% !TeX spellcheck = en_AU
%------------------------ Preamble and bibliography resources

\documentclass[11pt,oneside,a4paper,reqno]{amsart}
\usepackage[utf8]{inputenc}
\usepackage[T1]{fontenc}
\usepackage{lmodern}
\usepackage{etoolbox}
\usepackage[protrusion=false]{microtype}

%Page Setup
\usepackage{geometry}
\usepackage[all]{xy}
\geometry{a4paper,margin=3cm}
\pdfpagewidth=\paperwidth
\pdfpageheight=\paperheight
\usepackage{xcolor}

%AMS
\usepackage{mathtools} %internally loads amsmath
\usepackage{amssymb} %internally loads amsfonts
\usepackage{amsthm} %must be after amsmath to work properly
\usepackage{thmtools} %needed to make cleverref work

%Tikz
%\usepackage[pdftex]{epsfig,pict2e}
\usepackage{tikz}
\usepackage{manfnt}
\usetikzlibrary{matrix}
\usetikzlibrary{arrows}
\usetikzlibrary{positioning}
\usetikzlibrary{calc}
\usepackage{tikz-cd}

\usepackage{quiver}
\usepackage{stackengine,scalerel}

%%%Lists
\usepackage{enumitem}
\setenumerate{listparindent=\parindent}
\setlist[enumerate]{
	font=\normalfont,
	label=(\roman*),
	topsep=3pt,
	itemsep=-0.3ex,
	partopsep=1ex,
	parsep=1ex}

\setenumerate[1]{label=(\alph*)}
\setenumerate[2]{label=(\roman*)}

%Boldmath Titles
\def\namedlabel#1#2{\begingroup
	#2%
	\def\@currentlabel{#2}%
	\phantomsection\label{#1}\endgroup
}
\makeatother

\makeatletter
\g@addto@macro\bfseries{\boldmath}
\makeatother

%Misc

\usepackage{datetime}
\usepackage{url}

\emergencystretch=1em

%------------------------ Macros --------------------------

\DeclareMathOperator{\id}{id}

\DeclareMathOperator{\spaan}{span}

\DeclareMathOperator{\intt}{int}

\DeclareMathOperator{\Aut}{Aut}

\DeclareMathOperator{\ox}{\otimes}

\DeclareMathOperator{\Mor}{Mor}
\DeclareMathOperator{\Morp}{Mor_p}

\DeclareMathOperator{\pr}{Prp}
\DeclareMathOperator{\per}{Per}
\DeclareMathOperator{\pim}{Pim}
\DeclareMathOperator{\kat}{Kat}

% \scriptscriptstyle

\newcommand\restr[2]{{% we make the whole thing an ordinary symbol
		\left.\kern-\nulldelimiterspace % automatically resize the bar with \right
		#1 % the function
		\vphantom{|} % pretend it's a little taller at normal size
		\right|_{#2} % this is the delimiter
}}

\newcommand{\ol}[1]{\overline{#1}}

%\DeclareMathOperator{\proj}{Proj}

 %is the same as suspension

\newcommand{\wt}[1]{\widetilde{#1}}

\newcommand{\udot}{\mathbin{\ThisStyle{%
			\ensurestackMath{\stackinset{c}{0pt}{c}{0pt}
				{\scalebox{.6}{$\SavedStyle\bullet$}}{\SavedStyle\cup}}}}}

\newcommand{\unis}[3]{{#1} \uplus_{#2} {#3}}
\newcommand{\unif}[1]{{#1}_{\uplus}}
\newcommand{\pers}[4]{{#1} \uplus_{#2}^{#3} {#4} }
\newcommand{\perf}[2]{{#1}_{#2}}
\newcommand{\mins}[3]{{#1} \udot_{#2} {#3}}
\newcommand{\minf}[1]{{#1}_{\udot}}

\newcommand{\decor}[1]{\langle{#1}\rangle}

\DeclareMathOperator{\unilim}{\textstyle\varprojlim^{\uplus}}
\DeclareMathOperator{\perlim}{\textstyle\varprojlim^{\udot}}
\DeclareMathOperator{\minlim}{\textstyle\varprojlim^{\vee}}

\DeclareMathOperator{\unilima}{\textstyle\varinjlim^{\uplus}}
\DeclareMathOperator{\perlima}{\textstyle\varinjlim^{\udot}}
\DeclareMathOperator{\minlima}{\textstyle\varinjlim^{\vee}}
\DeclareMathOperator{\mlima}{\textstyle\mathcal{M}-\varinjlim}

% Category Names
\newcommand{\catname}[1]{{\normalfont\mathbf{#1}}}
\newcommand{\LCH}{\catname{LCH}}

\newcommand{\CC}{\mathbb{C}}

\newcommand{\NN}{\mathbb{N}}

\newcommand{\RR}{\mathbb{R}}
\newcommand{\TT}{\mathbb{T}}

\newcommand{\ZZ}{\mathbb{Z}}

\newcommand{\Bb}{\mathcal{B}}

\newcommand{\Hh}{\mathcal{H}}
\newcommand{\Ii}{\mathcal{I}}
\newcommand{\Jj}{\mathcal{J}}
\newcommand{\Kk}{\mathcal{K}}
\newcommand{\Ll}{\mathcal{L}}
\newcommand{\Mm}{\mathcal{M}}

\newcommand{\Oo}{\mathcal{O}}
\newcommand{\Pp}{\mathcal{P}}
\newcommand{\Qq}{\mathcal{Q}}

\newcommand{\Tt}{\mathcal{T}}
\newcommand{\Uu}{\mathcal{U}}
\newcommand{\Vv}{\mathcal{V}}

\newcommand{\Zz}{\mathcal{Z}}

%Bibliography

%\usepackage[
%backend=bibtex,
%style=ieee-alphabetic,
%citestyle=ieee-alphabetic,
%doi=false,
%isbn=false,
%sorting = nyt,
%giveninits=true,
%%dashed=false
%maxbibnames=99
%]{biblatex}
%
%\renewcommand*{\bibfont}{\small}
%\renewbibmacro{in:}{}
%\defbibheading{bibliography}[\refname]{}

%\bibliography{References} 
%\usepackage{biblatex2bibitem}
%\usepackage{refcheck}

\usepackage[
	pdfauthor = {Alexander Mundey},
	pdftitle = {Fibrewise compactifications and generalised limits in commutative and noncommutative topology},
	unicode=true,
	colorlinks=true,
	linkcolor=blue,
	citecolor=orange,
	urlcolor=purple,
	breaklinks=true
]{hyperref} %must be loaded near the end

\usepackage[
noabbrev,
capitalise,
nameinlink
]{cleveref} %must be loaded before theorem names and after hyperref

\newtheorem{thm}{Theorem}[section]
\newtheorem{cor}[thm]{Corollary}
\newtheorem{lem}[thm]{Lemma}
\newtheorem{prop}[thm]{Proposition}

\theoremstyle{definition}
\newtheorem{dfn}[thm]{Definition}

\theoremstyle{remark}
\newtheorem{rmk}[thm]{Remark}

\newtheorem{example}[thm]{Example}

\usepackage[symbol]{footmisc}

\title[Fibrewise compactifications and generalised limits]{Fibrewise compactifications and generalised limits in commutative and noncommutative topology}
\author{Alexander Mundey}

\date{\today}

\subjclass[2020]{46L85 (Primary) 55R70, 46M40 (Secondary)}
\keywords{Fibrewise compactification, perfection, unified space, generalised limit, regulated limit, boundary path space, noncommutative topology, $C^*$-algebras, multiplier algebras, Cuntz--Pimsner algebras}

%------------------------ Main Document --------------------------
\begin{document}

	\begin{abstract}
		We introduce fibrewise compactifications in both the setting of locally compact Hausdorff spaces and continuous maps, and the parallel setting of $C^*$-algebras and nondegenerate multiplier-valued $*$-homomorphisms.  In both situations, we use fibrewise compactifications to define regulated limits.
		In the topological setting, regulated limits extend classical inverse limits so that the resulting limit space remains locally compact; examples include the path spaces of directed graphs. In the operator-algebraic setting, regulated limits realise a direct-limit construction for multiplier-valued $*$-homomorphisms; examples include the cores of relative Cuntz–Pimsner algebras.
	\end{abstract}

\maketitle
%\tableofcontents

\section{Introduction}
Locally compact Hausdorff spaces are a mainstay of modern mathematics and are often considered to be ``nice'' topological spaces, but the category of locally compact Hausdorff spaces and continuous maps has limitations. Notably, the category is not closed under taking inverse limits. The purpose of this article is to provide a solution to this problem, along with the analogous problem for $C^*$-algebras.  

Consider a sequence of locally compact Hausdorff spaces and continuous maps
% https://q.uiver.app/#q=WzAsNCxbMCwwLCJYXzAiXSxbMiwwLCJYXzIiXSxbMSwwLCJYXzEiXSxbMywwLCJcXGNkb3RzIl0sWzIsMCwiZl8wIiwyXSxbMSwyLCJmXzEiLDJdLFszLDEsImZfMiIsMl1d
\begin{equation}\label{eq:projective_sequence}
	\begin{tikzcd}[ampersand replacement=\&,cramped]
		{X_0} \& {X_1} \& {X_2} \& \cdots
		\arrow["{f_0}"', from=1-2, to=1-1]
		\arrow["{f_1}"', from=1-3, to=1-2]
		\arrow["{f_2}"', from=1-4, to=1-3]
	\end{tikzcd}.
\end{equation}
The inverse limit $\varprojlim(X_i,f_i)$ of \labelcref{eq:projective_sequence} is Hausdorff, but in general it is not locally compact. This can be remedied by insisting that the maps $f_i \colon X_{i+1} \to X_i$ are \emph{proper} \cite[Theorem~3.7.13]{Eng89}; that is, $f_i^{-1}(K)$ is compact for all compact $K \subseteq X_i$
. In practice, one may not always have the luxury of proper maps.

A related situation occurs in the setting of $C^*$-algebras. Given a sequence of $C^*$-algebras and $*$-homomorphisms
\begin{equation}\label{eq:inductive_sequence}
	\begin{tikzcd}[ampersand replacement=\&,cramped]
		{A_0} \& {A_1} \& {A_2} \& \cdots
		\arrow["{\varphi_0}", from=1-1, to=1-2]
		\arrow["{\varphi_1}", from=1-2, to=1-3]
		\arrow["{\varphi_2}", from=1-3, to=1-4]
	\end{tikzcd}
\end{equation}
we can form the direct limit $\varinjlim(A_i,\varphi_i)$. Gelfand duality implies that if the $A_i$ are commutative, and the maps $\varphi_i \colon A_i \to A_{i+1}$ take an approximate identity of $A_i$ to one of $A_{i+1}$, then the sequence \labelcref{eq:inductive_sequence} is dual to one of the form \labelcref{eq:projective_sequence} for which the $f_i$ are continuous and proper. This does not reflect the full situation of \labelcref{eq:projective_sequence}, since there the $f_i$ are not necessarily proper. The appropriate noncommutative notion of a continuous (non-proper) function is a $*$-homomorphism $\varphi \colon A \to \Mm(B)$ between a $C^*$-algebra $A$ and the multiplier algebra of a $C^*$-algebra $B$ such that $\varphi(A)B$ is norm dense in $B$ (see \cite{aHRW10}). We think of such a map as a \emph{morphism} from $A$ to $B$ in the sense of \cite{Lan95}. The problem is that if we replace the $*$-homomorphisms of \labelcref{eq:inductive_sequence} with morphisms $\varphi_i \colon A_i \to \Mm(A_{i+1})$, then it is unclear how to make sense of the direct limit.

In this article, we provide a remedy the problems occurring in both situations, dealing with the topological setting in \cref{sec:topological} and the $C^*$-algebraic setting in \cref{sec:CStar}. We do so by introducing \emph{fibrewise compactifications} in both situations. Fibrewise compactifications allow us to extend a continuous map to a proper continuous map, or a morphism between $C^*$-algebras to a proper morphism in the sense of \cite{ELP99}. We use fibrewise compactifications to construct more general notions of limits called \emph{regulated limits}.

In the topological setting, the idea of fibrewise compactifications  dates back to Whyburn's unified space of a continuous map \cite{Why53,Why66}. The theory has since been further developed and refined \cite{AnDe14,Cain69,Jam89}.
 The idea is to start with a continuous map $f \colon X \to Y$ and ``compactify its fibres'' to extend $f$ to a continuous proper map $\widetilde{f} \colon \widetilde{X} \to Y$, whose domain $\wt{X}$ is a new locally compact Hausdorff space containing $X$ as an open subspace. 
 The original approach of
 Whyburn was to ``glue''  $Y$ to the fibres $\{f^{-1}(y)\}_{y \in Y}$ of $f$ ``at infinity''.
 The existing literature on fibrewise compactifications assumes that $f \colon X \to Y$ is surjective, so we formulate a more general definition (see \cref{dfn:perfection}) to handle non-surjective maps. We call a fibrewise compactification in which $\widetilde{f}$ is also surjective a \emph{perfection} since the maps $\wt{f}$ are perfect (continuous, proper, and surjective).

Inductively applying fibrewise compactifications to \labelcref{eq:projective_sequence}, yields a new sequence 
 \begin{equation}\label{eq:projective_sequence_proper}
 	\begin{tikzcd}[ampersand replacement=\&,cramped]
 		\wt{X}_0 \& \wt{X}_1 \& \wt{X}_2 \& \cdots
 		\arrow["\wt{f}_0"', from=1-2, to=1-1]
 		\arrow["\wt{f}_1"', from=1-3, to=1-2]
 		\arrow["\wt{f}_2"', from=1-4, to=1-3]
 	\end{tikzcd}
 \end{equation}
 of locally compact Hausdorff spaces,
  connected by proper continuous maps. 
  Regulated limits in this setting are a class of limits of the form $\varprojlim(\wt{X}_i,\wt{f}_i)$ that depend on a suitable sequence of open sets $U_i \subseteq X_i$.  Regulated limits are locally compact and Hausdorff, and the original limit $\varprojlim(X_i,f_i)$ continuously embeds in $\varprojlim(\wt{X}_i,\wt{f}_i)$ (see \cref{cor:regulated_limit}). 
  
  Different choices of fibrewise compactification result in different regulated limits. We highlight three constructions that are somewhat canonical. The first construction is based on Whyburn's unified space, which we call the \emph{unified limit} $\unilim(X_i,f_i)$. The second construction $\minlim(X_i,f_i)$ is the smallest regulated limit of $(X_i,f_i)_{i \in \NN}$, and the third $\perlim(X_i,f_i)$ is the smallest regulated limit in which the universal projections $\perlim(X_i,f_i) \to \wt{X}_i$ are surjective.

Examples of regulated limits---and part of the author's motivation for introducing them (cf. \cite[Chapter~3]{MunPhD})---comes from directed-graph $C^*$-algebras. The \emph{path space} $E^{\le \infty}$ and \emph{boundary path space} $\partial E$ of a directed (or topological) graph $E$ are locally compact Hausdorff spaces that contain the space of one-sided infinite paths $E^{\infty}$ of the graph (see \cite{deCa21,Web14,Yen06}). In \cref{ex:top_graph_1} we show that $E^{\le \infty}$ can be realised as a regulated limit of the form $\unilim(X_i,f_i)$, and in \cref{ex:top_graph_2} we show that $\partial E$ can be realised as a regulated limit of the form $\perlim(X_i,f_i)$.  Regulated limits provide a systematic way of studying these spaces.

In the $C^*$-algebraic setting, we extend the noncommutative topology paradigm to fibrewise compactifications and regulated limits. A noncommutative fibrewise compactification (see \cref{dfn:perfection_nc}) of a morphism $\varphi \colon A \to \Mm(B)$ consists of a new $C^*$-algebra $\wt{B}$ containing $B$ as an ideal together with a $*$-homomorphism $\wt{\varphi} \colon A \to \wt{B}$ that takes an approximate unit of $A$ to an approximate unit of $\wt{B}$. Following \cite{ELP99}, we call such a $*$-homomorphism a \emph{proper morphism} from $A$ to $\wt{B}$. 
 We also insist that $ \alpha_B \circ \wt{\varphi} = \varphi$, where $\alpha_B \colon \wt{B} \to \Mm(B)$ is the $*$-homomorphism induced by the inclusion of $B$ as an ideal in $\wt{B}$. A noncommutative perfection is a noncommutative fibrewise compactification in which $\wt{\varphi}$ is also injective. 
 In \cref{prop:fw_commutative_same} we show that for commutative $C^*$-algebras $A$ and $B$, a noncommutative fibrewise compactification of $\varphi$ corresponds directly to a fibrewise compactification of the induced map $\varphi_* \colon \widehat{B} \to \widehat{A}$ between the spectra.
 
The noncommutative analogous of Whyburn's unified spaces, \emph{unified algebras}, correspond precisely to split extensions of $C^*$-algebras, and we show in \cref{prop:unified_cstar} that every split extension of \emph{commutative} $C^*$-algebras is isomorphic to the continuous functions vanishing at infinity on a unified space. In \cref{thm:unified_quotient} we show that sub-fibrewise compactifications the unified algebra of $\varphi \colon A \to \Mm(B)$ correspond to a more general class of extensions of $A$ by $B$, complementing the classification of universal extensions of \cite{ELP99}.
 The ideals defined by Pimsner \cite[Definition~3.8]{Pim97} and Katsura \cite[Definition~3.2]{Kat04cor} in their studies of Cuntz--Pimsner algebras arise naturally as essential ingredients in our constructions.

We use our noncommutative fibrewise compactifications to develop a notion of regulated limit for sequences of morphisms $(\varphi_i \colon A_i \to \Mm(A_{i+1}))_{i \in \NN}$. We inductively apply noncommutative fibrewise compactifications to such a sequence to get a directed sequence
\[
\begin{tikzcd}[ampersand replacement=\&,cramped]\label{eq:inductive_sequence_proper}
	{A_0} \& {\wt{A}_1} \& {\wt{A}_2} \& \cdots
	\arrow["{\wt{\varphi}_0}", from=1-1, to=1-2]
	\arrow["{\wt{\varphi}_1}", from=1-2, to=1-3]
	\arrow["{\wt{\varphi}_2}", from=1-3, to=1-4]
\end{tikzcd}
\]
 of $C^*$-algebras and proper morphisms $\wt{\varphi}_i$. A regulated limit in this setting is a limit of the form  $\varinjlim(\wt{A_i},\wt{\varphi}_i)$ that depends on a suitable sequence of ideals $J_i \trianglelefteq A_i$ (see \cref{dfn:regulated_limit_alg}). 
A regulated limit $\varinjlim(\wt{A_i},\wt{\varphi}_i)$ is compatible with the sequence $(\varphi_i \colon A_i \to \Mm(A_{i+1}))_{i \in \NN}$ in the sense that each $A_i$ is an ideal in $\wt{A}_i$, and if $\alpha_i \colon \wt{A}_i \to \Mm(A_i)$ denotes the induced morphism, then $\varphi_i = \wt{\varphi}_i \circ \alpha_i$. 
As in the topological setting, we focus on three  regulated limits $\unilima(A_i,\varphi_i)$, $\perlima(A_i,\varphi_i)$, and $\minlima(A_i,\varphi_i)$. 

In \cref{prop:cuntz-pimsner_characterisation2} we show that the cores of relative Cuntz--Pimsner algebras $\Oo_{X,J}$ associated to a $C^*$-correspondence $(\varphi, X_A)$ are examples of regulated limits. The core of the Toeplitz algebra $\Tt_X$ is a regulated limit of the form $\unilima(A_i,\varphi_i)$, and the core Cuntz--Pimsner algebra $\Oo_{X}$ is a regulated limit of the form $\perlima(A_i,\varphi_i)$, where in this instance $A_i = \Kk_A(X^{\ox i})$ is the algebra of generalised compact operators on the $i$-th tensor power of $X$.
 This example gives insight in to the sense in which the Toeplitz algebra and Cuntz--Pimsner algebra constructions are limiting procedures. 
 
 Although our attention is restricted regulated limits of locally compact Hausdorff spaces and $C^*$-algebras, it appears as though there is more scope for similar constructions. For groupoids---which in many cases bridge the gap between topological spaces and noncommutative $C^*$-algebras---related work has already appeared.  
 Groupoid-equivariant fibrewise compactifications were introduced in \cite{AnDe14}, and in \cite[Theorem~3.3.43]{MunPhD} they are used to prove a groupoid-theoretic version of \cref{prop:cuntz-pimsner_characterisation2} in the special case of topological graphs. More recently, these constructions have been used to construct universal actions of \'etale groupoids \cite[Theorem~4.16]{Jun23}.

 The paper is partitioned into two sections, with the structure of the $C^*$-algebraic \cref{sec:CStar} mirroring that of the topological \cref{sec:topological}.  The reader interested in topology can read \cref{sec:topological} without \cref{sec:CStar}, and the reader interested in
 $C^*$-algebras should be able to read most of \cref{sec:CStar} without having read \cref{sec:topological}, although some context will certainly be lost.  
 
 In Subsections~\ref{sec:fw_comp} and \ref{sec:nc_fw} we give definitions of fibrewise compactifications and establish some basic facts. 
  In Subsection~\ref{sec:unified_space} we introduce Whyburn's unified space, and in Subsection~\ref{sec:unified_alg} we introduce the analogous unified algebra. In Subsection~\ref{sec:sub-fibrewise} we classify sub-fibrewise compactifications of the unified space, and in Subsection~\ref{sec:quotient_fw} we classify quotient fibrewise compactifications of the unified algebra. In Subsections~\ref{sec:commutative_composition} and \ref{sec:nc_composition} we examine the composition of fibrewise compactifications in preparation for introducing regulated limits in Subsections~\ref{subsec:lch_inverse_limits} and \ref{sec:nc_limits}. In Subsection~\ref{sec:cores} we prove that the cores of relative Cuntz--Pimsner algebras are regulated limits.

\subsection*{Acknowledgements}

The author would like to thank Adam Rennie and Aidan Sims for their suggestions. 
This research was supported by Australian Research Council grant DP220101631, University of Wollongong AEGiS CONNECT grant 141765, and an Australian Government Research Training Program (RTP) Scholarship.

\section{The topological setting}\label{sec:topological}

Recall that a map $f \colon X \to Y$ between topological spaces is \emph{proper} if $f^{-1}(K)$ is compact for all compact $K \subseteq Y$. In particular, the \emph{fibres} $f^{-1}(y)$ of $f$ are compact for all $y \in Y$. 
We say that $f$ is \emph{perfect} if it is continuous, closed, surjective, and for every $y \in Y$ the fibre $f^{-1}(y)$ is compact in $X$. If $Y$ is locally compact and Hausdorff, then perfect maps coincide with continuous proper surjections. Almost every topological space encountered in this paper is locally compact and Hausdorff, so we never need to worry about the distinction between perfect maps and continuous proper surjections. 

Perfect maps respect local compactness; the continuous image of a locally compact space under a perfect map is again locally compact \cite{Mic72}. We recall that a subset of a topological space is \emph{precompact} if it has compact closure. Locally compact Hausdorff spaces admit bases consisting of precompact open sets \cite[Theorem~3.2.2]{Eng89}.

\subsection{Fibrewise compactifications and perfections}\label{sec:fw_comp}

Given a continuous map $f \colon X \to Y$ we define
\begin{align}\label{eq:pimsner_set}
	\begin{split}
		\pr(f) 
% 		\coloneqq \{y \in Y 
%		\mid \exists W \text{ a } &\text{precompact neighbourhood  of } y \\
%		& \qquad \text{ such that } f^{-1}{(\ol{W})} \text{ is compact}\}.\\
		\coloneqq \bigcup \{ W \mid W \subseteq Y &\text{ is a } \text{precompact open set}\\
	 &\text{ such that } f^{-1}{(\ol{W})} \text{ is compact}\}.\\
	\end{split}
\end{align} 
The set $\pr(f)$ is open in $Y$. In \cref{lem:f-reg_characterisation} we show that $\pr(f)$ is the largest open subset $U$ of $Y$ such that the restriction $f|_{f^{-1}(U)} \colon f^{-1}(U) \to U$ is proper, but we do not require this fact for now. 

We are interested in the process of taking a continuous map and extending it to either a proper or perfect map. 
 We capture this idea with the following definition.

\begin{dfn}\label{dfn:perfection} Let $f \colon X \to Y$ be a continuous map between locally compact Hausdorff spaces. A \emph{fibrewise compactification} of $f$ is a pair $(Z,g)$ consisting of a locally compact Hausdorff space $Z$ and a continuous proper map $g \colon Z \to Y$ such that
\begin{enumerate}[labelindent=0pt,labelwidth=\widthof{\ref{perf:commuting}},label=(F\arabic{enumi}), ref=(F\arabic*),leftmargin=!]
\item\label{perf:commuting} there is an open inclusion $\iota_X \colon X \to Z$  such that $g \circ \iota_X = f$; and 

\item\label{perf:density} the restriction of $g$ to ${Z \setminus \overline{\iota_X(X)}}$ is a homeomorphism onto a  subset of $\pr(f)$ that is closed in the subspace topology of $\pr(f)$. 
\end{enumerate}
If $g$ is also surjective, then $g$ is perfect, and we call $(Z,g)$ a \emph{perfection} of $f$. If $\ol{\iota_X(X)} = Z$, then we call $(Z,g)$ a \emph{strict fibrewise compactification}. We say that fibrewise compactifications $(Z,g)$ and $(Z',g')$ of $f$ are \emph{isomorphic} if there is a homeomorphism $h \colon Z \to Z'$ such that $g' = h \circ g$ and $h|_X = \id_X$. 
\end{dfn}

Condition~\ref{perf:commuting} says that $g$ is a proper map extending $f$ to a larger domain,
 while condition~\ref{perf:density} ensures that the extension is ``not too large''. The inclusion, in $Z$, of points that lie outside of $\ol{\iota_X(X)}$ is sometimes necessary to ensure surjectivity of $g$.

The notion of a ``fibrewise compactification'' has appeared in the literature previously (see \cite[\S 8]{Jam89} and \cite{AnDe14}). In these definitions it is assumed that
\begin{enumerate}
	\item the map $f \colon X \to Y$ is surjective, and
	\item $\ol{\iota_X(X)}$ is dense in $Z$.
\end{enumerate}
If $f \colon X \to Y$ is surjective, then a strict fibrewise compactification of $f$ corresponds to a fibrewise compactification of $f$ in the sense of \cite{AnDe14,Jam89}.
If $f \colon X \to Y$ is not surjective, then things are more subtle. As the following lemma shows, points on the boundary of $f(X)$ can act as an obstruction to properness of $f$.

\begin{lem}\label{lem:boundary_issues}
	Suppose that $f \colon X \to Y$ is a continuous map between locally compact Hausdorff spaces. If $y \in \overline{f(X)} \setminus f(X)$, then for any precompact open neighbourhood $W \subseteq Y$ of $y$, the preimage $f^{-1}(\overline{W})$ is not compact in $X$. In particular, if $f$ is proper, then it is a closed map. 
\end{lem}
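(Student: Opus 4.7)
\medskip

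The plan is to prove the first assertion by a direct net argument, then deduce the second from the first by replacing $f$ with its restriction to a closed set.

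\textbf{Part 1 (preimages of neighbourhoods of boundary points are not compact).} Let $y \in \overline{f(X)} \setminus f(X)$ and fix a precompact open neighbourhood $W \subseteq Y$ of $y$. Since $y$ lies in the closure of $f(X)$, I would select a net $(x_\alpha)_{\alpha \in A}$ in $X$ with $f(x_\alpha) \to y$. Because $W$ is a neighbourhood of $y$, the net $(f(x_\alpha))$ is eventually in $W \subseteq \overline{W}$, so after passing to a cofinal subnet I may assume $x_\alpha \in f^{-1}(\overline{W})$ for every $\alpha$. If $f^{-1}(\overline{W})$ were compact, this net would have a convergent subnet with limit $x \in f^{-1}(\overline{W}) \subseteq X$. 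Continuity of $f$ together with uniqueness of limits in the Hausdorff space $Y$ would then force $f(x) = y$, contradicting $y \notin f(X)$. Hence $f^{-1}(\overline{W})$ is not compact.

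\textbf{Part 2 (properness implies closedness).} Given a closed subset $C \subseteq X$, I would show $f(C)$ is closed by applying Part 1 to the restriction $f|_C \colon C \to Y$. The restriction is continuous, and it is proper because for compact $K \subseteq Y$ one has $(f|_C)^{-1}(K) = f^{-1}(K) \cap C$; since $f^{-1}(K)$ is compact and $C$ is closed in the Hausdorff space $X$, the intersection is compact. Now take any $y \in \overline{f(C)}$. By local compactness of $Y$, there exists a precompact open neighbourhood $W$ of $y$, and then $(f|_C)^{-1}(\overline{W})$ is compact since $\overline{W}$ is compact and $f|_C$ is proper. By Part 1 applied to $f|_C$, this forces $y \in f(C)$. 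Therefore $\overline{f(C)} = f(C)$, so $f(C)$ is closed.

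\textbf{Anticipated obstacles.} The argument is essentially routine; the only subtlety is to work with nets rather than sequences since the spaces need not be first countable, and to remember to invoke Hausdorffness both for uniqueness of net limits (in Part 1) and for compact subsets being closed (used implicitly in Part 2 when passing to $f|_C$). The use of a \emph{precompact} open neighbourhood—guaranteed by local compactness plus Hausdorffness—is what converts the hypothesis on $W$ into compactness of $\overline{W}$, which is the input Part 1 needs.
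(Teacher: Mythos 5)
Your proposal is correct and follows essentially the same route as the paper: a net in $f(X)\cap W$ converging to $y$ is lifted to $X$, and compactness of $f^{-1}(\overline W)$ would yield a convergent subnet whose image limits to $y\in f(X)$, a contradiction; closedness then follows by applying this to the proper restriction $f|_C$. The only difference is that you spell out the deduction of closedness in more detail than the paper, which simply notes that $f|_C$ is proper and concludes $\overline{f(C)}=f(C)$.
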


\begin{proof}
	Fix a net $y_\lambda \to y$ with $y_\lambda \in f(X) \cap W$ and choose $x_\lambda \in f^{-1}(y_\lambda)$. If $(x_\lambda)$ admits a subnet $(x_{\lambda'})$ that converges to some $x \in X$, then $f(x) = \lim_{\lambda'} f(x_{\lambda'}) = \lim_{\lambda'} y_{\lambda'} = y$. This contradicts that $y \notin f(X)$.
	If $f$ is proper and $C \subseteq X$ is closed, then $f|_C \colon C \to Y$ is proper, so $\ol{f(C)} = f(C)$. 
\end{proof}

The following example illustrates the boundary issues of \cref{lem:boundary_issues}.

\begin{example}\label{ex:boundary_issues}
	Let $X = (0,1)^2 \subseteq \RR^2$, and $Y = (0,1)$ with the Euclidean topologies, and define $f \colon X \to Y$ by $f(x_1,x_2) = x_1$. Let $S^1$ denote the unit circle, let $Z = (0,1) \times S^1$, and define $g \colon Z \to Y$ by $g(x_1,x_2) = x_1$. Let $\iota_X \colon X \to Z$ be the inclusion induced by the inclusion of $(0,1)$ in its one-point compactification $S^1$. 	
	Then $(Z,g)$ is a strict fibrewise compactification of $f$ (in fact it is a perfection). 
	
	Now we enlarge the codomain of $f$. Let $Y' = \RR$ and define $f' \colon X \to Y'$ by $f'(x_1,x_2) = x_1$. As before, define an extension $g' \colon Z \to Y'$ of $f'$ by $g'(x_1,x_2) = x_1$. Then $g'(Z) = (0,1)$ is not closed in $Y$.  \cref{lem:boundary_issues} implies that $g'$ is not proper, so $(Z,g')$ is not a fibrewise compactification of $f'$. 
	To address this, a fibrewise compactification of $f'$ must have nonempty fibres $(f')^{-1}(\{0\})$ and $(f')^{-1}(\{0\})$
\end{example}

To address the boundary issues of \cref{lem:boundary_issues} we introduce a somewhat inefficient fibrewise compactification in \cref{sec:unified_space}, and then restrict to sub-fibrewise compactifications in the following sense.

\begin{dfn}
	Let $f \colon X \to Y$ be a continuous map between locally compact Hausdorff spaces and suppose that $(Z,g)$ is a fibrewise compactification of $f$. A \emph{sub-fibrewise compactification} of $(Z,g)$ is a fibrewise compactification $(W,h)$ of $f$ such that $W$ is a subspace of $Z$ containing $X$, and $g|_{W} = h$. If $g$ and $h$ are both surjective, then we call $(W,h)$ a \emph{sub-perfection} of $(Z,g)$. 
\end{dfn}

Sub-fibrewise compactifications are automatically closed.

\begin{lem}
	\label{lem:subperfections_closed}
	Suppose that $(Z,g)$ is a fibrewise compactification of a continuous map $f \colon X \to Y$ between locally compact Hausdorff spaces. If $(W,h)$ is a sub-fibrewise compactification of $(Z,g)$, then $W$ is closed in $Z$. 
\end{lem}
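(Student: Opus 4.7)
The plan is to show that the closure of $W$ in $Z$ is exactly $W$ by exploiting the interplay between properness of $h$, the subspace topology on $W$, and local compactness of $Y$. Fix $z \in \ol{W}$ and set $y \coloneqq g(z) = h(z)$ (where $h$ agrees with $g$ on $W$, so by continuity also on the closure). The goal is to trap $z$ inside a compact subset of $W$ that is simultaneously closed in $Z$.

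First I would use local compactness of $Y$ to choose a precompact open neighbourhood $V \subseteq Y$ of $y$. Properness of $h \colon W \to Y$ then gives that $h^{-1}(\ol{V})$ is compact in $W$. The crucial observation is that $W$ carries the subspace topology from $Z$, so this compact set is also compact as a subset of $Z$; since $Z$ is Hausdorff, $h^{-1}(\ol{V})$ is closed in $Z$.

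Next I would show that $z \in h^{-1}(\ol{V})$. To see this, note that $g^{-1}(V)$ is an open neighbourhood of $z$ in $Z$, and for any open neighbourhood $U \ni z$ in $Z$ the intersection $U \cap g^{-1}(V)$ is again open and contains $z$; because $z \in \ol{W}$, this intersection must meet $W$. Since $W \cap g^{-1}(V) = h^{-1}(V) \subseteq h^{-1}(\ol{V})$, every open neighbourhood of $z$ meets $h^{-1}(\ol{V})$, so $z$ lies in the closure of $h^{-1}(\ol{V})$ in $Z$. By the previous step this closure equals $h^{-1}(\ol{V})$, whence $z \in h^{-1}(\ol{V}) \subseteq W$.

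The main (and essentially only) subtlety is the compatibility between the subspace topology on $W \subseteq Z$ and the \emph{intrinsic} local compactness/properness assumed for the fibrewise compactification $(W,h)$: these agree precisely because compactness is an intrinsic property, which is what allows the proper preimage $h^{-1}(\ol{V})$ to function as a closed neighbourhood-trap inside $Z$. Everything else is routine point-set topology, and the conclusion that $W$ is closed in $Z$ follows.
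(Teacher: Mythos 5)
Your proof is correct and follows essentially the same strategy as the paper's: choose a precompact open neighbourhood $V$ of $g(z)$ in $Y$ and use properness of $h$, together with the fact that $W$ carries the subspace topology, to trap $z$ in the set $h^{-1}(\ol{V}) \subseteq W$, which is compact and hence closed in the Hausdorff space $Z$. The only difference is cosmetic: the paper extracts a convergent subnet inside $h^{-1}(\ol{V})$ and invokes uniqueness of limits, whereas you replace that step with the equivalent observation that $z \in \ol{W}$ forces $z$ to lie in the closure of $h^{-1}(\ol{V})$, which equals $h^{-1}(\ol{V})$ itself.
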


\begin{proof}
	Suppose that $(w_{\lambda})$ is a net in $W$ that converges to some $z \in Z$. Then $h(w_{\lambda}) = g(w_{\lambda}) \to g(z)$. Fix a precompact open neighbourhood $U$ of $g(z)$. By passing to a subnet we may assume that each $h(w_{\lambda}) \in U$. Each $w_{\lambda}$ belongs to the compact set $h^{-1}(\ol{U})$ so there is a subnet of $(w_{\lambda})$ that converges in $h^{-1}(\ol{U}) \subseteq W$. Since $W$ is a subspace of the Hausdorff space $Z$, this limit must be $z$, so $z \in W$. Hence, $W$ is closed in $Z$.
\end{proof}

A fibrewise compactification may always be restricted to a strict fibrewise compactification. The following result is a consequence of \cref{lem:subperfections_closed}. 

	\begin{lem}\label{dfn:closure_subfw}\label{prop:strict_fw}
		Let $(Z,g)$ be fibrewise compactification of $f \colon X \to Y$. The sub-fibrewise compactification
		\[
		([Z],[g]) \coloneqq (\ol{\iota_X(X)},g|_{\ol{\iota_X(X)}})
		\]
		is strict. If $(W,h)$ is a sub-fibrewise compactification of $(Z,g)$, then $([Z],[g]) = ([W],[h])$. 
	\end{lem}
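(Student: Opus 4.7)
The plan is to verify the assertions in the order they are stated, leaning on \cref{lem:subperfections_closed} for the second part. First, I would check that $([Z],[g])$ really is a sub-fibrewise compactification of $(Z,g)$. Since $\ol{\iota_X(X)}$ is closed in the locally compact Hausdorff space $Z$, it is itself locally compact Hausdorff, and $[g] = g|_{\ol{\iota_X(X)}}$ is continuous. Properness passes to closed subspaces: if $K \subseteq Y$ is compact, then $[g]^{-1}(K) = g^{-1}(K) \cap \ol{\iota_X(X)}$ is a closed subset of the compact set $g^{-1}(K)$. The inclusion $\iota_X \colon X \to \ol{\iota_X(X)}$ is still open since $\iota_X(X)$ was open in $Z$, and $g \circ \iota_X = f$ is inherited from $(Z,g)$. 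Finally, condition \ref{perf:density} is vacuous: the closure of $\iota_X(X)$ inside $[Z]$ equals $[Z]$, so $[Z] \setminus \ol{\iota_X(X)}^{[Z]} = \emptyset$.

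Strictness of $([Z],[g])$ is then exactly the same observation: the closure of $\iota_X(X)$ in the subspace $[Z] = \ol{\iota_X(X)}$ is $[Z]$ itself, which is the definition of a strict fibrewise compactification.

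For the final assertion, suppose $(W,h)$ is a sub-fibrewise compactification of $(Z,g)$. By \cref{lem:subperfections_closed}, $W$ is closed in $Z$, and by definition $\iota_X(X) \subseteq W$. Taking closures in $Z$ gives $\ol{\iota_X(X)} \subseteq W$, i.e.\ $[Z] \subseteq W$. Now the closure of $\iota_X(X)$ computed inside the subspace $W$ is $\ol{\iota_X(X)} \cap W = \ol{\iota_X(X)} = [Z]$, so $[W] = [Z]$ as subsets of $Z$. Since $h = g|_W$ by the definition of a sub-fibrewise compactification, restricting further to $[W] = [Z]$ yields $[h] = h|_{[W]} = g|_{[Z]} = [g]$, completing the identification $([Z],[g]) = ([W],[h])$.

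There is no real obstacle here: the argument is essentially bookkeeping with subspace topologies and restrictions, with the only nontrivial input being \cref{lem:subperfections_closed} to ensure the relation $[Z] \subseteq W$. The main thing to be careful about is distinguishing closures computed inside $Z$ from those computed inside the subspaces $[Z]$ and $W$, which is why the final step reduces to the set-theoretic identity $\ol{\iota_X(X)}^W = \ol{\iota_X(X)}^Z \cap W$.
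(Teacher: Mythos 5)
Your proof is correct and follows exactly the route the paper intends: the paper states this lemma without proof, noting only that it is a consequence of \cref{lem:subperfections_closed}, and your verification (closedness of $[Z]$ and $W$, inheritance of properness and openness of $\iota_X$, the vacuous check of \ref{perf:density}, and the identity $\ol{\iota_X(X)}^W = \ol{\iota_X(X)}^Z \cap W$) is precisely the bookkeeping being left to the reader. No gaps.
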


\subsection{The unified space}\label{sec:unified_space}
In this subsection we introduce the \emph{unified space} construction of  Whyburn~\cite{Why53}. It is our first, and simplest, example of a fibrewise compactification. Most examples of fibrewise compactifications that we consider in this article are sub-fibrewise compactifications of the unified space. 
 Given a continuous map $f \colon X \to Y$, the intuition is that we may ``glue'' a copy of the space $Y$ to $X$ to compactify the fibres of $f$. 

\begin{dfn}
	\label{dfn:unified_space}
	Let $f \colon X \to Y$ be a continuous map between locally compact Hausdorff spaces. Let $\unis{X}{f}{Y} \coloneqq X \sqcup Y$ as a set, and define $\unif{f} \colon \unis{X}{f}{Y} \to Y$  by 
	\[
	\unif{f}(x) = \begin{cases}
		f(x) & \text{if } x \in X,\\
		x & \text{if } x \in Y.
	\end{cases}
	\]
	Let $\tau_X$ and $\tau_Y$ denote the topologies on $X$ and $Y$, and let $\kappa_X$ denote the collection of compact subsets of $X$. We equip $\unis{X}{f}{Y}$ with the topology generated by the subbase
\begin{equation}
	\label{eq:unified_base}
		\Bb = \tau_X \cup \big\{ \unif{f}^{-1}(V) \cap \big( (X \sqcup Y) \setminus K\big) \bigm| V \in \tau_Y \text{ and } K \in \kappa_X \big\}.
\end{equation}
	We refer to the space $\unis{X}{f}{Y}$ and the pair $(\unis{X}{f}{Y}, \unif{f})$ as the \emph{unified space} of $f$.
\end{dfn}

\begin{rmk}
	For any $V \in \tau_Y$, we have $\unif{f}^{-1}(V) \in \Bb$, so $\unif{f}$ is continuous. 
\end{rmk}

\begin{lem}
	The subbase $\Bb$ of \labelcref{eq:unified_base} is a base. 
\end{lem}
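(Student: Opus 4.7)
The plan is to verify the two standard criteria for a collection of subsets to be a base: that $\Bb$ covers $\unis{X}{f}{Y}$, and that the intersection of any two elements of $\Bb$ is itself a union of elements of $\Bb$ (in fact, I will show that it lies in $\Bb$, which is even better). The argument is a short case analysis, and the only nontrivial input is that compact subsets of $X$ are closed in $X$ because $X$ is Hausdorff.

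First I would check coverage. Taking $V = Y \in \tau_Y$ and $K = \emptyset \in \kappa_X$ produces $\unif{f}^{-1}(Y) \cap (X \sqcup Y) = X \sqcup Y \in \Bb$, so every point of $\unis{X}{f}{Y}$ is contained in some element of $\Bb$.

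Next I would analyse intersections $B_1 \cap B_2$ of two elements of $\Bb$ in three cases. If $B_1, B_2 \in \tau_X$, then $B_1 \cap B_2 \in \tau_X \subseteq \Bb$. If both are of the ``co-compact'' form, $B_i = \unif{f}^{-1}(V_i) \cap ((X \sqcup Y) \setminus K_i)$ for $i = 1,2$, then a direct computation gives
\[
B_1 \cap B_2 = \unif{f}^{-1}(V_1 \cap V_2) \cap \big((X \sqcup Y) \setminus (K_1 \cup K_2)\big),
\]
which lies in $\Bb$ because $V_1 \cap V_2 \in \tau_Y$ and $K_1 \cup K_2$ is a compact subset of $X$. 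The remaining case is $B_1 \in \tau_X$ and $B_2 = \unif{f}^{-1}(V) \cap ((X \sqcup Y) \setminus K)$. Here $B_1 \subseteq X$ and $\unif{f}|_X = f$, so
\[
B_1 \cap B_2 = \big(B_1 \cap f^{-1}(V)\big) \setminus K.
\]
The set $B_1 \cap f^{-1}(V)$ is open in $X$ by continuity of $f$, and since $X$ is Hausdorff the compact set $K$ is closed in $X$, so $B_1 \cap B_2$ is open in $X$ and hence lies in $\tau_X \subseteq \Bb$.

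Since $\Bb$ covers the space and is closed under pairwise intersections, it is a base for the topology it generates. There is no real obstacle here; the only subtlety worth flagging is the use of the Hausdorff hypothesis on $X$ in the mixed case to guarantee that subtracting the compact set $K$ preserves openness.
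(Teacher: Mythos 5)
Your proof is correct and follows essentially the same route as the paper: checking coverage via $V=Y$, $K=\varnothing$, and then the same three-case analysis of pairwise intersections with the same computations. The only difference is that you explicitly flag the use of the Hausdorff hypothesis to ensure $K$ is closed in the mixed case, which the paper leaves implicit.
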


\begin{proof}
	Since $\varnothing$ is compact, the elements of $\Bb$ cover $X \sqcup Y$. Fix $B_1,B_2 \in \Bb$. If $B_1,B_2 \in \tau_X$, then $B_1 \cap B_2 \in \tau_X \subseteq \Bb$.
	If $B_1 \in \tau_X$ and $B_2 = \unif{f}^{-1}(V) \cap ((X \sqcup Y) \setminus K)$ for some $V \in \tau_Y$ and $K \in \kappa_X$, then $B_1 \cap B_2 = B_1 \cap f^{-1}(V) \cap X \setminus K \in \tau_X \subseteq \Bb$. If $B_1 = \unif{f}^{-1}(V_1) \cap ((X \sqcup Y) \setminus K_1)$ and $B_2 = \unif{f}^{-1}(V_2) \cap ((X \sqcup Y) \setminus K_2)$ for some $V_1,V_2 \in \tau_Y$ and $K_1,K_2 \in \kappa_X$, then $B_1 \cap B_2 = \unif{f}^{-1}(V_1 \cap V_2) \cap ((X \sqcup Y) \setminus (K_1 \cup K_2)) \in \Bb$. So $\Bb$ is a base. 
\end{proof}

The following lemma provides some alternate characterisations of the topology on $\unis{X}{f}{Y}$. In particular, it shows that \cref{dfn:unified_space} agrees with the unified space definition of~\cite{Why66}. 

\begin{lem}\label{lem:unified_topology}
	Let $f \colon X \to Y$ be a continuous map between locally compact Hausdorff spaces. Then $\unis{X}{f}{Y}$ is a Hausdorff space.
	\begin{enumerate}
		\item \label{itm:open}	A set $U \subseteq \unis{X}{f}{Y}$ is open if and only if
		\begin{enumerate}[label=(\roman{enumii})]
			\item\label{itm:open1}$U \cap X$ is open in $X$;
			\item\label{itm:open2} $U \cap Y$ is open in $Y$; and
			\item\label{itm:open3} for any compact $K \subseteq U \cap Y$, the set $f^{-1}(K) \cap (X \setminus U)$ is compact in $X$. 
		\end{enumerate}
		\item \label{itm:closed}
		A set $C \subseteq \unis{X}{f}{Y}$ is closed if and only if 
		\begin{enumerate}[label=(\roman{enumii})]
			\item $C \cap X$ is closed in $X$;
			\item $C \cap Y$ is closed in $Y$; and
			\item for any compact set $K \subseteq Y \setminus C$ the set $f^{-1}(K) \cap C$ is compact in $X$. 
		\end{enumerate}
		\item \label{itm:nets}
		Let $(x_\lambda)_{\lambda \in \Lambda}$ be a net in $\unis{X}{f}{Y}$. 
		\begin{enumerate}[label=(\roman{enumii})]
			\item If $x \in X$, then $x_\lambda \to x$ if and only if there exists $\lambda_0 \in \Lambda$ such that $\lambda \ge \lambda_0$ implies $x_{\lambda} \in X$ and $x_{\lambda} \to x$ in $X$.
			\item If $x \in Y$, then $x_\lambda \to x$ if and only if $\unif{f}(x_\lambda) \to \unif{f}(x) = x$ and for any compact $K \in \kappa_X$ there exists $\lambda_K \in \Lambda$ such that $\lambda \ge \lambda_K$ implies that $x_{\lambda} \in (X \setminus K) \sqcup Y$.
		\end{enumerate}		
	\end{enumerate}
\end{lem}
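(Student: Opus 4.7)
The plan is to establish Hausdorffness first, then prove part (i) directly, derive part (ii) by taking complements, and obtain part (iii) from (i) together with the observation that compact subsets of $X$ are closed in $\unis{X}{f}{Y}$. For Hausdorffness, I would split into cases by the location of the two points $p, q$: if both lie in $X$ or both in $Y$, disjoint basic neighbourhoods come from $\tau_X \subseteq \Bb$ or from pulling back disjoint $Y$-open sets along $\unif{f}$; if $p \in X$ and $q \in Y$, take a compact neighbourhood $K$ of $p$ with interior $V \in \tau_X$ and separate $p$ and $q$ using $V$ and $\unif{f}^{-1}(Y) \cap ((X \sqcup Y) \setminus K) = (X \setminus K) \sqcup Y$.

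For part (i), the forward direction amounts to three checks on an open $U \subseteq \unis{X}{f}{Y}$. Openness of $U \cap X$ follows because $X \in \Bb$; openness of $U \cap Y$ follows because any basic neighbourhood $B$ of $y \in U \cap Y$ must have the form $\unif{f}^{-1}(V) \cap ((X \sqcup Y) \setminus K)$ with $V \in \tau_Y$, giving $B \cap Y = V \subseteq U \cap Y$. The third condition is the key step: given compact $K \subseteq U \cap Y$, I would cover each $y \in K$ by a basic open set $B_y = \unif{f}^{-1}(V_y) \cap ((X \sqcup Y) \setminus K_y) \subseteq U$, extract a finite subcover indexed by $y_1, \ldots, y_n$, and chase definitions to show $f^{-1}(K) \cap (X \setminus U) \subseteq K_{y_1} \cup \cdots \cup K_{y_n}$; indeed, any $x$ on the left satisfies $\unif f(x) = f(x) \in V_{y_j}$ for some $j$, which forces $x \in K_{y_j}$ lest $x \in B_{y_j} \subseteq U$. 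Since $f^{-1}(K) \cap (X \setminus U)$ is closed in $X$, it is then compact. Conversely, given the three conditions and $p \in U$, I would produce a basic neighbourhood of $p$ contained in $U$: for $p \in X$ use $U \cap X$; for $p \in Y$ use local compactness of $Y$ to choose a precompact open neighbourhood $V$ of $p$ with $\ol V \subseteq U \cap Y$, let $K \coloneqq f^{-1}(\ol V) \cap (X \setminus U)$ (compact by the third condition), and verify that $\unif{f}^{-1}(V) \cap ((X \sqcup Y) \setminus K)$ contains $p$ and lies in $U$.

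Part (ii) then follows from (i) by taking complements, using that $f^{-1}(K) \cap C = f^{-1}(K) \cap (X \setminus U)$ whenever $K \subseteq Y$ and $C = \unis{X}{f}{Y} \setminus U$. For part (iii), when $x \in X$ I would use part (i) together with $\tau_X \subseteq \Bb$ to see that the subspace topology on $X$ agrees with $\tau_X$, and then use a compact $X$-neighbourhood of $x$ (which is open in $\unis{X}{f}{Y}$) to eventually trap the net in $X$, so that convergence descends to $X$. When $x \in Y$, continuity of $\unif{f}$ gives the first clause, and part (ii) implies every $K \in \kappa_X$ is closed in $\unis{X}{f}{Y}$, so $(X \setminus K) \sqcup Y$ is an open neighbourhood of $x$ yielding the second clause. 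The converses in both cases are direct checks against a general basic neighbourhood of $x$.

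The main obstacle is the finite-cover argument in the forward direction of (i) establishing the third condition: this is the only step where the precise shape of the subbasic sets $\unif{f}^{-1}(V) \cap ((X \sqcup Y) \setminus K)$ is genuinely exploited, and it encodes the intuition that adjoining $Y$ to $X$ compactifies the fibres of $f$ in $\unis{X}{f}{Y}$.
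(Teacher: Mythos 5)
Your proposal is correct, and it diverges from the paper's proof in one substantive place: the forward direction of part \labelcref{itm:open}. The paper only verifies conditions \labelcref{itm:open1}--\labelcref{itm:open3} for the \emph{basic} sets in $\Bb$, and then closes the loop by citing Whyburn's result that the family of sets satisfying \labelcref{itm:open1}--\labelcref{itm:open3} is itself a topology, so that exhibiting $\Bb$ as a base for that topology finishes the argument. You instead prove the forward direction directly for an arbitrary open $U$, with the finite-subcover argument doing the work for condition \labelcref{itm:open3}: cover the compact $K \subseteq U \cap Y$ by basic sets $B_{y_j} = \unif{f}^{-1}(V_{y_j}) \cap ((X \sqcup Y)\setminus K_{y_j}) \subseteq U$ and observe that any $x \in f^{-1}(K) \cap (X \setminus U)$ is trapped in some $K_{y_j}$, so the set is a closed subset of a finite union of compacta. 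This makes your proof self-contained where the paper leans on \cite[\S 4]{Why53}, at the cost of one extra compactness argument; the converse direction (producing a basic neighbourhood $\unif{f}^{-1}(V) \cap ((X\sqcup Y)\setminus (f^{-1}(\ol{V}) \cap (X\setminus U)))$ inside $U$ via local compactness of $Y$) is the same in both, as are the Hausdorffness case analysis and the derivations of parts \labelcref{itm:closed} and \labelcref{itm:nets}, which the paper simply states as consequences of \labelcref{itm:open}. Two small wording points: your justification that $U \cap X$ is open should really note that \emph{each} basic set meets $X$ in a $\tau_X$-open set (for the second type of basic set this uses that $K \in \kappa_X$ is closed in $X$), not merely that $X \in \Bb$; and in part \labelcref{itm:nets} for $x \in X$ the set you use to trap the net eventually in $X$ should be an open $\tau_X$-neighbourhood (e.g.\ a precompact one), since a compact neighbourhood is not itself open. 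Neither affects the correctness of the argument.
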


\begin{proof}
	We first show that $\unis{X}{f}{Y}$ is Hausdorff. Fix $x_1,x_2 \in \unis{X}{f}{Y}$. If $x_1,x_2 \in X$, then since $X$ is Hausdorff $x_1$ and $x_2$ can be separated by neighbourhoods in $\tau_X$. If $x_1,x_2 \in Y$, then since $Y$ is Hausdorff there exist disjoint open sets $U_1,U_2 \in \tau_Y$ with $x_1 \in U_1$ and $x_2 \in U_2$. Then $\unif{f}^{-1}(U_1)$ and $\unif{f}^{-1}(U_2)$ are disjoint open neighbourhoods of $x_1$ and $x_2$ in $\unis{X}{f}{Y}$. 
	Finally, suppose that $x_1 \in X$ and $x_2 \in Y$. Fix a precompact open neighbourhood $U \in \tau_X$ of $x_1$ and an open neighbourhood $V \in \tau_Y$ of $x_2$. Then $U$ and $\unif{f}^{-1}(V) \cap ((X \sqcup Y) \setminus \ol{U})$ are disjoint open neighbourhoods of $x_1$ and $x_2$ in $\unis{X}{f}{Y}$. So $\unis{X}{f}{Y}$ is Hausdorff. 
	
		 For \labelcref{itm:open}, we first show that each basic open set in $\Bb$ satisfies conditions \labelcref{itm:open1,itm:open2,itm:open3} of \labelcref{itm:open}. If $U \in \tau_X \subseteq \Bb$, then \labelcref{itm:open1,itm:open2,itm:open3} hold. So suppose that $V \in \tau_Y$ and $K \in \kappa_X$, and consider the basic open set $B \coloneqq \unif{f}^{-1}(V) \cap ( (X \sqcup Y) \setminus K)$.
	Then $B \cap X = f^{-1}(V) \cap (X \setminus K)$ is open in $X$, giving \labelcref{itm:open1}, and $B  \cap Y = V$, giving \labelcref{itm:open2}. For \labelcref{itm:open3}, observe that if $C$ is a compact subset of $V$, then $
		f^{-1}(C) \cap (X \setminus B) = f^{-1}(C) \cap K 
	$ is compact since it is a closed subset of $K$. So open sets in $\unis{X}{f}{Y}$ satisfy \labelcref{itm:open1,itm:open2,itm:open3} of \labelcref{itm:open}.
	
	For the converse, it is shown in \cite[\S 4]{Why53} that the sets $U \subseteq X \sqcup Y$ satisfying \labelcref{itm:open1,itm:open2,itm:open3} of \labelcref{itm:open} determine a topology on $X \sqcup Y$, so it suffices to show that $\Bb$ forms a base for this topology. To this end, fix $x \in \unis{X}{f}{Y}$ and suppose that $W \subseteq \unis{X}{f}{Y}$ is a neighbourhood of $x$ satisfying \labelcref{itm:open1,itm:open2,itm:open3}. We show that there exists $B \in \Bb$ such that $ x \in B \subseteq W$. 
	If $x \in W \cap X$, then $W \cap X \in \tau_X \subseteq \Bb$.
	 So suppose that $x \in W \cap Y$. Since $Y$ is locally compact and $W \cap Y$ is open in $Y$, there exists a precompact open neighbourhood $V$ of $x$ such that the closure $\ol{V}$ in $Y$ is contained in $W \cap Y$. Condition \labelcref{itm:open3} implies that $f^{-1}(\ol{V}) \cap (X \setminus W)$ is compact in $X$. Let $B \coloneq \unif{f}^{-1}(V) \cap ((X \sqcup Y) \setminus (f^{-1}(\ol{V}) \cap (X \setminus W))) \in \Bb$. Then $x \in B$ and  
	\begin{align*}
		B&= \unif{f}^{-1}(V) \cap \Big(\big((X \sqcup Y) \setminus f^{-1}(\ol{V})\big)
		\cup \big((X \sqcup Y)\setminus (X \setminus W)\big)
		\Big)\\
		&= \big(
		\unif{f}^{-1}(V) \cap ((X \sqcup Y)\setminus f^{-1}(\ol{V}))
		 \big)
		 \cup
		 \big(
		 \unif{f}^{-1}(V) \cap W
		 \big)
		 \cup
		 \big(
		 f^{-1}(V) \cap Y
		 \big)\\
		 &= V \cup (\unif{f}^{-1}(V) \cap W) \cup V
		 \subseteq W.
	\end{align*}
	So $\Bb$ is a base for the topology generated by sets satisfying \labelcref{itm:open1,itm:open2,itm:open3}. 
	
		Parts \labelcref{itm:closed} and \labelcref{itm:nets} follow from \labelcref{itm:open}. 
\end{proof}

We summarise some basic properties of the unified space. Most of these properties can be found throughout {\cite[\S 3]{Why66}}, but we reprove them in contemporary language.  

\begin{prop}
	\label{prop:unified_properties}
	Let $f \colon X \to Y$ be a continuous map between locally compact Hausdorff spaces.
	\begin{enumerate}
	
		\item\label{itm:unified_ix_open} The inclusion $\iota_X \colon X \hookrightarrow \unis{X}{f}{Y}$ is continuous and open.
		\item \label{itm:unified_iy_closed} The inclusion $\iota_Y \colon Y \hookrightarrow \unis{X}{f}{Y}$ is continuous and closed. 
		\item \label{itm:unified_1} The map $\unif{f} \colon \unis{X}{f}{Y} \to Y$ is perfect.
		\item \label{itm:compactness_characterisation} A set $K \subseteq \unis{X}{f}{Y}$ is compact if and only if $K$ is closed in $\unis{X}{f}{Y}$ and there is a compact set $K' \in \kappa_Y$ such that $K \subseteq \unif{f}^{-1}(K')$.
		\item \label{itm:local_compactness} The space $\unis{X}{f}{Y}$ is locally compact. 
		
		\item \label{itm:unified_6}  The pair $(\unis{X}{f}{Y}, \unif{f})$ is a perfection of $f$ and $\unif{f}(\unis{X}{f}{Y} \setminus \ol{\iota_X(X)}) = \pr(f)$. 
		
		\item \label{itm:unified_4} If $f$ is proper, then $\unis{X}{f}{Y}$ is $X \sqcup Y$ with the disjoint union topology.
		\item \label{itm:unified_5} If both $X$ and $Y$ are second countable, then $\unis{X}{f}{Y}$ is metrisable. 
	\end{enumerate}
\end{prop}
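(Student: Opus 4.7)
The plan is to push everything through \cref{lem:unified_topology}, the open-set and net characterisations of the topology on $\unis{X}{f}{Y}$. Items (i)--(iii) are essentially bookkeeping with the subbase $\Bb$; item (iv) is the technical heart; and items (v)--(viii) reduce to (iv) together with a single identification $\unis{X}{f}{Y}\setminus\ol{\iota_X(X)}=\pr(f)$ needed for (vi).

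For (i)--(iii): since $\tau_X\subseteq\Bb$ and $B\cap X\in\tau_X$ for each $B\in\Bb$, the inclusion $\iota_X$ is a continuous open map. Each basic open set meets $Y$ in a $\tau_Y$-open set, so $\iota_Y$ is a topological embedding; since $X\in\Bb$ is open, $Y$ is closed, making $\iota_Y$ closed. Continuity and surjectivity of $\unif{f}$ are immediate from the construction, and compactness of a fibre $\unif{f}^{-1}(y)=f^{-1}(y)\cup\{y\}$ drops out of the open-set characterisation in \cref{lem:unified_topology}: given an open cover, pick one member $U_0\ni y$; then $f^{-1}(y)\cap(X\setminus U_0)$ is already compact in $X$, so finitely many further cover members suffice. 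Closedness of $\unif{f}$ then follows once (iv) gives properness, using that $Y$ is locally compact Hausdorff.

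The main step is (iv). The forward direction uses Hausdorffness of $\unis{X}{f}{Y}$ and continuity of $\unif{f}$. For the reverse, it suffices to show $\unif{f}^{-1}(K')$ is compact whenever $K'\in\kappa_Y$, as a closed subset of a compact Hausdorff set is compact. My preferred argument is a net chase: given $(z_\lambda)\subseteq\unif{f}^{-1}(K')$, pass to a subnet with $\unif{f}(z_\lambda)\to y\in K'$; either a subnet sits eventually in some $L\in\kappa_X$ (and has a sub-subnet converging in $L$ to a point of $f^{-1}(y)$ by continuity of $f$), or for every $L\in\kappa_X$ a subnet is eventually in $(X\setminus L)\sqcup Y$, in which case the net-convergence criterion of \cref{lem:unified_topology} produces a subnet converging to $y$. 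Local compactness (v) is then immediate: at $x\in X$ take a $\tau_X$-precompact neighbourhood, and at $y\in Y$ take $\unif{f}^{-1}(V)$ for a precompact $V\ni y$ in $Y$, using (iv) to see it is precompact in $\unis{X}{f}{Y}$.

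For (vi) the heart is the identification $\unis{X}{f}{Y}\setminus\ol{\iota_X(X)}=\pr(f)$, which I would prove by unwinding the basic open sets: $y\in Y$ admits a basic neighbourhood disjoint from $X$ iff there exist $V\in\tau_Y$ with $y\in V$ and $K\in\kappa_X$ with $f^{-1}(V)\subseteq K$, and after shrinking $V$ to a precompact $W\ni y$ with $\ol{W}\subseteq V$ this becomes exactly the defining condition of $\pr(f)$. Property \labelcref{perf:commuting} is built into the construction and \labelcref{perf:density} then follows from the embedding property of $\iota_Y$ together with the fact that the image $\pr(f)$ is trivially closed in itself. Item (vii) is an immediate consequence: if $f$ is proper, $\pr(f)=Y$, so $Y$ is open by (vi) and closed by (ii), making $\unis{X}{f}{Y}=X\sqcup Y$ topologically. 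For (viii), $\unis{X}{f}{Y}$ is locally compact Hausdorff (hence regular) by (v), and a countable base can be assembled by combining countable bases of $X$ and $Y$ with a countable compact exhaustion of $X$ (which exists since $X$ is $\sigma$-compact); Urysohn's metrisation theorem then applies. The principal obstacle I expect is the net argument in (iv); the remaining items are bookkeeping with \cref{lem:unified_topology} or standard general topology.
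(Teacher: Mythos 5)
Your proposal is correct and follows essentially the same route as the paper: items (i)--(ii) from the open-set characterisation of \cref{lem:unified_topology}, the same net-chase for compactness of $\unif{f}^{-1}(K')$ (you attach it to (iv) and deduce perfectness of $\unif{f}$ from it, whereas the paper proves properness directly in (iii) and obtains (iv) as a corollary), the same unwinding of basic neighbourhoods to identify $\unif{f}(\unis{X}{f}{Y}\setminus\ol{\iota_X(X)})=\pr(f)$, and the same $\sigma$-compactness argument for metrisability. The only other cosmetic difference is that you derive (vii) from $\pr(f)=Y$ via (vi) rather than verifying openness of $\iota_Y(Y)$ directly.
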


\begin{proof}
	\labelcref{itm:unified_ix_open} and  \labelcref{itm:unified_iy_closed} follow directly from \cref{lem:unified_topology}. 
	
	\labelcref{itm:unified_1} 
	The map $\unif{f}$ is surjective by construction. For properness, fix a compact set $K \subseteq Y$ and let $(x_{\lambda})$ be a net in $\unif{f}^{-1}(K)$. We show that $(x_{\lambda})$ admits a convergent subnet. Since $(\unif{f}(x_\lambda))$ is a net in the compact set $K$, we may pass to a subnet to guarantee that $(\unif{f}(x_{\lambda}))$ converges to some $y \in K$.
	If, for every $C \in \kappa_X$, there exists $\lambda_C$ such that $\lambda \ge \lambda_C$ implies $x_{\lambda} \notin C$, then \cref{lem:unified_topology}~\labelcref{itm:nets} says that $x_{\lambda} \to y \in \unif{f}^{-1}(K)$. Otherwise, there exists $C \in \kappa_X$ and a subnet $(x_{\lambda'})$ contained in $C$. Since $C$ is compact in $X$, we can pass to a subnet to guarantee that $(x_{\lambda'})$ converges in $C$. Since the limit lies in $X$, which is open in $\unis{X}{f}{Y}$, the convergence also occurs in $\unis{X}{f}{Y}$. So $\unif{f}^{-1}(K)$ is compact, and hence $\unif{f}$ is perfect.

	\labelcref{itm:compactness_characterisation} Suppose that $K \subseteq \unis{X}{f}{Y}$ is closed and there is a compact set $K' \subseteq Y$ such that $K \subseteq \unif{f}^{-1}(K')$. Since $\unif{f}$ is perfect, $\unif{f}^{-1}(K')$ is compact, so $K$ is compact. On the other hand, if $K \subseteq \unis{X}{f}{Y}$ is compact, then it is closed and contained in the compact set $\unif{f}^{-1}(\unif{f}(K))$.
	
	\labelcref{itm:local_compactness} Fix $x \in \unis{X}{f}{Y}$. If $x \in X$, then by local compactness of $X$ there exists an open neighbourhood $U \in \tau_X$ of $x$ such that the closure $\ol{U}$ in $X$ is compact in $X$. Then $\unif{f}^{-1}(\unif{f}(\ol{U}))$ is a compact set containing $U$. If $x \in Y$, then local compactness of $Y$ yields an open neighbourhood $V \in \tau_Y$ of $x$ such that the closure $\ol{V}$ in $Y$ is compact in $Y$. Then $\unif{f}^{-1}(\ol{V})$ is compact and contains the open neighbourhood $\unif{f}^{-1}(V)$ of $x$.
	
		\labelcref{itm:unified_6} Part \labelcref{itm:unified_1} says that $\unif{f}$ is perfect. Since $\iota_X \colon X \to \unis{X}{f}{Y}$ is open and $\unif{f} \circ \iota_X = f$, \labelcref{perf:commuting} is satisfied. 
	Since $\unis{X}{f}{Y} \setminus \ol{\iota_X(X)} \subseteq \iota_Y(Y)$, the restriction of $\unif{f}$ to $\unis{X}{f}{Y} \setminus \ol{\iota_X(X)}$ is a homeomorphism onto its image. We show that $\unif{f}(\unis{X}{f}{Y} \setminus \ol{\iota_X(X)}) = \pr(f)$ from which \labelcref{perf:density} follows.
	
	First suppose that $y \in \pr(f)$. Fix a precompact open neighbourhood $W \subseteq \pr(f)$ of $y$ such that $f^{-1}(\ol{W})$ is compact.  If $K \subseteq W$ is compact, then $f^{-1}(K) 
	\subseteq  f^{-1}(\ol{W})$ is compact in $X$. By \cref{lem:unified_topology}, the set $\iota_Y(W)$ is open in $\unis{X}{f}{Y}$. Since $\iota_Y(W)$ is an open neighbourhood of $\iota_Y(y)$ that is disjoint from $\iota_X(X)$, we have $ \pr(f) \subseteq \unif{f}(\unis{X}{f}{Y} \setminus \ol{\iota_X(X)})$.
	
	Now suppose that $y \in \unif{f}(\unis{X}{f}{Y} \setminus \ol{\iota_X(X)})$. Then there is an open set $W \subseteq \unis{X}{f}{Y}$  such that $\iota_Y(y) \in W$ and $W \cap X = \varnothing$. By \cref{lem:unified_topology}, the set $W \cap Y$ is open in $Y$ and for any compact set $K \subseteq W$ the set  $f^{-1}(K) \cap (X \setminus W) = f^{-1}(K)$ is compact in $X$. Since $Y$ is locally compact there is a precompact open neighbourhood $U \subseteq \ol{U} \subseteq W \cap Y$ of $y$ in $Y$. Then $f^{-1}(\ol{U})$ is compact, and so $y \in \pr(f)$. 		
	
	\labelcref{itm:unified_4} If $f$ is proper, then $Y = \unis{X}{f}{Y} \setminus f^{-1}(Y)$ is open in $\unis{X}{f}{Y}$. By \cref{lem:unified_topology}, the subspace topologies on $X$ and $Y$ agree with $\tau_X$ and $\tau_Y$. 
	
	\labelcref{itm:unified_5} Choose a countable base $\Bb_X$ for $X$ and $\Bb_Y$ for $Y$. Since second-countable locally compact Hausdorff spaces are $\sigma$-compact, there is an increasing sequence of compact subsets $(K_i)_{i \in \NN}$ of $X$ such that $X = \bigcup_{i \in \NN} K_i$. For any compact $K \subseteq X$, there exists $i \in \NN$ such that $K \subseteq K_i$, so $\Bb_X \cup \{ (\unif{f})^{-1}(V) \cap ((X \sqcup Y) \setminus K_i) \mid V \in \Bb_Y ,\, i \in \NN\}$ is a countable base for $\unis{X}{f}{Y}$. Hence, $\unis{X}{f}{Y}$ is second countable, and since second-countable locally compact Hausdorff spaces are metrisable, $\unis{X}{f}{Y}$ is metrisable. 
\end{proof}

\begin{dfn}
	Let $f \colon X \to Y$ be a continuous map between locally compact Hausdorff spaces. With the notation established in \cref{dfn:closure_subfw}, we call the strict fibrewise compactification $([\unis{X}{f}{Y}],[\unif{f}])$ the \emph{minimal fibrewise compactification} of $f$. 
\end{dfn}

\begin{rmk}\label{rmk:minimal_fw}
	The term ``minimal'' is justified for $([\unis{X}{f}{Y}],[\unif{f}])$. For each $y \in f(X)$ there is precisely one point added to compactify the fibre $f^{-1}(y)$. For each $y \in \ol{f(X)} \setminus f(X)$ there is precisely one point in $[\unif{f}]^{-1}(y)$ which is required in light of \cref{lem:boundary_issues}. The set $\iota_Y(Y \setminus \ol{f(X)})$ is open in $\unis{X}{f}{Y}$ and disjoint from $\ol{\iota_X(X)}$, so for every $y \in Y \setminus \ol{f(X)}$ the set $[\unif{f}]^{-1}(y)$ is empty. Thus, in forming $[\unis{X}{f}{Y}]$ from $X$ we have included the fewest points required to ensure a proper extension of $f$. 
\end{rmk}

\begin{example}
	If $X$ is a locally compact Hausdorff space and $f \colon X \to \{\infty\}$ is the map given by $f(x) = \infty$ for all $x \in X$, then $\unis{X}{f}{\{\infty\}}$ is the one-point compactification of $X$, including the isolated point $\{\infty\}$ if $X$ is already compact. The minimal fibrewise compactification corresponds to the one-point compactification where we do not include an isolated point if $f$ is compact. 
\end{example}

\begin{example}\label{ex:fun_pictures}
	Let $X = ((0,1] \times [0,1)) \cup ((1,2] \times [0,1])$ with the subspace topology from $\RR^2$. Let $Y = \RR$ and  $f \colon X \to Y$ be the projection onto the first component. In the left-hand diagram of 
	\begin{center}
			\begin{tikzpicture}[scale=1.1]
				\begin{scope}[font=\small]
					
					\node (shift) at (0,1.5) {};
					
					\coordinate (AL) at (-1,0);
					\node (BL) at (0,0) {};
					\coordinate (CL) at (1,0);
					\coordinate (AU) at (-1,1);
					\coordinate (BU) at (0,1);
					\coordinate (CU) at (1,1);
					
					\coordinate[label={[label distance=0cm]90:$Y$}] (LL) at (-1.5,-1);
					\coordinate (LLE) at (-2,-1);
					\coordinate (LR) at (1.5,-1);
					\coordinate (LRE) at (2,-1);
					\node (LM) at (0,-1) {};

					\fill[black!10!white] (AL) rectangle (CU);
					
					\draw [dashed,thick] (AL) -- (AU) -- (BU);
					\draw [thick] (AL) -- (CL) -- (CU) -- (BU);
					
					\draw [thick] (LL) -- (LR);
					\draw [thick,dotted] (LL) -- (LLE) (LR) -- (LRE);
					
					\node (M) at (0,0.5) {$X$};
					
					\draw[fill=white]
					(AL) circle (0.5mm) (BU) circle (0.5mm);
					
					\draw [->] (BL.south) -- (LM.north) node[midway, label={[label distance=0cm]0:$f$}] {};
					
					\node[fill,inner sep=1pt,label=270:{\scriptsize$0$}] (0) at (-1,-1) {};
					\node[fill,inner sep=1pt,label=270:{\scriptsize$1$}] (0) at (0,-1) {};
					\node[fill,inner sep=1pt,label=270:{\scriptsize$2$}] (0) at (1,-1) {};
					
				\end{scope}
				
			\end{tikzpicture}
\quad   
			\begin{tikzpicture}[scale=1.1]
				\begin{scope}[font=\small]
					
					%		\coordinate (AL) at (-1,0);
					\node (BN) at (0,0) {};
					\coordinate (BL) at (0,0) {};
					\coordinate (CL) at (1,0);
					%		\coordinate (AU) at (-1,1);
					\coordinate (A) at (-1,0.5);
					\coordinate (AL) at (-1.5,0.5);
					\coordinate (ALE) at (-2,0.5);
					\coordinate (BU) at (0,1);
					\coordinate (CU) at (1,1);
					\coordinate (CUU) at (1,1.5);
					\coordinate (CUUR) at (1.5,1.5);
					\coordinate (CUURE) at (2,1.5);
					
					\coordinate[label={[label distance=0cm]90:$Y$}] (LL) at (-1.5,-1);
					\coordinate (LLE) at (-2,-1);
					\coordinate (LR) at (1.5,-1);
					\coordinate (LRE) at (2,-1);
					\node (LM) at (0,-1) {};
					
					%		\fill[black!10!white] (AL) rectangle (CU);
					
					\draw [thick,fill=black!10!white] (A) -- (BL) -- (CL) -- (CU) -- (BU) -- cycle;
					
					\draw [thick,red] (A) -- (AL);
					\draw [thick,dotted,red] (AL) -- (ALE);
					\draw [thick,red] (BU) -- (CUU) -- (CUUR);
					\draw [thick,dotted,red] (CUUR)  -- (CUURE);
					\draw [thick,red] (A) -- (BU);
					
					\draw [thick] (LL) -- (LR);
					\draw [thick,dotted] (LL) -- (LLE) (LR) -- (LRE);
					
					\node (M) at (0.2,0.5) {$\unis{X}{f}{Y}$};
					
					\draw[red,fill=red]
					(A) circle (0.5mm)
					(BU) circle (0.5mm);

					\draw [->] (BL.south) -- (LM.north) node[midway, label={[label distance=0cm]0:$\unif{f}$}] {};
					
					\node[fill,inner sep=1pt,label=270:{\scriptsize$0$}] (0) at (-1,-1) {};
					\node[fill,inner sep=1pt,label=270:{\scriptsize$1$}] (0) at (0,-1) {};
					\node[fill,inner sep=1pt,label=270:{\scriptsize$2$}] (0) at (1,-1) {};
				\end{scope}
				
			\end{tikzpicture}
			\quad
			\begin{tikzpicture}[scale=1.1]
				\begin{scope}[font=\small]
					
					%		\coordinate (AL) at (-1,0);
					\node (BN) at (0,0) {};
					\coordinate (BL) at (0,0) {};
					\coordinate (CL) at (1,0);
					%		\coordinate (AU) at (-1,1);
					\coordinate (A) at (-1,0.5);
					\coordinate (AL) at (-1.5,0.5);
					\coordinate (ALE) at (-2,0.5);
					\coordinate (BU) at (0,1);
					\coordinate (CU) at (1,1);
					\coordinate (CUU) at (1,1.5);
					\coordinate (CUUR) at (1.5,1.5);
					\coordinate (CUURE) at (2,1.5);
					
					\coordinate[label={[label distance=0cm]90:$Y$}] (LL) at (-1.5,-1);
					\coordinate (LLE) at (-2,-1);
					\coordinate (LR) at (1.5,-1);
					\coordinate (LRE) at (2,-1);
					\node (LM) at (0,-1) {};
					
					%		\fill[black!10!white] (AL) rectangle (CU);
					
					\draw [thick,fill=black!10!white] (A) -- (BL) -- (CL) -- (CU) -- (BU) -- cycle;
					
%					\draw [thick,red] (A) -- (AL);
%					\draw [thick,dotted,red] (AL) -- (ALE);
%					\draw [thick,red] (BU) -- (CUU) -- (CUUR);
%					\draw [thick,dotted,red] (CUUR)  -- (CUURE);
					\draw [thick,red] (A) -- (BU);
					
					\draw [thick] (LL) -- (LR);
					\draw [thick,dotted] (LL) -- (LLE) (LR) -- (LRE);
					
					\node (M) at (0.2,0.5) {$[\unis{X}{f}{Y}]$};
					
					\draw[red,fill=red]
					(A) circle (0.5mm)
					(BU) circle (0.5mm);
					
					\draw [->] (BL.south) -- (LM.north) node[midway, label={[label distance=0cm]0:$[\unif{f}]$}] {};
					
					\node[fill,inner sep=1pt,label=270:{\scriptsize$0$}] (0) at (-1,-1) {};
					\node[fill,inner sep=1pt,label=270:{\scriptsize$1$}] (0) at (0,-1) {};
					\node[fill,inner sep=1pt,label=270:{\scriptsize$2$}] (0) at (1,-1) {};
				\end{scope}
				
			\end{tikzpicture}
			
	\end{center}
	the map $f$ is given by projecting $X$ orthogonally onto the line $Y$.	
	The centre diagram illustrates the unified space $(\unis{X}{f}{Y},\unif{f})$ of $f$, up to isomorphism. The closed copy of $Y$ embedded in $\unis{X}{f}{Y}$ is highlighted in red. The right-hand diagram illustrates the minimal fibrewise compactification $([\unis{X}{f}{Y}],[\unif{f}])$.  In each diagram, open circles represent the exclusion of a point, while filled-in circles represent the inclusion of a point. 	
\end{example}

\begin{example}
	As in \cref{ex:boundary_issues}, let $X = (0,1)^2 \subseteq \RR^2$, let $Y = \RR$, and define $f \colon X \to Y$ by $f(x_1,x_2) = x_1$. Let $S^2 \subseteq \RR^3$ denote the $2$-sphere of radius $1/2$ centred at $(1/2,0,0)$. Then $\unis{X}{f}{Y}$ can be identified with the topological subspace of $\RR^3$ given by
	\[
\big(	(-\infty,0] \times \{0\} \times \{0\}\big) \cup S^2 \cup \big(	[1,\infty) \times \{0\} \times \{0\}\big) 
	\]
	consisting of two infinite rays attached to the sphere $S^2$, and $\unif{f}$ can be identified with the projection onto the first component. The closed embedding of $Y$ in $\unis{X}{f}{Y}$ can be identified with the subspace
	\[
	\big(	(-\infty,0] \times \{0\} \times \{0\}\big) \cup \big\{(x,y,0) \in S^2 \colon y \ge  0 \big\} \cup \big(	[1,\infty) \times \{0\} \times \{0\}\big).
	\]
	The minimal fibrewise compactification consists of the sphere $S^2$ and the projection onto the first component. 
\end{example}

	For the remainder of this section we are exclusively concerned with sub-fibrewise compactifications of the unified space; however larger fibrewise compactifications do exist. If $f \colon X \to Y$ is surjective then it is shown in \cite[Remarks~1.3]{AnDe14} that there is an analogous fibrewise compactification to the Stone--\v{C}ech compactification. 
\begin{rmk}\label{rmk:maximalish_fw}
	We outline the construction of a ``large'' fibrewise compactification. For each locally compact Hausdorff space $X$, let $\beta X$ denote its Stone--\v{C}ech compactification. Let $f \colon X \to Y$ be a continuous map between locally compact Hausdorff spaces and consider the extension $\ol{f} \colon X \to \beta \ol{f(X)}$. By the universal property of the Stone--\v{C}ech compactification, $\ol{f}$ extends to a unique continuous map $\beta {\ol{f}} \colon \beta X \to \beta \ol{f(X)}$ such that $(\beta \ol{f})|_X = \ol{f}$. Let $Z = (\beta\ol{f})^{-1}(\ol{f(X)})$ and let $g$ be the composition $Z \overset{\beta \ol{f}}{\to} \ol{f(X)} \hookrightarrow Y$. 
	
	Since $\ol{f(X)}$ is open in $\beta \ol{f(X)}$, $Z$ is open in $\beta X$. Since $X$ is open in $\beta X$ and $X \subseteq Z$, $X$ is open in $Z$. Moreover, $g|_X = f$. If $K \subseteq Y$ is compact, then $g^{-1}(K) = (\beta \ol{f})^{-1}(K \cap \ol{f(X)})$ is compact in $Z$, so $g$ is proper. Since $X$ is dense in $\beta X$ it is also dense in $Z$, and so $(Z,g)$ is a strict fibrewise compactification of $f$. 
	
	By \cite[Theorem~10.13]{GM60}, the space $Z$ is the largest subspace of $\beta X$ for which $f$ has a continuous proper extension into $Y$. Thus, we suspect that $(Z,g)$ is universal among strict fibrewise compactifications, but we leave this question and its correct formulation open. 
\end{rmk}

\subsection{Sub-fibrewise compactifications of the unified space}
\label{sec:sub-fibrewise}

Although the unified space construction yields a perfection of $f \colon X \to Y$, it is by no means efficient. There are typically many sub-fibrewise compactifications and sub-perfections. To classify these we introduce the following notion. 

\begin{dfn}\label{dfn:f-perfect}
	Let $f \colon X \to Y$ be a continuous map between locally compact Hausdorff spaces. An open set $U \subseteq Y$ is said to be \emph{$f$-proper} if the restriction $\restr{f}{f^{-1}(U)} \colon f^{-1}(U) \to U$ is proper, and 
	 \emph{$f$-perfect} if the restriction is perfect.
\end{dfn}
 Given $f \colon X \to Y$ we consider the open set
\[
\per(f)
\coloneqq \pr(f) \cap \intt(\ol{f(X)}).
\]

\begin{lem}[{cf.~\cite[Remark~1.7]{AnDe14}~and~\cite[Proposition~2.8]{Kat04}}] \label{lem:f-reg_characterisation}Let $f \colon X \to Y$ be a continuous map between locally compact Hausdorff spaces and let $U \subseteq Y$ be open. Then
	\begin{enumerate}
		\item $U$ is $f$-proper if and only if $U \subseteq \pr(f)$; and
		\item $U$ is $f$-perfect if and only if $U \subseteq \per(f)$.
	\end{enumerate}
In particular, $\pr(f)$ is the maximal $f$-proper subset of $Y$, and $\per(f)$ is the maximal $f$-perfect subset of $Y$. 
\end{lem}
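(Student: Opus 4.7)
The plan is to handle (i) and (ii) separately, with (i) doing most of the heavy lifting and (ii) following easily once we analyse surjectivity using \cref{lem:boundary_issues}.

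For (i), I would prove both inclusions directly from the definition of $\pr(f)$. For the forward direction, assume $U$ is $f$-proper and fix $y \in U$. Using local compactness of $Y$, choose a precompact open neighbourhood $W$ of $y$ with $\ol{W} \subseteq U$. Then $\ol{W}$ is a compact subset of $U$, and since $f|_{f^{-1}(U)}$ is proper, $f^{-1}(\ol{W}) = (f|_{f^{-1}(U)})^{-1}(\ol{W})$ is compact. Thus $W$ belongs to the family defining $\pr(f)$, so $y \in W \subseteq \pr(f)$. Conversely, assume $U \subseteq \pr(f)$ and fix a compact $K \subseteq U$. For each $y \in K$, by definition of $\pr(f)$ there is a precompact open $W_y \ni y$ with $f^{-1}(\ol{W_y})$ compact. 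Extract a finite subcover $W_{y_1},\dots,W_{y_n}$ of $K$. Then $f^{-1}(K)$ is closed in $X$ and contained in the compact set $\bigcup_{i=1}^n f^{-1}(\ol{W_{y_i}})$, hence compact. This shows $f|_{f^{-1}(U)}$ is proper.

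For (ii), the forward direction is immediate: if $U$ is $f$-perfect, then it is $f$-proper (so $U \subseteq \pr(f)$ by (i)) and surjectivity gives $U \subseteq f(X)$; since $U$ is open, $U \subseteq \intt(f(X)) \subseteq \intt(\ol{f(X)})$, yielding $U \subseteq \per(f)$. For the converse, assume $U \subseteq \per(f)$. By (i) the restriction is proper; it only remains to show it is surjective, i.e.\ $U \subseteq f(X)$. This is the step that requires a little care and is the main (mild) obstacle. Fix $y \in U$; since $U \subseteq \intt(\ol{f(X)}) \subseteq \ol{f(X)}$, there is a net $(y_\lambda) \subseteq f(X)$ with $y_\lambda \to y$, and since $U$ is open we may assume $y_\lambda \in f(X) \cap U$ eventually. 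Now \cref{lem:boundary_issues} tells us that the proper map $f|_{f^{-1}(U)} \colon f^{-1}(U) \to U$ is closed, so its image $f(X) \cap U$ is closed in $U$. Hence $y \in \ol{f(X) \cap U} \cap U = f(X) \cap U \subseteq f(X)$, as required.

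The final ``in particular'' sentence follows at once: $\pr(f)$ is itself $f$-proper by (i) applied with $U = \pr(f)$, and likewise $\per(f)$ is $f$-perfect by (ii), while any $f$-proper (resp.\ $f$-perfect) subset is contained in $\pr(f)$ (resp.\ $\per(f)$) by the same results, so these sets are indeed maximal.
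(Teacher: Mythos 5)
Your proof is correct and follows essentially the same route as the paper: local compactness supplies the precompact neighbourhoods in both directions of (i), and the surjectivity step in (ii) comes down to the same density-plus-properness argument (the paper extracts a convergent subnet in $f^{-1}(\ol{W})$ directly, whereas you invoke the closedness of proper maps from \cref{lem:boundary_issues}; these are equivalent). Your explicit finite-subcover argument in the converse of (i) is in fact slightly more complete than the paper's, which only verifies properness over a single neighbourhood $W$ and implicitly uses that properness is local on the base.
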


\begin{proof}
	First suppose that $U \subseteq \pr(f)$ is open. Fix $y \in U$. Then $y$ has a precompact open neighbourhood $W$ such that $f^{-1}(\ol{W})$ is compact.
	 Since $U \cap W$ is precompact and $f^{-1}(\ol{U \cap W}) \subseteq f^{-1}(\ol{W})$ is compact, we may assume---without loss of generality---that $W \subseteq U$. 
	 If $K \subseteq W$ is compact, then its preimage $f^{-1}(K) \subseteq f^{-1}(W) \subseteq f^{-1}(\ol{W})$ is compact in $f^{-1}(W)$, and is therefore compact in $f^{-1}(U)$. Hence, $U$ is $f$-proper. 
	 Conversely, suppose that $U$ is $f$-proper and fix $y \in U$.  Since $Y$ is a locally compact Hausdorff space, there is a precompact open neighbourhood $W$ of $y$ such that $\ol{W} \subseteq U$. Then $f^{-1}(\ol{W})$ is compact.
	 
	 Now suppose that $U \subseteq \per(f)$ is open and fix $y \in U$. Choose a precompact open neighbourhood $W \subseteq \ol{W} \subseteq U$ of $y$ such that $f^{-1}(\ol{W})$ is compact. 
	  Since $y \in \intt(\ol{f(X)})$, there is a net $y_{\lambda} \to y$ such that $y_{\lambda} \in W$ and $y_{\lambda} = f(x_{\lambda})$ for some $x_{\lambda} \in f^{-1}(W)$. 
	 Since $f^{-1}(\ol{W})$ is compact, we can pass to a convergent subnet to assume that $x_{\lambda} \to x$ for some $x \in f^{-1}(\ol{W})$. Then $f(x) = y$, so $U$ is $f$-perfect.
	Conversely, suppose that $U$ is $f$-perfect. Then $U \setminus f(X) = \varnothing$. Hence, $U \cap (Y \setminus \ol{f(X)}) = U \setminus \ol{f(X)} = \varnothing$. Since $U$ is open, $U \cap (Y \setminus \intt(\ol{f(X)})) = U \cap \ol{Y \setminus \ol{f(X)}} = \varnothing$, so $U \subseteq \intt(\ol{f(X)})$. Hence, $U \subseteq \pr(f) \cap \intt(\ol{f(X)}) =  \per(f)$. 	
\end{proof}

If $U$ is $f$-proper, then excising $U$ from $\unis{X}{f}{Y}$ produces a sub-fibrewise compactification.

\begin{dfn}
	\label{dfn:other_perfections}
	Let $f \colon X \to Y$ be a continuous map between locally compact Hausdorff spaces and suppose that $U \subseteq Y$ is $f$-proper. Define
	\[
	\pers{X}{f}{U}{Y} \coloneqq  X \sqcup (Y \setminus
	U),
	\]
	equipped with the subspace topology of $\unis{X}{f}{Y}$, and let $\perf{f}{U}\colon\pers{X}{f}{U}{Y} \to Y$ be the restriction
	of $\unif{f}$ to ${\pers{X}{f}{U}{Y}}$.
\end{dfn}

\begin{rmk}\label{rmk:empty_set}
	Since the empty set is vacuously $f$-perfect, we have
	$\pers{X}{f}{\varnothing}{Y} = \unis{X}{f}{Y}$ and $\perf{f}{\varnothing} = \unif{f}$. 
\end{rmk}

\begin{prop}\label{lem:subperfection_characterisation}
	Let $f \colon X \to Y$ be a continuous map between locally compact Hausdorff spaces and let $U$ be $f$-proper ($f$-perfect). Then $\iota_Y(U)$ is open in $\unis{X}{f}{Y}$ and $(\pers{X}{f}{U}{Y}, \perf{f}{U})$ is a fibrewise compactification (perfection) of $f$ with $\perf{f}{U}(\unis{X}{f}{Y} \setminus \ol{\iota_X(X)}) = \pr{f} \setminus U$.
	Moreover, every sub-fibrewise compactification (sub-perfection) of $(\unis{X}{f}{Y},\unif{f})$ is of the form $(\pers{X}{f}{U}{Y}, \perf{f}{U})$ for some $f$-proper ($f$-perfect) set $U \subseteq Y$.
\end{prop}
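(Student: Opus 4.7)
The plan is to verify the forward direction by showing that $\iota_Y(U)$ is open in $\unis{X}{f}{Y}$, which makes $\pers{X}{f}{U}{Y}$ a closed subspace and lets the fibrewise compactification axioms be inherited from those of the unified space; for the converse, I would read off the set $U$ from the open complement of the sub-fibrewise compactification and verify $f$-properness from the openness criteria of \cref{lem:unified_topology}.

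First I would apply \cref{lem:unified_topology}\labelcref{itm:open} to $\iota_Y(U)$: the conditions $\iota_Y(U) \cap X = \varnothing$ and $\iota_Y(U) \cap Y = U$ are immediate, and for any compact $K \subseteq U$ we have $f^{-1}(K) \cap (X \setminus \iota_Y(U)) = f^{-1}(K)$, which is compact because $U$ is $f$-proper. Hence $\pers{X}{f}{U}{Y}$ is closed in $\unis{X}{f}{Y}$, so it is locally compact Hausdorff, and $\perf{f}{U}$ is continuous and proper as the restriction of the proper map $\unif{f}$ to a closed subspace. Condition \ref{perf:commuting} is clear. For \ref{perf:density}, closedness of $\pers{X}{f}{U}{Y}$ makes the closure of $\iota_X(X)$ the same whether computed inside $\pers{X}{f}{U}{Y}$ or inside $\unis{X}{f}{Y}$, so
\[
\pers{X}{f}{U}{Y} \setminus \ol{\iota_X(X)} = \big(\unis{X}{f}{Y} \setminus \ol{\iota_X(X)}\big) \setminus \iota_Y(U),
\]
and \cref{prop:unified_properties}\labelcref{itm:unified_6} identifies the image of this set under $\perf{f}{U}$ with $\pr(f) \setminus U$, which is closed in $\pr(f)$ since $U$ is open. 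If $U$ is $f$-perfect, then surjectivity of $\perf{f}{U}$ is immediate: the fibre over $y \in Y \setminus U$ contains $\iota_Y(y)$, while each $y \in U \subseteq f(X)$ is hit by every point of $\iota_X(f^{-1}(y))$.

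For the converse, if $(W,h)$ is a sub-fibrewise compactification, then \cref{lem:subperfections_closed} forces $W$ to be closed in $\unis{X}{f}{Y}$. Since the complement $\unis{X}{f}{Y} \setminus W$ is open and disjoint from $\iota_X(X)$, it has the form $\iota_Y(U)$ for some $U \subseteq Y$; the openness criteria of \cref{lem:unified_topology}\labelcref{itm:open} together with \cref{lem:f-reg_characterisation} then give $U \subseteq \pr(f)$, so $U$ is $f$-proper, and by construction $h = \perf{f}{U}$. If $h$ is also surjective, then $Y = f(X) \cup (Y \setminus U)$ forces $U \subseteq f(X)$; since $U$ is open this yields $U \subseteq \intt(\ol{f(X)})$, hence $U \subseteq \per(f)$, so $U$ is $f$-perfect by \cref{lem:f-reg_characterisation}. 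The main obstacle I anticipate is the bookkeeping around \ref{perf:density} — in particular confirming that the closure of $\iota_X(X)$ is stable under passing to the closed subspace and that the restricted image is exactly $\pr(f) \setminus U$ — but once $\pers{X}{f}{U}{Y}$ is known to be closed, the rest is a routine unwinding of definitions.
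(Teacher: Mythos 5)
Your proof is correct and follows essentially the same route as the paper: establish openness of $\iota_Y(U)$ via \cref{lem:unified_topology}, deduce that $\pers{X}{f}{U}{Y}$ is a closed subspace inheriting \ref{perf:commuting} and \ref{perf:density} from \cref{prop:unified_properties}, and run the converse through \cref{lem:subperfections_closed}. The only (harmless) deviations are that you verify surjectivity of $\perf{f}{U}$ directly from surjectivity of $f|_{f^{-1}(U)}$ onto $U$ where the paper argues via closedness of $\perf{f}{U}$ and $U \subseteq \ol{f(X)}$, and in the converse you route $f$-perfectness through $U \subseteq \per(f)$ and \cref{lem:f-reg_characterisation} rather than exhibiting preimages pointwise.
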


\begin{proof}
	Fix an $f$-proper set $U$. By \cref{lem:f-reg_characterisation}, each $y \in U$ has a precompact open neighbourhood $W_y \subseteq U$  such that $f^{-1}(\ol{W_y})$ is compact. In $\unis{X}{f}{Y}$ we have
	\[
	\iota_Y(U) = \bigcup_{y \in U} \unif{f}^{-1}(U) \cap (\unis{X}{f}{Y} \setminus f^{-1}(\ol{W_y})),
	\]
	which is open in $\unis{X}{f}{Y}$ by \cref{lem:unified_topology}. It follows that $\pers{X}{f}{U}{Y}$ is a closed subspace of $\unis{X}{f}{Y}$, and is therefore locally compact and Hausdorff. It also follows that the closure of $\iota_X(X)$ in both $\pers{X}{f}{U}{Y}$ and $\unis{X}{f}{Y}$ coincides. By \cref{prop:unified_properties}, $\unif{f}$ restricts to an homeomorphism from $\unis{X}{f}{Y} \setminus \ol{\iota_X(X)}$ to $\pr(f)$, so $\perf{f}{U}$, being a restriction of $\unif{f}$, restricts to a homeomorphism from $\pers{X}{f}{U}{Y} \setminus \ol{\iota_X(X)}$ to $\per(f) \setminus U$. So \labelcref{perf:density} holds.	
	The restriction $\perf{f}{U}$ of $\unif{f}$ is proper since the intersection of a compact set with a closed subspace is compact, and $\perf{f}{U} \circ \iota_X= f$, so \labelcref{perf:commuting} holds. 
	Hence, $(\pers{X}{f}{U}{Y}, \perf{f}{U})$ is a fibrewise compactification. 
	
	If $U$ is also $f$-perfect, then $\perf{f}{U}$ surjects onto $Y \setminus U$. By \cref{lem:boundary_issues}, $\perf{f}{U}$ is closed, so $\ol{\perf{f}{U}(X)} =\perf{f}{U}(\ol{\iota_X(X)})$. It follows that
	\[
	U \subseteq \ol{f(X)} \subseteq \ol{\perf{f}{U}(X)} =\perf{f}{U}(\ol{\iota_X(X)}),
	\]
	and so $\perf{f}{U}$ is surjective and $(\pers{X}{f}{U}{Y}, \perf{f}{U})$ is a perfection. 
	
	Now suppose that $(Z,g)$ is a sub-fibrewise compactification of $(\unis{X}{f}{Y},\unif{f})$. By \cref{lem:subperfections_closed},   $U \coloneqq (\unis{X}{f}{Y}) \setminus Z$ is an open subset of $Y$. Since $Z$ is closed \cref{lem:unified_topology} implies that for any compact set $K \subseteq Y \setminus Z =U$, the set $f^{-1}(K) \cap Z = f^{-1}(K)$ is compact in $X$. So $U$ is $f$-proper. 
	
	Suppose that $g$ is also surjective and fix $y \in U$. Then there exists $z \in Z$ such that $g(z) = y$. We must have $z \in \iota_X(X)$, for otherwise $U \cap Z \ne \varnothing$. So $f(z)  = g(z) = y$. Thus, $z \in f^{-1}(U)$, and so $U$ is $f$-perfect.  
\end{proof}

By \cref{lem:f-reg_characterisation}, $\per(f)$ is the maximal $f$-perfect open subset of $Y$. Since $f$-perfect subsets correspond to sub-perfections of the unified space, $(\pers{X}{f}{\per(f)}{Y},\perf{f}{\per(f)})$ is the unique minimal sub-perfection of the unified space.

\begin{dfn}
	Let $f \colon X \to Y$ be a continuous map between locally compact Hausdorff spaces. 
	We call $(\mins{X}{f}{Y}, \minf{f}) \coloneqq (\pers{X}{f}{\per(f)}{Y}, \perf{f}{\per(f)})$ the \emph{minimal perfection} of $f$. 
\end{dfn}

\begin{rmk}
The term ``minimal'' in minimal perfection is justified since \cref{prop:strict_fw} implies that $[\mins{X}{f}{Y}]$ is the minimal fibrewise compactification, and for each $y \in Y \setminus \ol{f(X)}$ there is precisely one element in $\minf{f}^{-1}(y)$, which is required for $\minf{f}$ to be surjective. 
\end{rmk}

If $f \colon X \to Y$ is surjective (or has dense range), then the minimal fibrewise compactification and the minimal perfection coincide. They are typically distinct though, as the following example highlights. 

\begin{example}
	Let $f \colon X \to Y$ be the map of \cref{ex:fun_pictures}. Then $\per(f) = (1,2)$. The minimal perfection $(\mins{X}{f}{Y},\minf{f})$ corresponds diagrammatically to the subspace
	\begin{center}
		\begin{tikzpicture}[scale=1.3]
			\begin{scope}[font=\small]
				
				%		\coordinate (AL) at (-1,0);
				\node (BN) at (0,0) {};
				\coordinate (BL) at (0,0) {};
				\coordinate (CL) at (1,0);
				%		\coordinate (AU) at (-1,1);
				\coordinate (A) at (-1,0.5);
				\coordinate (AL) at (-1.5,0.5);
				\coordinate (ALE) at (-2,0.5);
				\coordinate (BU) at (0,1);

				\coordinate (CU) at (1,1);
				\coordinate (CUU) at (1,1.5);
				\coordinate (CUUR) at (1.5,1.5);
				\coordinate (CUURE) at (2,1.5);
				
				\coordinate[label={[label distance=0cm]90:$Y$}] (LL) at (-1.5,-1);
				\coordinate (LLE) at (-2,-1);
				\coordinate (LR) at (1.5,-1);
				\coordinate (LRE) at (2,-1);
				\node (LM) at (0,-1) {};

				%		\fill[black!10!white] (AL) rectangle (CU);
				
				\draw [thick,fill=black!10!white] (A) -- (BL) -- (CL) -- (CU) -- (BU) -- cycle;
				
				\draw [thick,red] (A) -- (AL);
				\draw [thick,dotted,red] (AL) -- (ALE);
				\draw [thick,red] (CUU)--(CUUR);
				\draw [thick,dotted,red] (CUUR)  -- (CUURE);
				\draw [thick,red] (A) -- (BU);
				
				\draw [thick] (LL) -- (LR);
				\draw [thick,dotted] (LL) -- (LLE) (LR) -- (LRE);
				
				\node (M) at (0.2,0.5) {$\mins{X}{f}{Y}$};
				
				\draw[red,fill=red]
				(A) circle (0.5mm)
				(BU) circle (0.5mm);

				\draw [->] (BL.south) -- (LM.north) node[midway, label={[label distance=0cm]0:$\unif{f}$}] {};
				
				%				\node[circle, fill=red, inner sep=2pt] at (BU) {};
				\node[label={[label distance=0.5mm]180:$y$},circle, fill=red, inner sep=0.5mm] at (CUU) {};
				
				\node[fill,inner sep=1pt,label=270:{\scriptsize$0$}] (0) at (-1,-1) {};
				\node[fill,inner sep=1pt,label=270:{\scriptsize$1$}] (0) at (0,-1) {};
				\node[fill,inner sep=1pt,label=270:{\scriptsize$2$}] (0) at (1,-1) {};
				
			\end{scope}
			
		\end{tikzpicture}
	\end{center} 
	of the unified space. Again, the parts of $Y$ that are ``glued'' to $X$ are highlighted in red. Although the point labelled $y$ (corresponding to $2 \in Y$) is not necessary for surjectivity of $\minf{f}$, it is needed to ensure that $\minf{f}$ is proper.
\end{example}

\begin{example}
	Suppose that $f \colon X \to Y$ is continuous and proper, but not surjective. Since $f$ is proper, $\unis{X}{f}{Y} \simeq X \sqcup Y$, and since $\per(f) = \intt(\ol{f(X)})$ we have $\mins{X}{f}{Y} \simeq X \sqcup (Y \setminus \intt(\ol{f(X)}))$.
\end{example}

\subsection{Composing sub-fibrewise compactifications}
\label{sec:commutative_composition}
Let $X,Y$, and $Z$ be locally compact Hausdorff spaces and suppose that $f \colon X \to Y$ and $g \colon Y \to Z$ are continuous. There are at least three ways that the unified space construction interacts with the composition of $f$ and $g$.

 The simplest approach is to take the unified space $(\unis{X}{g \circ f}{Z}, \unif{(g \circ f)})$ of $g \circ f$. This loses information about the intermediate space $Y$, making it undesirable when we introduce regulated limits in \cref{subsec:lch_inverse_limits}.

The second approach is to first form the unified space $(\unis{Y}{g}{Z}, \unif{g})$ of $g$. Considering the induced map $\iota_Y \circ f \colon X \to \unis{Y}{g}{Z}$, we form $(\unis{X}{\iota_Y \circ f}{ (\unis{Y}{g}{Z})}, \unif{(\iota_Y \circ f)})$.
The second construction is distinct from the first: as sets $\unis{X}{g \circ f}{Z} = X \sqcup Z$, while $\unis{X}{\iota_Y \circ f}{ (\unis{Y}{g}{Z})} = X \sqcup Y \sqcup Z$. So the second construction remembers the intermediate space $Y$.
The composition $\unif{g} \circ \unif{ (\iota_Y \circ f) }$ is perfect.

The third approach is to first form the unified space $(\unis{X}{f}{Y},\unif{f})$ of $f$. Considering the composition $g \circ \unif{f} \colon \unis{X}{f}{Y} \to Z$, we form $(\unis{(\unis{X}{f}{Y})}{g \circ \unif{f}}{Z},\unif{(g \circ \unif{f})})$. As sets  $\unis{(\unis{X}{f}{Y})}{g \circ \unif{f}}{Z}=X \sqcup Y \sqcup Z$, like in the second construction.

The commuting diagram
% https://q.uiver.app/#q=WzAsNyxbMCwxLCJaIl0sWzEsMSwiWSJdLFsyLDEsIlgiXSxbMSwwLCJcXHVuaXN7WX17Z317Wn0iXSxbMiwwLCJcXHVuaXN7WH17XFxpb3RhX1kgXFxjaXJjIGZ9eyAoXFx1bmlze1l9e2d9e1p9KX0iXSxbMSwyLCJcXHVuaXN7WH17Zn17WX0iXSxbMCwyLCJcXHVuaXN7KFxcdW5pc3tYfXtmfXtZfSl9e2cgXFxjaXJjIFxcdW5pZntmfX17Wn0iXSxbMSwwLCJnIiwyXSxbMiwxLCJmIiwyXSxbMiw0LCJcXGlvdGFfWCIsMix7InN0eWxlIjp7InRhaWwiOnsibmFtZSI6Imhvb2siLCJzaWRlIjoidG9wIn19fV0sWzEsMywiXFxpb3RhX1kiLDIseyJzdHlsZSI6eyJ0YWlsIjp7Im5hbWUiOiJob29rIiwic2lkZSI6InRvcCJ9fX1dLFs0LDMsIlxcdW5pZnsoXFxpb3RhX1kgXFxjaXJjIGYpfSIsMix7InN0eWxlIjp7ImhlYWQiOnsibmFtZSI6ImVwaSJ9fX1dLFsyLDUsIlxcaW90YV9YIiwwLHsic3R5bGUiOnsidGFpbCI6eyJuYW1lIjoiaG9vayIsInNpZGUiOiJ0b3AifX19XSxbNSwxLCJcXHVuaWZ7Zn0iLDIseyJzdHlsZSI6eyJoZWFkIjp7Im5hbWUiOiJlcGkifX19XSxbNiwwLCJcXHVuaWZ7KGcgXFxjaXJjIFxcdW5pZntmfSl9IiwwLHsic3R5bGUiOnsiaGVhZCI6eyJuYW1lIjoiZXBpIn19fV0sWzMsMCwiXFx1bmlme2d9IiwyLHsic3R5bGUiOnsiaGVhZCI6eyJuYW1lIjoiZXBpIn19fV0sWzUsNiwiXFxpb3RhX3tcXHVuaXN7WH17Zn17WX19IiwyLHsic3R5bGUiOnsidGFpbCI6eyJuYW1lIjoiaG9vayIsInNpZGUiOiJ0b3AifX19XV0=
\begin{equation}\label{eq:big_diagram}
\begin{tikzcd}[ampersand replacement=\&,cramped,column sep=35pt]
	\& {\unis{Y}{g}{Z}} \& {\unis{X}{\iota_Y \circ f}{ (\unis{Y}{g}{Z})}} \\
	Z \& Y \& X \\
	{\unis{(\unis{X}{f}{Y})}{g \circ \unif{f}}{Z}} \& {\unis{X}{f}{Y}}
	\arrow["{\unif{g}}"',curve={height=15pt}, two heads, from=1-2, to=2-1]
	\arrow["{\unif{(\iota_Y \circ f)}}"', two heads, from=1-3, to=1-2]
	\arrow["{\iota_Y}"', hook, from=2-2, to=1-2]
	\arrow["g"', from=2-2, to=2-1]
	\arrow["{\iota_X}"', hook, from=2-3, to=1-3]
	\arrow["f"', from=2-3, to=2-2]
	\arrow["{\iota_X}"',curve={height=-15pt}, hook, from=2-3, to=3-2]
	\arrow["{\unif{(g \circ \unif{f})}}", two heads, from=3-1, to=2-1]
	\arrow["{\unif{f}}"', two heads, from=3-2, to=2-2]
	\arrow["{\iota_{\unis{X}{f}{Y}}}"', hook, from=3-2, to=3-1]
\end{tikzcd}
\end{equation}
summarises the maps involved in the second and third constructions; the inclusions are open inclusions, and the surjections are perfect.

It is not immediately apparent, but the second and third constructions result in homeomorphic spaces. In particular, unified space construction satisfies the following associativity property.

\begin{lem}\label{lem:compositions_associative}
Let $f \colon X \to Y$ and $g \colon Y \to Z$ be continuous maps between locally compact Hausdorff spaces. The identity map on $X \sqcup Y \sqcup Z$ induces a homeomorphism 
	\[
	\unis{X}{\iota_Y \circ f}{ (\unis{Y}{g}{Z})} \simeq \unis{(\unis{X}{f}{Y})}{g \circ \unif{f}}{Z}.
	\] 
For all $x \in X \sqcup Y \sqcup Z$ we have
	\begin{equation}\label{eq:compositions_same}
		\unif{g} \circ \unif{ (\iota_Y \circ f)}(x)	
			=\unif{(g \circ \unif{f})} (x) =
		\begin{cases}
			g \circ f (x) & \text{if } x \in X\\
			g(x)& \text{if } x \in Y\\
			x& \text{if } x \in Z.
		\end{cases}
	\end{equation}

\end{lem}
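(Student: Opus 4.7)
My approach is to first verify the pointwise formula \labelcref{eq:compositions_same} directly from the definition of the $(\cdot)_\uplus$ map in \cref{dfn:unified_space}: for $x \in X$, we have $\unif{(\iota_Y \circ f)}(x) = \iota_Y(f(x)) = f(x) \in Y \subseteq \unis{Y}{g}{Z}$, so $\unif{g}(\unif{(\iota_Y \circ f)}(x)) = g(f(x))$; in parallel, $\unif{f}(x) = f(x)$ gives $\unif{(g \circ \unif{f})}(x) = g(f(x))$. The cases $x \in Y$ and $x \in Z$ are analogous, so the two compositions agree as set-theoretic maps $X \sqcup Y \sqcup Z \to Z$, and both coincide with the piecewise formula in \labelcref{eq:compositions_same}.

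The main task is the homeomorphism. Write $A \coloneqq \unis{X}{\iota_Y \circ f}{\unis{Y}{g}{Z}}$ and $B \coloneqq \unis{(\unis{X}{f}{Y})}{g \circ \unif{f}}{Z}$; both have underlying set $X \sqcup Y \sqcup Z$, so it suffices to show that a net converges in $A$ to $w$ if and only if it converges in $B$ to $w$. The plan is to apply the net criterion of \cref{lem:unified_topology}~\labelcref{itm:nets} iteratively---once for the outer $(\cdot)_\uplus$ and once for the inner one---and split into cases according to whether $w$ lies in $X$, $Y$, or $Z$.

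For $w \in X$, convergence in both $A$ and $B$ reduces to: eventually $w_\lambda \in X$ and $w_\lambda \to w$ in $X$ (on the $B$ side this uses that $X$ is open in $\unis{X}{f}{Y}$ by \cref{prop:unified_properties}~\labelcref{itm:unified_ix_open}). For $w \in Y$, unpacking the inner criterion in $A$ shows that $\unif{(\iota_Y \circ f)}(w_\lambda) \to w$ in $\unis{Y}{g}{Z}$ is equivalent to the combined statement that eventually $w_\lambda \in X \sqcup Y$ with $\unif{f}(w_\lambda) \to w$ in $Y$; together with the outer ``eventually outside any compact of $X$'' clause, this exactly matches the convergence criterion on the $B$ side. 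The delicate case is $w \in Z$: convergence in $A$ requires simultaneously that $w_\lambda$ eventually leaves any compact of $X$ and that $\unif{(\iota_Y \circ f)}(w_\lambda)$ eventually leaves any compact of $Y$, while convergence in $B$ requires $w_\lambda$ to eventually leave any compact of $\unis{X}{f}{Y}$. The equivalence of these two compactness clauses is precisely \cref{prop:unified_properties}~\labelcref{itm:compactness_characterisation}: every compact subset of $\unis{X}{f}{Y}$ is contained in $\unif{f}^{-1}(K_Y)$ for some compact $K_Y \subseteq Y$, and conversely $\unif{f}^{-1}(K_Y)$ is compact by perfectness of $\unif{f}$ (\cref{prop:unified_properties}~\labelcref{itm:unified_1}).

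The main obstacle is the $w \in Z$ case, where the bookkeeping bridges between a single ``nested'' compactness condition in $\unis{X}{f}{Y}$ and two ``decoupled'' conditions on $X$ and $Y$ separately. Once \cref{prop:unified_properties}~\labelcref{itm:compactness_characterisation} provides this translation, the remaining case checks are routine.
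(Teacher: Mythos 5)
Your proposal is correct and follows essentially the same route as the paper: verify the pointwise formula directly, then show the two topologies have the same convergent nets via the criterion of \cref{lem:unified_topology}, with the $Z$-case handled exactly as the paper does by invoking \cref{prop:unified_properties}~\labelcref{itm:compactness_characterisation} to translate between compact subsets of $\unis{X}{f}{Y}$ and the two decoupled compactness conditions on $X$ and $Y$.
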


\begin{proof}

	Let $\tau_1$ denote the topology on $	\unis{X}{\iota_Y \circ f}{ (\unis{Y}{g}{Z})}$ and let $\tau_2$ denote the topology on $\unis{(\unis{X}{f}{Y})}{g \circ \unif{f}}{Z}$. To show that these spaces are homeomorphic we appeal to the convergence characterisation of \cref{lem:unified_topology} and show that they have the same convergent nets.

	Fix a net $(x_\lambda)$ in $X \sqcup Y \sqcup Z$. If $x \in X$, then $x_\lambda \to x$ with respect to $\tau_1$ if and only if $x_\lambda \to x$ with respect to $\tau_2$, since in both cases the net must eventually
	belong to $X$ and converge in the topology of $X$.
	
	Now suppose that $y \in Y$. If $x_\lambda \to y$ with respect to $\tau_1$, then $ \unif{(\iota_Y \circ f)}(x_\lambda) \to \unif{(\iota_Y \circ f)}(y) = y$ in $\unis{Y}{g}{Z}$, and for every $K \in \kappa_X$ there exists $\lambda_K$ such that $\lambda \ge \lambda_K$ implies $x_\lambda \notin K$. Since $\unif{(\iota_Y \circ f)}(x_\lambda) \to y \in Y$, and $Y$ is open in $\unis{Y}{g}{Z}$, by passing to a subnet we may assume that $\unif{(\iota_Y \circ f)}(x_\lambda) \in Y$ for all $\lambda$. In particular, $\unif{(\iota_Y \circ f)}(x_\lambda) = \unif{f}(x_\lambda)$. It follows that $x_\lambda \to y$ in $\unis{X}{f}{Y}$. Since $\unis{X}{f}{Y} \in \tau_2$ we have $x_\lambda \to y$ with respect to $\tau_2$. 
	
	On the other hand, suppose that $x_\lambda \to y$ with respect to $\tau_2$. Since $\unis{X}{f}{Y} \in \tau_2$, the net $(x_\lambda)$ is eventually in $\unis{X}{f}{Y}$ and converges in the topology of $\unis{X}{f}{Y}$. In particular, for any $K \in \kappa_X$, there exists $\lambda_K$ such that $\lambda \ge \lambda_K$ implies $x_\lambda \notin K$ and $\unif{f}(x_\lambda) \to \unif{f}(y) = y$ in $Y$. Since $\unif{f}(y') = \unif{(\iota_Y \circ f)}(y')$ for all $y' \in X \sqcup Y$, we have $x_\lambda \to y$ in $\tau_1$. 
	
	Now suppose that $z \in Z$. If $x_\lambda \to z$ with respect to $\tau_1$, then $\unif{(\iota_Y \circ f)}(x_\lambda) \to \unif{(\iota_Y \circ f)}(z) = z$ in $\unis{Y}{g}{Z}$, and for every $K_1 \in \kappa_X$ there exists $\lambda_{K_1}$ such that $\lambda \ge \lambda_{K_1}$ implies $x_\lambda \notin K_1$. Since $\unif{(\iota_Y \circ f)}(x_\lambda) \to  z$ in $\unis{Y}{g}{Z}$ and $z \in Z$, for any compact $K_2 \in \kappa_Y$ there exists $\lambda_{K_2}$ such that $\lambda \ge \lambda_{K_2}$ implies $\unif{(\iota_Y \circ f)}(x_\lambda) \notin K_2$ and $\unif{g} \circ\unif{(\iota_Y \circ f)}(x_\lambda) \to z $ in $Z$. It follows from \labelcref{eq:compositions_same} that $\unif{(g \circ \unif{f})}(x_{\lambda}) \to z$ in $Z$.
	
	Fix a compact set $K \in \kappa_{\unis{X}{f}{Y}}$. To show that $x_\lambda \to z$ in $\tau_2$ it remains to show that we can find $\lambda_K$ such that $\lambda \ge \lambda_K$ implies $x_\lambda \notin K$. By \cref{prop:unified_properties}, $K$ is closed in $\unis{X}{f}{Y}$ and there exists $K' \in \kappa_Y$ such that $K \subseteq \unif{f}^{-1}(K')$. Set $K_2 = K'$. By the previous paragraph, there exists $\lambda_{K'}$ such that $\lambda \ge \lambda_{K'}$ implies $\unif{(\iota_Y \circ f)}(x_\lambda) \notin K'$. In particular, if $x_\lambda \in X$ then $f(x_{\lambda}) \notin K'$, and if $x_\lambda \in Y$ then $x_\lambda \notin K'$. In either case $x_\lambda \notin K$ so it suffices to let $\lambda_K \coloneqq \lambda_{K'}$. Hence, $x_\lambda \to z$ with respect to $\tau_2$. 
	
	On the other hand, suppose that $x_\lambda \to z$ with respect to $\tau_2$. Then for any compact $K \in \kappa_{\unis{X}{f}{Y}}$ there exists $\lambda_K$ such that $\lambda \ge \lambda_K$ implies $x_\lambda \notin K$, and $\unif{(g \circ \unif{f})}(x_\lambda) \to z$ in $Z$. By \labelcref{eq:compositions_same} we have $\unif{g} \circ \unif{(\iota_Y \circ f)}(x_\lambda) \to z$ in $Z$. If $K_2 \in \kappa_Y$ is compact, then  \cref{prop:unified_properties} implies that $K_2$ is also compact in $\unis{X}{f}{Y}$. Hence, there exists $\lambda_{K_2}$ such that $\lambda \ge \lambda_{K_2}$ implies $\unif{(\iota_Y \circ f)}(x_\lambda) \notin K_2$. In particular, $\unif{(\iota_Y \circ f)}(x_\lambda) \to z$ in $\unis{Y}{g}{Z}$. Now fix $K_1 \in \kappa_X$. Since $K_1 \subseteq  \unif{f}^{-1} (f(K_1))$, \cref{prop:unified_properties} implies that $K_1$ is also compact in $\unis{X}{f}{Y}$, so there exists $\lambda_{K_1}$ such that $\lambda \ge \lambda_{K_1}$ implies $x_\lambda \notin K_1$. Consequently, $x_\lambda \to z$ in $\tau_1$. 
	
	The equality \labelcref{eq:compositions_same} follows from the definitions of $\unif{g} \circ \unif{(\iota_Y \circ f)}$ and $\unif{(g \circ \unif{f})}$.
\end{proof}

Since \cref{lem:compositions_associative} implies that $	\unis{X}{\iota_Y \circ f}{ (\unis{Y}{g}{Z})} \simeq \unis{(\unis{X}{f}{Y})}{g \circ \unif{f}}{Z}$, we abuse notation and simply write $X \uplus_f Y \uplus_g Z$ for this composition. By \labelcref{eq:unified_base}, a
 base for the topology on $X \uplus_f Y \uplus_g Z$ is given by the collection 
\begin{align} \label{eq:level_two_unified}
	\begin{split}
		\big\{\big(U_X \cap \unif{(\iota_Y \circ f)}^{-1}(U_Y) &\cap   \unif{(\iota_Y \circ f)}^{-1} \circ \unif{g}^{-1} (U_Z)\big) \setminus \big(K_X \cup \unif{(\iota_Y \circ f)}^{-1}(K_Y)\big) \bigm| \\
		& U_X \in \tau_X,\, U_Y \in \tau_Y, U_Z \in \tau_Z, K_X \in \kappa_X,\, K_Y \in \kappa_Y\big\}.
	\end{split}
\end{align}

We can also compose sub-fibrewise compactifications of the unified space. 

\begin{cor}\label{lem:compositions_associative_sub}
	Let $f \colon X \to Y$ and $g \colon Y \to Z$ be continuous maps between locally compact Hausdorff spaces. Let $U \subseteq Y$ be $f$-proper ($f$-perfect) and let $V \subseteq Y$ be $g$-proper ($g$-perfect). Then $V$ is $(g \circ \perf{f}{U})$-proper ($(g \circ \perf{f}{U})$-perfect),  $\iota_{Y}(U)$ is $(\iota_Y \circ f)$-proper ($(\iota_Y \circ f)$-perfect). As sets
	\[
	\pers{(\pers{X}{f}{U}{Y})}{g \circ \perf{f}{U}}{V}{Z} = X \sqcup (Y \setminus U) \sqcup (Z \setminus V) = \pers{X}{\iota_Y \circ f}{\iota_Y(U)}{(\pers{Y}{g}{V}{Z})},
	\]
	and the identity map induces a homeomorphism $\pers{(\pers{X}{f}{U}{Y})}{g \circ \perf{f}{U}}{V}{Z} \simeq \pers{X}{\iota_Y \circ f}{\iota_Y(U)}{(\pers{Y}{g}{V}{Z})}$. 
	
	If $U' \subseteq U$ and $V' \subseteq V$ are open, then $\pers{(\pers{X}{f}{U}{Y})}{g \circ \perf{f}{U}}{V}{Z}$ can be identified with the closed subset $X \sqcup (Y \setminus U) \sqcup (Z \setminus V)$ of $\pers{(\pers{X}{f}{U'}{Y})}{g \circ \perf{f}{U}}{V'}{Z}$.
\end{cor}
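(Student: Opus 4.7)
The plan is to reduce the claim to Lemma~\cref{lem:compositions_associative} by showing that both sides embed as the same closed subset of the respective ambient unified spaces $\unis{X}{\iota_Y \circ f}{(\unis{Y}{g}{Z})}$ and $\unis{(\unis{X}{f}{Y})}{g \circ \unif{f}}{Z}$, which are already known to be identity-homeomorphic. The arguments are essentially bookkeeping with Lemmas~\cref{lem:f-reg_characterisation}~and~\cref{lem:unified_topology}, once the sub-fibrewise compactifications are set up correctly.

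First I would verify the properness and perfectness claims. Given that $V$ is $g$-proper, Lemma~\cref{lem:f-reg_characterisation} gives, for each $z \in V$, a precompact open $W \subseteq V$ with $g^{-1}(\overline{W})$ compact; since $\perf{f}{U}$ is proper, $(g \circ \perf{f}{U})^{-1}(\overline{W}) = \perf{f}{U}^{-1}(g^{-1}(\overline{W}))$ is compact, so $V \subseteq \pr(g \circ \perf{f}{U})$. In the perfect case, $U$ being $f$-perfect makes $\perf{f}{U}$ surjective onto $Y$, hence $(g \circ \perf{f}{U})(\pers{X}{f}{U}{Y}) = g(Y)$, and so $\per(g \circ \perf{f}{U}) = \per(g)$, which contains $V$. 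For $\iota_Y(U)$, note that $\iota_Y \colon Y \hookrightarrow \pers{Y}{g}{V}{Z}$ is an open inclusion onto the open subset $Y$ of $\pers{Y}{g}{V}{Z}$. For $y \in U$, pick a precompact open $W \subseteq Y$ with $y \in W$, $\overline{W} \subseteq U$, and $f^{-1}(\overline{W})$ compact; using the convergence characterisation of Lemma~\cref{lem:unified_topology}(iii), the closure of $\iota_Y(W)$ in $\pers{Y}{g}{V}{Z}$ is $\iota_Y(\overline{W})$, which is compact, and $(\iota_Y \circ f)^{-1}(\iota_Y(\overline{W})) = f^{-1}(\overline{W})$ is compact, giving $(\iota_Y \circ f)$-properness. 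Perfectness follows because $U \subseteq \overline{f(X)}$ in $Y$ forces $\iota_Y(U) \subseteq \overline{(\iota_Y \circ f)(X)}$ in $\pers{Y}{g}{V}{Z}$.

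The set-theoretic equality $X \sqcup (Y \setminus U) \sqcup (Z \setminus V)$ on both sides is immediate from the definitions. For the homeomorphism, Lemma~\cref{lem:compositions_associative} says the identity is a homeomorphism between the two ``big'' unified spaces. I would then show that $\pers{X}{\iota_Y \circ f}{\iota_Y(U)}{(\pers{Y}{g}{V}{Z})}$ sits as a closed subspace of $\unis{X}{\iota_Y \circ f}{(\unis{Y}{g}{Z})}$ with the subspace topology, and similarly for $\pers{(\pers{X}{f}{U}{Y})}{g \circ \perf{f}{U}}{V}{Z}$ inside $\unis{(\unis{X}{f}{Y})}{g \circ \unif{f}}{Z}$. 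This reduces to showing that the intermediate construction $\unis{X}{\iota_Y \circ f}{(\pers{Y}{g}{V}{Z})}$ carries the subspace topology inherited from $\unis{X}{\iota_Y \circ f}{(\unis{Y}{g}{Z})}$, which is a direct check via Lemma~\cref{lem:unified_topology}(iii): convergence in $\unis{Y}{g}{Z}$ of nets landing in $\pers{Y}{g}{V}{Z}$ agrees with convergence in the subspace topology, and this carries upward through the outer unified space construction. Once both sides are identified as the same closed subset of identity-homeomorphic ambient spaces, the identity restricts to the desired homeomorphism.

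For the final statement, when $U' \subseteq U$ and $V' \subseteq V$ are open, Lemma~\cref{lem:subperfections_closed} applied twice shows that $\pers{X}{f}{U}{Y}$ is closed in $\pers{X}{f}{U'}{Y}$, and in turn $\pers{(\pers{X}{f}{U}{Y})}{g \circ \perf{f}{U}}{V}{Z}$ is closed in $\pers{(\pers{X}{f}{U'}{Y})}{g \circ \perf{f}{U'}}{V'}{Z}$, with underlying set exactly $X \sqcup (Y \setminus U) \sqcup (Z \setminus V)$. The main obstacle throughout is the careful tracking of subspace topologies through iterated unified space constructions, especially comparing the order in which $U$ and $V$ are excised; this is managed by systematically invoking the convergent-net characterisation of \cref{lem:unified_topology}(iii).
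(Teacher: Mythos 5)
Your proposal is correct and follows essentially the same route as the paper: verify that $V$ and $\iota_Y(U)$ are proper (resp.\ perfect) for the relevant composite maps, note the common underlying set, and reduce the homeomorphism and the final closedness claim to \cref{lem:compositions_associative}, \cref{lem:subperfection_characterisation}, and \cref{lem:subperfections_closed}. The only difference is one of detail: you explicitly flag and check that $\unis{X}{\iota_Y \circ f}{(\pers{Y}{g}{V}{Z})}$ carries the subspace topology from $\unis{X}{\iota_Y \circ f}{(\unis{Y}{g}{Z})}$, a compatibility the paper leaves implicit in the phrase ``since we have sub-fibrewise compactifications of the unified space.''
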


\begin{proof}
	First suppose that $U$ is $f$-proper and $V$ is $g$-proper. 
	Since $U$ is $f$-proper, $\perf{f}{U}$ is proper. So, since $V$ is $g$-proper, it is also $(g \circ \perf{f}{U})$-proper. The disjoint union description of $\pers{(\pers{X}{f}{U}{Y})}{g \circ \perf{f}{U}}{V}{Z}$ follows from \cref{dfn:other_perfections}.
	Since $\iota_Y\colon Y \to \pers{Y}{g}{V}{Z}$ is a homeomorphism onto its image, $(\iota_Y \circ f)^{-1}(\iota_Y(U)) = f^{-1}(U)$. It follows that  $\iota_{Y}(U)$ is $(\iota_Y \circ f)$-proper. The disjoint union description of $\pers{X}{\iota_Y \circ f}{\iota_Y(U)}{(\pers{Y}{g}{V}{Z})}$ again follows from \cref{dfn:other_perfections}.	
	The homeomorphism follows from \cref{lem:compositions_associative} since we have sub-fibrewise compactifications of the unified space. The final statement follows from \cref{lem:subperfection_characterisation} and \cref{lem:subperfections_closed}.
	
	The proof is identical, after making the necessary changes, if $U$ is $f$-perfect and $V$ is $g$-perfect. 
\end{proof}

We write $\pers{X}{f}{U}{\pers{Y}{g}{V}{Z}}$ for the space $ \pers{(\pers{X}{f}{U}{Y})}{g \circ \perf{f}{U}}{V}{Z} \simeq \pers{X}{\iota_Y \circ f}{\iota_Y(U)}{(\pers{Y}{g}{V}{Z})}$. By \cref{lem:compositions_associative_sub} and \cref{rmk:empty_set}, there is a sequence of closed inclusions
\[
\pers{X}{f}{\pr(f)}{\pers{Y}{g}{\pr(g)}{Z}} \subseteq \pers{X}{f}{\per(f)}{\pers{Y}{g}{\per(g)}{Z}} \subseteq\\
X \uplus_f Y \uplus_g Z.
\]

\begin{rmk}
	Despite the unified space (and its sub-fibrewise compactifications) displaying an associativity-like property, it is not clear to the author whether any of the constructions can be made functorial.
	An interesting observation is that the unified space (and other sub-fibrewise compactifications) can be used to define a category whose objects are locally compact Hausdorff spaces that differ from the usual category with continuous maps. 
	
	Let $X$ and $Y$ be locally compact Hausdorff spaces. Suppose that $(i_A, A, p_A)$ is a triple consisting of a locally compact Hausdorff space $A$, an open inclusion $i_A \colon X \to A$, and a continuous proper map $p_A \colon A \to Y$. We say that another such triple $(i_{A'},A',p_{A'})$ is isomorphic to $(i_A, A, p_A)$ if there is a homeomorphism $h \colon A \to A'$ such that $i_A \circ h = i_{A'}$ and $p_A \circ h = p_{A'}$.
	
	Let $\LCH$ be the category of locally compact Hausdorff spaces with continuous maps. 
	We define a new category $\unif{\LCH}$ 	
	whose objects are also locally compact Hausdorff spaces.	
	A morphism $X \to Y$ in $\unif{\LCH}$ is (an isomorphism class of) a triple $(i_A, A, p_A)$, which we denote by $[i_A, A, p_A]$.
	Composition of morphisms is defined using the unified space construction: if $[i_A, A, p_A] \colon X \to Y$ and $[i_B, B, p_B] \colon Y \to Z$ are morphisms in $\unif{\LCH}$, then $[i_B, B, p_B] \circ	[i_A, A, p_A]$
	is defined to be 
$[\iota_A \circ i_A, \unis{A}{i_B \circ p_A}{B}, p_B \circ \unif{(i_B \circ p_A)}]
	$. By \cref{lem:compositions_associative}, this composition is associative, so $\unif{\LCH}$ is a category. 
	
	The map taking a morphism $f \colon X \to Y$ in $\LCH$ to $[\iota_X, \unis{X}{f}{Y}, \unif{f}]$ in $\unif{\LCH}$ is not functorial since if $g \colon Y \to Z$ is another continuous map then typically $\unis{X}{g \circ f}{Z} \not\cong \unis{X}{f}{\unis{Y}{g}{Z}}$.
\end{rmk}

\subsection{Locally compactifying inverse limits}
\label{subsec:lch_inverse_limits}

Let $(X_i,f_i)_{i \in \NN}$ be an \emph{inverse sequence} of topological spaces $X_i$ and continuous maps $f_i \colon X_{i+1} \to X_i$. Its \emph{inverse limit} $\varprojlim(X_i,f_i)$ is the unique space (up to homeomorphism) with continuous maps $\pi_k \colon \varprojlim(X_i,f_i) \to X_k$ satisfying $\pi_k  \circ f_{k} = \pi_{k+1} $ for all $k \in \NN$, that is universal: if $Z$ is another topological space with continuous maps $g_k \colon Z \to X_k$ satisfying $g_k \circ f_k = g_{k+1}$ for all $k \in \NN$, then there is a unique continuous map $g \colon Z \to \varprojlim(X_i,f_i)$ such that $f_k \circ g = g_k$ for all $k \in \NN$.
  
By \cite[Proposition~2.5.1]{Eng89}, the inverse limit $\varprojlim(X_i,f_i)$ may be described explicitly as the closed subspace $\{(x_i)_{i \in \NN} \mid x_i \in X_i,\, f_{i}(x_{i+1}) = x_i\}$ of the product $\prod_{i \in \NN} X_i$ . The maps $\pi_k \colon \varprojlim (X_i,f_i) \to X_k$ are the coordinate projections, and the collection $\{\pi_k^{-1}(U) \colon k \in \NN,\, U \in \tau_{X_k}\}$ constitutes a base for the topology on $\varprojlim(X_i,f_i)$.
We record the following well-known result. 

\begin{lem}\label{lem:injectivity_limits}
	Let $(X_i,f_i)_{i \in \NN}$ and $(Y_i,g_i)_{i \in \NN}$ be inverse sequences of topological spaces and continuous maps. Let $\pi_k \colon \varprojlim (X_i,f_i) \to X_k$ and $\tau_k \colon \varprojlim(Y_i,g_i) \to Y_k$ be the universal projections. If for each $k \in \NN$, there are continuous maps $\alpha_k \colon X_k \to Y_k$ satisfying $g_k \circ \alpha_{k+1} = \alpha_{k} \circ f_k$, then there is a unique continuous map $\alpha \colon \varprojlim (X_i,f_i) \to \varprojlim(Y_i,g_i)$ satisfying $\alpha_k \circ \pi_k = \tau_k \circ \alpha$ for all $k \in \NN$.  Moreover, if each $\alpha_k$ is injective, then so is $\alpha$. 
\end{lem}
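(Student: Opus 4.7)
The plan is to construct $\alpha$ directly from the universal property of $\varprojlim(Y_i,g_i)$. First I would form, for each $k \in \NN$, the composition $\beta_k \coloneqq \alpha_k \circ \pi_k \colon \varprojlim(X_i,f_i) \to Y_k$. To invoke the universal property I need the compatibility $g_k \circ \beta_{k+1} = \beta_k$, which follows from the hypothesis $g_k \circ \alpha_{k+1} = \alpha_k \circ f_k$ combined with the defining identity $f_k \circ \pi_{k+1} = \pi_k$ for the inverse limit $\varprojlim(X_i,f_i)$: indeed, $g_k \circ \beta_{k+1} = g_k \circ \alpha_{k+1} \circ \pi_{k+1} = \alpha_k \circ f_k \circ \pi_{k+1} = \alpha_k \circ \pi_k = \beta_k$. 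The universal property of $\varprojlim(Y_i,g_i)$ then produces a unique continuous map $\alpha \colon \varprojlim(X_i,f_i) \to \varprojlim(Y_i,g_i)$ with $\tau_k \circ \alpha = \beta_k = \alpha_k \circ \pi_k$ for every $k \in \NN$, and uniqueness of such an $\alpha$ is built into the universal property.

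For the injectivity claim, I would pass to the explicit description of the inverse limit recalled just before the statement: $\varprojlim(Y_i,g_i)$ is the subspace of $\prod_{i \in \NN} Y_i$ consisting of sequences $(y_i)$ with $g_i(y_{i+1}) = y_i$, with $\tau_k$ the $k$-th coordinate projection. Under this identification the intertwining relations $\tau_k \circ \alpha = \alpha_k \circ \pi_k$ force $\alpha$ to act coordinatewise by $\alpha\bigl((x_i)_{i \in \NN}\bigr) = (\alpha_i(x_i))_{i \in \NN}$. Consequently, if $\alpha((x_i)) = \alpha((x_i'))$ then $\alpha_i(x_i) = \alpha_i(x_i')$ for every $i$, and injectivity of each $\alpha_i$ yields $x_i = x_i'$ for all $i$, whence $(x_i) = (x_i')$.

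This lemma is a routine consequence of the universal property together with the coordinatewise description of inverse limits, so I do not anticipate any real obstacle; the only care needed is in bookkeeping the two inverse-limit diagrams simultaneously when chasing $g_k \circ \alpha_{k+1} \circ \pi_{k+1} = \alpha_k \circ \pi_k$.
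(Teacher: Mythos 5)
Your proposal is correct and follows essentially the same route as the paper: both construct $\alpha$ by applying the universal property of $\varprojlim(Y_i,g_i)$ to the maps $\alpha_k \circ \pi_k$, and both deduce injectivity from the relations $\tau_k \circ \alpha = \alpha_k \circ \pi_k$ together with the fact that the $\pi_k$ are coordinate projections. Your version merely spells out the compatibility check $g_k \circ \beta_{k+1} = \beta_k$ that the paper leaves implicit.
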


\begin{proof}
	The first statement follows directly from the universal property of $\varprojlim(Y_i,g_i)$ applied to the maps $\alpha_i \circ \pi_i \colon \varprojlim(X_i,g_i) \to Y_i$. To see that $\alpha$ is injective, suppose that $\alpha(x) = \alpha(x')$ for some $x,x'\in \varprojlim(X_i,f_i)$. Then $\alpha_k \circ \pi_k (x) = \tau_k \circ \alpha(x) = \tau_k \circ \alpha(x') = \alpha_k \circ \pi_k (x')$ for all $k \in \NN$. Since each $\alpha_{k}$ is injective, $\pi_k(x) = \pi_{k}(x')$ for all $k \in \NN$. Since the $\pi_k$ are coordinate projections in $\prod_{i \in \NN}X_i$, we have $x = x'$.  
\end{proof}

If each $X_i$ is locally compact and Hausdorff, then $\varprojlim(X_i,f_i)$ is Hausdorff, but it is not locally compact unless the maps $f_i$ are proper \cite[Theorem 3.7.13]{Eng89}. 
Using the unified space construction we produce a new inverse limit $\varprojlim(\wt{X}_i,\wt{f}_i)$ in which the maps $\wt{f}_i$ are perfect extensions of the $f_i$.

\begin{dfn}
	Let $(X_i,f_i)_{i \in \NN}$ be an inverse sequence of locally compact Hausdorff spaces with continuous maps $f_i \colon X_{i+1} \to X_{i}$. 
	Let $\wt{X}_0 \coloneqq X_0$. For each $i \ge 1$ inductively define
	\[
	\wt{X}_{i} \coloneqq \unis{X_i}{\iota_{X_{i-1}} \circ f_{i-1}}{\wt{X}_{i-1}}
	\]
	and let $\wt{f}_i \colon \wt{X}_{i+1} \to \wt{X}_{i}$ be the perfect map given by
	\[
	\wt{f}_i (x) = \unif{(\iota_{X_i} \circ f_i)}(x) =  \begin{cases}
		f_{i}(x) & \text{if } x \in X_{i+1},\\
		x & \text{if } x \in \wt{X}_{i}.
	\end{cases}
	\]
	We call the space $\unilim (X_i,f_i) \coloneqq \varprojlim (\wt{X}_{i},\wt{f}_i)$ the \emph{unified limit} of $(X_i,f_i)_{i \in \NN}$.
\end{dfn}

As sets we have $\wt{X}_{i} = \bigsqcup_{k=0}^i X_k$, however the topology on $\wt{X}_i$ is quite nontrivial. We extend the notation established in \cref{sec:commutative_composition} and write 
\[
 \wt{X}_{i} \eqqcolon \unis{X_{i}}{f_{i-1}}{
\unis{X_{i-1}}{f_{i-2}}{\unis{\cdots}{f_0}{X_0}}}.
\]
The next result follows inductively from \cref{lem:compositions_associative}.

\begin{lem}\label{lem:composition_big}
	Let $(X_i,f_i)_{i \in \NN}$ be an inverse sequence of locally compact Hausdorff spaces and continuous maps. Fix $p,q \in \NN$. Let $f_{p,q} \colon \unis{X_{p+q}}{f_{p+q-1}}{ \unis{\cdots}{f_{q+1}}{X_{q+1}}} \to  \wt{X}_q$ be the continuous map given by
	\[
	f_{p,q}(x) = \begin{cases}
		f_q \circ \cdots \circ  f_{i-1}(x) & \text{if } x \in X_i \text{ and } i > q,\\
		x & \text{if } x \in \wt{X}_q.
	\end{cases}
	\]
	Then $\wt{X}_{p+q} \simeq \unis{(\unis{X_{p+q}}{f_{p+q-1}}{ \unis{\cdots}{f_{q+1}}{X_{q+1}}})}{f_{p,q}}{\wt{X}_q}$. In particular, $\bigsqcup_{i=q+1}^{p+q} X_i$ is open in $\wt{X}_{p+q}$, and $ \wt{X}_q = \bigsqcup_{i=0}^{q} X_i$ is closed in $\wt{X}_{p+q}$.
\end{lem}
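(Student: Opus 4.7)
The plan is to induct on $p$, with the associativity result of \cref{lem:compositions_associative} providing the engine. Set $Z_p \coloneqq \unis{X_{p+q}}{f_{p+q-1}}{\unis{\cdots}{f_{q+1}}{X_{q+1}}}$ for $p \ge 1$. The base case $p=1$ is immediate: $Z_1 = X_{q+1}$, and on $X_{q+1}$ the map $f_{1,q}$ is precisely $\iota_{X_q} \circ f_q$, so $\unis{Z_1}{f_{1,q}}{\wt{X}_q}$ coincides with the defining expression $\wt{X}_{q+1} = \unis{X_{q+1}}{\iota_{X_q} \circ f_q}{\wt{X}_q}$.

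For the inductive step, assume $\wt{X}_{p+q} \simeq \unis{Z_p}{f_{p,q}}{\wt{X}_q}$. By definition,
\[
\wt{X}_{p+q+1} = \unis{X_{p+q+1}}{\iota_{X_{p+q}} \circ f_{p+q}}{\wt{X}_{p+q}} \simeq \unis{X_{p+q+1}}{\iota_{X_{p+q}} \circ f_{p+q}}{\unis{Z_p}{f_{p,q}}{\wt{X}_q}}.
\]
By \cref{prop:unified_properties}, $X_{p+q}$ sits as an open subspace of $Z_p$ via the outermost layer of its construction, so the inclusion $\iota_{X_{p+q}}$ into $\unis{Z_p}{f_{p,q}}{\wt{X}_q}$ factors as $\iota_{Z_p} \circ \iota$, where $\iota \colon X_{p+q} \to Z_p$ is this subspace inclusion. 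Setting $f' \coloneqq \iota \circ f_{p+q} \colon X_{p+q+1} \to Z_p$ and applying \cref{lem:compositions_associative} produces
\[
\wt{X}_{p+q+1} \simeq \unis{(\unis{X_{p+q+1}}{f'}{Z_p})}{f_{p,q} \circ \unif{f'}}{\wt{X}_q}.
\]
The outer space is $Z_{p+1}$ by construction, and a short case check against the formula \labelcref{eq:compositions_same} for $\unif{f'}$ shows that $f_{p,q} \circ \unif{f'}$ agrees with $f_{p+1,q}$ on each of $X_{p+q+1}$, the intermediate $X_i$ with $q<i\le p+q$, and $\wt{X}_q$. This closes the induction.

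The two ``in particular'' statements follow immediately from \cref{prop:unified_properties}: under the homeomorphism $\wt{X}_{p+q} \simeq \unis{Z_p}{f_{p,q}}{\wt{X}_q}$, the set $\bigsqcup_{i=q+1}^{p+q} X_i$ is the image of $Z_p$ under the open inclusion $\iota_{Z_p}$, hence open in $\wt{X}_{p+q}$, whereas $\wt{X}_q = \bigsqcup_{i=0}^{q} X_i$ is the image of the closed inclusion $\iota_{\wt{X}_q}$. The main obstacle is purely bookkeeping: one must keep straight the several copies of $X_{p+q}$ (the outermost piece of $\wt{X}_{p+q}$ versus the outermost piece of $Z_p$) and confirm that the identity map on the underlying sets does realise the associativity isomorphism in the required way.
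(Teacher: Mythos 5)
Your proof is correct and is precisely the argument the paper intends: the paper's entire "proof" is the remark that the result follows inductively from \cref{lem:compositions_associative}, and your induction on $p$ (base case $Z_1 = X_{q+1}$, inductive step via the associativity homeomorphism applied to $f' = \iota \circ f_{p+q}$ and $g = f_{p,q}$, plus the openness/closedness statements from \cref{prop:unified_properties}) fills in exactly those details. The only cosmetic slip is the reference to checking agreement of $f_{p,q}\circ\unif{f'}$ with $f_{p+1,q}$ on $\wt{X}_q$, which is vacuous since $\wt{X}_q$ is not contained in the domain $Z_{p+1}$.
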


 The unified limit $\unilim(X_i,f_i)$ can be thought of as a ``local compactification'' of the limit $\varprojlim(X_i,f_i)$.

\begin{prop}\label{prop:unified_limit}
	Let $(X_i,f_i)_{i \in \NN}$ be an inverse sequence of locally compact Hausdorff spaces and continuous maps. The unified limit $\unilim({X}_{i},{f}_i)$ is a locally compact Hausdorff space. Moreover,
	\begin{enumerate}
		\item the universal projections $\wt{\pi}_k \colon \unilim({X}_i,{f}_i) \to \wt{X}_k$ are perfect;
		\item\label{itm:base_unifiedlim}  the sets
		\begin{align*}
			\Zz(U_0,\ldots,U_m; K_0, \ldots, K_n) \coloneqq \big(\wt{\pi}_0^{-1}(U_0) &\cap \cdots \cap \wt{\pi}_m^{-1}(U_m)\big)\\ 
			&\setminus  \big(\wt{\pi}_0^{-1}(K_0) \cup \cdots \cup \wt{\pi}_n^{-1}(K_n)\big)
		\end{align*}
		indexed by $m ,n \in \NN$ with $m \ge n$,  $(U_i)_{i=0}^m$ with $U_i \in \tau_{X_i}$, and $(K_j)_{j=0}^{n}$ with $K_j \in \kappa_{X_j}$, constitute a base for the topology on $\unilim({X}_{i},{f}_i)$;
	
		\item 
		the inclusions $\iota_{X_k} \colon X_k \hookrightarrow \wt{X}_k$ for each $k \in \NN$ induce a continuous inclusion $\varprojlim(X_i,f_i) \hookrightarrow \varprojlim^{\Uu}(X_{i},f_i)$; and
		\item
		 if each $X_i$ is second countable, then $\unilim ({X}_{i},{f}_i)$ is metrisable.
		\end{enumerate}
\end{prop}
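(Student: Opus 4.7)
The plan is to address the four parts in order, with parts (i) and (iv) following quickly from standard results once the perfectness of the bonding maps is established, while parts (ii) and (iii) require more bookkeeping but follow from the results already proved.

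For (i), I would begin by observing that each bonding map $\wt{f}_i \colon \wt{X}_{i+1} \to \wt{X}_i$ is perfect by \cref{prop:unified_properties}\labelcref{itm:unified_1}, and each $\wt{X}_i$ is locally compact Hausdorff by \cref{prop:unified_properties}\labelcref{itm:local_compactness}. The classical result \cite[Theorem~3.7.13]{Eng89} already invoked in the introduction then applies: the inverse limit of a countable sequence of locally compact Hausdorff spaces under perfect bonding maps is again locally compact Hausdorff, and the universal projections $\wt{\pi}_k$ are perfect. This simultaneously establishes local compactness of $\unilim(X_i,f_i)$ and proves (i).

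For (ii), I would start from the standard fact that the cylinder sets $\wt{\pi}_k^{-1}(B)$ with $B$ basic open in $\wt{X}_k$ form a base for $\unilim(X_i,f_i)$. By \cref{lem:composition_big}, $\wt{X}_k \simeq \unis{X_k}{f_{k-1}}{\unis{\cdots}{f_0}{X_0}}$, and iterating the subbase description \labelcref{eq:unified_base} (of which \labelcref{eq:level_two_unified} is the first iteration) produces a base for $\wt{X}_k$ consisting of sets of the form
\[
\bigcap_{i=0}^k \wt{g}_{i,k}^{-1}(U_i) \setminus \bigcup_{j=1}^{k} \wt{g}_{j,k}^{-1}(K_j),
\]
where $U_i \in \tau_{X_i}$, $K_j \in \kappa_{X_j}$, and $\wt{g}_{i,k} \colon \wt{X}_k \to \wt{X}_i$ is the natural iterated bonding map. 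Pulling these cylinders back to the inverse limit via the relation $\wt{\pi}_i = \wt{g}_{i,k} \circ \wt{\pi}_k$ exhibits them as $\Zz$-sets with $m = n = k$ and $K_0 = \varnothing$. For the converse, an arbitrary $\Zz(U_0,\ldots,U_m;K_0,\ldots,K_n)$ is open since each $\wt{\pi}_j^{-1}(K_j)$ is compact by (i), hence closed, and each $\wt{\pi}_i^{-1}(U_i)$ is open; and any such set can be refined further into cylinders of the form above by absorbing $\wt{\pi}_0^{-1}(K_0)$ into the constraint on $U_0$ using local compactness of $X_0$.

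For (iii), the open inclusions $\iota_{X_k} \colon X_k \hookrightarrow \wt{X}_k$ satisfy $\wt{f}_k \circ \iota_{X_{k+1}} = \iota_{X_k} \circ f_k$ by the definition of $\wt{f}_k$, so \cref{lem:injectivity_limits} immediately produces a continuous injection $\varprojlim(X_i,f_i) \hookrightarrow \unilim(X_i,f_i)$. For (iv), \cref{prop:unified_properties}\labelcref{itm:unified_5} shows that each $\wt{X}_k$ is metrisable whenever the $X_i$ are second countable; since $\unilim(X_i,f_i)$ embeds as a subspace of the countable product $\prod_i \wt{X}_i$, which is metrisable (both countable products and subspaces of metrisable spaces are metrisable), the unified limit is metrisable. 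The main obstacle is the bookkeeping for (ii): carefully iterating the subbase from \labelcref{eq:unified_base} to obtain a clean base of $\wt{X}_k$ at every level, and verifying that the enlarged $\Zz$-family (with the asymmetric constraint $m \ge n$ and the optional $K_0$) produces exactly the cylinder sets of the standard base without introducing spurious ones.
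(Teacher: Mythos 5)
Your proposal is correct and follows essentially the same route as the paper: part (i) via \cite[Theorem~3.7.13]{Eng89} applied to the perfect bonding maps $\wt{f}_i$, part (ii) by inductively iterating the base \labelcref{eq:level_two_unified} and pulling back along the projections, part (iii) via \cref{lem:injectivity_limits}, and part (iv) from second countability of the pieces. The only (immaterial) deviation is in (iv), where you embed the limit into the metrisable product $\prod_i \wt{X}_i$ rather than observing directly that a countable inverse limit of second-countable spaces is second countable, hence metrisable when locally compact Hausdorff; note that iterating \cref{prop:unified_properties} here really uses the second countability of each $\wt{X}_k$ established in its proof, not merely the metrisability asserted in its statement.
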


\begin{proof}

By construction, $(\wt{X}_i, \wt{f}_i)_{i \in \NN}$ is an inverse sequence of locally compact Hausdorff spaces with perfect maps. By \cite[Theorem 3.7.13]{Eng89}, the limit $\varprojlim (\wt{X}_{i},\wt{f}_i)$ is a locally compact Hausdorff space in the initial topology induced by the $\wt{\pi}_k$, and the $\wt{\pi}_k$ are proper. The $\wt{\pi}_k$ are surjective since each $\wt{f}_k$ is surjective. 
That the sets in part \labelcref{itm:base_unifiedlim} form a base follows inductively from the base \labelcref{eq:level_two_unified} and the definition of the initial topology induced by the $\wt{\pi}_k$. 
By \cref{lem:injectivity_limits}, the inclusions $\iota_{X_k} \colon X_k
 \hookrightarrow \wt{X}_k$ induce a continuous inclusion $\iota \colon \varprojlim(X_i,f_i) \hookrightarrow \unilim(\wt{X}_i, \wt{f}_i)$ such that $\wt{\pi}_k \circ \iota = \iota_{X_k} \circ \pi_k$ for all $k \in \NN$. 
If each $X_i$ is second countable, then \cref{prop:unified_properties} implies that each $\wt{X}_i$ is second countable. The countable inverse limit of second-countable spaces is second countable. Since $\unilim(X_i,f_i)$ is second countable, locally compact, and Hausdorff, it is metrisable.   
\end{proof}

Since $\wt{X}_i = \bigsqcup_{k=0}^{i} X_k$ and $\wt{f}_i|_{\wt{X}_i} = \id_{\wt{X}_i}$ for each $i \in \NN$, we may identify $\unilim ({X}_{i},{f}_i)$ with $\varprojlim (X_{i},f_i) \sqcup \bigsqcup_{i=0}^{\infty} X_i$. If $\pi_i \colon \varprojlim (X_i,f_i) \to X_i$ denote the universal projections, then, under this identification, the universal projections $\wt{\pi}_i \colon \varprojlim (X_{i},f_i) \sqcup \bigsqcup_{i=0}^{\infty} X_i \to \wt{X}_i$ satisfy
\begin{equation}\label{eq:pi_tilde}
	\wt{\pi}_i (x) 
	= \begin{cases}
		\pi_i(x) & \text{if } x \in \varprojlim(X_i,f_i),\\
		f_{i} \circ \cdots \circ  f_{j-1}(x) & \text{if } x \in X_j \text{ and } j > i,\\
		x & \text{if } x \in X_j \text{ and } j \le i.
	\end{cases}
\end{equation}

Although \cref{prop:unified_limit} implies that $\varprojlim(X_i,f_i)$ may be identified as a subspace of $\unilim (X_i,f_i)$, it is typically neither open nor closed for otherwise $\varprojlim(X_i,f_i)$ would be locally compact. The basic open sets $\Zz(U_0,\ldots,U_m; K_0, \ldots, K_n)$ contain elements from both $\varprojlim(X_i,f_i)$ and $\bigsqcup_{i=0}^{\infty} X_i$. Limit points of $\varprojlim(X_i,f_i)$, as a subspace of $\unilim(X_i,f_i)$, can belong to $\bigsqcup_{i=0}^{\infty} X_i$.

One application of the unified limit is in extending a countably infinite product of locally compact spaces (which is typically not locally compact) to a locally compact space. 

\begin{example}	\label{ex:product}
	Let $(X_i)_{i \in \NN}$ be a sequence of locally compact Hausdorff spaces.
	Let $p_m \colon \prod_{i=0}^{m+1} X_i \to \prod_{i=0}^{m} X_{i}$ be the projection onto the first $m$ components. Then $\prod_{i=0}^{\infty} X_i \simeq \varprojlim ( \prod_{i=0}^{m} X_{i},p_m)$. By \cref{prop:unified_limit}, the space $\varprojlim^{\uplus}  ( \prod_{i=0}^{m} X_{i},p_m)$ is locally compact and contains $\prod_{i=0}^{\infty} X_i$ as a subspace.
\end{example}

In the next example looks at a more specific case of \cref{ex:product} to highlight the nature of convergence in the unified limit. 

\begin{example}\label{ex:integers_product}
	Suppose that $X_i = \ZZ$ for all $i \in \NN$. Then $\ZZ^{\infty} \coloneqq \prod_{i =0}^{\infty} \ZZ$ is the space of all infinite sequences of integers. Let $\pi_k \colon \ZZ^{\infty} \to \prod_{i=0}^k \ZZ$ be the projection onto the first $k$ terms. The family  $\{\pi_k^{-1}(n) \mid k \in \NN,\, n \in \ZZ^k\}$ constitutes a base for the topology on $\ZZ^{\infty}$. For $k \in \NN$ and $n = (n_1,\ldots,n_k)$, the basic open set $\pi_k^{-1}(n)$ consists of all infinite sequences that start with the finite sequence $n$. 
	If $a \in \ZZ^{\infty}$ we write $a_i$ for its $i$-th term. A sequence $(a^j)_{j \in \NN}$ of sequences $a^j \in \ZZ^{\infty}$ converges to $a \in \ZZ^{\infty}$ if, for every $I \in \NN$, there exists $J \in \NN$ such that $a_i = a_i^j$ for all $0 \le i \le I$ and $j \ge J$. 
	
	As a set, we identify the unified limit $\varprojlim^{\uplus}  ( \prod_{i=0}^{m} \ZZ,p_m)$ with $\ZZ^{\infty} \sqcup \bigsqcup_{k \in \NN} \ZZ^k$, the collection of all finite and infinite sequences in $\ZZ$. Let $k \in \NN$, $n \in \ZZ^k$, and fix a finite set $F \subseteq \bigsqcup_{i \in \NN} \ZZ^i$ of finite sequences. Let
	\[
	\Zz(n,F) \coloneqq \Big\{ a \in \ZZ^{\infty} \sqcup \bigsqcup_{i \in \NN} \ZZ^i \mid (a_1,\ldots,a_k) = n \text{ and } (a_1,\ldots,a_i) \notin F \text{ for all } i \in \NN\Big\}.
	\]
	 Adapting \cref{prop:unified_limit}~\labelcref{itm:base_unifiedlim} to this setting, a base for the topology on $\varprojlim^{\uplus}  ( \prod_{i=0}^{m} \ZZ,p_m)$ is given by the sets $\Zz(n,F)$, ranging over all finite sequences $n$ and finite sets $F \subseteq \bigsqcup_{k \in \NN} \ZZ^k$.
	 
	Fix a finite sequence $d = (d_1,\ldots,d_l) \in \ZZ^{l}$. For each $j \ge 1$, let $b^j = (d_1,\ldots,d_l,j,j,\cdots) \in \ZZ^{\infty}$. The sequence $(b^j)_{j \in \NN}$ does not converge in $\ZZ^{\infty}$. We show, however, that $b^j \to d$ in $\varprojlim^{\uplus}   ( \prod_{i=0}^{m} \ZZ,p_m)$. Fix $k \in \NN$, $n \in \ZZ^k$ and a finite set $F$ of finite sequences such that $d \in \Zz(n,F)$. That is, $(d_1,\ldots,d_k) = n$ (so $k \le l$) and no initial segment of $d$ belongs to $F$. By definition, $(b^j_1,\ldots,b^j_k) = n$, and since $F$ is finite, for large $j$ we have $b^j \not\in F$. Thus, $b^j \to d$ in $\varprojlim^{\uplus}   ( \prod_{i=0}^{m} \ZZ,p_m)$. We have found a sequence of infinite sequences in $\ZZ^{\infty}$ that converges to a finite sequence in the unified limit. 
\end{example}

For the next example we require the following definition. 

\begin{dfn}[{\cite{Kat04}}]
	A \emph{topological graph} $E = (E^0,E^1,r,s)$ consists of locally compact Hausdorff spaces $E^0$ and $E^1$ of \emph{vertices} and  \emph{edges}, together with a continuous map $r \colon E^1 \to E^0$ called the \emph{range}, and a local homeomorphism $s \colon E^1 \to E^0$ called the \emph{source}. 
\end{dfn}

Topological graphs were introduced by Katsura~\cite{Kat04} as a class of topological dynamical systems that generalise directed graphs. Indeed, directed graphs correspond to topological graphs where $E^0$ and $E^1$ are countable and discrete. 

\begin{example}\label{ex:top_graph_1}
	Let $E = (E^0,E^1,r,s)$  be a topological graph.
	For each $n \in \NN$, the space of \emph{paths of length $n$} in $E$ is 
	\[
	E^n \coloneq \{e_1e_2\cdots e_n \mid e_i \in E^1, \, s(e_i) = r(e_{i+1}) \text{ for all } 1 \le i < n\} \subseteq \prod_{i=1}^n E^1
	\]
	with subspace topology inherited from the product. Each $E^n$ is closed in $\prod_{i=1}^n E^1$, and is therefore locally compact and Hausdorff. 
	For each $i \ge 1$, define $r_i \colon E^{i+1} \to E^{i}$ by $r_i(e_1\cdots e_{i+1}) = e_1\cdots e_{i}$ and $r_0 \coloneqq r$. The  \emph{(one-sided) infinite path space} of $E$ is the inverse limit 
	\[
	E^{\infty} \coloneqq \varprojlim (E^i,r_i) =  \{e_1e_2 e_3 \cdots \mid e_i \in E^1,\, s(e_i) = r(e_{i+1}) \text{ for all } i \in \NN \} \subseteq \prod_{i=1}^{\infty} E^1
	\]
	with the inverse limit topology. 
			
	The range map $r \colon E^1 \to E^0$ is typically not proper nor surjective. As such, the infinite path space $E^{\infty}$ is typically not locally compact. 
	By \cref{prop:unified_limit}, the unified limit $\varprojlim^{\uplus} (E^{i},r_i)$ is a locally compact Hausdorff space containing $E^\infty$ as a subspace. 
	The new sequence $(\wt{E}^i,\wt{r}_i)_{i \in \NN}$ is given explicitly by
	\[
	\wt{E}^i = \bigsqcup_{k=0}^i E^k
	\qquad \text{and} \qquad
	\wt{r}_i (x) =
	\begin{cases}
		r_i(x) & \text{if } x \in E^{i+1},\\
		x & \text{if } x \in \wt{E}^{i}.
	\end{cases}
	\]
	The unified limit is
	\[
	E^{\le \infty} \coloneqq {\textstyle\varprojlim^{\uplus}} (E^{i},r_i) =  E^{\infty} \sqcup \bigsqcup_{i=1}^{\infty} E^i,
	\]
	the collection of all finite and infinite paths in $E$. By  \cref{prop:unified_limit}~\labelcref{itm:base_unifiedlim}, the topology on $E^{\le \infty}$ is generated by the basic open sets 
	\[
	\big\{\Zz(U_0,\ldots,U_m; K_0, \ldots, K_n)  \bigm| U_i \in \tau_{E^i} , K_j \in \kappa_{E^k}, m \ge n \in \NN\big\}.
	\]
	
	The topology on $E^{\le \infty}$ agrees with the patch topology of \cite[Proposition~3.8]{deCa21}, which in turn agrees with the topologies on $E^{\le \infty}$ previously considered in \cite{Exe08,PW05,Web14,Yen07} (see \cite{deCa21}).
	 Our description complements these previous analyses of $E^{\le \infty}$. Importantly, our construction of $E^{\le \infty}$ only relies on elementary topological tools. It also helps to explain \emph{why} the topology on $E^{ \le \infty}$ is what it is: it ensures that the maps appearing in the inverse limit decomposition of $E^{\le \infty}$ are perfect.
\end{example}

By considering sub-fibrewise compactifications of the unified space, we can also construct smaller extensions of $\varprojlim(X_i,f_i)$. To describe these extensions, we introduce the following terminology. 
\begin{dfn}\label{dfn:regulated_limit} Let $(X_i,f_i)_{i \in \NN}$ be an inverse sequence of locally compact Hausdorff spaces and continuous maps. 
	A \emph{regulating sequence} for $(X_i,f_i)_{i \in \NN}$ is a collection $\Uu = (U_i)_{i \in \NN}$ of open sets $U_i \subseteq X_i$, such that each $U_i$ is $f_{i}$-proper. If each $U_i$ is $f_{i}$-perfect, then we call $\Uu$ a \emph{perfect} regulating sequence. 
	
	Given a regulating sequence $\Uu$ for $(X_i,f_i)_{i \in \NN}$ let $X^{\Uu}_0 \coloneqq X_0$. For $i \ge 1$ inductively define
	\[
	X^\Uu_i = \pers{X_{i}}{\iota_{X_{i-1}} \circ f_{i-1}}{U_{i-1}}{X^{\Uu}_{i-1}}.
	\]
	Let $f^{\Uu}_i \colon X^{\Uu}_{i+1} \to X^{\Uu}_i$ be the perfect map given by 
	\[
	f^{\Uu}_i(x) = \perf{(\iota_{X_i} \circ f_i)}{U_i} (x)= \begin{cases}
		f(x) & \text{if } x \in X_{i+1}\\
		x & \text{if } x \in X^\Uu_i\setminus U_i.
	\end{cases}
	\]
	 We call the space $\varprojlim^{\Uu}(X_i,f_i) \coloneqq \varprojlim(X^{\Uu}_{i},f^{\Uu}_i)$ the \emph{$\Uu$-regulated limit} of $(X_i,f_i)_{i \in \NN}$.
\end{dfn}

\begin{rmk}
	If $\Uu = (\varnothing)_{i \in \NN}$, then $\varprojlim^{\Uu}(X_i,f_i) = \unilim(X_i,f_i)$.
\end{rmk}

As sets we have $X^{\Uu}_{i} = X_i \sqcup \bigsqcup_{k=0}^{i-1} (X_k \setminus U_k)$. We extend the notation established in \cref{sec:commutative_composition} and write
\[
X^{\Uu}_{i} \eqqcolon \pers{X_{i}}{f_{i-1}}{U_{i-1}}{
			\pers{X_{i-1}}{f_{i-2}}{U_{i-2}}{\pers{\cdots}{f_0}{U_0}{X_0}}}.
\]
We have the following generalisation of \cref{prop:unified_limit}.

\begin{cor}\label{cor:regulated_limit}
		Let $(X_i,f_i)_{i \in \NN}$ be an inverse sequence of locally compact Hausdorff spaces and continuous maps, and let $\Uu = (U_i)_{i \in \NN}$ be a regulating sequence for $(X_i,f_i)_{i \in \NN}$. 
		\begin{enumerate}
			\item\label{itm:ureg_lch} The $\Uu$-regulated limit $\varinjlim^{\Uu}({X}_{i},{f}_i)$ is a locally compact Hausdorff space.
			
			\item\label{itm:genlim_prop} The universal projections $\pi^{\Uu}_k \colon  \varprojlim^{\Uu}(X_i,f_i) \to X^{\Uu}_{k}$ are proper, and perfect if $\Uu$ is perfect.

			\item\label{itm:genlim_inclusion} 
			The inclusions $\iota_{X_k} \colon X_k \hookrightarrow X^{\Uu}_k$ for each $k \in \NN$ induce a continuous inclusion $\varprojlim(X_i,f_i) \hookrightarrow \varprojlim^{\Uu}(X_{i},f_i)$.
			\item\label{itm:genlim_ctble}
			If each $X_i$ is second countable, then $\varprojlim^{\Uu} ({X}_{i},{f}_i)$ is metrisable.
			
			\item\label{itm:genlim_sub} If $\Vv = (V_i)_{i \in \NN}$ is another regulating sequence for $(X_i,f_i)_{i \in \NN}$ such that $U_i \subseteq V_i$ for all $i \in \NN$, then $\varprojlim^{\Vv}(X_i,f_i)$ can be identified with the closed subset $\varprojlim(X_i,f_i) \sqcup \bigsqcup_{i=0}^{\infty} (X_i \setminus V_i)$ of $\varprojlim^{\Uu}(X_i,f_i)$.  In particular, $\varprojlim^\Uu(X_i,f_i)$ can be identified with the closed subset $\varprojlim(X_i,f_i) \sqcup \bigsqcup_{i=0}^{\infty} (X_i \setminus U_i)$ of $\unilim(X_i,f_i)$. 
		\end{enumerate}
\end{cor}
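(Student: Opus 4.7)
The plan is to mirror the argument of \cref{prop:unified_limit} essentially verbatim, using \cref{lem:subperfection_characterisation} in place of the corresponding properties of the unified space. By \cref{lem:subperfection_characterisation}, each $X^{\Uu}_i$ is locally compact and Hausdorff and the map $f^{\Uu}_i \colon X^{\Uu}_{i+1} \to X^{\Uu}_i$ is proper (perfect whenever $\Uu$ is perfect, since then each $U_i$ is $f_i$-perfect). This is the only input needed for \labelcref{itm:ureg_lch} and \labelcref{itm:genlim_prop}: by \cite[Theorem~3.7.13]{Eng89} the inverse limit of locally compact Hausdorff spaces along proper maps is itself locally compact and Hausdorff, and the universal projections are proper; surjectivity of each $f^{\Uu}_i$ in the perfect case carries over to each $\pi^{\Uu}_k$.

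For \labelcref{itm:genlim_inclusion}, I would note that the defining formula of $f^{\Uu}_i$ in \cref{dfn:regulated_limit} gives $f^{\Uu}_k \circ \iota_{X_{k+1}} = \iota_{X_k} \circ f_k$, so \cref{lem:injectivity_limits} applied to $\alpha_k = \iota_{X_k}$ produces the required continuous injection $\varprojlim(X_i,f_i) \hookrightarrow \varprojlim^{\Uu}(X_i,f_i)$. For \labelcref{itm:genlim_ctble}, observe that each $X^{\Uu}_i$ is a subspace of $\wt{X}_i$, which is second countable by \cref{prop:unified_properties}\labelcref{itm:unified_5} whenever the $X_i$ are; a countable inverse limit of second-countable spaces is second countable, and a second-countable locally compact Hausdorff space is metrisable.

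The less routine part is \labelcref{itm:genlim_sub}. The strategy is to iterate \cref{lem:compositions_associative_sub} to identify $X^{\Vv}_i$ with a closed subspace of $X^{\Uu}_i$. Concretely, one inductively verifies that
\[
X^{\Vv}_i = X_i \sqcup \bigsqcup_{k=0}^{i-1}(X_k \setminus V_k) \subseteq X_i \sqcup \bigsqcup_{k=0}^{i-1}(X_k \setminus U_k) = X^{\Uu}_i
\]
is closed (each inclusion in the inductive step is closed by the final clause of \cref{lem:compositions_associative_sub}), and that $f^{\Vv}_i$ coincides with the restriction of $f^{\Uu}_i$. Then \cref{lem:injectivity_limits} gives a continuous injection $\alpha \colon \varprojlim^{\Vv}(X_i,f_i) \hookrightarrow \varprojlim^{\Uu}(X_i,f_i)$ with $\pi^{\Uu}_k \circ \alpha = \iota_k \circ \pi^{\Vv}_k$, where $\iota_k \colon X^{\Vv}_k \hookrightarrow X^{\Uu}_k$ is the closed inclusion. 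The image of $\alpha$ is the intersection $\bigcap_{k \in \NN}(\pi^{\Uu}_k)^{-1}(X^{\Vv}_k)$, which is closed in $\varprojlim^{\Uu}(X_i,f_i)$; set-theoretically this intersection is exactly $\varprojlim(X_i,f_i) \sqcup \bigsqcup_{i=0}^{\infty}(X_i \setminus V_i)$ under the identification of \labelcref{eq:pi_tilde} adapted to the $\Uu$-regulated setting. Taking $\Vv = (\varnothing)_{i \in \NN}$ (so that $\varprojlim^{\Vv}(X_i,f_i) = \unilim(X_i,f_i)$) gives the second assertion.

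The main obstacle I anticipate is bookkeeping in \labelcref{itm:genlim_sub}: one must carefully check that the two iterated sub-fibrewise compactifications produced by descending along the tower (once with $\Uu$, once with $\Vv$) are compatible with the compositions of \cref{lem:compositions_associative_sub} so that $\alpha$ really is a \emph{topological} embedding with closed image, not merely a set-theoretic bijection onto its image. The rest of the corollary reduces to citing \cref{lem:subperfection_characterisation}, \cref{lem:injectivity_limits}, \cref{prop:unified_properties}, and \cite[Theorem~3.7.13]{Eng89}.
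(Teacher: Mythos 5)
Your handling of parts (i)--(iv) is the same as the paper's, which simply reruns the proof of \cref{prop:unified_limit} with \cref{lem:subperfection_characterisation} supplying properness (perfectness) of the $f_i^{\Uu}$. In part (v) you diverge in one genuine way: the paper shows the induced map $\alpha$ is a \emph{closed map} by a net argument (push a convergent net $\alpha(x_\lambda)\to x$ through each $\pi_k^{\Uu}$, use closedness and injectivity of $\alpha_k$ to extract a thread $y$ with $\alpha(y)=x$), whereas you identify the image outright as $\bigcap_{k}(\pi_k^{\Uu})^{-1}(X_k^{\Vv})$, which is closed because each $X_k^{\Vv}$ is closed in $X_k^{\Uu}$. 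Your identification is correct -- if $\pi_k^{\Uu}(x)\in X_k^{\Vv}$ for all $k$ then $y_k\coloneqq\pi_k^{\Uu}(x)$ is a thread for $(X_i^{\Vv},f_i^{\Vv})$ since $f_k^{\Vv}$ is the restriction of $f_k^{\Uu}$ -- and is arguably cleaner than the net argument. The ``obstacle'' you flag, that $\alpha$ be a topological embedding and not merely a continuous bijection onto its image, is in fact automatic and needs no properness: by \cref{dfn:other_perfections} and \cref{lem:compositions_associative_sub} each $X_k^{\Vv}$ carries the subspace topology of $X_k^{\Uu}$ (both being subspaces of $\wt{X}_k$), and both limits carry the subspace topology of the corresponding products, so $\alpha$ is the restriction of the embedding $\prod_k\alpha_k$ and hence an embedding. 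One genuine slip to fix: in the final specialisation you take $\Vv=(\varnothing)_{i\in\NN}$, but the hypothesis $U_i\subseteq V_i$ then forces $U_i=\varnothing$; the roles must be reversed, i.e.\ apply the general statement with $(\varnothing)_{i\in\NN}$ playing the role of $\Uu$ and the given regulating sequence playing the role of $\Vv$, so that $\varprojlim^{\Uu}(X_i,f_i)$ sits as a closed subset of $\unilim(X_i,f_i)$, exactly as the paper does.
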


\begin{proof}
	Items \labelcref{itm:ureg_lch,itm:genlim_prop,itm:genlim_inclusion,itm:genlim_ctble} follow from arguments analogous to those in the proof of \cref{prop:unified_limit}.
	
 For \labelcref{itm:genlim_sub}, let $\Uu = (U_i)_{i \in \NN}$ and $\Vv = (V_i)_{i \in \NN}$ be regulating sequences for $(X_i,f_i)_{i \in \NN}$ with $U_i \subseteq V_i$. By \cref{lem:composition_big} and
 \cref{lem:compositions_associative_sub}, for each $i \in \NN$  the space $X_{i}^{\Vv} = X_i \sqcup \bigsqcup_{k=0}^{i-1} (X_k \setminus V_i)$ is closed in $X_{i}^{\Uu} = X_i \sqcup \bigsqcup_{k=0}^{i-1} (X_k \setminus U_i)$ which is, in turn, closed in $\wt{X}_{i}= \bigsqcup_{k=0}^{i}X_k$. For each $i \in \NN$ let,  $\alpha_i \colon X_{i}^{\Vv} \hookrightarrow  X_{i}^{\Uu}$ be the corresponding closed inclusion. Then $ f_{i}^{\Vv} \circ \alpha_{i+1} = \alpha_{i} \circ f_{i}^{\Uu}$.
 So, \cref{lem:injectivity_limits} gives a continuous inclusion $\lambda \colon \varprojlim^{\Vv}(X_i,f_i) \hookrightarrow \varprojlim^{\Uu}(X_i,f_i)$ such that $\alpha_i \circ \pi^{\Vv}_i = \pi^{\Uu}_i  \circ \alpha$ for all $i \in \NN$. 

	To see that $\alpha$ is closed, fix a net $(x_{\lambda})$ in $\varprojlim^{\Vv}(X_i,f_i)$ such that $\alpha(x_{\lambda}) \to x$ for some $x \in  \varprojlim^{\Uu}(X_i,f_i)$. Fix $k \in \NN$. Then $\alpha_k \circ \pi^{\Vv}_k (x_{\lambda}) = \pi^{\Uu}_k \circ \alpha (x_{\lambda}) \to \pi^{\Uu}_k (x)$. Since each $\alpha_k$ is a closed map, $\alpha_k(X_k^{\Vv})$ is closed. So $\pi^{\Uu}_k(x) = \alpha_k(y_k)$ for some $y_k \in X^{\Vv}_k$. Injectivity of $\alpha_k$ implies that $\pi^{\Vv}_k(x_{\lambda}) \to y_k$. Since each $X_k^{\Vv}$ is Hausdorff, continuity of $f_{k}^{\Vv}$ and the fact that $f_k^{\Vv} \circ \pi_{k+1}^{\Vv} = \pi_k^{\Vv}$  gives $f_{k}^{\Vv}(y_{k+1}) = y_k$ for all $k \in \NN$. That is, $y \coloneqq (y_0,y_1,\ldots) \in \prod_{i=0}^{\infty} X^{\Vv}_i$ belongs to $\varprojlim(X^{\Vv}_{i},f^{\Vv}_i)$, and $\alpha(y) = x$. So $\alpha$ is closed. 

	Taking $\Uu = (\varnothing)_{i \in \NN}$, $\Vv = (U_i)_{i \in \NN}$, and applying \cref{lem:compositions_associative_sub}
	gives the identification of $\varprojlim^{\Uu}(X_i,f_i)$ with $\varprojlim(X_i,f_i) \sqcup \bigsqcup_{i=0}^{\infty} (X_i \setminus U_i)$ in $\unilim(X_i,f_i)$. 
\end{proof}

\begin{dfn}
	Let $(X_i,f_i)_{i \in \NN}$ be an inverse sequence of locally compact Hausdorff spaces.
	\begin{enumerate}
		\item If $\Uu = (\pr(f_i))_{i \in \NN}$, then we call $\minlim(X_i,f_i) \coloneqq \varprojlim^{\Uu}(X_i,f_i)$ the \emph{minimal regulated limit} of $(X_i,f_i)_{i \in \NN}$.
		\item If $\Uu = (\per(f_i))_{i \in \NN}$, then we call $\perlim(X_i,f_i) \coloneqq \varprojlim^{\Uu}(X_i,f_i)$ the \emph{minimal perfect regulated limit} of $(X_i,f_i)_{i \in \NN}$.
	\end{enumerate}
\end{dfn}

By \cref{cor:regulated_limit}, there is a sequence of inclusions
\[
\varprojlim(X_i,f_i) 
\subseteq \minlim(X_i,f_i)
\subseteq \perlim(X_i,f_i)
\subseteq \unilim (X_i,f_i)
\]
with the final two inclusions being closed. 

\begin{example}
	In the setup of \cref{ex:product} we had locally compact Hausdorff spaces $X_i$ and projections $p_m \colon \prod_{i=0}^{m+1} X_i \to \prod_{i=0}^{m} X_{i}$. Each projection $p_m$ is surjective, and is proper if and only if $X_{m+1}$ is compact. In fact, we have
	\[
	\per(p_m) = \pr(p_m) = \begin{cases}
		\prod_{i=0}^{m} X_{i} & \text{if } X_{m+1} \text{ is compact},\\
		\varnothing & \text{if } X_{m+1} \text{ is not compact}.
	\end{cases}
	\]
	So $\minlim(X_i,f_i)
	= \perlim(X_i,f_i)$. These limits are equal to $\varprojlim(X_i,f_i)$ if every $X_i$ is compact, and equal to $\unilim(X_i,f_i)$ if every $X_i$ is not compact (as in \cref{ex:integers_product}). 
\end{example}

\begin{example}\label{ex:top_graph_2}
	Let $E = (E^0,E^1,r,s)$ be a topological graph as in \cref{ex:top_graph_1}. There are two important subsets of vertices in the analysis of topological graphs (see \cite[Definition~2.6]{Kat04}). These are the set of \emph{finite receivers}
	\begin{align*}
		E^0_{\mathrm{fin}} \coloneqq \{v \in E^0 &\mid v \text{ has precompact open neighbourhood } V \\
		&\quad \text{ such that } r^{-1}(\ol{V}) \text{ is compact}\}
	\end{align*}
	and the set of \emph{sources} $
	E^0_{\mathrm{src}} \coloneqq E^0 \setminus \ol{r(E^1)}
	$.
	Elements of $E^0_{\mathrm{sing}} \coloneqq 	\ol{E^0_{\mathrm{src}}} \cup (E^0 \setminus E^0_{\mathrm{fin}})$ are called  \emph{singular vertices}, and elements of $E^0_{\mathrm{reg}} \coloneqq E^0 \setminus E^0_{\mathrm{sing}}$ are called \emph{regular vertices}.
	In our established terminology $E^0_{\mathrm{fin}} = \pr(r)$ and 
	\begin{align*}
		E_{\mathrm{reg}}^0 &= E^0 \setminus \Big(\ol{E^0 \setminus \ol{r(E^1)}} \cup (E^0 \setminus E^0_{\mathrm{fin}})\Big)  
		=\Big(E^0 \setminus \ol{E^0 \setminus \ol{r(E^1)}}\Big) \cap E^0_{\mathrm{fin}}\\
		&= \intt(\ol{r(E^1)}) \cap \pr(r)
		= \per(r).
	\end{align*}

	 Let $V \subseteq E^0$ be an open subset of $E^0_{\mathrm{fin}}$, so $V$ is $r$-proper by \cref{lem:f-reg_characterisation}. 	As in \cref{ex:top_graph_1}, let $r_i \colon E^{i+1} \to E^{i}$ be given by $r_i(e_1\cdots e_{i+1}) = e_1\cdots e_{i}$ for $i \ge 1$, and let $r_0 \coloneqq r$.  For each $i \ge 1$, the set $E^{i}V\coloneqq \{e_1 \cdots e_i \in E^i \mid s(e_i) \in V\}$ is $r_i$-proper. Setting $U_0 = V$, and $U_i = E^i V$ gives a regulating sequence $\Uu = (U_i)_{i \in \NN}$ for $(E^i,r_i)_{i \in \NN}$. The regulating sequence $\Uu$ is perfect if and only if $V \subseteq E^0_{\mathrm{reg}}$. 
	
	In the case where $V = E^{0}_{\mathrm{reg}}$ we write $E_{\mathrm{sing}}^i \coloneqq E^i \setminus E^iV$ for $i \ge 0$. Since $\unilim(X_i,f_i) \simeq E^{\le \infty}$ (see \cref{ex:top_graph_1}), $\perlim(X_i,f_i)$ is homeomorphic to the \emph{boundary path space}
	\[
	\partial E \coloneqq E^{\infty} \sqcup \bigsqcup_{i=0}^{\infty} E^i_{\mathrm{sing}}
	\]
	of \cite{deCa21,Web14,Yen06}. If $V \coloneqq E^0_{\mathrm{fin}}$, then we write $E_{\mathrm{inf}}^i = E^i \setminus E^iV$ for $i \ge 0$. We have
	\[
	\minlim(X_i,f_i) \cong E^{\infty} \sqcup \bigsqcup_{i=0}^{\infty} E^i_{\mathrm{inf}},
	\]
	which is a closed subspace of $\partial E$. As far as the author is aware, this space does not appear in the topological or directed-graph literature. Other choices of $V$ (or even $\Uu$) correspond to ``relative'' path spaces.
\end{example}

\section{\texorpdfstring{The $C^*$-algebraic setting}{The C*-algebraic setting}}
\label{sec:CStar}

In this section, we move from classical point-set topology to noncommutative $C^*$-algebras. Briefly, a $C^*$-algebra is a norm-closed $*$-subalgebra of the bounded linear operators on some complex Hilbert space.  We refer the reader to \cite{Arv76,Dav96,Mur90} for a standard introduction to $C^*$-algebras.

 By Gelfand duality, every \emph{commutative} $C^*$-algebra $A$ is isomorphic to $C_0(X)$, the algebra of continuous $\CC$-valued functions vanishing at infinity on some locally compact Hausdorff space $X$. 
The space $X$ is compact if and only if $A$ is unital (has a multiplicative identity). In fact, Gelfand duality provides an equivalence of categories between the category of compact Hausdorff spaces with continuous maps, and the category of unital commutative $C^*$-algebras with unital $*$-homomorphisms. A continuous map $f \colon X \to Y$ is dual to the $*$-homomorphism $f^* \colon C(Y) \to C(X)$  given by $f^*(a) = a \circ f$. As such, noncommutative unital $C^*$-algebras can be thought of as ``noncommutative compact Hausdorff spaces''. Results about the category of compact Hausdorff spaces and continuous maps often have analogous in the category of \emph{noncommutative} unital $C^*$-algebras and $*$-homomorphisms. This is referred to as the \emph{noncommutative topology} paradigm. 

 In the noncompact setting things are more subtle, requiring us to introduce some additional terminology.
By an ideal in $C^*$-algebra we mean a norm-closed 2-sided ideal. Ideals in $C_0(X)$ are isomorphic to $C_0(U)$ for some open set $U \subseteq X$, and the quotient $C_0(X)/C_0(U)$ is isomorphic to $C_0(X \setminus U)$, with the quotient map $C_0(X) \twoheadrightarrow  C_0(X \setminus U)$ given by restriction of functions to $X \setminus U$. 
 If $I$ is an ideal in a $C^*$-algebra $A$ then the \emph{annihilator} of $I$ is the ideal of $A$ given by
\begin{equation}\label{eq:annihilator}
	I^{\perp} = \{a \in A \colon ab = 0 \text{ for all } b \in I\}.
\end{equation}
 An ideal $I \trianglelefteq A$ is \emph{essential} if $I^{\perp} = \{0\}$.
 If $U \subseteq X$ is open, then $C_0(U)^{\perp} \cong C_0(X \setminus \overline{U})$, so $C_0(U)\trianglelefteq C_0(X)$ is essential if and only if $U$ is dense in $X$. 

The \emph{multiplier algebra} $\Mm(A)$ of a $C^*$-algebra $A$ is the largest unital $C^*$-algebra containing $A$ as an essential ideal (see \cite[Ch.~2]{Lan95}). The \emph{strict topology} on $\Mm(A)$ is generated by the seminorms $t \mapsto \|ta\|$ and $t \mapsto \|t^*a\|$ for all $a \in A$. For a commutative $C^*$-algebra $C_0(X)$ there are isomorphisms $\Mm(C_0(X)) \cong C_b(X) \cong C(\beta X)$, where $C_b(X)$ is the continuous bounded functions on $X$, and $\beta X$ is the Stone--\v{C}ech compactification of $X$. We usually identify $\Mm(C_0(X))$ with $C_b(X)$ and $C(\beta X)$. 

Let $B$ be another $C^*$-algebra.
 Following \cite{Lan95}, we call a $*$-homomorphism $\varphi \colon A \to \Mm(B)$ a \emph{morphism} from $A$ to $B$ if $\varphi(A)B \coloneqq {\spaan} \{ \varphi(a)b \colon a \in A, b \in B\}$ is norm-dense in $B$. We write $\Mor(A,B)$ for the collection of all morphisms from $A$ to $B$. By \cite[Proposition~2.5]{Lan95}, a $*$-homomorphism $\varphi \colon A \to \Mm(B)$ belongs to $\Mor(A,B)$ if and only if for some approximate unit $(a_{\lambda})$ of $A$, we have $\varphi(a_\lambda) \to 1_{\Mm(B)}$ in the strict topology on $\Mm(B)$.  If $I \trianglelefteq A$, then a morphism $\varphi \in \Mor(I,B)$ extends to a unique morphism $\ol{\varphi} \in
 \Mor(A,B)$ satisfying $\ol{\varphi}(a)b = \lim_{\lambda} \varphi(a u_{\lambda})b$ for all $a \in A$, $b \in B$, and approximate identities $(u_{\lambda})$ for $I$ \cite[Proposition~2.1]{Lan95}. If $\varphi \in \Mor(A,B)$, then the extension $\ol{\varphi} \colon \Mm(A) \to \Mm(B)$, induced by $A \trianglelefteq \Mm(A)$, is unital. 
  
  Following \cite{ELP99}, we say that a morphism $\varphi \in \Mor(A,B)$ is \emph{proper} if $\varphi(A) \subseteq B$. 
  We denote the collection of proper morphisms from $A$ to $B$ by $\Morp(A,B)$. 
  By post-compositing with the inclusion $B \hookrightarrow \Mm(B)$, a $*$-homomorphism $\varphi \colon A \to B$ corresponds to a $*$-homomorphism $ A \to \Mm(B)$ whose image is contained in $B$. Thus, a $*$-homomorphism $\varphi \colon A \to B$
  corresponds to a proper morphism in $\Morp(A,B)$ if and only if for any approximate identity $(a_{\lambda})$ for $A$, the net $(\varphi(a_{\lambda}))$ is an approximate identity for $B$. We identify $\Morp(A,B)$ with such $*$-homomorphisms.  
  
It is important to recognise that if $\varphi \colon A \to \Mm(B)$ belongs to $\Mor(A,B)$, then it is not usually the case that $\varphi \in \Morp(A,\Mm(B))$; as this requires $\ol{\varphi(A)\Mm(B)} = \Mm(B)$ (see \cref{rmk:nondegen_distinction} below). 
In the literature there is some confusion; the maps that we call ``morphisms'' and ``proper morphisms'' are both referred to a \emph{nondegenerate $*$-homomorphisms}, despite their subtle distinction. We refrain from using the term ``nondegenerate'' to avoid confusion, except when referring to representations of $C^*$-algebras on Hilbert spaces and Hilbert modules.

Both morphisms and proper morphisms can be used to define categories whose objects are $C^*$-algebras. If $\phi \in \Mor(A,B)$ and $\psi\in \Mor(B,C)$, then the composition of $\psi$ with $\phi$ is defined to be $\ol{\psi} \circ \phi \in \Mor(A,C)$, where $\ol{\psi}\colon \Mm(B) \to \Mm(C)$ is the extension induced by the ideal $B \trianglelefteq \Mm(B)$.
The category of $C^*$-algebras and proper morphisms is a wide subcategory of the category of $C^*$-algebras and morphisms. 
Composition of proper morphisms corresponds to the usual composition of $*$-homomorphisms.

If $f \colon X \to Y$ is a continuous map between locally compact Hausdorff spaces, and $a \in C_0(Y)$, then $f^*(a) \coloneqq a \circ f$ is continuous and bounded, but does not necessarily vanish at infinity. We instead get a $*$-homomorphism $f^* \colon C_0(Y) \to C_b(X) \cong \Mm(C_0(X))$ such that $f^*(C_0(Y))C_0(X)$ is dense in $C_0(X)$. That is, $f^* \in \Mor(C_0(Y),C_0(X))$. In fact, there is an equivalence of categories between the category of locally compact Hausdorff spaces and continuous maps and the category of commutative $C^*$-algebras with morphisms \cite{aHRW10,Wor80}. 
As such, the ``correct'' category of \emph{noncommutative} locally compact Hausdorff spaces is the category of $C^*$-algebras with morphisms. 

  If $f \colon X \to Y$ is also assumed to be proper, then the map $f^*$ instead takes values in $C_0(X)$. The equivalence of categories descends to one between the category of locally compact Hausdorff spaces with proper continuous maps and the category of commutative $C^*$-algebras with proper morphisms. The category of $C^*$-algebras and proper morphisms is the noncommutative analogue of the category of locally compact Hausdorff spaces and proper continuous maps. 

We summarise the preceding discussion with the following noncommutative dictionary:

	\begin{center}
		\def\arraystretch{1.1}
		\begin{tabular}{c | c }
%			\hline
			Topological spaces & $C^*$-algebras\\
			\hline
			\hline
			Compact Hausdorff spaces & Unital $C^*$-algebras\\
			Continuous maps & Unital $*$-homomorphisms\\
			\hline
			Locally compact Hausdorff spaces & Nonunital $C^*$-algebras\\
			 Continuous maps & Morphisms\\
			\hline
		Locally compact Hausdorff spaces & Nonunital $C^*$-algebras \\
			Continuous proper maps  & Proper morphisms\\
		\end{tabular}
	\end{center}
	
	\noindent For commutative $C^*$-algebras, the category on the left of each row is equivalent to the category on the right.

	We record some examples of morphisms that appear in the $C^*$-algebraic literature.
	
	\begin{example}\label{ex:reps}
		Let $A$ be a $C^*$-algebra 
		and suppose that $\pi \colon A \to \Bb(\Hh)$ is a representation on a Hilbert space $\Hh$. Since the compact operators $\Kk(H)$ are the unique nontrivial ideal of $\Bb(H)$, they are essential, and we have $\Bb(\Hh) \cong \Mm(\Kk(\Hh))$. 		
		If $\pi$ is a nondegenerate representation (in the sense that $\ol{\pi(A)\Hh} = \Hh$), then  \cite[p.~20]{Lan95} implies that $\pi \in \Mor(A,\Kk(\Hh))$. 
	\end{example}

	\begin{example}\label{ex:C0(X)-algebras}
		Let $A$ be a $C^*$-algebra and let $X$ be a locally compact Hausdorff space. The algebra $A$ is said to be a \emph{$C_0(X)$-algebra} if there is a $*$-homomorphism $\varphi$ from $C_0(X)$ into the center of $\Mm(A)$ such that $\ol{\varphi(C_0(X)) A} = A$. In particular, $\varphi \in \Mor(C_0(X),A)$.
	\end{example}

\begin{example}\label{ex:correspondences}
	Let $B$ be a $C^*$-algebra and suppose that $X_B$ is a right Hilbert $B$-module (see~\cite{Lan95,RW98}) with $B$-valued inner product $\langle \, \cdot \mid \cdot \, \rangle \colon X_B \times X_B \to B$. We denote $X_B$ by $X$ if the algebra $B$ is clear from context. Let $\Ll_B(X)$ denote the collection of adjointable $B$-linear operators on $X_B$, and let $\Kk_B(X) = \ol{\spaan}\{\Theta_{x,y} \mid x,y \in X\}$ denote its ideal of generalised compact operators, where $\Theta_{x,y}(z) = x \, \langle y | z\rangle$ for all $x,y,z \in X$. 
	
	Suppose $A$ is another $C^*$-algebra and that $\varphi \colon A \to \Ll_B(X)$ is a $*$-homomorphism. We say that $(\varphi,{}_A X_B)$ is an \emph{$A$--$B$-correspondence}. 
	If $\varphi$ nondegenerate (in the sense that $\ol{\varphi(A)X_B }= X_B$) then we say that $(\varphi,{}_A X_B)$ is a \emph{nondegenerate correspondence}.  It follows from  \cite[p.~20]{Lan95} that under the isomorphism $\Ll_B(X) \cong \Mm(\Kk_B(X))$, nondegeneracy of $\varphi \colon A \to \Ll_B(X)$ is equivalent to $\varphi \in \Mor(A,\Kk_B(X))$.
\end{example}

\subsection{Noncommutative fibrewise compactifications and perfections}\label{sec:nc_fw}
For our noncommutative analogue of fibrewise compactifications our starting datum is a morphism $\varphi \in \Mor(A,B)$. The idea is that we want to modify the codomain of $\varphi$ to attain a proper morphism. 

In \cite[Definition~3.8]{Pim97}, Pimsner introduced the following ideal. 

\begin{dfn}
Let $\varphi \in \Mor(A,B)$. We call the ideal
	 \[
	\pim(\varphi) \coloneqq \varphi^{-1}(B) 
	\]
	of $A$ the \emph{Pimsner ideal of $\varphi$}.
\end{dfn}
 The Pimsner ideal of $\varphi \in \Mor(A,B)$ is the largest ideal $J$ of $A$ such that $\varphi|_J \in \Morp(J,B)$.  
For commutative $C^*$-algebras, the Pimsner ideal is related to the open set $\pr(f)$ of \labelcref{eq:pimsner_set}.

The following result translates the notions of $f$-proper and $f$-perfect sets in topological spaces to ideals in $C^*$-algebras. In the language of topological graphs the result is known to experts. 

\begin{lem}[{cf.~\cite{Kat04}}]\label{lem:from_sets_to_ideals_and_back}
	Let $f \colon X \to Y$ be a continuous map between locally compact Hausdorff spaces and let $f^* \colon C_0(Y) \to C_b(X)$ be the $*$-homomorphism given by $f^*(a) = a \circ f$. 
	An open set $U \subseteq Y$ is $f$-proper if and only if $f^*(C_0(U)) \subseteq C_0(X)$, and $f$-perfect if and only if $U$ is $f$-proper and $f^*|_{C_0(U)}$ is injective. In particular, $\pim(f^*) = C_0(\pr(f))$. 
\end{lem}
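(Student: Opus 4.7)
The plan is to work through the three claims using Gelfand duality between open subsets of $Y$ and ideals of $C_0(Y)$, combined with the characterisation of $\pr(f)$ and $\per(f)$ from \cref{lem:f-reg_characterisation}. Throughout, I will view $C_0(U)$ as an ideal of $C_0(Y)$ via extension by zero, and use that an element $g \in C_b(X)$ lies in $C_0(X)$ if and only if $\{x : |g(x)| \ge \epsilon\}$ is compact for every $\epsilon > 0$.

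For the first equivalence, the forward direction is direct. If $U$ is $f$-proper and $a \in C_0(U)$, then for every $\epsilon > 0$ the set $K_\epsilon = \{y \in U : |a(y)| \ge \epsilon\}$ is compact in $U$, and because $f|_{f^{-1}(U)}$ is proper, $f^{-1}(K_\epsilon) = \{x \in X : |f^*(a)(x)| \ge \epsilon\}$ is compact in $X$. Hence $f^*(a) \in C_0(X)$. For the converse, I would appeal to \cref{lem:f-reg_characterisation} and show $U \subseteq \pr(f)$: given $y \in U$, local compactness provides a precompact open $W$ with $y \in W \subseteq \overline{W} \subseteq U$, and Urysohn's lemma gives an $a \in C_c(Y)$ with $\supp(a) \subseteq U$ and $a \equiv 1$ on $\overline{W}$. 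Then $a \in C_0(U)$, so by hypothesis $f^*(a) \in C_0(X)$, and $f^{-1}(\overline{W})$ is a closed subset of the compact set $\{x : |f^*(a)(x)| \ge 1/2\}$, hence compact. So $y \in \pr(f)$.

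For the second equivalence, combine the first equivalence with the identification of the kernel of $f^*|_{C_0(U)}$. An element $a \in C_0(U)$ satisfies $a \circ f = 0$ precisely when $a$ vanishes on $f(X) \cap U$, so under Gelfand duality $\ker(f^*|_{C_0(U)}) \cong C_0(U \setminus \overline{f(X) \cap U})$, where the closure is taken in $U$. Therefore $f^*|_{C_0(U)}$ is injective if and only if $f(X) \cap U$ is dense in $U$, equivalently $U \subseteq \overline{f(X)}$. Since $U$ is open, this is equivalent to $U \subseteq \intt(\overline{f(X)})$. Combined with the first equivalence and \cref{lem:f-reg_characterisation}, this gives the stated characterisation of $f$-perfect sets.

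For the Pimsner-ideal statement, use that $\pim(f^*) = (f^*)^{-1}(C_0(X))$ is an ideal of $C_0(Y)$, so by Gelfand duality it has the form $C_0(V)$ for a unique open $V \subseteq Y$. Applying the first equivalence to $V$ shows $V$ is $f$-proper, whence $V \subseteq \pr(f)$ by \cref{lem:f-reg_characterisation}, and applying the first equivalence to $\pr(f)$ shows $C_0(\pr(f)) \subseteq \pim(f^*) = C_0(V)$, giving the reverse inclusion. The main obstacle I anticipate is ensuring the correspondence between kernels and ideals of $C_0(U)$ in the second equivalence is carefully handled, in particular the step identifying the closure of $f(X) \cap U$ inside $U$ with $\overline{f(X)} \cap U$; this is routine since $U$ is open in $Y$, but it is where the topology of $Y$ rather than of $U$ comes into play.
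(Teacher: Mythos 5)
Your proof is correct, but it takes a noticeably different route from the paper's in both converse directions. The paper argues the converse of the properness claim by contraposition: assuming $U$ is not $f$-proper it picks a compact $K\subseteq U$ with $f^{-1}(K)$ noncompact and builds a Tietze bump equal to $1$ on $K$, exhibiting an element of $C_0(U)$ whose pullback fails to vanish at infinity. You instead argue directly, showing each $y\in U$ lies in $\pr(f)$ by combining a Urysohn function with the level-set criterion for vanishing at infinity, and then invoke \cref{lem:f-reg_characterisation}; the two arguments are of comparable length and both are sound. The divergence is larger for the perfectness claim: the paper again works by contraposition, taking $y_0\in U\setminus f(X)$, splitting into the cases $y_0\in\overline{f(X)}\setminus f(X)$ (handled by \cref{lem:boundary_issues}) and $y_0\notin\overline{f(X)}$ (handled by a bump function killed by $f^*$), whereas you compute the kernel of $f^*|_{C_0(U)}$ outright as $C_0(U\setminus\overline{f(X)})$ and reduce injectivity to the containment $U\subseteq\intt(\overline{f(X)})$, then lean on \cref{lem:f-reg_characterisation} to recover surjectivity of $f|_{f^{-1}(U)}$ from $U\subseteq\per(f)$. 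Your version buys a cleaner structural statement (an explicit description of the kernel, which is also what makes the ``in particular'' clause immediate), at the cost of outsourcing the genuinely topological content --- that properness plus dense image forces surjectivity --- to the net argument inside \cref{lem:f-reg_characterisation}; the paper's version keeps that content visible by producing explicit witnesses of failure. Your identification $\overline{f(X)\cap U}\cap U=\overline{f(X)}\cap U$, which you rightly flag as the delicate step, does hold because $U$ is open, so there is no gap.
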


\begin{proof}
	First suppose that $U \subseteq Y$ is $f$-proper. Identify $C_0(U)$ with the ideal of functions in $C_0(Y)$ that are identically zero on $Y \setminus U$. Since $U$ is $f$-proper, for any compactly supported $a \in C_c(U)$ the function $f^*(a)$ is compactly supported. So $f^*(C_0(U)) \subseteq C_0(X)$. If $a \in \ker(f^*) \cap C_0(U)$, then $f^*(a)(x) = a(f(x)) = 0$ for all $x \in U$. So, if $U$ is also $f$-perfect, then $a = 0$. That is, $f^*|_{C_0(U)}$ is injective.
	
	For the converse, suppose that $U \subseteq Y$ is open but not $f$-proper. Since $f|_{f^{-1}(U)}$ is not proper, there exists a compact set $K \subseteq U$ such that $f^{-1}(K)$ is not compact. Take a precompact open set $V \subseteq U$ such that $K \subseteq V \subseteq \ol{V} \subseteq U$. Since $\ol{V}$ is compact, it is normal, so the Tietze Extension Theorem gives a compactly supported function $a \in C_c(U)$ such that $a(y) = 1$ for all $y \in K$, and $a(y) = 0$ for all $y \in U \setminus \ol{V}$. So, $f^*(a)(x) = 1$ for all $x \in f^{-1}(K)$. Now fix a compact set $K_0 \subseteq X$. Since $f^{-1}(K)$ is closed and not compact, there exists $x_{K_0} \in  f^{-1}(K) \setminus K_0$. As $f^*(a)(x_{K_0}) = 1$ the function $f^*(a)$ does not vanish at infinity, so $f^*(C_0(U)) \not\subseteq C_0(X)$.
	
	Now suppose $f|_{f^{-1}(U)}$ does not surject onto $U$. Then there exists $y_0 \in U \setminus f(X)$. If $y_0 \in \ol{f(X)} \setminus f(X)$, then \cref{lem:boundary_issues} implies that $f|_{f^{-1}(U)}$ is not proper, so we can assume that $y_0 \in U \setminus \ol{f(X)}$. Take a precompact neighbourhood $V$ of $y_0$ such that $\ol{V} \subseteq   U \setminus \ol{f(X)}$. Using Tietze's Extension Theorem on $\ol{V}$, there exists $a \in C_c(U)$ such that $a(y_0) = 1$ and $a(y) = 0$ for all $y \in U \setminus \ol{V}$. Then $f^*(a) = 0$, so $f^*$ is not injective. 
	
	The final statement follows from the definition of $\pim(f^*)$ and the fact that $\pr(f)$ is the maximal $f$-proper set. 
\end{proof}

\cref{lem:from_sets_to_ideals_and_back} indicates the importance of the Pimsner ideal in our formulation of noncommutative fibrewise compactifications.

If $I$ is an ideal in a $C^*$-algebra $A$, then there is an induced $*$-homomorphism $\alpha_I \colon A \to \Mm(I)$ satisfying $\alpha(a)b = ab$ for all $a \in A$ and $b \in I$. Since $I$ is an ideal $\ol{\alpha_I(A)I }= I$, so $\alpha_I \in \Mor(A,I)$. Moreover, $\alpha_I$ restricts to an isomorphism between the copies of $I$ in $A$ and in $\Mm(I)$. 

\begin{dfn}\label{dfn:perfection_nc}
	Let $A$ and $B$ be $C^*$-algebras and let $\varphi \in \Mor(A,B)$. A \emph{(noncommutative) fibrewise compactification} of $\varphi$ is a pair $(C,\psi)$ consisting of a $C^*$-algebra $C$ and $\psi \in \Morp(A,C)$ such that
	\begin{enumerate}[labelindent=0pt,labelwidth=\widthof{\ref{itm:NCP1}},label=(NF\arabic{enumi}), ref=(NF\arabic*),leftmargin=!]
		\item\label{itm:NCP1} the algebra $B$ is isomorphic to an ideal of $C$ and if $\alpha_C \in \Mor(C,B)$ is the induced morphism, then $\alpha_C \circ \psi = \varphi$; and
		
		\item\label{itm:NCP2}
	the image of the $*$-homomorphism $\ol{\psi} \colon \pim(\varphi) \to C/B$, given by $\ol{\psi}(a) = \psi(a) + B$, contains $\{c + B \colon c \in B^{\perp} \trianglelefteq C\}$.

	\end{enumerate}
	If $\psi$ is also injective, then we call $(C,\psi)$ a \emph{(noncommutative) perfection}. If $B$ is essential in $C$, then we call $(C,\psi)$ a \emph{strict fibrewise compactification}.
\end{dfn}

\begin{rmk}\label{rmk:nondegen_distinction}
Given $\varphi \in \Mor(A,B)$ it is tempting to think that $(\Mm(B),\varphi)$ is a fibrewise compactification of $\varphi$. Indeed, $\alpha_{\Mm(B)} = \id_{\Mm(B)}$ and $B$ is an essential ideal in $\Mm(B)$. 
 However, for $\varphi \in \Mor(A,B)$ we typically have $\ol{\varphi(A)\Mm(B)} \ne \Mm(B)$, so  $\varphi \notin \Morp(A,\Mm(B))$. 
 For example, suppose that $B$ is nonunital and $\varphi \colon B \to \Mm(B)$ is the inclusion of $B$ as a proper ideal. Then, $\ol{\varphi(B)\Mm(B)} = B \ne \Mm(B)$. 
\end{rmk}

\begin{rmk}\label{rmk:strict_nice}
	To check that $(C,\psi)$ is a \emph{strict} fibrewise compactification we only need to verify that \labelcref{itm:NCP1} holds and that $B^{\perp} = \{0\}$, since \labelcref{itm:NCP2} holds automatically. 
\end{rmk}

We record the following characterisation of the annihilator of an ideal. 
\begin{lem}\label{lem:annihilator_kernel}
	Let $C$ be a $C^*$-algebra, let $B \trianglelefteq C$, and let $\alpha_C \in \Mor(C,B)$ be the induced morphism. Then $\ker(\alpha_C) =  B^{\perp}$.  
\end{lem}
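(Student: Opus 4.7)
The plan is to prove both inclusions directly from the defining relation $\alpha_C(c)b = cb$ for $c \in C$ and $b \in B$, using that $B$ is an essential ideal in its own multiplier algebra $\Mm(B)$.

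For the inclusion $B^{\perp} \subseteq \ker(\alpha_C)$, I would take $c \in B^{\perp}$, so that $cb = 0$ for every $b \in B$. The defining relation then gives $\alpha_C(c) b = cb = 0$ for all $b \in B$. The key step is invoking the fact that $B \trianglelefteq \Mm(B)$ is essential: any element of $\Mm(B)$ annihilating all of $B$ must be zero, so $\alpha_C(c) = 0$.

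For the reverse inclusion $\ker(\alpha_C) \subseteq B^{\perp}$, I would take $c \in \ker(\alpha_C)$, so $\alpha_C(c) = 0$. Then for any $b \in B$, applying the defining relation again yields $cb = \alpha_C(c) b = 0$, so $c \in B^{\perp}$ by definition of the annihilator in \labelcref{eq:annihilator}.

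There is essentially no obstacle here: the entire argument reduces to the universal property that characterises $\alpha_C$ (namely $\alpha_C(c)b = cb$) together with essentiality of $B$ in $\Mm(B)$, which was noted in the introduction to \cref{sec:CStar}. The proof should take only a few lines.
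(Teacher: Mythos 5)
Your proof is correct and follows essentially the same route as the paper's: both directions come down to the defining relation $\alpha_C(c)b = cb$ together with essentiality of $B$ in $\Mm(B)$. The paper is merely a little more pedantic in distinguishing $B$ from its canonical copy $B'\subseteq\Mm(B)$ (writing things like $cb=\alpha_C^{-1}(\alpha_C(cb))$), but the substance is identical.
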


\begin{proof}
	Let $B'$ denote the canonical copy of $B$ in $\Mm(B)$ and note that $\alpha_C$ restricts to an isomorphism from $B$ to $B'$.
	Fix $c \in \ker(\alpha_C)$. For all $b \in B$ we have $cb = \alpha_C^{-1}(\alpha_C(cb)) = 0$, and similarly $bc = 0$, so $c \in B^{\perp}$. Now suppose that $c \in B^{\perp}$. Then for all $b' \in B'$ we have $\alpha_C(c) b' = \alpha_C(c \alpha_C^{-1}(b')) = 0$, so $c \in \ker(\alpha_C)$. 
\end{proof}

The next lemma shows that the condition \labelcref{itm:NCP2} can be strengthened to an equality.

\begin{lem}\label{rmk:NCP2}
	Let $A$ and $B$ be $C^*$-algebras and let $\varphi \in \Mor(A,B)$. Suppose that $(C,\psi)$ is a fibrewise compactification of $\varphi$. Then
	\begin{enumerate}
		\item\label{itm:nc_equal} $\ol{\psi}(\pim(\varphi)) = \{c + B \colon c \in B^{\perp} \}$; and
		\item\label{itm:nc_quotient}  $\pim(\varphi)/\psi^{-1}(B) \cong B^{\perp}$.
	\end{enumerate}
\end{lem}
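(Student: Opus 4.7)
My plan is to prove part (i) first and deduce part (ii) from it via an isomorphism-theorem argument.

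For part (i), condition \labelcref{itm:NCP2} already gives the inclusion $\ol{\psi}(\pim(\varphi)) \supseteq \{c + B : c \in B^\perp\}$, so I would focus on the reverse inclusion. Fix $a \in \pim(\varphi)$, so $\varphi(a) \in B$. Since $\alpha_C$ restricts to an isomorphism between the copy of $B$ sitting in $C$ and the copy sitting in $\Mm(B)$, there is a unique $b \in B \subseteq C$ with $\alpha_C(b) = \varphi(a)$. The hypothesis $\alpha_C \circ \psi = \varphi$ then gives $\alpha_C(\psi(a) - b) = 0$, and by \cref{lem:annihilator_kernel} we have $\ker(\alpha_C) = B^\perp$, so $\psi(a) - b \in B^\perp$. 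Writing $c \coloneqq \psi(a) - b$, we get $\ol{\psi}(a) = \psi(a) + B = c + B$ with $c \in B^\perp$, establishing the missing inclusion.

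For part (ii), I would first observe that $\psi^{-1}(B) \subseteq \pim(\varphi)$: if $\psi(a) \in B \subseteq C$, then $\varphi(a) = \alpha_C(\psi(a)) \in \alpha_C(B) \subseteq B$, so $a \in \varphi^{-1}(B) = \pim(\varphi)$. Thus $\ol{\psi}$ restricts to a $*$-homomorphism $\pim(\varphi) \to C/B$ whose kernel is precisely $\psi^{-1}(B)$ and whose image, by part (i), equals $(B^\perp + B)/B$. The first isomorphism theorem gives
\[
\pim(\varphi)/\psi^{-1}(B) \;\cong\; (B^\perp + B)/B.
\]

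To finish, I would invoke the second isomorphism theorem to identify $(B^\perp + B)/B$ with $B^\perp / (B^\perp \cap B)$, and note that $B^\perp \cap B = \{0\}$, because any element of the intersection satisfies $b^*b = 0$ and hence equals $0$. Combining these yields $\pim(\varphi)/\psi^{-1}(B) \cong B^\perp$. No step here looks like a serious obstacle; the only subtlety is being careful about the identification of $B$ as an ideal in $C$ with its image in $\Mm(B)$ under $\alpha_C$, and invoking \cref{lem:annihilator_kernel} to equate $\ker(\alpha_C)$ with $B^\perp$.
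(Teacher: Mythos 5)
Your proof is correct and follows essentially the same route as the paper: for (i) the reverse inclusion is obtained by writing $\psi(a)$ as $b + c$ with $b \in B$ and $c \in \ker(\alpha_C) = B^{\perp}$ via \cref{lem:annihilator_kernel}, and for (ii) one applies the isomorphism theorems to $\ol{\psi}$ together with $B \cap B^{\perp} = \{0\}$. Your explicit check that $\psi^{-1}(B) \subseteq \pim(\varphi)$ is a minor addition the paper leaves implicit, but the substance is identical.
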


\begin{proof}
	 \labelcref{itm:nc_equal} Let $q \colon C \to  C/B$ be the quotient map. Since $B \cap B^{\perp} = \{0\}$, the algebra $q(B^{\perp}) = \{ c + B \colon c \in B^{\perp}\}$ is isomorphic to $B^{\perp}$. 	By \labelcref{itm:NCP2} we have $\ol{\psi}(\pim(\varphi)) \supseteq q(B^{\perp})$. Fix $a \in \pim(\varphi)$. By \labelcref{itm:NCP1} we have $\alpha_C \circ \psi(a) = \phi(a) \in B \subseteq \Mm(B)$. Hence, there exists $b \in B$ such that $\alpha_C(b - \psi(a)) = 0$. Since $\ker(\alpha_C) = B^{\perp}$, there exists $b' \in B^{\perp}$ such that $\psi(a) = b + b' \in B + B^{\perp}$. It follows that $\ol{\psi}(a) = b' + B \in q(B^{\perp})$, so $\ol{\psi}(\pim(\varphi)) = q(B^{\perp})$.
	 
	 \labelcref{itm:nc_quotient} Observe that $\ker(\ol{\psi}) = \psi^{-1}(B) \trianglelefteq \pim(\varphi)$. Since $\ol{\psi}$ maps onto $q(B^{\perp})$, it induces an isomorphism $\pim(\varphi)/\psi^{-1}(B) \cong B^{\perp}$. 
\end{proof}

Noncommutative fibrewise compactifications directly generalise fibrewise compactifications of continuous maps between locally compact Hausdorff spaces. 
To see this we require the following lemma, known to experts.
\begin{lem}\label{lem:ideal+perp_commutative}
	Let $I$ be an ideal of a $C^*$-algebra $A$. 
	\begin{enumerate}
		\item\label{itm:comm_ideals_1}	The image of $I^{\perp}$ in $A/I$ is an essential ideal.
		\item\label{itm:comm_ideals_2} If $I$ and $A/I$ are commutative, then so is $A$.
		\item\label{itm:comm_ideals_3} If $I$ and $I^{\perp}$ are commutative, then so is $A$.
	\end{enumerate} 
\end{lem}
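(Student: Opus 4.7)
The plan is to prove the three parts together, since they share a common toolkit: the basic fact that $I \cap I^{\perp} = 0$ (any $x$ in the intersection satisfies $x^{*}x = 0$, hence $x = 0$), the observation that $I^{\perp}$ is itself a closed two-sided ideal of $A$, and the embedding $A/I^{\perp} \hookrightarrow \Mm(I)$ induced by the morphism $\alpha_{I}$ from \cref{lem:annihilator_kernel}.

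For (i), I would first use $I \cap I^{\perp} = 0$ to identify the image of $I^{\perp}$ in $A/I$ with the ideal $(I + I^{\perp})/I \cong I^{\perp}$. To verify essentiality, I would take $a + I$ in $A/I$ annihilating this image, so that $aI^{\perp} \subseteq I$. Since $I^{\perp}$ is an ideal of $A$, we also have $aI^{\perp} \subseteq I^{\perp}$, giving $aI^{\perp} \subseteq I \cap I^{\perp} = 0$ and hence $a \in (I^{\perp})^{\perp}$. The remaining step, upgrading $a \in (I^{\perp})^{\perp}$ to $a \in I$, relies on the standard bipolar-type relation for closed ideals in a $C^{*}$-algebra, and this is where I expect the main technical care.

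For (ii), the engine is the morphism $\alpha_{I} \colon A \to \Mm(I)$, which by \cref{lem:annihilator_kernel} has kernel $I^{\perp}$. Commutativity of $I$ makes $\Mm(I)$ commutative (e.g.\ via $\Mm(C_{0}(X)) = C_{b}(X)$), so $A/I^{\perp}$ is commutative as a subalgebra of $\Mm(I)$, meaning $[A, A] \subseteq I^{\perp}$. The hypothesis that $A/I$ is commutative gives $[A, A] \subseteq I$, and combining yields $[A, A] \subseteq I \cap I^{\perp} = 0$, so $A$ is commutative.

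For (iii), I would run the argument of (ii) twice. Commutativity of $I$ gives $[A, A] \subseteq I^{\perp}$ via $\alpha_{I}$. Applying the same reasoning to $I^{\perp}$, the morphism $\alpha_{I^{\perp}} \colon A \to \Mm(I^{\perp})$ has kernel $(I^{\perp})^{\perp}$, and commutativity of $I^{\perp}$ makes $\Mm(I^{\perp})$ commutative, so $[A, A] \subseteq (I^{\perp})^{\perp}$. Intersecting gives $[A, A] \subseteq I^{\perp} \cap (I^{\perp})^{\perp} = 0$. Alternatively, one can invoke (i) directly: a commutative essential ideal inside $A/I$ forces $A/I$ to be commutative (since the ambient algebra embeds into the commutative multiplier algebra of the essential ideal), and then (ii) delivers commutativity of $A$. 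The main technical obstacle is (i); once it and \cref{lem:annihilator_kernel} are in hand, (ii) and (iii) reduce to elementary commutator manipulations via the multiplier embedding.
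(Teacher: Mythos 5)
Your arguments for parts (ii) and (iii) are correct, and they take a genuinely different route from the paper's. For (ii) the paper argues by contradiction, extending an irreducible representation of the commutative ideal $I$ (necessarily a character) to an irreducible representation of $A$ and evaluating it on a commutator; your argument instead uses the embedding $A/I^{\perp}=A/\ker(\alpha_I)\hookrightarrow \Mm(I)$ to get $[A,A]\subseteq I^{\perp}$ and intersects with $[A,A]\subseteq I$. For (iii) the paper routes through part (i) and the universal property of the multiplier algebra, whereas your primary argument simply runs the multiplier embedding twice to get $[A,A]\subseteq I^{\perp}\cap(I^{\perp})^{\perp}=\{0\}$. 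This is a real gain: your proof of (iii) is self-contained and does not use (i) at all, and since only (ii) and (iii) are invoked elsewhere in the paper (in \cref{prop:unified_cstar} and \cref{prop:fw_commutative_same}), your versions suffice for everything downstream.

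Part (i) is where the genuine gap is, and it sits exactly at the step you flagged: there is no ``bipolar relation'' $(I^{\perp})^{\perp}=I$ for general closed ideals of a $C^*$-algebra. Take $A=C([0,1])$ and $I=C_0((0,1))$; then $I^{\perp}=C_0([0,1]\setminus\overline{(0,1)})=\{0\}$, so $(I^{\perp})^{\perp}=A\neq I$. Your reduction shows that the image of $I^{\perp}$ in $A/I$ has annihilator $(I^{\perp})^{\perp}/I$, so (i) holds precisely when $I=(I^{\perp})^{\perp}$ (in the commutative picture, when the corresponding open set is regular); in the example above the image of $I^{\perp}$ in $A/I\cong\CC^2$ is the zero ideal, which is not essential, so the statement itself fails in this generality and the step cannot be completed. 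You are in good company: the paper's own proof of (i) asserts that $aI^{\perp}\subseteq I$ forces $a=0$, which is the same non sequitur. The practical consequence is that you should discard your fallback argument for (iii) via (i) and keep your primary one, which is correct and actually repairs the paper's reliance on the faulty part (i).
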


\begin{proof}
	\labelcref{itm:comm_ideals_1} Let $q \colon A \to A/I$ denote the quotient map. If $a \in A$ is such that $q(a I^{\perp}) = 0$, then $a I^{\perp} \subseteq I$, but this can only happen if $a = 0$. So, $q(I^{\perp})$ is essential in $A/I$. 
	
	\labelcref{itm:comm_ideals_2}  Let $I$ and $A/I$ be commutative. Suppose for contradiction that $a,b \in A$ are such that $[a,b] \ne 0$. Since $A/I$ is commutative, $q([a,b]) = 0$, so $[a,b] \in I$. Fix an irreducible representation $\pi \colon I \to \CC$ such that $\pi([a,b]) \ne 0$. Then $\pi$ extends to an irreducible representation $\widetilde{\pi} \colon A \to \CC$ by \cite[Lemma I.9.14]{Dav96} and $0 \ne \widetilde{\pi}([a,b]) = [\widetilde{\pi}(a), \widetilde{\pi}(b)]$, contradicting the commutativity of $\CC$. 
	
	\labelcref{itm:comm_ideals_3} Suppose that $I$ and $I^{\perp}$ are commutative. Part \labelcref{itm:comm_ideals_1} and the universal property of the multiplier algebra implies that $A/I$ is a subalgebra of the commutative algebra $\Mm(q(I^{\perp}))$. So, $A/I$ is commutative, and by \labelcref{itm:comm_ideals_2}, so is $A$.
\end{proof}

 Recall that the spectrum $\widehat{A}$ of a commutative $C^*$-algebra $A$ consists of all nonzero $*$-homomorphisms $A \to \CC$ with the weak* topology. If $B$ is another commutative $C^*$-algebra and $\varphi \in \Mor(A,B)$, then Gelfand duality \cite[Theorem~2]{aHRW10} yields a continuous map $\varphi_* \colon \widehat{B} \to \widehat{A}$ such that $(\varphi_*)^* = \varphi$. The morphism $\varphi$ is proper if and only if $\varphi_*$ is proper. 
 
\begin{prop}\label{prop:fw_commutative_same}
Let $ f \colon X \to Y$ be a continuous map between locally compact Hausdorff spaces. If $(Z,g)$ is a fibrewise compactification of $f$, then $(C_0(Z),g^*)$ is a fibrewise compactification of $f^* \in \Mor(C_0(Y),C_0(X))$. Conversely, if $(C,\psi)$ is a fibrewise compactification of $f^*$, then $C$ is commutative and $(\widehat{C}, \psi_*)$ is a fibrewise compactification of $f$. The same is true when restricting to perfections.
\end{prop}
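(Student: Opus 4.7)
The plan is to pass back and forth through Gelfand duality, with essentially one small topological observation doing the substantive work in both directions: \emph{for any continuous proper map $g \colon Z \to Y$ between locally compact Hausdorff spaces equipped with an open inclusion $X \hookrightarrow Z$ satisfying $g|_X = f$, we have $g(\overline{X} \setminus X) \cap \pr(f) = \varnothing$.} Indeed, if $z_0 \in \overline{X} \setminus X$ and $g(z_0) \in \pr(f)$, take a precompact open neighbourhood $V$ of $g(z_0)$ with $f^{-1}(\overline{V})$ compact; any net $(x_\alpha) \subseteq X$ converging to $z_0$ satisfies $f(x_\alpha) \to g(z_0)$, so is eventually in $f^{-1}(\overline{V}) \subseteq X$, hence has a subnet converging to some $x' \in X$; Hausdorffness of $Z$ forces $z_0 = x' \in X$, a contradiction.

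For the forward direction, set $(C, \psi) \coloneqq (C_0(Z), g^*)$. Condition \labelcref{itm:NCP1} is immediate Gelfand duality: properness of $g$ gives $g^* \in \Morp(C_0(Y), C_0(Z))$, openness of $X \hookrightarrow Z$ gives $C_0(X) \trianglelefteq C_0(Z)$, and $g \circ \iota_X = f$ dualises to $\alpha_{C_0(Z)} \circ g^* = f^*$. For \labelcref{itm:NCP2}, \cref{lem:from_sets_to_ideals_and_back} gives $\pim(f^*) = C_0(\pr(f))$, and a direct check gives $C_0(X)^{\perp} = C_0(Z \setminus \overline{X})$. Given $c \in C_0(Z \setminus \overline{X})$, transport it along the homeomorphism $g|_{Z \setminus \overline{X}} \colon Z \setminus \overline{X} \to W$ to obtain $c' \in C_0(W)$, where $W \subseteq \pr(f)$ is closed; surjectivity of the restriction $C_0(\pr(f)) \twoheadrightarrow C_0(W)$ produces $a \in C_0(\pr(f))$ with $a|_W = c'$. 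The topological observation ensures $a \circ g$ vanishes on $\overline{X} \setminus X$, so $g^*(a) - c$ vanishes throughout $Z \setminus X$ and therefore lies in $C_0(X)$. The perfection clause follows because $g^*$ is injective iff $g$ is surjective.

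For the converse, suppose $(C, \psi)$ is a fibrewise compactification of $f^*$. By \cref{rmk:NCP2}~\labelcref{itm:nc_quotient}, $C_0(X)^{\perp}$ is isomorphic to a quotient of $\pim(f^*) \trianglelefteq C_0(Y)$ and is hence commutative; \cref{lem:ideal+perp_commutative}~\labelcref{itm:comm_ideals_3} then forces $C$ to be commutative, so $C \cong C_0(Z)$ for $Z \coloneqq \widehat{C}$ locally compact Hausdorff. Gelfand duality converts $\psi \in \Morp(C_0(Y), C)$ into a continuous proper map $g \coloneqq \psi_* \colon Z \to Y$, converts $C_0(X) \trianglelefteq C$ into an open inclusion $X \hookrightarrow Z$, and converts $\alpha_C \circ \psi = f^*$ into $g|_X = f$. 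The topological observation now applies and yields $g(\overline{X} \setminus X) \cap \pr(f) = \varnothing$, so for every $a \in \pim(f^*) = C_0(\pr(f))$ the function $\psi(a) = a \circ g$ vanishes on $\overline{X} \setminus X$; combined with the equality in \cref{rmk:NCP2}~\labelcref{itm:nc_equal}, the map $C_0(\pr(f)) \to C_0(Z \setminus \overline{X})$, $a \mapsto \psi(a)|_{Z \setminus \overline{X}}$, is a well-defined surjective $*$-homomorphism of commutative $C^*$-algebras. Gelfand duality then identifies its dual as a closed embedding $Z \setminus \overline{X} \hookrightarrow \pr(f)$ sending $z$ to $g(z)$, which is \labelcref{perf:density}. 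Perfection transfers since injectivity of $\psi$ is equivalent to density of $g(Z)$ in $Y$, which together with properness of $g$ forces surjectivity.

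The main obstacle is the converse step: one must verify that the map $a \mapsto \psi(a)|_{Z \setminus \overline{X}}$ actually takes values in $C_0(Z \setminus \overline{X})$ rather than merely in $C_b(Z \setminus \overline{X})$, so that Gelfand duality produces a closed embedding rather than a weaker continuous map. This is exactly what the topological observation provides; all remaining steps are routine dualisation.
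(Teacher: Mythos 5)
Your argument is correct and follows essentially the same route as the paper's: Gelfand duality handles \labelcref{perf:commuting}/\labelcref{itm:NCP1} in both directions, commutativity of $C$ comes from \cref{rmk:NCP2} together with \cref{lem:ideal+perp_commutative}~\labelcref{itm:comm_ideals_3}, and \labelcref{perf:density}/\labelcref{itm:NCP2} are matched by identifying $B^{\perp}$ with $C_0(Z \setminus \ol{\iota_X(X)})$ and $\pim(f^*)$ with $C_0(\pr(f))$. The one genuine addition is the isolated observation that $g(\ol{X}\setminus X)\cap \pr(f)=\varnothing$ for any continuous extension of $f$ over an open inclusion $X\hookrightarrow Z$. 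In the converse direction this is strictly redundant, since \cref{rmk:NCP2}~\labelcref{itm:nc_equal} already forces $\ol{\psi}(\pim(f^*))$ into $q(B^{\perp})\cong C_0(Z\setminus\ol{\iota_X(X)})$; but in the forward direction it is exactly what guarantees that your chosen $a\in C_0(\pr(f))$ with $a|_W=c'$ satisfies $a\circ g=0$ on $\ol{X}\setminus X$, hence $g^*(a)-c\in C_0(X)$ --- a point the paper's Tietze-extension step passes over silently (an arbitrary extension $a'$ of $a$ from $S$ to $\pr(f)$ need not vanish on $g(\ol{X}\setminus X)$ unless one knows that set misses $\pr(f)$). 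So your version is, if anything, slightly more careful than the published one, while buying nothing structurally new.
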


\begin{proof}
	First suppose that $(Z,g)$ is a fibrewise compactification of $f$. Let $A \coloneqq C_0(Y)$, $B \coloneqq C_0(X)$, $C \coloneqq C_0(Z)$, and $\varphi \coloneqq f^* \in \Mor(A,B)$.
	 	 Since $g$ is proper, \cref{lem:from_sets_to_ideals_and_back} implies that $\psi \coloneqq g^* \in \Morp(A,C)$. Since $\iota_X \colon X \to Z$ is open, $B$ is isomorphic to an ideal of $C$. Let $\alpha_{C} \in \Mor(C,C_0(X))$ be the induced $*$-homomorphism.
	  Using \labelcref{perf:commuting} at the second equality we have 
	\[
	(\alpha_C \circ \psi) (a)(x) = a(g \circ \iota_X(x)) = a(f(x)) = \varphi(a)(x)
	\] 
	for all $a \in A$ and $x \in X$. 
	So, \labelcref{itm:NCP1} is satisfied.
	
	For~\labelcref{itm:NCP2}, note that  $C/B \cong C_0(Z \setminus \iota_X(X))$ and $\{c + B \mid c \in B^{\perp}\} = C_0(Z \setminus \ol{\iota_X(X)})$. By \cref{lem:from_sets_to_ideals_and_back}, we have $\pim(\varphi) = C_0(\pr(f))$. For $a \in \pim(\varphi)$ we have $\ol{\psi}(a) = (a \circ g)|_{Z \setminus \iota_X(X)}$.  We use~\labelcref{perf:density} to see that $g$ restricts to a homeomorphism from $Z \setminus \ol{\iota_X(X)}$ to a closed subset $S$ of $\pr(f)$ in the subspace topology. Fix a compactly supported function $a_0 \in C_c(Z \setminus \ol{\iota_X(X)})$ and let $a \in C_c(S)$ be the corresponding function induced by the homeomorphism. By the Tietze Extension Theorem we can extend $a$ to a function $a' \in \pim(\varphi)$ with $\ol{\psi}(a') = a_0$. Density of compactly supported functions in functions vanishing at infinity gives \labelcref{itm:NCP2}. That is, $(C_0(Z),g^*)$ is a fibrewise compactification of $f^*$.  If $g$ is surjective, then $g^*$ is injective. 
	 
	Now suppose that $(C,\psi)$ is a fibrewise compactification of $f^*  \in \Mor(C_0(Y),C_0(X))$. Let $A \coloneqq C_0(Y)$, $B \coloneqq C_0(X)$, and $\varphi \coloneqq f^*$. By~\cref{rmk:NCP2}, there is an isomorphism $\pim(\varphi)/\psi^{-1}(B) \cong B^{\perp}$. In particular, $B^{\perp}$ is commutative, so by \cref{lem:ideal+perp_commutative} the algebra $C$ is commutative. Since $B \trianglelefteq C$ there is an open inclusion $\iota_X \colon X \to \widehat{C}$. 
	By Gelfand duality $\psi \colon A \to C$ is dual to a proper map $g \coloneqq \psi_* \colon \widehat{C} \to Y$. 
	
	By \labelcref{itm:NCP1}, for all $a \in A$ and $b \in B$ we have $\psi(a)b = \phi(a)b \in B$. Fix $x \in X$ and take $b \in B$ such that $b(x) = 1$. Then 
	\[
	a(g \circ \iota_X(x)) = \psi(a)b(x) =\phi(a)b(x) = a(f (x))
	\] 
	for all $a \in A$, so $g \circ \iota_X = f$. That is, \labelcref{perf:commuting} is satisfied. 
	  
	For \labelcref{itm:NCP2}, note that $B^{\perp} = C_0(\widehat{C} \setminus \ol{\iota_X(X)})$. The quotient $\pim(\varphi)/\psi^{-1}(B)$ corresponds to a closed subset $S$ of $\pr(f)$ in the subspace topology. The isomorphism $B^{\perp} \cong \pim(\varphi)/\psi^{-1}(B)$ of \cref{rmk:NCP2} is induced by the restriction of $g$ to $\widehat{C} \setminus \ol{\iota_X(X)}$. In particular, $g \colon \widehat{C} \setminus \ol{\iota_X(X)} \to S$ is a homeomorphism. So, $(\widehat{C},\varphi_*)$ is a fibrewise compactification of $f$. If $\varphi$ is injective, then $\varphi_*$ is surjective. 
\end{proof}

\begin{dfn}
	Suppose that $(C,\psi)$ is a fibrewise compactification of $\varphi \in \Mor(A,B)$. We say that a fibrewise compactification $(C',\psi')$ of $\varphi$ is a \emph{quotient fibrewise compactification} if there is a surjective $*$-homomorphism $q \colon C \to C'$ such that $\psi' \circ q = \psi$ and $\alpha_{C'} \circ q = \alpha_C$. That is, the diagram
% https://q.uiver.app/#q=WzAsNCxbMCwxLCJBIl0sWzIsMSwiXFxNbShCKSJdLFsxLDIsIkMnIl0sWzEsMCwiQyJdLFswLDIsIlxccHNpJyIsMl0sWzIsMSwiXFxhbHBoYV9DJyIsMl0sWzAsMywiXFxwc2kiXSxbMywxLCJcXGFscGhhX0MiXSxbMywyLCJxIiwyXV0=
\[\begin{tikzcd}[row sep = 0pt,ampersand replacement=\&,cramped]
	\& C \\
	A \&\& {\Mm(B)} \\
	\& {C'}
	\arrow["{\alpha_C}", from=1-2, to=2-3]
	\arrow["q"', from=1-2, to=3-2]
	\arrow["\psi", from=2-1, to=1-2]
	\arrow["{\psi'}"', from=2-1, to=3-2]
	\arrow["{\alpha_C'}"', from=3-2, to=2-3]
\end{tikzcd}\]
	commutes.
	 If $\psi$ and $\psi'$ are both injective we say that $(C',\psi')$ is a \emph{quotient perfection}. If $q$ is an isomorphism we say that $(C,\psi)$ and $(C',\psi')$ are \emph{isomorphic}. 
\end{dfn}

\begin{rmk}\label{rmk:B_preservation}
	Suppose $(C',\psi')$ is a quotient fibrewise compactification of $(C,\psi)$. Since $\alpha_{C'}$ and $\alpha_C$ restrict to isomorphisms on their respective ideals that are isomorphic to $B$, and $\alpha_{C'} \circ q = \alpha_C$, the map $q$ also restricts to an isomorphism between the copies of $B$.
\end{rmk}

\begin{dfn}\label{dfn:essential_subperfection}
	Let $(C,\psi)$ be a fibrewise compactification of $\varphi$ and let $q_{\perp} \colon C \to C/B^{\perp}$ be the quotient map. Then
	\[
	([C],[\psi]) \coloneqq (C/{B^{\perp}},q_{\perp} \circ \psi)
	\]
	is a strict fibrewise compactification.
\end{dfn}

\begin{lem}
	Let $(C,\psi)$ be a fibrewise compactification of $\varphi \in \Mor(A,B)$. Then $([C],[\psi]) \cong (\alpha_C(C),\alpha_C \circ \psi)$. If $(C',\psi')$ is a quotient fibrewise compactification of $(C,\psi)$, then $([C],[\psi]) \cong ([C'],[\psi'])$.
\end{lem}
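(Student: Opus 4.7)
The plan is to prove the first isomorphism directly using \cref{lem:annihilator_kernel}, and then to deduce the second isomorphism as a formal consequence of the first.

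For the first isomorphism, I would start by applying \cref{lem:annihilator_kernel} to identify $\ker(\alpha_C) = B^{\perp}$. The first isomorphism theorem then gives an injective $*$-homomorphism $\bar\alpha_C \colon C/B^{\perp} \to \Mm(B)$ with $\bar\alpha_C \circ q_{\perp} = \alpha_C$, and whose image is exactly $\alpha_C(C)$; thus $\bar\alpha_C \colon [C] \to \alpha_C(C)$ is a $*$-isomorphism. Next, I would verify that $(\alpha_C(C), \alpha_C \circ \psi)$ is itself a strict fibrewise compactification of $\varphi$. Since $\alpha_C$ restricts to an isomorphism on the copy of $B$ in $C$, the image $\alpha_C(C)$ contains $B$ as an ideal. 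Any $c \in \alpha_C(C) \subseteq \Mm(B)$ annihilating $B$ must vanish because $B$ is essential in $\Mm(B)$, so $B$ is essential in $\alpha_C(C)$; this gives strictness via \cref{rmk:strict_nice}. Moreover, $\psi \in \Morp(A,C)$ sends an approximate unit of $A$ to one of $C$, so $\alpha_C \circ \psi$ sends it to one of $\alpha_C(C)$, proving $\alpha_C \circ \psi \in \Morp(A, \alpha_C(C))$. The compatibility $\alpha_{\alpha_C(C)} \circ (\alpha_C \circ \psi) = \alpha_C \circ \psi = \varphi$ follows from (NF1) applied to $(C,\psi)$, where $\alpha_{\alpha_C(C)}$ is simply the inclusion into $\Mm(B)$.

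It then remains to check that $\bar\alpha_C$ is an isomorphism of fibrewise compactifications. The identity $\bar\alpha_C \circ [\psi] = \bar\alpha_C \circ q_{\perp} \circ \psi = \alpha_C \circ \psi$ is immediate, and the identity $\alpha_{\alpha_C(C)} \circ \bar\alpha_C = \alpha_{[C]}$ holds because both sides factor $\alpha_C$ through $q_{\perp}$ into $\Mm(B)$ and agree on the copy of $B$.

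For the second isomorphism, the cleanest route is to reduce to the first. Let $q \colon C \twoheadrightarrow C'$ be the quotient $*$-homomorphism witnessing that $(C',\psi')$ is a quotient fibrewise compactification. The defining relation $\alpha_{C'} \circ q = \alpha_C$ combined with surjectivity of $q$ gives $\alpha_{C'}(C') = \alpha_{C'}(q(C)) = \alpha_C(C)$. Furthermore, (NF1) applied to both $(C,\psi)$ and $(C',\psi')$ gives $\alpha_C \circ \psi = \varphi = \alpha_{C'} \circ \psi'$. Therefore the pairs $(\alpha_C(C), \alpha_C \circ \psi)$ and $(\alpha_{C'}(C'), \alpha_{C'} \circ \psi')$ are literally equal, and the first part of the lemma yields
\[
([C],[\psi]) \cong (\alpha_C(C), \alpha_C \circ \psi) = (\alpha_{C'}(C'), \alpha_{C'} \circ \psi') \cong ([C'],[\psi']),
\]
completing the proof.

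The routine calculations are transparent; the only potential pitfall is confirming that $(\alpha_C(C),\alpha_C \circ \psi)$ qualifies as a strict fibrewise compactification, which requires a brief verification that $\alpha_C \circ \psi$ takes an approximate unit of $A$ to one of the (possibly nonunital) subalgebra $\alpha_C(C)$ and that $B$ remains essential after passing to $\alpha_C(C)$. Both facts follow from the fact that $\alpha_C(C)$ is sandwiched between $B$ and $\Mm(B)$, so no delicate argument is needed.
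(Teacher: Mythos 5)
Your proposal is correct and follows essentially the same route as the paper: both parts rest on \cref{lem:annihilator_kernel} together with the first isomorphism theorem to identify $C/B^{\perp}\cong\alpha_C(C)$, and the second statement is deduced from the first via $\alpha_{C'}(C')=\alpha_C(C)$ and $\alpha_C\circ\psi=\varphi=\alpha_{C'}\circ\psi'$. Your explicit verification that $(\alpha_C(C),\alpha_C\circ\psi)$ is itself a strict fibrewise compactification is a detail the paper leaves implicit (it is asserted without proof in \cref{dfn:essential_subperfection}), but it is a correct and harmless addition.
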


\begin{proof} 
Let $B^{\perp}$ denote the annihilator of $B$ in $C$. 
For the first statement we use \cref{lem:annihilator_kernel} to see that $C/B^{\perp} = C/\ker(\alpha_C) \cong \alpha_C(C)$. The isomorphism takes $c + B^{\perp}$ to $\alpha_C(c)$, so $\alpha_C \circ \psi$ and $q_{\perp} \circ \psi$ coincide under the isomorphism. 

For the second statement let $q \colon C \to C'$ denote the quotient map. Since $q$ surjects and $\alpha_{C'} \circ q = \alpha_C$,  we have $\alpha_{C'} (C') = \alpha_C(C)$. Since $\alpha_C \circ \psi = \alpha_{C'} \circ \psi'$, the previous paragraph gives $([C],[\psi]) \cong ([C'],[\psi'])$.
\end{proof}

\subsection{The unified algebra}\label{sec:unified_alg}
The unified space of \cref{dfn:unified_space} fits naturally into the noncommutative topology paradigm via the language of split extensions.
 We recall some notions from the theory of  extensions of $C^*$-algebras and refer to \cite{Bla98} and \cite{W-O93} for further details. 

Let $A$ and $B$ be $C^*$-algebras. An \emph{extension of $A$ by $B$} is a short exact sequence $0 \to B \to E \to A \to 0$ of $C^*$-algebras and $*$-homomorphisms. Often, $E$ itself is referred to as an extension of $A$ by $B$. Since $B$ is an ideal in $E$, there is an induced morphism $\alpha \in \Mor(E,B)$ that descends to a $*$-homomorphism $\beta \colon A \to \Qq(B) \coloneqq \Mm(B)/B$. With $q_B \colon \Mm(B) \to \Qq(B)$ denoting the quotient map, there is a commuting diagram
\[
\begin{tikzcd}
	0 \arrow[r]
	& B \arrow[r] \arrow[d,equals]
	& E \arrow[r] \arrow[d,"\alpha"]
	& A \arrow[r] \arrow[d,"\beta"]
	& 0\\
	0 \arrow[r]
	& B \arrow[r]
	& \Mm(B)\arrow[r,"q_B"]
	& \Qq(B) \arrow[r] 
	& 0
\end{tikzcd}
\]
with exact rows. 
The map $\beta$ is a complete isomorphism invariant of the extension called the \emph{Busby invariant}. Indeed, the algebra $E$ is isomorphic to the pullback algebra
\[
\Mm(B) \oplus_{q_B,\beta} A \coloneqq \{(m,a) \in \Mm(B) \oplus A \mid q_B(m) = \beta(a) \}.
\]
An extension $0 \to B \to E \overset{q}{\to} A \to 0$ is \emph{split} if there is a $*$-homomorphism $s \colon A \to E$ such that $q \circ s = \id_A$. For a split extension, the Busby invariant lifts to a $*$-homomorphism $\ol{\beta} \coloneqq \alpha \circ s \colon A \to \Mm(B)$. In this case, $E$ is isomorphic to
\begin{equation}\label{eq:unified_algebra}
	\Mm(B) \oplus_{\ol{\beta}} A \coloneqq \{ (m,a) \in \Mm(B) \oplus A \mid m - \ol{\beta}(a) \in B\}.
\end{equation}

Unified spaces are dual to split extensions in the following sense. 

\begin{thm}\label{prop:unified_cstar}
	Suppose that $f \colon X \to Y$ is a continuous map between locally compact Hausdorff spaces with unified space $(\unis{X}{f}{Y},\unif{f})$. There is a split extension
	\begin{equation}\label{eq:split_commutative}
		\begin{tikzcd}[ampersand replacement=\&,cramped]
			0 \arrow[r]
			\& C_0(X) \arrow[r]
			\& C_0(\unis{X}{f}{Y}) \arrow[r,"q"']
			\& C_0(Y) \arrow[r] \arrow[l,"\unif{f}^*"', bend right=33]
			\& 0
		\end{tikzcd},
	\end{equation}
	where $q$ is the restriction of functions to $Y$, and $\unif{f}^*$ is dual to the proper map $\unif{f}$. The map $f^* \in \Mor(C_0(Y),C_0(X))$ is a lift of the Busby invariant of the split extension and
	\begin{equation}\label{eq:pullback_unified}
		C_0(\unis{X}{f}{Y}) \cong \{(g,h) \in C_b(X) \oplus C_0(Y) \mid g - f^*(h) \in C_0(X) \}.
	\end{equation}
	Moreover, every split extension of $C_0(Y)$ by $C_0(X)$ is isomorphic to $C_0(\unis{X}{f}{Y})$ for some continuous map $f \colon X \to Y$.
\end{thm}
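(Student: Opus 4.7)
For the forward direction of the theorem, I would begin by applying \cref{prop:unified_properties}\labelcref{itm:unified_ix_open,itm:unified_iy_closed,itm:unified_1}: $\iota_X(X)$ is open in $\unis{X}{f}{Y}$, $\iota_Y(Y)$ is its closed complement, and $\unif{f}$ is perfect. Dualizing yields the short exact sequence \labelcref{eq:split_commutative} in which $q$ is restriction of functions to $Y$, and $\unif{f}^*$ lands in $C_0(\unis{X}{f}{Y})$ since $\unif{f}$ is proper; the identity $\unif{f}|_Y = \id_Y$ then gives $q \circ \unif{f}^* = \id_{C_0(Y)}$, establishing the splitting. To confirm $f^*$ is a lift of the Busby invariant, I would compute: for $h \in C_0(Y)$, $k \in C_0(X)$, and $x \in X$,
\[
(\unif{f}^*(h) \cdot k)(x) = h(\unif{f}(\iota_X(x)))\, k(x) = h(f(x))\, k(x) = (f^*(h) \cdot k)(x),
\]
so $\alpha_{C_0(X)} \circ \unif{f}^* = f^*$ as elements of $\Mm(C_0(X)) = C_b(X)$. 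The pullback expression \labelcref{eq:pullback_unified} is then the general fact that a split extension is isomorphic to $\Mm(B) \oplus_{\overline{\beta}} A$ for any $*$-homomorphism lift $\overline{\beta}$ of its Busby invariant.

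For the converse, given a split extension $0 \to C_0(X) \to E \to C_0(Y) \to 0$ with splitting $s$, I would invoke \cref{lem:ideal+perp_commutative}\labelcref{itm:comm_ideals_2} to conclude that $E$ is commutative; writing $E \cong C_0(Z)$, the ideal $C_0(X)$ identifies $X$ with an open subset of $Z$ and $Y = Z \setminus X$ is its closed complement. Setting $\varphi \coloneqq \alpha_{C_0(X)} \circ s \colon C_0(Y) \to C_b(X)$ and invoking Gelfand duality for morphisms between commutative $C^*$-algebras, I would extract a continuous map $f \colon X \to Y$ with $f^* = \varphi$. Applying the forward direction's pullback description to both $E$ and $C_0(\unis{X}{f}{Y})$ then matches their Busby invariants and produces an isomorphism of extensions fixing $C_0(X)$ and $C_0(Y)$.

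The main obstacle is establishing that $\varphi = \alpha_{C_0(X)} \circ s$ is genuinely a morphism (nondegenerate as a $*$-homomorphism into $\Mm(C_0(X))$), so that Gelfand duality truly produces a map into $Y$ rather than into its one-point compactification. Equivalently, one needs the ``free'' set $X_{\mathrm{free}} \coloneqq \{x \in X : s(h)(x) = 0 \text{ for all } h \in C_0(Y)\}$ to be empty, so that $h \mapsto s(h)(x)$ is a nonzero character (hence evaluation at some point of $Y$) for every $x$. I expect this to follow from a density-type argument that leverages how the topology of $Z$ glues $X$ to $Y$, possibly after first reducing to the essential case by passing to the associated strict fibrewise compactification $([E],[s])$ of \cref{dfn:essential_subperfection} and then accounting for the orthogonal complement $C_0(X)^{\perp}$ separately.
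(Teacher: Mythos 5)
Your forward direction is, in essence, the paper's own argument: restriction to the closed subset $Y$ gives $q$ with kernel $C_0(X)$, the computation $q\circ\unif{f}^{*}=\id_{C_0(Y)}$ gives the splitting, and the identity $\alpha_{C_0(X)}\circ\unif{f}^{*}=f^{*}$ (which you verify exactly as the paper does, via $\unif{f}\circ\iota_X=f$) exhibits $f^{*}$ as a lift of the Busby invariant, from which \labelcref{eq:pullback_unified} follows by the general description of split extensions. Nothing to add there.

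For the converse you again follow the paper's route---commutativity of $E$ via \cref{lem:ideal+perp_commutative}, then $\varphi=\alpha\circ s$ and Gelfand duality---and the obstacle you single out is precisely the step the paper passes over in silence: it simply writes $\varphi\coloneqq\alpha\circ s\in\Mor(C_0(Y),C_0(X))$ with no justification. You are right that this is where all the content lies, but your hope that nondegeneracy of $\alpha\circ s$ ``follows from a density-type argument'' cannot be realised, because it is genuinely false for some split extensions. Take $X=\RR$, $Y=\{\mathrm{pt}\}$, and the split extension $0\to C_0(\RR)\to C_0(\RR)\oplus\CC\to\CC\to 0$ with splitting $s(\lambda)=(0,\lambda)$: here $\alpha\circ s=0$, your set $X_{\mathrm{free}}$ is all of $X$, and no functional $h\mapsto s(h)(x)$ is a character. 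Worse, the conclusion itself fails for this extension: the only continuous map $f\colon\RR\to\{\mathrm{pt}\}$ has $\unis{\RR}{f}{\{\mathrm{pt}\}}$ equal to the one-point compactification $S^1$, and $C(S^1)$ is unital while $C_0(\RR)\oplus\CC$ is not, so no choice of $f$ produces an isomorphism. Consequently neither your density argument nor your fallback via $([E],[s])$ and $C_0(X)^{\perp}$ can close the gap; the ``moreover'' clause needs the additional hypothesis that the Busby invariant lifts to a \emph{nondegenerate} $*$-homomorphism $C_0(Y)\to C_b(X)$ (equivalently, that the splitting can be chosen so that for every $x\in X$ there is $h\in C_0(Y)$ with $s(h)(x)\neq 0$), under which your argument---and the paper's---goes through verbatim. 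In flagging this step you have located a genuine gap in the published proof rather than merely in your own.
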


\begin{proof}
	Since $Y$ is closed in $\unis{X}{f}{Y}$, the restriction of functions in $C_0(\unis{X}{f}{Y})$ to $Y$ gives a surjective $*$-homomorphism $q \colon C_0(\unis{X}{f}{Y}) \to C_0(Y)$ with kernel isomorphic to $C_0(X)$. For each $a \in C_0(X)$ and $y \in Y$ we have
	\[
	q \circ \unif{f}^* (a) (y) = \unif{f}^* (a)(\iota_Y(y)) = a(\unif{f} \circ \iota_Y(y)) = a(y),
	\]
	so $\unif{f}^*$ splits $q$.
	Let $\alpha \in \Mor(C_0(\unis{X}{f}{Y}), C_0(X))$ be the morphism induced by $C_0(X) \trianglelefteq C_0(\unis{X}{f}{Y})$. Thinking of $\alpha$ as a $*$-homomorphism $C_0(\unis{X}{f}{Y}) \to C_b(X)$ we have $\alpha(a)(x) = a \circ \iota_X(x)$ for all $a \in C_0(\unis{X}{f}{Y})$ and $x \in X$. Since $\unif{f} \circ \iota_X = f$, we have $\alpha \circ \unif{f}^* = f^*$. Hence, the diagram
	\[
	\begin{tikzcd}[ampersand replacement=\&,cramped]
		0 \arrow[r]
		\& C_0(X) \arrow[r] \arrow[d,equals]
		\& C_0(\unis{X}{f}{Y}) \arrow[r,"q"'] \arrow[d, "\alpha"]
		\& C_0(Y) \arrow[r] \arrow[r] \arrow[l,"\unif{f}^*"', bend right=33] \arrow[dl, "f^*"] \arrow[d, "\beta"]
		\& 0\\
		0 \arrow[r]
		\& C_0(X) \arrow[r]
		\& \Mm(C_0(X)) \arrow[r]
		\& \Qq(C_0(X)) \arrow[r] 
		\& 0
	\end{tikzcd}
	\]
	commutes, where $\beta$ being the Busby invariant for \labelcref{eq:split_commutative}. Hence, $f^*$ is a lift of the Busby invariant, so \labelcref{eq:unified_algebra} gives \labelcref{eq:pullback_unified}. 
	
	For the final statement, \cref{lem:ideal+perp_commutative} implies that extensions of commutative $C^*$-algebras are commutative. 
	Suppose that we have a split extension of commutative $C^*$-algebras
	\begin{equation*}\label{eq:split_commutative2}
		\begin{tikzcd}
			0 \arrow[r]
			& C_0(X) \arrow[r]
			& C_0(Z) \arrow[r,"q"',start anchor={[yshift=1.5ex]south east}, end anchor ={[yshift=1.5ex]south west}]
			& C_0(Y) \arrow[r] \arrow[l,"s"', start anchor={[yshift=-1.5ex]north west}, end anchor ={[yshift=-1.5ex]north east}]
			& 0,
		\end{tikzcd}
	\end{equation*}
	and without loss of generality assume that $X\subseteq Z$ is open and $Y = Z \setminus X$.
	Let $\alpha \in \Mor(C_0(Z),C_0(X))$ be the morphism induced by the extension.
	 Let $\varphi \coloneqq \alpha \circ s \in \Mor(C_0(Y),C_0(X))$. For each $x \in X$ let $\epsilon_x \in C(\beta X)^* = \Mm(C_0(X))^*$ denote the evaluation functional at $x$. By Gelfand duality, the map $\epsilon_x \mapsto \epsilon_x \circ \varphi$ induces a continuous map $\varphi_* \colon X \to Y$ such that $(\varphi_*)^* = \varphi$, \cite[Theorem~2]{aHRW10}. It follows that
	\[
	C_0(Z) \cong \{ (g,h) \in C(\beta X) \oplus C_0(Y) \mid g - (\varphi_*)^*(h) \in C_0(X)\}.
	\]
	The Busby invariant is a complete invariant of extensions, so $C_0(Z) \cong C_0(\unis{X}{\varphi_*}{Y})$. By Gelfand duality, $Z \simeq \unis{X}{\varphi_*}{Y}$. 
\end{proof}
In light of \cref{prop:unified_cstar} we introduce the following terminology. 

\begin{dfn}
	Let $A$ and $B$ be $C^*$-algebras and suppose that $\varphi \in \Mor(A,B)$. Let 
	\begin{align*}
	\unis{B}{\varphi}{A} \coloneq	
	\Mm(B) \oplus_{\varphi} A
		= \{(m,a) \in \Mm(B) \oplus A \mid m - \varphi(a) \in B\}
	\end{align*}
	and let $\unif{\varphi} \colon A \to \unis{B}{\varphi}{A}$ be given by $\unif{\varphi} (a) = (\varphi(a),a)$. We refer to both $\unis{B}{\varphi}{A}$ and $(\unis{B}{\varphi}{A}, \unif{\varphi})$ as the \emph{unified algebra of $\varphi$}.
\end{dfn}

Although it is not standard in the $C^*$-algebraic literature,  the name \emph{unified algebra} is justified by \cref{prop:unified_cstar} as whenever $f \colon X \to Y$ is a continuous map between locally compact Hausdorff spaces,
$
C_0(\unis{X}{f}{Y}) \cong \unis{C_0(X)}{f^*}{ C_0(Y)}.
$

For a general $\varphi \in \Mor(A,B)$, there is an extension $0 \to B \overset{\iota}{\to} \unis{B}{\varphi}{A} \overset{q}{\to} A \to 0$ with $\iota(b) = (b,0)$ and $q(b,a) = a$ that splits via $\unif{\varphi}$. The discussion preceding \cref{prop:unified_cstar} implies that $\varphi$ corresponds to the lift $\ol{\beta}$ of the Busby invariant for the split extension. Let $\alpha \in \Mor(\unis{B}{\varphi}{A}, B)$ be the morphism induced by $B \trianglelefteq \unis{B}{\varphi}{A}$. Then $\alpha(m,b)b' = mb'$ for all $b' \in B$, so $\alpha(m,b) = m$.

\begin{rmk}\label{rmk:unified_alg_alternate}
Observe that if $(m,a) \in \unis{B}{\varphi}{A}$, then there is some $b \in B$ such that $(m,a) = (\varphi(a) + b, a) = \unif{\varphi}(a) + (b,0)$. So, $\unis{B}{\varphi}{A}$ is isomorphic to the internal sum $\unif{\varphi}(A) + B \oplus 0$ in  $\Mm(B) \oplus A$.
\end{rmk}

\begin{prop}\label{prop:unified_alg_perf}
		Let $A$ and $B$ be $C^*$-algebras and let $\varphi \in \Mor(A,B)$. Then $(\unis{B}{\varphi}{A}, \unif{\varphi})$ is a perfection of $\varphi$ and $B^{\perp} = 0 \oplus \pim(\varphi)$.
\end{prop}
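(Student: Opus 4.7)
The plan is to verify the four points of Definition~\ref{dfn:perfection_nc} for the pair $(\unis{B}{\varphi}{A}, \unif{\varphi})$, with $B$ identified with the ideal $B \oplus 0 \trianglelefteq \unis{B}{\varphi}{A}$, handling the annihilator computation first since it feeds directly into \labelcref{itm:NCP2}.

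First I would compute $B^\perp$. If $(m,a) \in \unis{B}{\varphi}{A}$ annihilates every $(b,0)$, then $mb = 0 = bm$ for all $b \in B$, so $m \in B^\perp \trianglelefteq \Mm(B) = \{0\}$ since $B$ is essential in $\Mm(B)$. With $m = 0$, the defining condition $m - \varphi(a) \in B$ collapses to $\varphi(a) \in B$, i.e.\ $a \in \pim(\varphi)$. Conversely, every such $(0,a)$ with $a \in \pim(\varphi)$ lies in $B^\perp$. This yields $B^\perp = 0 \oplus \pim(\varphi)$, proving the second assertion.

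Next I would show $\unif{\varphi} \in \Morp(A, \unis{B}{\varphi}{A})$. It is clearly a well-defined injective $*$-homomorphism (injectivity is immediate from the second coordinate). For properness, fix an approximate identity $(a_\lambda)$ for $A$ and $(m,a) \in \unis{B}{\varphi}{A}$; by Remark~\ref{rmk:unified_alg_alternate} we may write $m = \varphi(a) + b$ for some $b \in B$. Then
\[
\unif{\varphi}(a_\lambda)(m,a) = \bigl(\varphi(a_\lambda a) + \varphi(a_\lambda)b,\; a_\lambda a\bigr).
\]
Since $\varphi \in \Mor(A,B)$, the net $\varphi(a_\lambda)$ converges strictly to $1_{\Mm(B)}$, so $\varphi(a_\lambda)b \to b$ in norm, while $\varphi(a_\lambda a) \to \varphi(a)$ and $a_\lambda a \to a$; hence $\unif{\varphi}(a_\lambda)(m,a) \to (m,a)$, and the symmetric computation works on the other side.

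Condition~\labelcref{itm:NCP1} is routine: $B \oplus 0$ is an ideal because $(m,a)(b,0) = (mb,0)$ and $(b,0)(m,a) = (bm,0)$ both lie in $B \oplus 0$, and the induced morphism $\alpha_{\unis{B}{\varphi}{A}}$ is the projection $(m,a) \mapsto m$, giving $\alpha_{\unis{B}{\varphi}{A}} \circ \unif{\varphi} = \varphi$. For \labelcref{itm:NCP2}, use the $B^\perp$ computation: for any $a \in \pim(\varphi)$ we have $\varphi(a) \in B$, hence
\[
\ol{\unif{\varphi}}(a) = (\varphi(a),a) + B = (0,a) + B,
\]
and $(0,a) \in B^\perp$, so $\ol{\unif{\varphi}}(\pim(\varphi)) = \{c + B \mid c \in B^\perp\}$ (with equality, in line with Lemma~\ref{rmk:NCP2}). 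The main obstacle is the properness verification for $\unif{\varphi}$: the naive approach using only strict convergence of $\varphi(a_\lambda)$ on $\Mm(B)$ fails for the $\Mm(B)$-coordinate, and one must exploit the defining condition $m - \varphi(a) \in B$ to upgrade strict convergence to norm convergence on the relevant factor.
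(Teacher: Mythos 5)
Your proposal is correct and follows essentially the same route as the paper's proof: the same approximate-identity computation (using $m - \varphi(a) \in B$ to get norm convergence in the $\Mm(B)$-coordinate) for properness of $\unif{\varphi}$, the same identification of $\alpha$ with $(m,a) \mapsto m$ for \labelcref{itm:NCP1}, and the same computation $B^{\perp} = 0 \oplus \pim(\varphi)$ feeding into \labelcref{itm:NCP2}. The only cosmetic difference is that you derive the converse inclusion for $B^{\perp}$ from essentiality of $B$ in $\Mm(B)$, where the paper argues that two multipliers agreeing on $B$ coincide — the same fact in different clothing.
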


\begin{proof}
	Since $\varphi \in \Mor(A,B)$, for any approximate identity $(a_{\lambda})$ for $A$, $a \in A$, and $b \in B$, we have
\[
\unif{\varphi}(a_{\lambda}) (b + \varphi(a),a) = (\varphi(a_{\lambda})b + \varphi(a_{\lambda}a), a_{\lambda}a) \to (b + \varphi(a),a).
\]
So, $\unif{\varphi} \in \Morp(A,\unis{B}{\varphi}{A})$.
	
For \labelcref{itm:NCP1}, observe that $B \oplus 0$ is an ideal in $\unis{B}{\varphi}{A}$, and that $\unif{\varphi}(a) (b,0) = (\varphi(a)b,0)$ for all $a \in A$ and $b \in B$. Let $\alpha \in \Mor(\unis{B}{\varphi}{A} , B)$ be the induced map. Since $\alpha(m,a) = m$, we have $\alpha \circ \unif{\varphi} = \varphi$. 

For \labelcref{itm:NCP2}, we first show that $B^{\perp} = 0 \oplus \pim(\varphi)$. If $a \in \pim(\varphi)$, then $(0,a) \in \unis{B}{\varphi}{A}$. So, for all $b \in B$, we have $(0,a)(b,0) = 0$. That is, $0 \oplus \pim(\varphi) \subseteq B^{\perp}$. On the other hand, if $(\varphi(a) - b,a) \in \unis{B}{\varphi}{A}$ is such that $0 = (\varphi(a) - b,a)(b',0) =(\varphi(a)b'- b b',0)$ for all $b' \in B$, then $\varphi(a)b' = b b'$ for all $b' \in B$. Hence, $\varphi(a) = b$ and so $a \in \pim(\varphi)$. That is, $B^{\perp} = 0 \oplus \pim(\varphi)$. 
 
The map $\ol{\unif{\varphi}} \colon \pim(\varphi) \to (\unis{B}{\varphi}{A}) / (B \oplus 0)$ of \labelcref{itm:NCP2} satisfies
\[
\ol{\unif{\varphi}}(a) = (\varphi(a),a) + B \oplus 0 = (0,a) + B \oplus 0
\] 
for all $a \in \pim(a)$, so \labelcref{itm:NCP2} is satisfied. Since $\unif{\varphi}$ is injective, $(\unis{B}{\varphi}{A}, \unif{\varphi})$ is a perfection of $\varphi$.
\end{proof}

The strict fibrewise compactification of \cref{dfn:essential_subperfection}, associated to the unified algebra, also admits an explicit description. 

\begin{prop}\label{lem:minimal_alg}
	Let $A$ and $B$ be $C^*$-algebras and let $\varphi \in \Mor(A,B)$. Let $B + \varphi(A)$ be the subalgebra of $\Mm(B)$ generated by $B$ and $\varphi(A)$. 
	Then $([\unis{B}{\varphi}{A}],[\unif{\varphi}]) \cong (B + \varphi(A), \varphi)$.
\end{prop}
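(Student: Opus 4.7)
The plan is to use the first isomorphism theorem applied to the induced morphism $\alpha \in \Mor(\unis{B}{\varphi}{A},B)$, exploiting two facts already in hand: by \cref{prop:unified_alg_perf} the annihilator of $B \oplus 0$ inside $\unis{B}{\varphi}{A}$ is $B^{\perp} = 0 \oplus \pim(\varphi)$, and by \cref{lem:annihilator_kernel} this annihilator equals $\ker(\alpha)$. Since $\alpha(m,a) = m$ by the discussion preceding \cref{prop:unified_alg_perf}, and since every element of $\unis{B}{\varphi}{A}$ has the form $(\varphi(a) + b, a)$ for some $a \in A$ and $b \in B$ (see \cref{rmk:unified_alg_alternate}), the image of $\alpha$ in $\Mm(B)$ is exactly the $*$-subalgebra $\varphi(A) + B$.

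Since $*$-homomorphisms between $C^*$-algebras have closed image, $\varphi(A) + B$ is a $C^*$-subalgebra of $\Mm(B)$, and the first isomorphism theorem yields a $*$-isomorphism
\[
\wt{\alpha} \colon [\unis{B}{\varphi}{A}] = \unis{B}{\varphi}{A}/B^{\perp} \xrightarrow{\;\cong\;} \varphi(A) + B,
\qquad (m,a) + B^{\perp} \longmapsto m.
\]
Note that $B \oplus 0 \trianglelefteq \unis{B}{\varphi}{A}$ maps onto $B \trianglelefteq \varphi(A) + B$, and the induced morphism $\varphi(A) + B \to \Mm(B)$ on $\varphi(A) + B$ is simply the inclusion, so $\wt{\alpha}$ is compatible with the fibrewise compactification structures.

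It remains to verify that $\wt{\alpha}$ intertwines $[\unif{\varphi}]$ with $\varphi$. For each $a \in A$,
\[
\wt{\alpha}\bigl([\unif{\varphi}](a)\bigr) = \wt{\alpha}\bigl((\varphi(a),a) + B^{\perp}\bigr) = \alpha(\varphi(a),a) = \varphi(a),
\]
so $\wt{\alpha} \circ [\unif{\varphi}] = \varphi$ as required. There is no real obstacle here: the content is packaged entirely in \cref{prop:unified_alg_perf} and \cref{lem:annihilator_kernel}, and the present proposition is essentially the observation that quotienting $\unis{B}{\varphi}{A}$ by the annihilator $B^{\perp} = 0 \oplus \pim(\varphi)$ collapses the ``$A$-coordinate'' and records only the multiplier $\varphi(a) + b \in \Mm(B)$.
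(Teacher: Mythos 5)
Your proposal is correct and follows essentially the same route as the paper: the paper invokes the immediately preceding lemma, which identifies $([C],[\psi])$ with $(\alpha_C(C),\alpha_C\circ\psi)$ via $\ker(\alpha_C)=B^{\perp}$ (\cref{lem:annihilator_kernel}) and the first isomorphism theorem, and then computes $\alpha(\unis{B}{\varphi}{A})=B+\varphi(A)$ and $\alpha\circ\unif{\varphi}=\varphi$. You have simply inlined that lemma's argument (and added the harmless extra identification $B^{\perp}=0\oplus\pim(\varphi)$), so there is no substantive difference.
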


\begin{proof}
	Let $\alpha \in \Mor(\unis{B}{\varphi}{A},B)$ be the morphism induced by $B \trianglelefteq \unis{B}{\varphi}{A}$. Since $\alpha(m,a) = m$ for all $(m,a) \in \unis{B}{\varphi}{A}$,  \cref{lem:minimal_alg} gives
	$
	[\unis{B}{\varphi}{A}] 
	\cong \alpha(\unis{B}{\varphi}{A}) = B +\varphi(A).
	$
	Under this isomorphism $[\unif{\varphi}] = \alpha \circ \unif{\varphi}$ takes $a \in A$ to $\varphi(a)$. 
\end{proof}

\begin{rmk}
	The map $\varphi$ of \cref{lem:minimal_alg} that appears in $(B + \varphi(A), \varphi)$ is, strictly speaking, different from the original morphism $\varphi \in \Mor(A,B)$, since its codomain is $B + \varphi(A)$. We abuse notation and write $\varphi$ for both morphisms. 
\end{rmk}

\begin{dfn}
	Let $A$ and $B$ be $C^*$-algebras and let $\varphi \in \Mor(A,B)$. We call the strict fibrewise compactification $([\unis{B}{\varphi}{A}],[\unif{\varphi}])$ the \emph{minimal fibrewise compactification} of $\varphi$. 
\end{dfn}

The term ``minimal'' is justified since $B + \varphi(A)$ is the smallest $C^*$-algebra containing $B$ as an ideal and $\varphi(A)$.

\begin{example}\label{ex:reps_unified}
	 Following \cref{ex:reps}, let $\pi \colon A \to \Bb(\Hh)$ be a nondegenerate representation. So, $\pi \in \Mor(A,\Kk(\Hh))$. Then $\unis{\Kk(\Hh)}{\pi}{A} = \{(T + \pi(a),a) \mid T \in \Kk(\Hh), a \in A \}$ is a subalgebra of $\Bb(\Hh) \oplus A$. By \cref{lem:minimal_alg}, $[\unis{\Kk(\Hh)}{\pi}{A}] = \pi(A) + \Kk(\Hh)$ is the subalgebra of compact perturbations of $\pi(A)$ in $\Bb(\Hh)$.
\end{example}

\begin{example}\label{ex:toeplitz}

	As in \cref{ex:correspondences}, let $(\varphi,X_B)$ be a nondegenerate $A$--$B$-correspondence. By \cref{lem:minimal_alg}, we have $[\unis{\Kk_B(X)}{\varphi}{A}] \cong \varphi(A) + \Kk_B(X)$, the algebra of compact perturbations of $\varphi(A)$ in $\Ll_B(X)$.
	
	If $A = B$, then the unified algebra $\unis{\Kk_A(X)}{\varphi}{A}$ admits an explicit description in terms of Toeplitz algebras of correspondences. 
	Following \cite{Kat04cor,Pim97}, the \emph{Fock space} of $(\varphi,X_A)$ is the right Hilbert $A$-module
	\[
	F_{X} \coloneqq \bigoplus_{n \ge 0} X^{\ox_A n} = \bigoplus_{n \ge 0} \underbrace{X \ox_A\cdots \ox_A X}_{n\text{-terms}},
	\] 
	where $X^{\ox_A 0} \coloneqq A_A$ is a right $A$-module under right multiplication. There is an injective nondegenerate $*$-homomorphism $\varphi_{\infty} \colon A \to \Ll_A(F_X)$ satisfying $\varphi_{\infty}(a)(x_1 \ox \cdots \ox x_n) = (\varphi(a)x_1) \ox \cdots \ox x_n$ for all $x_1 \ox \cdots \ox x_n \in X^{\ox n}$ and $\varphi(a)a' = aa'$ for all $a' \in A = X^{\ox 0}$.
	
	To each $x \in X$, we associate a \emph{creation operator} $T_x \in \Ll_A(F_X)$ that satisfies $T_x(x_1 \ox \cdots \ox x_n) = x \ox x_1 \ox \cdots \ox x_n$ and $T_x(a) = xa$ for $a \in X^{\ox 0}$. The \emph{Toeplitz algebra} $\Tt_X$ of $(\varphi,X_A)$ is the $C^*$-subalgebra of $\Ll_A(F_X)$ generated by $\varphi_{\infty}(A)$ and $\{T_x \mid x \in X\}$. 

Consider the subalgebra $C^*(\varphi_{\infty}(A), \{T_xT_y^* \mid x,y \in X\}) \subseteq \Tt_X$.
In \cref{prop:cuntz-pimsner_characterisation} below, we show that
	\[
	(\unis{\Kk_A(X)}{\varphi}{A},\unif{\varphi}) \cong \Big(C^*\big(\varphi_{\infty}(A), \{T_xT_y^* \mid x,y \in X\}\big), \varphi_{\infty}\Big) .
	\]
\end{example}

For the remainder of this article we work exclusively with quotient fibrewise compactifications of the unified algebra. Like in the topological setting, larger fibrewise compactifications do exist. 

\begin{rmk}
	We outline a ``large'' fibrewise compactification analogous to that of \cref{rmk:maximalish_fw}. 	
	Fix $\varphi \in \Mor(A,B)$. Since $\varphi(A)$ is a subalgebra of $\Mm(B)$, we treat $\Mm(\varphi(A))$ as a subalgebra of $\Mm(B)$. 
	Let $D \coloneqq \Mm(\varphi(A)) +B 
\subseteq \Mm(B)$ and define $\delta \colon A \to D$ by $\delta(a) = \varphi(a)$ for all $a \in A$ (so $\delta$ and $\varphi$ only differ by codomain). Since $\varphi$ is a morphism, $\delta \in \Morp(A,C)$.
 The algebra $B$ is an ideal in $D$ and induces a morphism $\alpha_D \in \Mor(D,B)$ that coincides with the inclusion $D \hookrightarrow \Mm(B)$. It follows that $\alpha_D \circ \delta = \varphi$, and so \labelcref{itm:NCP1} is satisfied. Since $D \subseteq \Mm(B)$, we have $B^{\perp} = \{0\}$. By \cref{rmk:strict_nice}, $(D,\delta)$ is a strict fibrewise compactification of $\varphi$. 

We suspect that $(D,\delta)$ is universal among strict fibrewise compactifications of $\varphi$, but leave the question and its precise formulation open. 
\end{rmk}

\subsection{Quotient fibrewise compactifications of the unified algebra}\label{sec:quotient_fw}

Like in \cref{sec:sub-fibrewise} where we classified sub-fibrewise compactifications of the unified space, in this section we classify quotient fibrewise compactifications of the unified algebra. 
 \cref{lem:from_sets_to_ideals_and_back} motivates the following definition.  

\begin{dfn}
	Let $A$ and $B$ be $C^*$-algebras and let $\varphi \in \Mor(A,B)$. An ideal $J \trianglelefteq A$ is \emph{$\varphi$-proper} if $\varphi(J) \subseteq B$. We say that $J$ is \emph{$\varphi$-perfect} if $\varphi|_J$ is also injective.
\end{dfn}

In \cite[Definition~3.2]{Kat04cor}, Katsura  introduced the following ideal. 

\begin{dfn}
	Let $\varphi \in \Mor(A,B)$. We call the ideal
	\[
	\kat(\varphi) \coloneqq \pim(\varphi) \cap \ker(\varphi)^{\perp}
	\]
	of $A$ the \emph{Katsura ideal of $\varphi$}.
\end{dfn}

The following result is analogous to \cref{lem:f-reg_characterisation} and follows directly from the definitions of $\pim(\varphi)$ and $\kat(\varphi)$. 

\begin{lem}\label{lem:phi_proper_characterisation}
 Let $A$ and $B$ be $C^*$-algebras and let $\varphi \in \Mor(A,B)$. Suppose that $J \trianglelefteq A$. Then
 \begin{enumerate}
 	\item $J$ is $\varphi$-proper if and only if $J \subseteq \pim(\varphi)$; and
 	\item $J$ is $\varphi$-perfect if and only if $J \subseteq \kat(\varphi)$. 
 \end{enumerate}
 In particular, $\pim(\varphi)$ is the maximal $\varphi$-proper ideal and $\kat(\varphi)$ is the maximal $\varphi$-perfect ideal. 
\end{lem}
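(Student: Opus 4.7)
The plan is straightforward: both equivalences are essentially unpackings of the definitions, coupled with the basic identity $\ker(\varphi) \cap \ker(\varphi)^{\perp} = \{0\}$. I will prove (i), then (ii), and finally note that the maximality statements fall out for free once the characterisations are in hand.

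For part (i), the argument is immediate from the definition of $\pim(\varphi)$: by definition $J$ is $\varphi$-proper means $\varphi(J) \subseteq B$, which rearranges to $J \subseteq \varphi^{-1}(B) = \pim(\varphi)$. For the forward direction of part (ii), I will assume $J$ is $\varphi$-perfect, so (i) gives $J \subseteq \pim(\varphi)$ immediately. To show $J \subseteq \ker(\varphi)^{\perp}$, I would pick $a \in J$ and $k \in \ker(\varphi)$; since $J$ is an ideal, $ak \in J$, and since $\varphi$ is a $*$-homomorphism into $\Mm(B)$ we have $\varphi(ak) = \varphi(a)\varphi(k) = 0$. Injectivity of $\varphi|_J$ then forces $ak = 0$, and the symmetric argument gives $ka = 0$, so $a \in \ker(\varphi)^{\perp}$.

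For the backward direction of (ii), suppose $J \subseteq \kat(\varphi) = \pim(\varphi) \cap \ker(\varphi)^{\perp}$. Then $J \subseteq \pim(\varphi)$ gives $\varphi$-properness via (i). For injectivity of $\varphi|_J$, if $a \in J$ satisfies $\varphi(a) = 0$, then $a \in J \cap \ker(\varphi) \subseteq \ker(\varphi)^{\perp} \cap \ker(\varphi) = \{0\}$, so $a = 0$.

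The maximality statements are then immediate. The ideal $\pim(\varphi)$ is itself $\varphi$-proper (by (i) applied to $J = \pim(\varphi)$), and every $\varphi$-proper ideal is contained in it. Similarly $\kat(\varphi)$ is $\varphi$-perfect (both inclusions hold trivially, and the injectivity of $\varphi|_{\kat(\varphi)}$ uses $\kat(\varphi) \cap \ker(\varphi) \subseteq \ker(\varphi)^{\perp} \cap \ker(\varphi) = \{0\}$), and every $\varphi$-perfect ideal is contained in it. I do not anticipate any real obstacle here; the only point warranting care is to remember that $\varphi$ takes values in $\Mm(B)$ rather than $B$, but this affects neither the multiplicativity of $\varphi$ nor the validity of the identity $\ker(\varphi) \cap \ker(\varphi)^{\perp} = \{0\}$, which holds for any ideal in any $C^*$-algebra.
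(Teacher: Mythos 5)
Your proof is correct and matches the paper's intent exactly: the paper gives no written proof, stating only that the result ``follows directly from the definitions of $\pim(\varphi)$ and $\kat(\varphi)$,'' and your argument is precisely the routine unpacking being alluded to (with the key observations being $\pim(\varphi) = \varphi^{-1}(B)$ and $\ker(\varphi) \cap \ker(\varphi)^{\perp} = \{0\}$). Nothing to add.
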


\begin{example}\label{ex:proper_the_same}
	If $f \colon X \to Y$ is a continuous map between locally compact Hausdorff spaces, then maximality of $\kat(f^*)$ and $\per(f)$ together with \cref{lem:from_sets_to_ideals_and_back} imply that $\kat(f^*) \cong C_0(\per(f))$. 
	More generally, \cref{lem:from_sets_to_ideals_and_back} implies that $U$ is $f$-proper if and only if $C_0(U)$ is $f^*$-proper, and $U$ is $f$-perfect if and only if $C_0(U)$ is $f^*$-perfect. 
\end{example}

Suppose that $J \trianglelefteq A$ is a $\varphi$-proper ideal. Since $\varphi(J) \subseteq B$, the subalgebra $0 \oplus J$ of $ \unis{B}{\varphi}{A}$ is an ideal. This allows for the following definition.

\begin{dfn}
Let $A$ and $B$ be $C^*$-algebras and let $\varphi \in \Mor(A,B)$. Suppose that $J \trianglelefteq A$ is a $\varphi$-proper ideal. Define
\[
\pers{B}{\varphi}{J}{A} \coloneqq \frac{\unis{B}{\varphi}{A}}{0 \oplus J},
\]
and define $\varphi_J \colon A \to \pers{B}{\varphi}{J}{A}$ by $\varphi_J  (a) = \unif{\varphi}(a) + 0 \oplus J = (\varphi(a),a + J)$.
\end{dfn}

 Analogously to \cref{rmk:unified_alg_alternate}, we may write
\begin{equation}\label{eq:quotient_description}
	\pers{B}{\varphi}{J}{A} = \{\varphi_J (a) + (b,J) \mid a \in A,\, b \in B \}.
\end{equation}
We show in \cref{prop:quotient_perfections_unified} below, that $(\pers{B}{\varphi}{J}{A},\varphi_J)$ is a fibrewise compactification of $\varphi$. First, we identify a copy of $B$ as an ideal in $\pers{B}{\varphi}{J}{A}$. 

\begin{lem}\label{lem:BJ_ideal}
Let $A$ and $B$ be $C^*$-algebras, let $\varphi \in \Mor(A,B)$, and let $J$ be a $\varphi$-proper
ideal. Then $ B_{J} \coloneqq \{(b,J)\in \pers{B}{\varphi}{J}{A}\mid b \in B\}$ is an ideal in $\pers{B}{\varphi}{J}{A}$ and $B_J \cong B$. 
\end{lem}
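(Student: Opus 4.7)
The plan is: (i) check that $0 \oplus J$ is a closed two-sided ideal of $\unis{B}{\varphi}{A}$ so that the quotient $\pers{B}{\varphi}{J}{A}$ is a well-defined $C^*$-algebra; (ii) verify $B_J$ is an ideal of that quotient; and (iii) construct an injective $*$-homomorphism $B \to \pers{B}{\varphi}{J}{A}$ with image $B_J$, from which the $C^*$-isomorphism and the closedness of $B_J$ follow automatically.

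For step (i), $\varphi$-properness of $J$ says $\varphi(J) \subseteq B$, which is precisely what is needed to put $(0, j)$ inside $\unis{B}{\varphi}{A}$ for every $j \in J$, since $0 - \varphi(j) = -\varphi(j) \in B$. Closedness of $0 \oplus J$ is inherited from closedness of $J$ in $A$ via the isometric inclusion $j \mapsto (0, j)$ in the direct-sum norm. Componentwise multiplication in $\Mm(B) \oplus A$ gives $(m, a)(0, j) = (0, aj) \in 0 \oplus J$ because $J$ is an ideal of $A$, with the opposite side handled symmetrically. For step (ii), pick any representative $(m, a) \in \unis{B}{\varphi}{A}$ of an element of $\pers{B}{\varphi}{J}{A}$ and any $b \in B$; the product lifts to $(m, a)(b, 0) = (mb, 0)$, which represents an element of $B_J$ since $B \trianglelefteq \Mm(B)$, and the right-multiplication side is analogous.

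For step (iii), define $\beta \colon B \to \pers{B}{\varphi}{J}{A}$ by $\beta(b) = (b, 0) + 0 \oplus J$; this is plainly a $*$-homomorphism mapping onto $B_J$. For injectivity, $\beta(b) = 0$ forces $(b, 0) \in 0 \oplus J$, so $(b, 0) = (0, j)$ in $\Mm(B) \oplus A$ for some $j \in J$, which forces $b = 0$ by equating components. An injective $*$-homomorphism between $C^*$-algebras is automatically isometric, so $\beta$ identifies $B$ with $B_J$ and, in particular, confirms that $B_J$ is norm-closed. I do not foresee any real obstacle here; the essential conceptual input is the observation that $\varphi$-properness of $J$ is exactly the condition that embeds $0 \oplus J$ into $\unis{B}{\varphi}{A}$, after which every verification reduces to routine manipulation of componentwise multiplication and quotient cosets.
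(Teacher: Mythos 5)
Your proof is correct and follows essentially the same route as the paper's: direct verification that componentwise products land in $B_J$ (using $B \trianglelefteq \Mm(B)$ and $J \trianglelefteq A$), together with the observation that $b \mapsto (b,J)$ is injective because the first coordinate of an element of $0 \oplus J$ vanishes. Your additional remarks on why $0 \oplus J$ sits inside $\unis{B}{\varphi}{A}$ (via $\varphi$-properness) and on automatic closedness of the image are accurate and simply make explicit what the paper establishes in the surrounding text.
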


\begin{proof}
	The set $B_J$ is clearly a subalgebra of $\pers{B}{\varphi}{J}{A}$. Since
		 $B \trianglelefteq \Mm(B)$ and $J \trianglelefteq A$, we have $(m,a+J)(b,J) = (mb,J) \in B_J$ and $(b,J)(m,a+J) = (bm,J) \in B_J$ for all $(m,a+J) \in  \pers{B}{\varphi}{J}{A}$ and $(b,J) \in B_J$. So, $B_J$ is an ideal. If $(b,J) = 0$, then $b = 0$, so the map $b \mapsto (b,J)$ induces an isomorphism $B \cong B_J$.
\end{proof}

\begin{prop}\label{prop:quotient_perfections_unified}
	Let $A$ and $B$ be $C^*$-algebras, let $\varphi \in \Mor(A,B)$, and let $J$ be a $\varphi$-proper ($\varphi$-perfect)
	 ideal. 
	Then $(\pers{B}{\varphi}{J}{A},\varphi_J)$ is a fibrewise compactification (perfection) of $\varphi$. Moreover, every quotient fibrewise compactification (perfection) of $(\unis{B}{\varphi}{A},\unif{\varphi})$ is isomorphic to $(\pers{B}{\varphi}{J}{A},\varphi_J)$ for some $\varphi$-proper ($\varphi$-perfect) ideal $J \trianglelefteq A$.
\end{prop}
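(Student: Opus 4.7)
The plan is first to show that $(\pers{B}{\varphi}{J}{A},\varphi_J)$ is a fibrewise compactification. I would write $\varphi_J = q_J \circ \unif{\varphi}$, where $q_J \colon \unis{B}{\varphi}{A} \to \pers{B}{\varphi}{J}{A}$ is the quotient by the ideal $0 \oplus J$ (which is an ideal precisely because $J$ is $\varphi$-proper). Since $\unif{\varphi} \in \Morp(A, \unis{B}{\varphi}{A})$ by \cref{prop:unified_alg_perf} and $q_J$ is a surjective $*$-homomorphism, the composition $\varphi_J$ lies in $\Morp(A, \pers{B}{\varphi}{J}{A})$. By \cref{lem:BJ_ideal} the algebra $B_J \cong B$ is an ideal of $\pers{B}{\varphi}{J}{A}$, and computing with representatives $\alpha_J(\varphi_J(a))(b, J) = \varphi_J(a)(b, J) = (\varphi(a)b, J)$ identifies $\alpha_J \circ \varphi_J$ with $\varphi$ under $B_J \cong B$, so \labelcref{itm:NCP1} holds.

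For \labelcref{itm:NCP2} I would compute $B_J^{\perp} \trianglelefteq \pers{B}{\varphi}{J}{A}$. The first coordinate of a class $(m, a+J)$ is determined modulo $0 \oplus J$ (so it is well-defined), and $(m, a+J)(b, J) = (mb, J) = 0$ for all $b \in B$ forces $mb = 0$ for all $b \in B$; essentiality of $B$ in $\Mm(B)$ then gives $m = 0$. Since $(0, a+J) \in \pers{B}{\varphi}{J}{A}$ requires a representative $(0, a) \in \unis{B}{\varphi}{A}$, forcing $a \in \pim(\varphi)$, one obtains $B_J^{\perp} = \{(0, a+J) \mid a \in \pim(\varphi)\}$. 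For $a \in \pim(\varphi)$ we have $\varphi(a) \in B$, so $(\varphi(a), J) \in B_J$ and $\ol{\varphi_J}(a) = (\varphi(a), a+J) + B_J = (0, a+J) + B_J$; thus $\ol{\varphi_J}(\pim(\varphi))$ equals the image of $B_J^{\perp}$ in the quotient, verifying \labelcref{itm:NCP2}. If $J$ is $\varphi$-perfect, then $\varphi_J(a) = 0$ forces both $\varphi(a) = 0$ and $a \in J$; since $\varphi|_J$ is injective, $a = 0$, giving a perfection.

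For the converse, suppose $(C', \psi')$ is a quotient fibrewise compactification witnessed by a surjection $q \colon \unis{B}{\varphi}{A} \twoheadrightarrow C'$ with $\psi' = q \circ \unif{\varphi}$ and $\alpha_{C'} \circ q = \alpha$, where $\alpha \in \Mor(\unis{B}{\varphi}{A}, B)$ is induced by $B \trianglelefteq \unis{B}{\varphi}{A}$. Set $K = \ker(q)$. For any $(m, a) \in K$, $m = \alpha(m, a) = \alpha_{C'}(q(m, a)) = 0$, so $K \subseteq 0 \oplus \pim(\varphi)$. Define $J \coloneqq \{a \in \pim(\varphi) \mid (0, a) \in K\}$; this is closed, and for each $a' \in A$ the identities $\unif{\varphi}(a')(0, a) = (0, a'a) \in K$ and $(0, a)\unif{\varphi}(a') = (0, aa') \in K$ show that $J$ is a two-sided ideal of $A$, hence $\varphi$-proper. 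Then $K = 0 \oplus J$, so $q$ descends to an isomorphism $C' \cong \pers{B}{\varphi}{J}{A}$ under which $\psi'$ corresponds to $\varphi_J$. If $\psi'$ is also injective, the earlier argument forces $J \cap \ker(\varphi) = 0$, i.e., $J$ is $\varphi$-perfect.

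The main obstacle will be the bookkeeping of representatives in $\pers{B}{\varphi}{J}{A}$ when establishing \labelcref{itm:NCP2} and identifying $B_J^{\perp}$; once the well-definedness of the first-coordinate projection on classes and essentiality of $B$ in $\Mm(B)$ are applied at the correct moments, the remaining verifications are direct manipulations of the split-extension description of the unified algebra.
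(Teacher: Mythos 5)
Your proposal is correct, and the forward direction (verifying \labelcref{itm:NCP1}, computing $B_J^{\perp} = \{(0,a+J) \mid a \in \pim(\varphi)\}$, and deducing \labelcref{itm:NCP2} and injectivity in the perfect case) is essentially identical to the paper's argument. The converse takes a genuinely different, and somewhat more economical, route. The paper first \emph{defines} $J \coloneqq \psi^{-1}(B_C) = \pim(\psi)$ and then proves $\ker(q) = 0 \oplus J$ by two separate inclusions, the harder of which ($0 \oplus J \subseteq \ker(q)$) invokes \cref{lem:annihilator_kernel} together with $B_C \cap B_C^{\perp} = \{0\}$. You instead compute $\ker(q)$ directly: the relation $\alpha_{C'} \circ q = \alpha$ kills the first coordinate of any element of the kernel, membership in $\unis{B}{\varphi}{A}$ then forces the second coordinate into $\pim(\varphi)$, and $J$ is simply read off as $\{a \mid (0,a) \in \ker(q)\}$, with the ideal property inherited from $\ker(q)$ via multiplication by $\unif{\varphi}(A)$. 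This bypasses the annihilator lemma entirely and produces the same $J$ (both are forced to equal $\{a \mid (0,a) \in \ker q\}$), at the mild cost of not exhibiting $J$ intrinsically as $\pim(\psi')$. Your treatment of the perfect case in the converse is also slightly different but valid: since $\ker(\varphi_J) = J \cap \ker(\varphi)$, injectivity of $\psi'$ immediately gives $J \cap \ker(\varphi) = \{0\}$, which for closed two-sided ideals is equivalent to $J \subseteq \ker(\varphi)^{\perp}$, i.e.\ to $J$ being $\varphi$-perfect; the paper reaches the same conclusion by showing $\psi(aj) \in B_C \cap B_C^{\perp}$ for $j \in J$, $a \in \ker(\varphi)$. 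The only place you are slightly terse is the phrase ``the earlier argument forces'' --- it would be worth stating explicitly that $\psi'(a) = q(0,a) = 0$ for $a \in J \cap \ker(\varphi)$ --- but the pieces are all in place.
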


\begin{proof}
	By \cref{lem:BJ_ideal}, $B_J\cong B$ is an ideal in  $\pers{B}{\varphi}{J}{A}$. Let $\alpha \in \Mor (\pers{B}{\varphi}{J}{A},B)$ be the induced morphism. 	Since $\varphi_J$ is the composition of $\unif{\varphi}$ with the quotient map $\unis{B}{\varphi}{A} \twoheadrightarrow  \pers{B}{\varphi}{J}{A}$, we have $\varphi_J \in \Morp(A,\pers{B}{\varphi}{J}{A})$. For each $a \in A$, we have $\perf{\varphi}{J}(a)(b,J) = (\varphi(a)b,J)$ for all $b \in B$. Hence, $\alpha \circ \perf{\varphi}{J} = \varphi$, and so $(\pers{B}{\varphi}{J}{A},\varphi_J)$ satisfies \labelcref{itm:NCP1}. 
	
	For \labelcref{itm:NCP2}, we first show that $B_J^{\perp} = \{(0,a + J) \mid a \in \pim(\varphi) \}$. If $a \in \pim(\varphi)$, then for all $(b',J) \in B_J$ we have $(0,a+J)(b',J) = (0,J) =(b',J)( 0,a+J)$, so $(0,a + J) \in B_J^{\perp}$. On the other hand, if $(\varphi(a)-b,a +J) \in B_J^{\perp}$ for some $a \in A$ and $b \in B$, then for each $(b',J) \in B_J$ we have $0 = (\varphi(a)-b,a +J)(b',J)$. It follows that $\varphi(a)b' = bb'$ for all $b' \in B$, so $\varphi(a) = b$. Hence, $a \in \pim(\varphi)$.
	
	The map $\ol{\varphi_J} \colon \pim(\varphi) \to \pers{B}{\varphi}{J}{A}/B_J$ satisfies
	\[
	\ol{\varphi_J}(a) = (\varphi(a),a +J) + B_J  = (0, a + J) + B_J
	\]
	for all $a \in \pim(\varphi)$, so by our characterisation of $B_J^{\perp}$, \labelcref{itm:NCP2} is satisfied. Hence $(\pers{B}{\varphi}{J}{A},\varphi_J)$ is a fibrewise compactification of $\varphi$.
	If $J$ is also $\varphi$-perfect and $\varphi_J(a) = 0$, then $a \in J$ and $\varphi(a) = 0$, so $a = 0$. Hence, $\varphi_J$ is injective. 
	
	Now suppose that $(C,\psi)$ is a quotient fibrewise compactification of $(\unis{B}{\varphi}{A},\unif{\varphi})$, with quotient map $q \colon \unis{B}{\varphi}{A} \to C$. Let $B_C$ denote the ideal of $C$ that is isomorphic to $B$, and let $\alpha_C \in \Mor(C,B)$ be the induced morphism. Let $J \coloneqq \psi^{-1}(B_C) = \pim(\psi)$ and observe that $J \trianglelefteq A$. Then $\varphi(J) = \alpha_C \circ \psi(J) \subseteq B \subseteq \Mm(B)$, so $J \subseteq \pim(\varphi)$. By \cref{lem:phi_proper_characterisation}, $J$ is $\varphi$-proper. 
	
	We show that $\ker(q) = 0 \oplus J$. Fix $(m,a) \in \ker(q)$. 
	As $q \circ \unif{\varphi} = \psi$, we have
	\[
	0 = q(m,a) = q\big((m - \varphi(a),0) + \unif{\varphi}(a)\big) = q(m - \varphi(a),0) + \psi(a).
	\]
	By \cref{rmk:B_preservation}, $q$ induces an isomorphism $q|_B \colon B \oplus 0 \to B_C$, so $\psi(a) =  q(\varphi(a)-m,0) \in B_C$. That is, $a \in J$. Since $\alpha_C \circ q = \alpha \in \Mor(\unis{B}{\varphi}{A},B)$, 
	\[
	0 = \alpha_C \circ q(m,a) = \alpha(m,a) = m.
	\]
	That is, $(m,a) \in 0 \oplus J$, and so $\ker(q) \subseteq 0 \oplus J$. 

	Now, fix $(0,j) \in 0 \oplus J$. Since $q|_B \colon B \to B_C$ is an isomorphism and $q \circ \unif{\varphi} = \psi$,
	\[
	q(0,j) = q\big(\unif{\varphi}(j) - (\varphi(j),0)\big) = \psi(j) - q(\varphi(j),0) \in B_C
	\]
	and so
	\[
	 \alpha_C \circ q(0,j) = \alpha_C \circ \psi(j) - \alpha_C \circ q(\varphi(j),0) = \varphi(j) - \alpha(\varphi(j),0) = \varphi(j) - \varphi(j) = 0.
	\]
	By \cref{lem:annihilator_kernel}, we have $q(0,j) \in B_C^{\perp}$. Since $q(0,j) \in B_C \cap B_C^{\perp} = \{0\}$, we have $(0,j) \in \ker(q)$.  Hence, $\ker(q) = 0 \oplus J$, and so, $C \cong \pers{A}{\varphi}{J}{B}$.
	
	Now suppose that $\psi$ is injective. Fix $j \in J$ and $a \in \ker(\varphi)$. Since $aj \in J \cap \ker(\varphi)$, we have $\psi(aj) \in B_C$ and $\alpha_C (\psi(aj)) = \varphi(aj) = 0$, so by \cref{lem:annihilator_kernel} we have $\psi(aj) \in B_C^{\perp}$. Hence $\psi(aj) = 0$, and so $aj = 0$. A symmetric argument shows $ja= 0$, so $j \in \ker(\varphi)^{\perp}$. Hence, $J \trianglelefteq \kat(\varphi)$, so by \cref{lem:phi_proper_characterisation}, $J$ is $\varphi$-perfect.   
	
	Since $q \circ \unif{\varphi} = \psi$, the isomorphism between $C$ and $\pers{B}{\varphi}{J}{A}$ intertwines $\psi$ and $\perf{\varphi}{J}$.
\end{proof}

In the commutative setting we have the following consequence of \cref{prop:unified_cstar}.

\begin{cor}\label{cor:commutative_quotients_same}
	Let $f \colon X \to Y$ be a continuous map between locally compact Hausdorff spaces and suppose that $U \subseteq Y$ is $f$-proper. Then
	\[
	(\pers{C_0(X)}{f^*}{C_0(U)}{C_0(Y)}, \perf{(f^*)}{C_0(U)})\cong (C_0( \pers{X}{f}{U}{Y}), (\perf{f}{U})^*).
	\]
\end{cor}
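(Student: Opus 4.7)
The plan is to exhibit the isomorphism as a composition of two natural identifications: first reduce to the unified case via \cref{prop:unified_cstar}, then quotient by the ideal corresponding to the open subset $\iota_Y(U) \subseteq \unis{X}{f}{Y}$.

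First I would invoke \cref{prop:unified_cstar} to identify $\unis{C_0(X)}{f^*}{C_0(Y)} \cong C_0(\unis{X}{f}{Y})$, under which an element $(g,h)$ of the pullback corresponds to the function on $\unis{X}{f}{Y}$ whose restriction to $\iota_X(X)$ is $g$ and whose restriction to $\iota_Y(Y)$ is $h$. By \cref{lem:subperfection_characterisation}, since $U$ is $f$-proper, $\iota_Y(U)$ is open in $\unis{X}{f}{Y}$, and $\pers{X}{f}{U}{Y} = \unis{X}{f}{Y} \setminus \iota_Y(U)$ is the corresponding closed complement. By standard Gelfand duality, restriction of functions gives a short exact sequence
\[
0 \to C_0(\iota_Y(U)) \to C_0(\unis{X}{f}{Y}) \to C_0(\pers{X}{f}{U}{Y}) \to 0,
\]
where the quotient map is precisely $(\perf{f}{U})^*$ composed with the identification, since $(\perf{f}{U})^*(h) = h \circ \perf{f}{U}$ equals restriction to $\pers{X}{f}{U}{Y}$ on functions pulled back from $Y$.

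Next I would identify the ideal $C_0(\iota_Y(U))$ inside the pullback description. Since $\iota_Y(U) \cap \iota_X(X) = \varnothing$, any $a \in C_0(\iota_Y(U))$ vanishes on $\iota_X(X)$, so under the isomorphism it corresponds to a pair $(0,h)$ with $h \in C_0(U)$; conversely any such pair lies in the pullback because $f^*(h) \in C_0(X)$ when $h \in C_0(U) \subseteq C_0(\pr(f))$ by \cref{lem:from_sets_to_ideals_and_back} and \cref{ex:proper_the_same}. Thus $C_0(\iota_Y(U))$ corresponds exactly to $0 \oplus C_0(U) \trianglelefteq \unis{C_0(X)}{f^*}{C_0(Y)}$, and the induced isomorphism on quotients gives
\[
C_0(\pers{X}{f}{U}{Y}) \cong \frac{\unis{C_0(X)}{f^*}{C_0(Y)}}{0 \oplus C_0(U)} = \pers{C_0(X)}{f^*}{C_0(U)}{C_0(Y)}.
\]

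Finally I would check that the isomorphism intertwines the relevant maps. For $h \in C_0(Y)$, the function $(\perf{f}{U})^*(h) = h \circ \perf{f}{U}$ sends $x \in X$ to $h(f(x))$ and $y \in Y \setminus U$ to $h(y)$, so under the pullback identification and the quotient it corresponds to $(f^*(h), h + C_0(U)) = \perf{(f^*)}{C_0(U)}(h)$, as required. The main point that needs care is the identification of $C_0(\iota_Y(U))$ as precisely $0 \oplus C_0(U)$ in the pullback description; the rest is bookkeeping, while the surjectivity/perfection statement follows automatically since $(\perf{f}{U})^*$ is injective exactly when $\perf{f}{U}$ has dense range, which is the case when $U$ is $f$-perfect by \cref{lem:subperfection_characterisation}.
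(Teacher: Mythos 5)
Your proposal is correct and follows essentially the same route as the paper: both identify $\unis{C_0(X)}{f^*}{C_0(Y)}$ with $C_0(\unis{X}{f}{Y})$ via \cref{prop:unified_cstar}, match the ideal $0 \oplus C_0(U)$ with $C_0(\iota_Y(U))$, and pass to the quotients. The paper's proof is just a terser version of yours, packaging the same content as a commuting diagram of short exact sequences.
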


\begin{proof}
	\cref{ex:proper_the_same} says that $C_0(U)$ is $f^*$-proper. 
	By \cref{lem:subperfections_closed}, the set $\iota_Y(U)$ is open in $\unis{X}{f}{Y}$, and $\pers{X}{f}{U}{Y} = \unis{X}{f}{(Y \setminus U)}$ is closed. \cref{prop:unified_cstar} implies that the diagram
	% https://q.uiver.app/#q=WzAsMTAsWzEsMCwiQ18wKFUpIl0sWzEsMSwiQ18wKFUpIl0sWzIsMCwiXFx1bmlze0NfMChYKX17Zl4qfXtDXzAoWSl9Il0sWzIsMSwiQ18wKFxcdW5pc3tYfXtmfXtZfSkiXSxbMCwwLCIwIl0sWzAsMSwiMCJdLFs0LDAsIjAiXSxbNCwxLCIwIl0sWzMsMCwiXFxwZXJze0NfMChYKX17Zl4qfXtDXzAoVSl9e0NfMChZKX0iXSxbMywxLCJDXzAoXFxwZXJze1h9e2Z9e1V9e1l9KSJdLFswLDFdLFs0LDBdLFswLDJdLFsyLDhdLFs4LDZdLFs1LDFdLFsxLDNdLFszLDldLFs5LDddLFsyLDNdXQ==
	\[\begin{tikzcd}[ampersand replacement=\&]
		0 \& {C_0(U)} \& {\unis{C_0(X)}{f^*}{C_0(Y)}} \& {\pers{C_0(X)}{f^*}{C_0(U)}{C_0(Y)}} \& 0 \\
		0 \& {C_0(U)} \& {C_0(\unis{X}{f}{Y})} \& {C_0(\pers{X}{f}{U}{Y})} \& 0
		\arrow[from=1-2, to=2-2, equals]
		\arrow[from=1-1, to=1-2]
		\arrow[from=1-2, to=1-3]
		\arrow[from=1-3, to=1-4]
		\arrow[from=1-4, to=1-5]
		\arrow[from=2-1, to=2-2]
		\arrow[from=2-2, to=2-3]
		\arrow[from=2-3, to=2-4]
		\arrow[from=2-4, to=2-5]
		\arrow[from=1-3, to=2-3, "\cong"]
	\end{tikzcd}\]
	commutes. The isomorphism $\unis{C_0(X)}{f^*}{C_0(Y)} \cong C_0(\unis{X}{f}{Y})$ descends to the quotients. 
\end{proof}

Just as the unified algebra can be described in terms of extensions, so can quotient perfections of the unified algebra.

\begin{thm}\label{thm:unified_quotient}
	Let $A$ and $B$ be $C^*$-algebras, let $\varphi \in \Mor(A,B)$, and suppose that $J$ is $\varphi$-proper. The quotient of $\pers{B}{\varphi}{J}{A}$ by $B_J = \{(b,J) \mid b \in B\}$ is isomorphic to $A/J$. Moreover, the diagram
	\begin{equation}\label{eq:quotient_unified}
		\begin{tikzcd}[ampersand replacement=\&, column sep={6em,between origins}]
			0 \arrow[r] 
			\& J\arrow[r,"\iota_A"] \arrow[d,"\restr{\varphi}{J}"] 
			\& A \arrow[r, "q_A"] \arrow[d, "\varphi_J "]
			\& A/J \arrow[r] \arrow[d, equal] 
			\& 0 
			\\
			0 \arrow[r] 
			\& B \arrow[r] 
			\& \pers{B}{\varphi}{J}{A} \arrow[r]  
			\& A/J \arrow[r]                              
			\& 0
		\end{tikzcd}
	\end{equation}
	of extensions, commutes. 
	The $C^*$-algebra $\pers{B}{\varphi}{J}{A}$ is universal for \labelcref{eq:quotient_unified} in the following sense: if $C$ is a $C^*$-algebra containing $B$ as an ideal, $\alpha_C \in \Mor(C,B)$ is the induced morphism, and $\pi \colon A \to C$ is a $*$-homomorphism such that $\alpha_C \circ \pi = \varphi$ and $\pi|_J = \varphi|_J$, then there is a unique $*$-homomorphism $\sigma_C \colon \pers{B}{\varphi}{J}{A} \to C$, satisfying
	\begin{equation}\label{eq:pi_plus}
		\sigma_C(b + \varphi(a),a+J) = b + \pi(a)
	\end{equation}
	for all $b \in B$ and $a \in A$, that makes the diagram
% https://q.uiver.app/#q=WzAsMTUsWzEsMCwiMCJdLFsxLDEsIjAiXSxbMCwyLCIwIl0sWzIsMSwiQiJdLFsyLDAsIkoiXSxbMywwLCJBIl0sWzQsMCwiQS9KIl0sWzUsMCwiMCJdLFs1LDEsIjAiXSxbMywxLCJcXE1tKEIpIFxcb3BsdXNfe1xcdmFycGhpfV5KQSJdLFs0LDEsIkEvSiJdLFsxLDIsIkIiXSxbMiwyLCJDIl0sWzMsMiwiQy9EIl0sWzQsMiwiMCJdLFswLDRdLFs0LDVdLFs1LDZdLFs2LDddLFs0LDNdLFs1LDldLFs2LDEwXSxbMTAsMTNdLFs5LDEyXSxbMywxMV0sWzIsMTFdLFsxMSwxMl0sWzEyLDEzXSxbMTMsMTRdLFsxLDNdLFszLDldLFs5LDEwXSxbMTAsOF0sWzQsMTFdLFs1LDEyXSxbNiwxM11d
\begin{equation}\label{eq:quotient_unified_universal}
	\begin{tikzcd}[ampersand replacement=\&,column sep={6em,between origins}]
		\& 0 \& J \& A \& {A/J} \& 0 \\
		\& 0 \& B \& {\pers{B}{\varphi}{J}{A}} \& {A/J} \& 0 \\[-0.7em]
		0 \& B \& C \& {C/B} \& 0
		\arrow[from=1-2, to=1-3]
		\arrow[from=1-3, to=1-4, "\iota_A"]
		\arrow[from=1-4, to=1-5, "q_A"]
		\arrow[from=1-5, to=1-6]
		\arrow[from=1-3, to=2-3,"\varphi|_J"]
		\arrow[from=1-4, to=2-4,"\varphi_J "]
		\arrow[from=1-5, to=2-5, equals]
		\arrow[from=2-5, to=3-4]
		\arrow[from=2-4, to=3-3,"\sigma_C"]
		\arrow[from=2-3, to=3-2, equals]
		\arrow[from=3-1, to=3-2]
		\arrow[from=3-2, to=3-3,"\iota_C"' ]
		\arrow[from=3-3, to=3-4, "q_C"']
		\arrow[from=3-4, to=3-5]
		\arrow[from=2-2, to=2-3]
		\arrow[from=2-3, to=2-4]
		\arrow[from=2-4, to=2-5]
		\arrow[from=2-5, to=2-6]
		\arrow[from=1-3, to=3-2, "\pi|_J"', crossing over]
		\arrow[from=1-4, to=3-3, "\pi"', crossing over]
		\arrow[from=1-5, to=3-4, crossing over]
	\end{tikzcd}
\end{equation}
commute. In particular, $\pers{B}{\varphi}{J}{A}$ is the unique $C^*$-algebra making \labelcref{eq:quotient_unified} commute. 
\end{thm}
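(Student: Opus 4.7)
The plan is to establish the extension first, then construct the map $\sigma_C$ witnessing the universal property, and finally deduce uniqueness of $\pers{B}{\varphi}{J}{A}$ from it. For the extension, the composition of the quotient $\unis{B}{\varphi}{A} \to A$, $(m,a) \mapsto a$, with $q_A \colon A \to A/J$ vanishes on $0 \oplus J$ and therefore descends to a surjection $p \colon \pers{B}{\varphi}{J}{A} \to A/J$. A class $(m,a) + 0 \oplus J$ lies in $\ker p$ precisely when $a \in J$, in which case $\varphi$-properness of $J$ forces $m = (m - \varphi(a)) + \varphi(a) \in B$, so $\ker p = B_J$ by \cref{lem:BJ_ideal}. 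Commutativity of \eqref{eq:quotient_unified} then reduces to unwinding the definitions of $\iota_A$, $q_A$, $\varphi|_J$, and $\varphi_J$ on generators.

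For the universal property, I would first construct a $*$-homomorphism $\tau \colon \unis{B}{\varphi}{A} \to C$ by $\tau(m,a) = (m - \varphi(a)) + \pi(a)$, viewing $m - \varphi(a) \in B$ in $C$ through the ideal inclusion, and then descend to $\pers{B}{\varphi}{J}{A}$. The main obstacle is verifying that $\tau$ is multiplicative; the key algebraic identity is that for $b \in B \trianglelefteq C$ and $c \in C$ one has $bc = b \cdot \alpha_C(c)$ and $cb = \alpha_C(c) \cdot b$, with the right-hand sides interpreted via the $\Mm(B)$-action on $B$. This holds because $\alpha_C$ extends to a unital $*$-homomorphism $\Mm(C) \to \Mm(B)$ restricting to the identity on $B$. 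Writing $b_i = m_i - \varphi(a_i)$ and using $\alpha_C \circ \pi = \varphi$, the cross terms in $\tau(m_1,a_1)\tau(m_2,a_2)$ simplify to $b_1\varphi(a_2) + \varphi(a_1)b_2$, which matches the expansion of $\tau(m_1m_2, a_1a_2)$; the $*$-identity is immediate. The hypothesis $\pi|_J = \varphi|_J$ then gives $\tau(0,j) = -\varphi(j) + \pi(j) = 0$ for $j \in J$, so $\tau$ descends to the required $\sigma_C$ on $\pers{B}{\varphi}{J}{A}$ satisfying \eqref{eq:pi_plus}. Uniqueness of $\sigma_C$ is immediate from the description of $\pers{B}{\varphi}{J}{A}$ via \eqref{eq:quotient_description}, and commutativity of \eqref{eq:quotient_unified_universal} follows directly on generators once the ideal structures on $B \trianglelefteq \pers{B}{\varphi}{J}{A}$ and $B \trianglelefteq C$ are matched.

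Finally, for the uniqueness clause, suppose $E$ is another $C^*$-algebra together with an extension $0 \to B \to E \to A/J \to 0$ and a $*$-homomorphism $\pi_E \colon A \to E$ satisfying $\alpha_E \circ \pi_E = \varphi$ and $\pi_E|_J = \varphi|_J$, making the analogue of \eqref{eq:quotient_unified} commute. The universal property applied to $(E, \pi_E)$ produces $\sigma_E \colon \pers{B}{\varphi}{J}{A} \to E$ that restricts to the identity on $B$ and descends to the identity on $A/J$, so the five-lemma applied to the resulting morphism of short exact sequences yields that $\sigma_E$ is an isomorphism, proving $E \cong \pers{B}{\varphi}{J}{A}$.
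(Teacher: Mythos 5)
Your proposal is correct and follows essentially the same route as the paper: the same computation of $\ker p = B_J$, the same formula for $\sigma_C$ with multiplicativity verified via $\alpha_C \circ \pi = \varphi$, and uniqueness of $\sigma_C$ read off from the spanning description \labelcref{eq:quotient_description}. The only cosmetic differences are that you lift to $\unis{B}{\varphi}{A}$ and descend rather than checking well-definedness directly on the quotient, and that you flesh out the final uniqueness clause with an explicit five-lemma argument where the paper simply cites universality.
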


\begin{proof}
 Consider the surjection $q \colon \pers{B}{\varphi}{J}{A} \to A/J$ given by $q(m,a +J) = a +J$. If $(m,a+J) \in \ker(q)$, then $a \in J$. Since $m - \varphi(a) \in B$, we have $m \in B$. It follows that $\ker(q) = \{(b,J) \mid b \in B\} = B_J$. By \cref{prop:quotient_perfections_unified}, we have $B \cong B_J$, yielding the bottom exact sequence of \labelcref{eq:quotient_unified}. It is routine to verify that \labelcref{eq:quotient_unified} commutes. 
	
	For the universal property, we note that since $\varphi|_J = \pi|_J$, the linear map $\sigma_C \colon \pers{B}{\varphi}{J}{A} \to C$ given by \labelcref{eq:pi_plus} is well-defined. 
	 The map $\sigma_C$ is clearly $*$-preserving. Using that $\alpha_C \circ \pi = \varphi$ at the third equality, for all $a_1,a_2 \in A$ and $b_1,b_2 \in B$ we have
	\begin{align*}
	&\sigma_C\big((b_1 + \varphi(a_1) , a_1 + J)(b_2 + \varphi(a_2) , a_2 + J)\big) \\
	&\quad= \sigma_C(b_1b_2 + \varphi(a_1)b_2 + b_1 \varphi(a_2) + \varphi(a_1a_2), a_1a_2 + J) \\
	&\quad= b_1b_2 + \varphi(a_1)b_2 + b_1 \varphi(a_2) + \pi(a_1a_2)\\
	&\quad= b_1b_2 + \pi(a_1)b_2 + b_1 \pi(a_2) + \pi(a_1a_2)\\
	&\quad=(b_1 + \pi(a_1))(b_2 + \pi(a_2))\\
	&\quad= \sigma_C(b_1 + \varphi(a_1) , a_1 + J) \sigma_C(b_2 + \varphi(a_2) , a_2 + J).
	\end{align*}
	So $\sigma_C$ is a $*$-homomorphism. 
	The map $\sigma_C$ restricts to an isomorphism from $B_J$ to $B \subseteq C$, so $\sigma_C$ descends to a $*$-homomorphism 
	$\ol{\sigma_C} \colon A/J \to C/B$. For each $a \in A$, we have
	\[
	{\sigma_C}  \circ \varphi_J (a) = {\sigma_C} (\varphi(a), a + J) = \pi (a),
	\]
	and it follows that the diagram \labelcref{eq:quotient_unified_universal} commutes.

	For uniqueness, suppose that $\tau \colon \pers{B}{\varphi}{J}{A} \to C$ is another map that makes the analogous diagram to \labelcref{eq:quotient_unified_universal} commute. Commutativity implies that
$
	\tau(\varphi_J (a)) = \pi(a)$ and $\tau(b,J) = b
$
	for all $a \in A$ and $b \in B$. Consequently, $\tau(\varphi_J (a) + (b,J)) = \pi(a) + b  = \sigma_C (\varphi_J (a) + (b,J))$, so \labelcref{eq:quotient_description} gives $\tau = \sigma_C$. The final uniqueness statement follows from the universality of $\pers{B}{\varphi}{J}{A}$. 
\end{proof}

\begin{rmk}
The setup of \cref{thm:unified_quotient} is similar to Diagram~III of \cite[Section~4.2]{ELP99}: however, we do not assume that $\varphi|_J \in \Morp(J,B)$. That is, we do not assume that $\varphi(J)B$ is dense in $B$. In \cite[Remark~4.4]{ELP99}, there is a counterexample showing that the properness assumption is critical to their universal characterisation. For us, this is made up for by additional assumptions, including the fact $\varphi|_J$ is the restriction of a morphism $\varphi \in \Mor(A,B)$. 
\end{rmk}

Since $\kat(\varphi)$ is the maximal $\varphi$-perfect ideal, we make the following definition in light of \cref{prop:quotient_perfections_unified}.

\begin{dfn}
	We call $(\mins{B}{\varphi}{A}, \minf{\varphi}) \coloneq (\pers{B}{\varphi}{\kat(\varphi)}{A}, \perf{\varphi}{J_{\kat(\varphi)}})$ the \emph{minimal perfection} of $\varphi$. 
\end{dfn}

Although examples of the algebras $\pers{B}{\varphi}{J}{A}$ may seem obscure in the noncommutative setting, they appear naturally when representing $C^*$-correspondences.

\begin{dfn}[{\cite[Definition~2.1]{Kat04cor}}]\label{dfn:corr_reps}
	Let $A$ be a $C^*$-algebra and let $(\varphi,X_A)$ be an $A$--$A$-correspondence (see \cref{ex:correspondences}). A \emph{representation} of $(\varphi,X_A)$ in a $C^*$-algebra $B$ is a pair $(\rho_A,\rho_X)$ consisting of a $*$-homomorphism $\rho_A \colon A \to B$ and a linear map $\rho_X\colon X \to B$ satisfying
	\[
	\rho_X(\varphi(a_1)x_1a_2) = \rho_A(a_1) \rho_X(x_1) \rho_A(a_2) \quad \text{ and } \quad \rho_A(\langle x_1 | x_2 \rangle) = \rho_X(x_1)^* \rho_X(x_2)
	\]
	for all $a_1,a_2 \in A$ and $x_1,x_2 \in X_A$. 		A representation $\rho$ is \emph{faithful} if $\rho_A$ is injective, in which case so is $\rho_X$. 
\end{dfn}

	Fix a representation $\rho = (\rho_A, \rho_X)$ of $(\varphi,X_A)$ in a $C^*$-algebra $B$.
	By \cite[Lemma~2.2]{KPW98}, there is a well-defined $*$-homomorphism $\rho_A^{(1)} \colon \Kk_A(X) \to B$ satisfying \begin{equation} \label{eq:compact_formula}
		\rho_A^{(1)}(\Theta_{x,y}) = \rho_X(x)\rho_X(y)^*
	\end{equation} for all rank-1 operators $\Theta_{x,y} \in \Kk_A(X)$. If $\rho$ is faithful, then so is $\rho_A^{(1)}$. 
 If $\rho$ is faithful, then by \cite[Proposition~3.3]{Kat04cor},
	\begin{equation}\label{eq:rep_ideal}
		I_{\rho} \coloneqq \big\{ a \in A \bigm| \rho_A(a) \in \rho_A^{(1)} (\Kk_A(A)) \big\}
	\end{equation}
	is an ideal contained in $\kat(\varphi)$ (so $I_{\rho}$ is $\varphi$-perfect by \cref{lem:phi_proper_characterisation}) and $\rho_A(a) = \rho_A^{(1)} \circ \varphi(a)$ for all $a \in I_{\rho}$.
	In \cite[Proposition 5.12]{Kat04cor}, it is noted that there is a commuting diagram 
	\begin{equation}\label{eq:katsura_diagram_1}
		\begin{tikzcd}[ampersand replacement=\&,column sep=small]
			0 \arrow[r] 
			\& I_{\rho} \arrow[r] \arrow[d,"\varphi"] 
			\& A \arrow[r] \arrow[d, "\rho_A "]
			\& A/I_{\rho} \arrow[r] \arrow[d, equal] 
			\& 0 
			\\
			0 \arrow[r] 
			\& \Kk_A(X) \arrow[r, "\rho_A^{(1)}"] 
			\& \rho_A^{(1)} (\Kk_A(X)) + \rho_A(A) \arrow[r]  
			\& A/I_{\rho} \arrow[r]                              
			\& 0
		\end{tikzcd}
	\end{equation}
	with exact rows. 
	The universal property of \cref{thm:unified_quotient} implies that
	\[
 (\pers{\Kk_A(X)}{\varphi}{I_{\rho}}{A},\varphi_{I_{\rho}}) \cong 	\big(\rho_A^{(1)} (\Kk_A(X)) + \rho_A(A) , \rho_A \big).
	\]
	We show in \cref{prop:cuntz-pimsner_characterisation} that every quotient perfection of $(\unis{\Kk_A(X)}{\varphi}{A},\unif{\varphi})$ arises from a faithful representation of $(\varphi,X_A)$. To prove this we recall the following definition. 
	
	\begin{dfn}[{cf. \cite[Definition~3.4]{Kat04cor}}]
		Let $J$ be a $\varphi$-proper ideal of a $C^*$-algebra $A$. A representation $(\rho_A,\rho_X)$ of an $A$--$A$-correspondence $(\varphi,X)$ in a $C^*$-algebra $B$ is said to be \emph{$J$-covariant} if $\rho_A^{(1)} \circ \varphi(a) = \rho_A(a)$ for all $a \in J$.
	\end{dfn}

	 By \cite[Theorem~2.19]{MS98}, if $J$ is a $\varphi$-proper ideal of $A$, then there is a universal $J$-covariant representation $\tau^J = (\tau_A^J,\tau_X^J)$ of $(\varphi,X_A)$ in a $C^*$-algebra $\Oo_{X,J}$, called the \emph{$J$-relative Cuntz--Pimsner algebra} of $(\varphi,X)$ (cf. \cite{Kat04cor,Pim97}). 
	 If $J = \{0\}$, then $\Oo_{X,\{0\}}$ is isomorphic to the Toeplitz algebra $\Tt_X$ of \cref{ex:toeplitz}. In the language of \cref{ex:toeplitz}, we have	 
	  $\tau_A^J =\varphi_{\infty}$ and $\tau_X^J(x) = T_x$ for all $x \in X$. 
	 If $J = \kat(\varphi)$, then $\Oo_X \coloneqq \Oo_{X,\kat(\varphi)}$ is called the \emph{Cuntz--Pimsner algebra} of $(\varphi,X)$.

	\begin{prop}\label{prop:cuntz-pimsner_characterisation}
		Let $(\varphi,X_A)$ be a $C^*$-correspondence over $A$ and let $J \trianglelefteq A$ be a $\varphi$-perfect ideal. Let $\tau^J$ be the universal representation of $(\varphi,X_A)$ in $\Oo_{X,J}$. 		
		 Then
		 \[
		 (\pers{\Kk_A(X)}{\varphi}{J}{A}, \perf{\varphi}{J}) 
		 \cong 
		 \big( (*\tau_A^J)^{(1)} (\Kk_A(X)) + \tau_A^J(A), \tau_A^J \big).
		 \]
		 In particular,  the unified algebra $\unis{\Kk_A(X)}{\varphi}{A}$ is isomorphic to a subalgebra of the Toeplitz algebra $\Tt_X$, and the minimal perfection $\mins{\Kk_A(X)}{\varphi}{A}$ is isomorphic to a subalgebra of the Cuntz--Pimsner algebra $\Oo_X$.
	\end{prop}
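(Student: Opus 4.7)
The strategy is to realise the proposition as a direct instance of the isomorphism surrounding \labelcref{eq:katsura_diagram_1} applied to the faithful representation $\rho = \tau^J$. Specifically, recall that the universal property of \cref{thm:unified_quotient} already gave
\[
(\pers{\Kk_A(X)}{\varphi}{I_\rho}{A},\varphi_{I_\rho}) \cong \big(\rho_A^{(1)}(\Kk_A(X)) + \rho_A(A),\, \rho_A\big)
\]
for any faithful representation $\rho$. So the whole proposition reduces to showing that (i) $\tau^J$ is faithful and (ii) the Katsura-type ideal $I_{\tau^J}$ of \labelcref{eq:rep_ideal} coincides with the chosen $J$.

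For (i), I would use that $J$ being $\varphi$-perfect forces $J \subseteq \kat(\varphi)$ by \cref{lem:phi_proper_characterisation}, and then invoke Katsura's result (\cite[Theorem~6.4]{Kat04cor}) that the universal $J$-covariant representation of $(\varphi,X_A)$ is faithful precisely when $J \subseteq \kat(\varphi)$. The inclusion $J \subseteq I_{\tau^J}$ for (ii) is immediate: $J$-covariance gives $\tau_A^J(a) = (\tau_A^J)^{(1)}(\varphi(a)) \in (\tau_A^J)^{(1)}(\Kk_A(X))$ for every $a \in J$.

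The main obstacle is the reverse inclusion $I_{\tau^J} \subseteq J$. My plan is to exploit the universal properties on both sides. For $a \in I_{\tau^J}$, the excerpt notes that $\tau_A^J(a) = (\tau_A^J)^{(1)}(\varphi(a))$, so $\tau^J$ is in fact $I_{\tau^J}$-covariant. The universal property of $\Oo_{X,I_{\tau^J}}$ then produces a $*$-homomorphism $\pi \colon \Oo_{X,I_{\tau^J}} \to \Oo_{X,J}$ intertwining the two universal representations. Conversely, since $I_{\tau^J} \subseteq \kat(\varphi)$ by \cite[Proposition~3.3]{Kat04cor}, $\tau^{I_{\tau^J}}$ is faithful, and since $J \subseteq I_{\tau^J}$, $\tau^{I_{\tau^J}}$ is also $J$-covariant; so universality yields a $*$-homomorphism $\sigma \colon \Oo_{X,J} \to \Oo_{X,I_{\tau^J}}$. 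Both compositions are the identity on generators, giving an isomorphism $\Oo_{X,J} \cong \Oo_{X,I_{\tau^J}}$. Applying Katsura's gauge-invariant uniqueness theorem to this isomorphism (both algebras carry canonical gauge actions that agree), the kernels of the respective quotient maps from $\Tt_X$ must coincide, forcing $J = I_{\tau^J}$.

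With $I_{\tau^J} = J$ in hand, the displayed isomorphism of the proposition follows at once from the discussion preceding the statement. For the final sentence, the unified algebra case is $J = \{0\}$: the zero ideal is vacuously $\varphi$-perfect, $\Oo_{X,\{0\}} = \Tt_X$, and by construction $(\pers{\Kk_A(X)}{\varphi}{\{0\}}{A},\varphi_{\{0\}}) = (\unis{\Kk_A(X)}{\varphi}{A},\unif{\varphi})$. The minimal-perfection case is $J = \kat(\varphi)$: here $\Oo_{X,\kat(\varphi)} = \Oo_X$ by definition, and $(\pers{\Kk_A(X)}{\varphi}{\kat(\varphi)}{A},\perf{\varphi}{\kat(\varphi)}) = (\mins{\Kk_A(X)}{\varphi}{A},\minf{\varphi})$, so the isomorphism identifies both with subalgebras of the corresponding universal $C^*$-algebras.
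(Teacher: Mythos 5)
Your overall reduction is exactly the paper's: establish that $\tau^J$ is faithful and that the ideal $I_{\tau^J}$ of \labelcref{eq:rep_ideal} equals $J$, then read the displayed isomorphism off \labelcref{eq:katsura_diagram_1} together with \cref{thm:unified_quotient}. The paper simply cites \cite[Theorem~2.19]{MS98} for the identity $I_{\tau^J}=J$ (which is obtained there from an explicit Fock-space model of the relative Cuntz--Pimsner algebra), whereas you attempt to derive it abstractly, and it is in that derivation that there is a genuine gap.

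The problem is the final step of your argument for $I_{\tau^J}\subseteq J$. The mutually inverse maps $\pi$ and $\sigma$ are constructed correctly, and they do show that the quotient maps $q_J\colon\Tt_X\to\Oo_{X,J}$ and $q_{I_{\tau^J}}\colon\Tt_X\to\Oo_{X,I_{\tau^J}}$ have the same kernel. But the inference ``equal kernels, therefore equal ideals'' requires knowing that $J\mapsto\ker(q_J)$ is injective on ideals contained in $\kat(\varphi)$, and that is not what the gauge-invariant uniqueness theorem says (it concerns injectivity of representations of a fixed $\Oo_{X,J}$, not the separation of distinct relative Cuntz--Pimsner quotients of $\Tt_X$). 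Worse, the injectivity you need is essentially equivalent to the statement you are proving: if $I_{\tau^J}=J$ holds for all $\varphi$-perfect $J$, then $J$ is recovered from the quotient as $\{a\in A\mid \tau_A^J(a)\in(\tau_A^J)^{(1)}(\Kk_A(X))\}$, which gives the injectivity; and conversely your argument needs that injectivity to conclude. A priori nothing you have written rules out that imposing covariance on $J$ silently forces covariance on a strictly larger ideal, which is exactly the phenomenon the cited theorem excludes when $J\subseteq\kat(\varphi)$. The repair is either to cite \cite[Theorem~2.19]{MS98} directly, as the paper does, or to perform the concrete computation in the Fock-space picture showing that no element of $A\setminus J$ becomes covariant in $\Oo_{X,J}$. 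The remaining parts of your proposal --- faithfulness of $\tau^J$, the easy inclusion $J\subseteq I_{\tau^J}$, and the specialisations $J=\{0\}$ and $J=\kat(\varphi)$ for the final sentence --- are correct and agree with the paper.
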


	\begin{proof}
		By \cite[Theorem~2.19]{MS98}, the ideal		
		 $I_{\tau^J}$ of \labelcref{eq:rep_ideal} is equal to $J$. It follows from \labelcref{eq:katsura_diagram_1} and \cref{thm:unified_quotient} that $\pers{\Kk_A(X)}{\varphi}{J}{A}$ is isomorphic to the subalgebra $(\tau_A^J)^{(1)} (\Kk_A(X)) + \tau_A^J(A)$ of $\Oo_{X,J}$. 
	\end{proof}

	\begin{rmk}
	If $J$ is a $\varphi$-proper ideal of $A$ which is not $\varphi$-perfect then we can no longer use \cite[Proposition~3.3]{Kat04cor}. It is unclear how to characterise $(\pers{\Kk_A(X)}{\varphi}{J}{A},\varphi_J)$ in this setting. 
	\end{rmk}

\subsection{Composing quotient perfections of the unified algebra}	
\label{sec:nc_composition}

Suppose that $\varphi \in \Mor(A,B)$ and $\psi  \in \Mor(B,C)$. As in the topological setting of \cref{sec:commutative_composition}, there are at least three ways to compose the unified algebra construction. In what follows we fix an approximate identity $(b_{\lambda})$ for $B$.

 For the simplest composition, we extend $\psi$ to the morphism $\ol{\psi} \in \Mor(\Mm(B),C)$ satisfying $\ol{\psi}(m)c = \lim_{\lambda}\psi(mb_{\lambda})c$ for all $m \in \Mm(B)$ and $c \in C$. We then form the unified algebra $(\unis{C}{\ol{\psi} \circ \varphi}{A}, \unif{(\ol{\psi} \circ \varphi)})$. Like in the topological setting, this construction loses information about the intermediate algebra $B$.

For the second approach, we first form $(\unis{B}{\varphi}{A}, \unif{\varphi})$. Since $B$ is an ideal in $\unis{B}{\varphi}{A}$ it induces a morphism $\alpha_B \in \Mor(\unis{B}{\varphi}{A},B)$ satisfying $\alpha_B(m,a) = m$. Then $\ol{\psi} \circ \alpha_B \in \Mor(\unis{B}{\varphi}{A},C)$, so we may form $(\unis{C}{\ol{\psi} \circ \alpha_B}{(\unis{B}{\varphi}{A})}, \unif{(\ol{\psi} \circ \alpha_B)})$.
 The composition $\unif{(\ol{\psi} \circ \alpha_B)} \circ \unif{\varphi} \in \Morp(A,\unis{C}{\ol{\psi} \circ \alpha_B}{(\unis{B}{\varphi}{A})})$ is injective. 

For the third approach, we first form $(\unis{C}{\psi}{B},\unif{\psi})$. Since $\unif{\psi} \in \Morp(B,\unis{C}{\psi}{B})$, it extends to a unique unital $*$-homomorphism $\ol{\unif{\psi}} \colon \Mm(B) \to \Mm(\unis{C}{\psi}{B})$ such that for all $m \in \Mm(B)$, we have
\begin{equation}\label{eq:map3}
	\ol{\unif{\psi}}(m) (n,b) = \lim_{\lambda} (\psi(m b_{\lambda}) n, m b_{\lambda} b) 
	=(\ol{\psi}(m) n, m b)
\end{equation}
for all $(n,b) \in \unis{C}{\psi}{B}$. We then form the composition $\ol{\unif{\psi}} \circ \varphi \in \Mor(A, \unis{C}{\psi}{B})$ and its unified algebra $(\unis{(\unis{C}{\psi}{B})}{\ol{\unif{\psi}} \circ \varphi}{A},\unif{(\ol{\unif{\psi}} \circ \varphi)})$.

Using thick arrows to denote morphisms, and thin arrows to denote proper morphisms, the algebras involved in the second and third constructions can be assembled into a diagram
% https://q.uiver.app/#q=WzAsNyxbMCwxLCJBIl0sWzEsMSwiQiJdLFsyLDEsIkMiXSxbMSwwLCJCQSJdLFsyLDAsIkNCQSJdLFswLDIsIkNCQSJdLFsxLDIsIkNCIl0sWzAsMSwiZSIsMCx7InN0eWxlIjp7ImJvZHkiOnsibmFtZSI6InNxdWlnZ2x5In19fV0sWzEsMiwiZiIsMCx7InN0eWxlIjp7ImJvZHkiOnsibmFtZSI6InNxdWlnZ2x5In19fV0sWzEsNiwiaCJdLFswLDUsImciLDJdLFs1LDYsImoiLDIseyJzdHlsZSI6eyJib2R5Ijp7Im5hbWUiOiJzcXVpZ2dseSJ9fX1dLFs2LDIsImkiLDIseyJzdHlsZSI6eyJib2R5Ijp7Im5hbWUiOiJzcXVpZ2dseSJ9fX1dLFswLDMsImEiXSxbMywxLCJjIiwwLHsic3R5bGUiOnsiYm9keSI6eyJuYW1lIjoic3F1aWdnbHkifX19XSxbMyw0LCJiIl0sWzQsMiwiZCIsMCx7InN0eWxlIjp7ImJvZHkiOnsibmFtZSI6InNxdWlnZ2x5In19fV1d
\[\begin{tikzcd}[ampersand replacement=\&,cramped,column sep=35pt]
	\& {\unis{B}{\varphi}{A}} \& {\unis{C}{\ol{\psi} \circ \alpha_B}{(\unis{B}{\varphi}{A}})} \\
	A \& B \& C \\
	{\unis{(\unis{C}{\psi}{B})}{\ol{\unif{\psi}} \circ \varphi}{A}} \& {\unis{C}{\psi}{B}}
	\arrow["{\unif{(\ol{\unif{\psi}} \circ \varphi)}}", from=1-2, to=1-3]
	\arrow["\alpha_B", Rightarrow, from=1-2, to=2-2]
	\arrow["\alpha_C", Rightarrow, from=1-3, to=2-3]
	\arrow["\unif{\varphi}",curve={height=-15pt}, from=2-1, to=1-2]
	\arrow["\varphi", Rightarrow, from=2-1, to=2-2]
	\arrow["{\unif{(\ol{\unif{\psi}} \circ \varphi)}}"', from=2-1, to=3-1]
	\arrow["\psi", Rightarrow, from=2-2, to=2-3]
	\arrow["\unif{\psi}", from=2-2, to=3-2]
	\arrow["{\alpha_{\unis{C}{\psi}{B}}}", Rightarrow, from=3-1, to=3-2]
	\arrow["\alpha_C",curve={height=15pt}, Rightarrow, from=3-2, to=2-3]
\end{tikzcd}
\]
that commutes with respect to composition of morphisms. 
The maps labelled by $\alpha$ are induced by inclusions of ideals. This diagram should be compared to \labelcref{eq:big_diagram}.

We have the following noncommutative analogue of \cref{lem:compositions_associative} which says that the second and third constructions, outlined above, are actually the same up to isomorphism. 

\begin{lem}\label{lem:alg_unified_composition}
	Let $A$, $B$, and $C$ be $C^*$-algebras, let $\varphi \in \Mor(A,B)$, and let $\psi \in \Mor(B,C)$. Let $\ol{\psi} \colon \Mm(B) \to \Mm(C)$ be the unital $*$-homomorphism induced by $\psi$. Both $\unis{C}{\ol{\psi} \circ \alpha_B}{(\unis{B}{\varphi}{A})}$ and $\unis{(\unis{C}{\psi}{B})}{\ol{\unif{\psi}} \circ \varphi}{A}$ are  isomorphic to the $C^*$-subalgebra 
	\begin{align*}
			\unis{C}{\psi}{\unis{B}{\varphi}{A}}  &\coloneqq \big\{(c + \psi(b) + \ol{\psi}(\varphi(a)), b + \varphi(a),a) \mid a \in A, b \in B, c \in C\big\} \\
			&= \{(m,n,a) \in \Mm(C) \oplus \Mm(B) \oplus A \colon n - \varphi(a) \in B, m - \ol{\psi}(n) \in C\}
	\end{align*}
	of $\Mm(C) \oplus \Mm(B) \oplus A$.
	Under this isomorphism, the proper morphisms $\unif{(\ol{\psi} \circ \alpha_B)} \circ \unif{\varphi}$ and $\unif{(\ol{\unif{\psi}} \circ \varphi)}$ are identified with the map $A \to \unis{C}{\psi}{\unis{B}{\varphi}{A}} $ given by
	\[
	a \mapsto \big(\ol{\psi} (\varphi(a)), \varphi(a), a\big).
	\]
\end{lem}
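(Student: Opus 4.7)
The plan is to verify all three identifications directly from the definitions, with the middle one being the content-bearing step.

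First I would check that $\unis{C}{\psi}{\unis{B}{\varphi}{A}}$ as defined is actually a $C^*$-subalgebra of $\Mm(C) \oplus \Mm(B) \oplus A$: closure under the $*$-algebra operations follows because the conditions $n - \varphi(a) \in B$ and $m - \ol{\psi}(n) \in C$ are preserved by sums, adjoints, and products (using that $B \trianglelefteq \Mm(B)$, $C \trianglelefteq \Mm(C)$, $\varphi$ is a $*$-homomorphism, and $\ol{\psi}$ is $*$-preserving with $\ol{\psi}(B) \subseteq C$), and norm-closedness is inherited from the product. This is analogous to the way $\unis{B}{\varphi}{A}$ sits inside $\Mm(B) \oplus A$.

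For the second construction, I would simply unwind the definition. An element of $\unis{C}{\ol{\psi}\circ\alpha_B}{(\unis{B}{\varphi}{A})}$ is a pair $(p, (n,a))$ with $(n,a) \in \unis{B}{\varphi}{A}$ and $p - \ol{\psi}(\alpha_B(n,a)) = p - \ol{\psi}(n) \in C$. Rebracketing via $(p, (n,a)) \mapsto (p, n, a)$ gives exactly the subalgebra $\unis{C}{\psi}{\unis{B}{\varphi}{A}}$. The map $\unif{(\ol{\psi} \circ \alpha_B)} \circ \unif{\varphi}$ then sends $a \mapsto (\ol{\psi}(\alpha_B(\varphi(a),a)),(\varphi(a),a)) = (\ol{\psi}(\varphi(a)), \varphi(a), a)$, as claimed.

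The hard part is the third construction, where the element $M$ of the outer unified algebra lies in $\Mm(\unis{C}{\psi}{B})$, which we do not want to describe intrinsically. The trick is to only use the fact that $M - \ol{\unif{\psi}}(\varphi(a)) \in \unis{C}{\psi}{B}$, so one can write $M = \ol{\unif{\psi}}(\varphi(a)) + (q,b)$ for a unique pair $(q,b) \in \unis{C}{\psi}{B}$, i.e.\ with $q \in \Mm(C)$, $b \in B$, and $q - \psi(b) \in C$. I would then define
\[
\Psi(M,a) \coloneqq \bigl(\ol{\psi}(\varphi(a)) + q,\ \varphi(a) + b,\ a\bigr)
\]
and verify that this lands in $\unis{C}{\psi}{\unis{B}{\varphi}{A}}$: indeed $(\varphi(a)+b) - \varphi(a) = b \in B$, and $(\ol{\psi}(\varphi(a)) + q) - \ol{\psi}(\varphi(a)+b) = q - \psi(b) \in C$. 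Well-definedness (uniqueness of $(q,b)$) follows from the direct-sum structure of $\unis{C}{\psi}{B}$, and injectivity of $\Psi$ is immediate because $M$ is determined by $a$ and $(q,b)$. Surjectivity is the reverse calculation: given $(m,n,a)$ in the target, set $b = n - \varphi(a) \in B$ and $q = m - \ol{\psi}(\varphi(a))$, so that $q - \psi(b) = m - \ol{\psi}(n) \in C$, and let $M = \ol{\unif{\psi}}(\varphi(a)) + (q,b)$.

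The main obstacle, and the key calculation, is showing that $\Psi$ is multiplicative. Using the formula \labelcref{eq:map3} for $\ol{\unif{\psi}}$ (which tells us that $\ol{\unif{\psi}}(m)$ acts on $\unis{C}{\psi}{B}$ as componentwise multiplication by the pair $(\ol{\psi}(m), m)$), one sees that $M$ acts on $\unis{C}{\psi}{B}$ by componentwise left multiplication by $(\ol{\psi}(\varphi(a)) + q,\ \varphi(a) + b)$; this requires checking that such componentwise multiplication preserves the pullback condition $\cdot - \psi(\cdot) \in C$, which follows from $q - \psi(b) \in C$ together with $\ol{\psi}(\varphi(a))\psi(b') = \psi(\varphi(a)b')$. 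Because the action of $\Mm(\unis{C}{\psi}{B})$ on $\unis{C}{\psi}{B}$ is faithful, multiplicativity of $\Psi$ reduces to the (trivial) multiplicativity of componentwise multiplication in $\Mm(C) \oplus \Mm(B)$; the $*$-operation is handled identically. Finally, specialising to $M = \ol{\unif{\psi}}(\varphi(a))$ (i.e.\ $(q,b)=(0,0)$) shows $\Psi \circ \unif{(\ol{\unif{\psi}} \circ \varphi)}$ is the stated map $a \mapsto (\ol{\psi}(\varphi(a)), \varphi(a), a)$, agreeing with the second construction and completing the proof.
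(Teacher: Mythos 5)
Your proposal is correct and follows essentially the same route as the paper: the second construction is handled by rebracketing the pullback, and the third by using \labelcref{eq:map3} to recognise that the relevant elements of $\Mm(\unis{C}{\psi}{B})$ act on $\unis{C}{\psi}{B}$ by componentwise multiplication by a pair in $\Mm(C)\oplus\Mm(B)$, which identifies them with elements of $\unis{C}{\psi}{\unis{B}{\varphi}{A}}$. You supply a few verifications the paper leaves implicit (that the target is a $C^*$-subalgebra, and the injectivity/multiplicativity of the identification via faithfulness of the multiplier action), but the underlying argument is the same.
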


\begin{proof}
	 We have
	\begin{align*}
		 \unis{C}{\ol{\psi} \circ \alpha_B}{(\unis{B}{\varphi}{A})} 
	 &= \{ (m,x) \in \Mm(C) \oplus (\unis{B}{\varphi}{A}) \mid m - \ol{\psi} \circ \alpha_B(x) \in C\}\\
	 &= \{(c + \ol{\psi} \circ \alpha_B(x), x) \mid c \in C, x \in \unis{B}{\varphi}{A}\}\\
	 &= \big\{ \big(c + \ol{\psi} \circ \alpha_B(b + \varphi(a),a), (b + \varphi(a),a) \big) \bigm| c \in C, b \in B, a \in A\big \}\\
	 &= \big\{ \big(c + \ol{\psi} (b) + \ol{\psi}(\varphi(a)), (b + \varphi(a),a)) \bigm| c \in C, b \in B, a \in A\big \}\\
	 &\cong \unis{C}{\psi}{\unis{B}{\varphi}{A}} .
	\end{align*}
	 For each $a \in A$,
	\[
	\unif{(\ol{\psi} \circ \alpha_B)} \circ \unif{\varphi} (a) 
	= \unif{(\ol{\psi} \circ \alpha_B)} (\varphi(a),a)
	= \big(\ol{\psi}(\varphi(a)), (\varphi(a),a) \big).
	\]
	For the second algebra, 
	\begin{align*}
		\unis{(\unis{C}{\psi}{B})}{\ol{\unif{\psi}} \circ \varphi}{A} 
		&=
		\{(x,a) \in \Mm(\unis{C}{\psi}{B}) \oplus A \mid x - \ol{\unif{\psi}} \circ \varphi(a) \in \unis{C}{\psi}{B}\}\\
		&= 
		\big\{
		\big((c + \psi(b),b) + \ol{\unif{\psi}}  (\varphi(a)),a\big) \bigm| c \in C, b \in B, a \in A \big\}. 
	\end{align*}
	Fix $c \in C$, $b \in B$, and $a \in A$. By \labelcref{eq:map3}, as a multiplier on $\unis{C}{\psi}{B}$, we have
	\begin{align*}
		\big((c + \psi(b),b) + \ol{\unif{\psi}} (\varphi(a))\big)(n,b') 
		&= ((c + \psi(b) + \ol{\psi}(\varphi(a))) n, (b+\varphi(a))b') \\
		&= \big(c + \psi(b) + \ol{\psi}(\varphi(a)), b + \varphi(a)\big) (n,b')
	\end{align*}
	for all $(n,b') \in \unis{C}{\psi}{B}$. So,
	\begin{align*}
		\unis{(\unis{C}{\psi}{B})}{\ol{\unif{\psi}} \circ \varphi}{A}
		= \big\{ \big((c + \psi(b) + \ol{\psi}(\varphi(a)), b + \varphi(a)), a \big)  \bigm| c \in C, b \in B, a \in A\big\}
	\end{align*}
	is isomorphic to $\unis{C}{\psi}{\unis{B}{\varphi}{A}} $. For each $a \in A$, we have
	\[
	\unif{(\ol{\unif{\psi}} \circ \varphi)}(a)
	= \big(\ol{\unif{\psi}} (\varphi(a)),a\big)
	= \big((\ol{\psi}(\varphi(a)),\varphi(a)),a\big). \qedhere
	\]
\end{proof}

We also have the following analogue of \cref{lem:compositions_associative_sub}.

\begin{lem}\label{lem:compositions_associative_quotient}
	Let $A$, $B$, and $C$ be $C^*$-algebras, let $\varphi \in \Mor(A,B)$, and let $\psi \in \Mor(B,C)$. Suppose that $J \trianglelefteq A$ is $\varphi$-proper and $I \trianglelefteq B$ is $\psi$-proper. Let $q^J \colon \unis{B}{\varphi}{A} \to \pers{B}{\varphi}{J}{A}$ and $q^I \colon \unis{C}{\psi}{B} \to \pers{C}{\psi}{I}{B}$ denote the corresponding quotient maps. 
	\begin{enumerate}
		\item \label{itm:tilde_descends}  The map $\ol{\psi} \circ \alpha_B \in \Mor (\unis{B}{\varphi}{A},C)$ descends to a well-defined morphism $\decor{\psi} \in \Mor(\pers{B}{\varphi}{J}{A},C)$ such that $\decor{\psi} \circ q^J = \ol{\psi} \circ \alpha_B$, and $q^J(I \oplus 0)$ is $\decor{\psi}$-proper (if $I$ is $\varphi$-perfect, then $q^J(I \oplus 0)$ is $\decor{\psi}$-perfect). 
		\item \label{itm:bar_extends} The map $\psi_I \in \Morp( B , \pers{C}{\psi}{I}{B})$ extends to a unique unital $*$-homomorphism $\ol{\psi_I} \colon \Mm(B) \to  \Mm(\pers{C}{\psi}{I}{B})$ and $J$ is $(\ol{\psi_I} \circ \varphi)$-proper (if $J$ is $\varphi$-perfect and $I$ is $\psi$-perfect, then $J$ is $(\ol{\psi_I} \circ \varphi)$-perfect).
		\item \label{itm:composition_quotient_algebras}The algebras $
		\pers{C}{\decor{\psi}}{q^J(I \oplus 0)}{(\pers{B}{\varphi}{J}{A})}
		$ and $
		\pers{(\pers{C}{\psi}{I}{B})}{\ol{\psi_I} \circ \varphi}{J}{A}
		$  
		are isomorphic to 
		\begin{align*}
			\pers{C}{\psi}{I}{\pers{B}{\varphi}{J}{A}}
			&\coloneqq \frac {\unis{C}{\psi}{\unis{B}{\varphi}{A}}}{0 \oplus I \oplus J}\\
			= \Big\{(m,&n+I,a+J) \in \Mm(C) \oplus \frac{\Mm(B)}{I} \oplus \frac{\Mm(A)}{J} \Bigm|  n - \varphi(a) \in B, m - \ol{\psi}(n) \in C\Big\}.
		\end{align*}

		\item \label{itm:quotient_alg_maps} Under the isomorphisms of \labelcref{itm:composition_quotient_algebras} the proper morphisms $\perf{\decor{\psi}}{q^J(I \oplus 0)} \circ \varphi_J$ and $\perf{(\ol{\psi_I} \circ \varphi)}{J}$ are identified with the map $A \to \pers{C}{\psi}{I}{\pers{B}{\varphi}{J}{A}}$ given by \[a \mapsto \big(\ol{\psi} (\varphi(a)), \varphi(a)+I, a+J\big).\]
	\end{enumerate}
\end{lem}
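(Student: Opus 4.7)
The plan is to reduce all four parts to \cref{lem:alg_unified_composition} by taking appropriate quotients. For \labelcref{itm:tilde_descends} I observe that $\ol{\psi} \circ \alpha_B$ annihilates $\ker(q^J) = 0 \oplus J$ (since $\alpha_B(0,j) = 0$), so it descends to a well-defined morphism $\decor{\psi}$ satisfying $\decor{\psi} \circ q^J = \ol{\psi} \circ \alpha_B$; nondegeneracy passes through because $q^J$ is surjective. The ideal $q^J(I \oplus 0) \cong I$ is $\decor{\psi}$-proper because $\decor{\psi}(q^J(i,0)) = \psi(i) \in C$, and $\decor{\psi}$-perfect when $I$ is $\psi$-perfect by the same computation together with injectivity of $\psi|_I$. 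For \labelcref{itm:bar_extends} the extension $\ol{\psi_I}$ is immediate from \cite[Proposition~2.1]{Lan95} applied to the proper morphism $\psi_I$; I will then verify $(\ol{\psi_I} \circ \varphi)$-properness (and perfectness when both $I$ and $J$ are perfect) of $J$ from the identity $\ol{\psi_I}(\varphi(j)) = \psi_I(\varphi(j)) = (\psi(\varphi(j)), \varphi(j) + I)$, which lies in $\pers{C}{\psi}{I}{B}$ for any $j \in J$ and vanishes only when $\varphi(j) \in I \cap \ker\psi$, forcing $j = 0$ under the perfectness hypotheses.

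The main technical step is \labelcref{itm:composition_quotient_algebras}. For the first algebra I will combine the map $(m,y) \mapsto (m, q^J(y))$ with the quotient by $0 \oplus q^J(I \oplus 0)$ to produce a surjection
\[
\Phi \colon \unis{C}{\ol{\psi}\circ\alpha_B}{(\unis{B}{\varphi}{A})} \twoheadrightarrow \pers{C}{\decor{\psi}}{q^J(I \oplus 0)}{(\pers{B}{\varphi}{J}{A})}.
\]
Well-definedness of the first map uses $\decor{\psi} \circ q^J = \ol{\psi} \circ \alpha_B$; surjectivity is immediate by lifting along $q^J$; and the kernel of $\Phi$ works out to $0 \oplus (0 \oplus J) + 0 \oplus (I \oplus 0) = 0 \oplus (I \oplus J)$ inside $\unis{C}{\ol{\psi}\circ\alpha_B}{(\unis{B}{\varphi}{A})}$. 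The identification $\unis{C}{\ol{\psi}\circ\alpha_B}{(\unis{B}{\varphi}{A})} \cong \unis{C}{\psi}{\unis{B}{\varphi}{A}}$ from \cref{lem:alg_unified_composition} then matches the quotient by $\ker\Phi$ with $\unis{C}{\psi}{\unis{B}{\varphi}{A}}/(0 \oplus I \oplus J) = \pers{C}{\psi}{I}{\pers{B}{\varphi}{J}{A}}$, provided that this identification carries $0 \oplus I \oplus J$ to the correspondingly-named ideal in the triple description---this is a short check using the explicit formulas from \cref{lem:alg_unified_composition}, reducing to the observation that $\ol{\psi}(\varphi(j)) = \psi(\varphi(j))$ for $j \in J$. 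A parallel argument starting from $\unis{(\unis{C}{\psi}{B})}{\ol{\unif{\psi}} \circ \varphi}{A}$ together with the second identification in \cref{lem:alg_unified_composition} handles the other composition. The main obstacle I expect is precisely this bookkeeping: ensuring that both identifications in \cref{lem:alg_unified_composition} respect the distinguished ideal $0 \oplus I \oplus J$ in the triple description.

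Part \labelcref{itm:quotient_alg_maps} follows by tracking $a \in A$ through the isomorphisms: the composition $\perf{\decor{\psi}}{q^J(I \oplus 0)} \circ \varphi_J$ sends $a$ first to $(\varphi(a), a+J) \in \pers{B}{\varphi}{J}{A}$ and then to $\bigl(\decor{\psi}(\varphi(a), a+J),\, (\varphi(a), a + J)\bigr)$ modulo $0 \oplus q^J(I \oplus 0)$, which reads as $(\ol{\psi}(\varphi(a)), \varphi(a)+I, a + J)$ under the triple description. The chain for $\perf{(\ol{\psi_I} \circ \varphi)}{J}$, namely $a \mapsto (\ol{\psi_I}(\varphi(a)), a+J)$, collapses to the same triple under the second identification from \cref{lem:alg_unified_composition}, so the two proper morphisms agree under the isomorphisms of \labelcref{itm:composition_quotient_algebras}.
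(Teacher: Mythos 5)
Your proposal is correct and takes essentially the same route as the paper: parts (i) and (ii) are the same direct verifications, and parts (iii)--(iv) reduce, exactly as in the paper, to the explicit element formulas of \cref{lem:alg_unified_composition}. Your surjection $\Phi$ and its kernel computation $0 \oplus (I \oplus J)$ simply make explicit the isomorphism-theorem bookkeeping that the paper's proof declares it will omit, and your tracing of $a \mapsto (\ol{\psi}(\varphi(a)), \varphi(a)+I, a+J)$ matches the paper's definition chase.
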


\begin{proof}
 \labelcref{itm:tilde_descends} Observe that $\ol{\psi} \circ \alpha_B(0 \oplus J) = 0$, so $\ol{\psi} \circ \alpha_B$ descends to a morphism $\decor{\psi} \in \Mor( \pers{B}{\varphi}{J}{A},C)$ satisfying $\decor{\psi} \circ q^J = \ol{\psi} \circ \alpha_B$. Since $I$ is $\psi$-proper, the ideal $q^J(I \oplus 0)$ is $\decor{\psi}$-proper. Suppose that $I$ is $\psi$-perfect and $\decor{\psi}(b,J) = 0$ for some $b \in I$. Then $\ol{\psi}(b) = 0$. That is, $\lim_{\lambda} \psi(b b_{\lambda})c = 0$ for all $c \in C$ and approximate identities $(b_{\lambda})$ for $B$. Since $b b_{\lambda} \in I$, we have $b b_{\lambda} = 0$ for all $\lambda$, so $b = 0$. Hence, $q^J(I \oplus 0)$ is $\decor{\psi}$-perfect.
	
	 \labelcref{itm:bar_extends} Since $\psi_I$ is proper, it extends to a unique unital $*$-homomorphism $\ol{\psi_I} \colon \Mm(B) \to  \Mm(\pers{C}{\psi}{I}{B})$ such that for any $m \in \Mm(B)$ and any approximate identity $(b_\lambda)$ of $B$, we have
	\begin{equation}\label{eq:overline_psi_I}
		\ol{\psi_I}(m)(n,b+I) 
	= \lim_{\lambda} \psi_I(mb_{\lambda})(n,b+I) 
	= \lim_{\lambda} (\psi(mb_{\lambda})n, mb_{\lambda}b + I)
	= (\ol{\psi}(m)n,mb +I)
	\end{equation}
	 for all $(n,b+I)  \in \pers{C}{\psi}{I}{B}$.
	  Since $J$ is $\varphi$-proper, $\varphi(J) \subseteq B$, so $\ol{\psi_I} \circ \varphi(J) \subseteq \pers{C}{\psi}{I}{B}$. That is, $J$ is $(\ol{\psi_I} \circ \varphi)$-proper. 
	  
	  Suppose that $J$ is $\varphi$-perfect, $I$ is $\psi$-perfect, and $a \in J$ is such that  \[0 = \ol{\psi_I}(\varphi(a))(n,b+I) = (\ol{\psi}(\varphi(a))n, \varphi(a)b + I)\]
	   for all $(n,b+I) \in \pers{C}{\psi}{I}{B}$. Then $\ol{\psi} (\varphi(a))n = 0$ for all $n \in C$ and $\varphi(a)b \in I$ for all $b \in B$. Approximate identity arguments show that $\ol{\psi}(\varphi(a)) = 0$ and $\varphi(a) \in I$. 
	   As $I$ is $\psi$-perfect, $\ol{\psi}|_{I}$ is injective. Since  $\ol{\psi}(\varphi(a)) = 0$ and $ \varphi(a) \in I$, we have $\varphi(a) = 0$. Since $J$ is $\varphi$-perfect, $a = 0$. That is, $J$ is $(\ol{\psi_I} \circ \varphi)$-perfect. 
	 
	 \labelcref{itm:composition_quotient_algebras} The argument is similar to that of \cref{lem:compositions_associative}, so we omit some detail including details of the isomorphism theorems invoked.   
	 By definition,
	 \[
	 \pers{C}{\decor{\psi}}{}{(\pers{B}{\varphi}{J}{A})}
	 = \{(c + \decor{\psi} (t), t )\mid c \in C, t\in \pers{B}{\varphi}{J}{A} \} \subseteq \Mm(C) \oplus (\pers{B}{\varphi}{J}{A}).
	 \]
	 If $t\in \pers{B}{\varphi}{J}{A}$, then $t = (b + \varphi(a), a + J)$ for some $a \in A$ and $b \in B$. Since $\decor{\psi} \circ q^J = \ol{\psi} \circ \alpha_B$, we have 
	 $
	 \decor{\psi}(t) = \decor{\psi}(b + \varphi(a), a + J) = \ol{\psi}\circ \alpha_B( b + \varphi(a), a) = \psi(b) + \ol{\psi}(\varphi(a)).
	 $
	 Hence,
	 \[
	 \pers{C}{\decor{\psi}}{}{(\pers{B}{\varphi}{J}{A})} 
	 =  \big\{\big(c + \psi(b) + \ol{\psi}(\varphi(a)), b + \varphi(a), a + J \big) \bigm| a \in A, b \in B, c \in C \big\}.
	 \]
	 Taking the quotient by $q^J(I \oplus 0)$ gives $\pers{C}{\decor{\psi}}{q^J(I \oplus 0)}{(\pers{B}{\varphi}{J}{A})} \cong \pers{C}{\psi}{I}{\pers{B}{\varphi}{J}{A}}$.
	 
	 By \labelcref{eq:overline_psi_I}, for $a \in A$ we have $\ol{\psi_I}(\varphi(a))(n,b+I) = (\ol{\psi}(\varphi(a)n), \varphi(a)b + I)$ for all $(n,b+I) \in \pers{C}{\psi}{I}{B}$. Using this at the second equality, as subsets of $\Mm(\pers{C}{\psi}{I}{B}) \oplus A/J$, we have	 
	 \begin{align*}
	 	\pers{(\pers{C}{\psi}{I}{B})}{\ol{\psi_I} \circ \varphi}{J}{A}
	 	&= \big\{\big((c + \psi(b),b+I) + \ol{\psi_I} \circ \varphi(a),a+J\big) \bigm| a \in A, b \in B, c \in C\big\}\\
	 	&= \big\{\big((c + \psi(b) + \ol{\psi}(\varphi(a)), b + \varphi(a)+ I), a+ J) \big) \bigm|  a \in A, b \in B, c \in C\big\}
	 \end{align*}
	 which is isomorphic to $\pers{C}{\psi}{I}{\pers{B}{\varphi}{J}{A}}$.
	
	\labelcref{itm:quotient_alg_maps} Analogously to the proof of \cref{lem:compositions_associative}, this follows from \labelcref{itm:composition_quotient_algebras} and some definition chasing. 
\end{proof}

\subsection{Generalised limits of \texorpdfstring{$C^*$}{C*}-algebras}\label{sec:nc_limits}

Let $(A_i,\varphi_i)_{i \in \NN}$ be a directed sequence of $C^*$-algebras $A_i$ and $*$-homomorphisms $\varphi_i \colon A_i \to A_{i+1}$. Its direct limit $\varinjlim(A_i,\varphi_i)$ is the unique $C^*$-algebra (up to isomorphism) with $*$-homomorphisms $\mu_k \colon A_k \to \varinjlim(A_i,\varphi_i)$ satisfying $\mu_{k+1} \circ \varphi_k = \mu_k$ for each $k \in \NN$, that is universal: if $B$ is another $C^*$-algebra with $*$-homomorphisms $\nu_k \colon A_k \to B$ satisfying $\nu_{k+1} \circ \varphi_k = \nu_k$ for all $k \in \NN$, then there exists a unique $*$-homomorphism $\nu \colon \varinjlim(A_i,\varphi_i) \to B$ such that $\nu \circ \mu_k = \nu_k$ for all $k \in \NN$. 

The direct limit of $(A_i,\varphi_i)_{i \in \NN}$ can be described explicitly. For each $k \in \NN$, let $\mu_k \colon A_k \to \prod_{n \in \NN} 
A_n / \bigoplus_{n \in \NN} A_n$ be the $*$-homomorphism given by
\[
\textstyle
\mu_k(a_k) = \Big(0, \cdots , 0 , a_k , \varphi_k(a_k) , \varphi_{k+1} \circ \varphi_k (a_k) , \varphi_{k+2} \circ \varphi_{k+1} \circ \varphi_k(a_k), \cdots\Big) +  \bigoplus_{n \in \NN} A_n
\]
for all $a_k \in A_k$. Then $\varinjlim(A_i,\varphi_i) = \overline{\bigcup_{k \in \NN}\mu_k(A_k)}$ by \cite[Proposition~6.2.4]{RLL00}. The maps $\mu_k$ satisfy $\mu_k \circ \varphi_k = \mu_{k+1}$ and are the universal maps for the direct limit.

Dualising the notion of inverse sequences of locally compact Hausdorff spaces and continuous maps, we consider the following, more general, notion of directed sequences of $C^*$-algebras.

\begin{dfn} A \emph{generalised directed sequence $C^*$-algebras} is a sequence $(A_i,\varphi_i)_{i \in \NN}$ of $C^*$-algebras $A_i$ and morphisms $\varphi_{i} \in \Mor(A_i,A_{i+1})$.
\end{dfn}

\begin{example}
	If $(X_i,f_i)_{i \in \NN}$ is a inverse sequence of locally compact Hausdorff spaces and continuous maps, then $f_i^* \in \Mor(C_0(X_i),C_0(X_{i+1}))$, and so $(C_0(X_i), f_i^*)_{i \in \NN}$ is a generalised directed sequence of $C^*$-algebras. 
\end{example}

\begin{example}
	A generalised directed sequence of $C^*$-algebras $(A_i,\varphi_i)_{i \in \NN}$, in which each $\varphi_i$ is proper, induces a directed sequence of $C^*$-algebras in the usual sense.  
\end{example}

Nontrivial noncommutative examples of generalised directed sequences appear in the theory of $C^*$-correspondences. 
\begin{example}\label{ex:correspondence_generalised_sequence}
	As in \cref{ex:toeplitz}, let $(\varphi,X_A)$ be a nondegenerate $A$--$A$-correspondence, and for each $i \ge 0$ let $X_A^{\ox i} \coloneqq X \ox_A X \ox_A\cdots \ox_A X$  be $i$-th tensor power of $X_A$, balanced over $A$. Let $\varphi_0 \coloneqq \varphi$, and for $k \ge 1$ let $\varphi_i \colon \Kk_A(X_A^{\ox i}) \to \Ll_A(X_A^{\ox i+1}) \cong \Mm(\Kk_A(X_A^{\ox i+1}))$ be the unique $*$-homomorphism satisfying
	\[
	\varphi_i(T) (x \ox y) = (Tx) \ox y 
	\]
	for all $x \in X_A^{\ox i}$ and $y \in X_A$ (see \cite[p.~369]{Kat04cor}). Then $\varphi_i \in \Mor(\Kk_A(X_A^{\ox i}),\Kk_A(X_A^{\ox i+1}))$, so $(\Kk_A(X_A^{\ox i}),\varphi_i)_{i \in \NN}$ is a generalised directed sequence of $C^*$-algebras.
\end{example}

Unlike the case of locally compact Hausdorff spaces, where $\varprojlim(X_i,f_i)$ makes sense as a topological space even if the $f_i$ are not proper, there is no good corresponding notion (as far as the author is aware) of a direct limit of a generalised sequence of $C^*$-algebras. Instead, we are interested in taking more general types of limits, like we did in the topological setting. 

Given a generalised directed sequence $(A_i,\varphi_i)_{i \in \NN}$ the naive approach is to first extend each $\varphi_i \in \Mor(A_i,A_{i+1})$ to a unital $*$-homomorphism $\ol{\varphi_i} \colon \Mm(A_i) \to \Mm(A_{i+1})$, and then take the usual direct limit of the resulting directed sequence of $C^*$-algebras
\[
A_0 \overset{\varphi_0}{\longrightarrow} \Mm(A_1) \overset{\ol{\varphi_1}}{\longrightarrow} \Mm(A_2) \overset{\ol{\varphi_2}}{\longrightarrow} \Mm(A_3) \longrightarrow \cdots .
\]
We denote this limit by $\mlima(A_i,\varphi_i)$. Since $A_i \subseteq \Mm(A_i)$, each of the algebras $A_i$ map into $\mlima(A_i,\varphi_i)$, and the limit is compatible with the $\varphi_i$. The drawback of this approach is that the limit is typically very large: not only does each $A_i$ map into $\mlima(A_i,\varphi_i)$, but so does $\Mm(A_i)$ for all $i \ge 1$. For example, if each $A_i$ is separable, then $\mlima(A_i,\varphi_i)$ is typically nonseperable. 

Instead we seek a smaller notion of limit. More specifically, we insist that the maps from each $A_i$ into the limit in are proper morphisms. 

\begin{lem}\label{lem:limit_properly_nd}
	Let $(A_i,\varphi_i)_{i \in \NN}$ be a
	 directed sequence of $C^*$-algebras, and let $\mu_k \colon A_k \to \varinjlim(A_i,\varphi_i)$ denote the universal $*$-homomorphisms. If each $\varphi_k \in \Morp(A_k,A_{k+1})$, then each $\mu_k \in \Morp(A_k,\varinjlim(A_i,\varphi_i))$. 
\end{lem}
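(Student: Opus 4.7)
The plan is to show directly that $\mu_k(A_k) \cdot \varinjlim(A_i,\varphi_i)$ is norm-dense in $\varinjlim(A_i,\varphi_i)$, using an approximate identity argument together with the description $\varinjlim(A_i,\varphi_i) = \overline{\bigcup_{n \in \NN}\mu_n(A_n)}$. Equivalently (and more conveniently for the computation), I would fix an approximate identity $(e_\lambda)$ of $A_k$ and prove $\mu_k(e_\lambda) a \to a$ for every $a \in \varinjlim(A_i,\varphi_i)$, which amounts to the same nondegeneracy condition.

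By norm-continuity of multiplication and of the $\mu_n$, it suffices to verify this convergence on the dense $*$-subalgebra $\bigcup_{n \ge k} \mu_n(A_n)$. Fix $n \ge k$ and $a_n \in A_n$, and let $\varphi_{k,n} \coloneqq \varphi_{n-1} \circ \cdots \circ \varphi_k \colon A_k \to A_n$; the compatibility $\mu_n \circ \varphi_{k,n} = \mu_k$ together with the fact that each $\mu_n$ is a $*$-homomorphism yields
\[
\mu_k(e_\lambda)\,\mu_n(a_n) \;=\; \mu_n\big(\varphi_{k,n}(e_\lambda)\, a_n\big).
\]
Because each $\varphi_i$ lies in $\Morp(A_i,A_{i+1})$, the composition $\varphi_{k,n}$ is again a proper morphism, so $\big(\varphi_{k,n}(e_\lambda)\big)_\lambda$ is an approximate identity of $A_n$. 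Hence $\varphi_{k,n}(e_\lambda)\, a_n \to a_n$ in norm, and applying the contractive map $\mu_n$ gives $\mu_k(e_\lambda)\,\mu_n(a_n) \to \mu_n(a_n)$.

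An $\varepsilon/3$ estimate then extends the convergence from the dense subalgebra to all of $\varinjlim(A_i,\varphi_i)$: given $a$ in the limit and $\varepsilon > 0$, choose $n \ge k$ and $a_n \in A_n$ with $\|a - \mu_n(a_n)\| < \varepsilon/3$, use $\|\mu_k(e_\lambda)\| \le 1$ to control two of the three pieces, and invoke the computation above for the middle piece. This shows $\mu_k \in \Morp(A_k, \varinjlim(A_i,\varphi_i))$. The only mildly delicate point is the reduction to indices $n \ge k$ in the dense subalgebra, but this is immediate since $\mu_m(A_m) \subseteq \mu_n(A_n)$ whenever $m \le n$ (via the connecting maps), so I do not anticipate any genuine obstacle.
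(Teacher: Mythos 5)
Your proposal is correct and follows essentially the same route as the paper's proof: approximate an arbitrary element of the limit by some $\mu_n(a_n)$ with $n \ge k$, use the identity $\mu_k(e_\lambda)\mu_n(a_n) = \mu_n(\varphi_{k,n}(e_\lambda)a_n)$ together with the fact that compositions of proper morphisms send approximate identities to approximate identities, and finish with an $\varepsilon/3$ estimate. The only difference is presentational — you isolate the statement that $\varphi_{k,n}$ is again proper, which the paper uses implicitly.
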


\begin{proof}
	Fix $k \in \NN$ and let $(a_\lambda)$ be an approximate unit for $A_k$ with $\|a_{\lambda}\| \le 1$. Fix $b \in \varinjlim(A_i,\varphi_i)$ and $\varepsilon > 0$. Since  $\varinjlim(A_i,\varphi_i) = \overline{\bigcup_{i \in \NN}\mu_i(A_i)}$, there exists $n \ge k$ and $b_n \in A_n$ such that $\|\mu_n(b_n) - b\| < \varepsilon/3$. Since each $\varphi_i$ is proper, there exists $\lambda_0$ such that for $\lambda \ge \lambda_0$ we have $\|\varphi_{n-1} \circ \cdots \circ \varphi_{k}(a_\lambda)b_n - b_n\| < \varepsilon/3$. Hence, for $\lambda \ge \lambda_0$ we have	
	\begin{align*}
		\| \mu_k(a_\lambda)b - b \|
		&\le \|\mu_k(a_\lambda)b - \mu_k(a_{\lambda})b_n\| + 
		 \|\mu_n(\varphi_{n-1} \circ \cdots \circ \varphi_{k}(a_\lambda)b_n) - \mu_n(b_n)\|\\
		& \qquad \qquad  + \|\mu_n(b_n) - b  \| < \varepsilon.
	\end{align*}
	So, $\mu_k$ is a proper morphism. 
\end{proof}

 Using fibrewise compactifications we can inductively extend a generalised directed sequence of $C^*$-algebras to one with proper morphisms, and \cref{lem:limit_properly_nd} implies that the universal maps into the limit will be proper. For the unified algebra construction we get the following notion of a limit.

\begin{dfn}
Let $(A_i,\varphi_i)_{i \in \NN}$ be a generalised directed sequence of $C^*$-algebras. Let $\widetilde{A}_0 \coloneqq A_0$. Working inductively, for each $i \ge 1$ define
\[
\wt{A}_{i} \coloneqq \unis{A_i}{\ol{\varphi}_{i-1} \circ \alpha_{A_{i-1}}}{\wt{A}_{i-1}},
\]
let $\alpha_{i} \in \Mor(\wt{A}_i,A_i)$ be induced by $A_i \trianglelefteq \wt{A}_i$, let $\ol{\varphi}_i \colon \Mm(\wt{A}_i) \to \Mm(A_{i+1})$ be the unique unital map induced by $\varphi_i$, and let
$\wt{\varphi}_{i} \colon \wt{A}_i \to \wt{A}_{i+1}$ be given by $\wt{\varphi}_i(a) \coloneqq  \unif{(\ol{\varphi}_{i} \circ \alpha_{A_{i}})}$.
 We call the $C^*$-algebra $\unilima(A_i,\varphi_i) \coloneqq \varinjlim(\wt{A}_i,\wt{\varphi}_i)$ the \emph{unified limit} of $(A_i,\varphi_i)_{i \in \NN}$.
\end{dfn}

By \cref{prop:unified_alg_perf}, the $\wt{\varphi}_i$ are injective and belong to $\Morp(\wt{A}_i,\wt{A}_{i+1})$. Explicitly, they satisfy $
\wt{\varphi}_i(a) = (\ol{\varphi}_{i} \circ \alpha_{A_{i}}(a),a)
$ for all $a \in \wt{A}_i$. Extending the notation established in \cref{lem:compositions_associative}, we  write 
\[
\wt{A}_{i} \eqqcolon \unis{A_{i}}{\varphi_{i-1}}{
	\unis{A_{i-1}}{\varphi_{i-2}}{\unis{\cdots}{\varphi_0}{A_0}}}.
\]
By inductively applying \cref{lem:alg_unified_composition}, we may identify $\wt{A}_i$ with
\begin{align*}
	&\big\{(a_i,\ldots,a_1,a_0) \in \Mm(A_i) \oplus \cdots \oplus \Mm(A_1) \oplus A_0 \bigm| \\
	 &\qquad \varphi_0(a_0) - a_1 \in A_1 \text{ and } \ol{\varphi}_k(a_k) - a_{k+1} \in A_{k+1} \text{ for all } 1 \le k < i \big\}.
\end{align*}
Under this identification, the map $\wt{\varphi}_i$ satisfies
\begin{equation}\label{eq:alpha_commute}
\wt{\varphi}_i(a_i,\ldots, a_0) = (\ol{\varphi}_{i} \circ \alpha_{A_{i}}(a_i,\ldots, a_0),(a_i,\ldots, a_0)) = (\ol{\varphi}_i(a_i),a_i,\ldots,a_0).
\end{equation}
We record some properties of the unified limit. 

\begin{prop}\label{prop:unified_limit_algebra}
	Let $(A_i,\varphi_i)_{i \in \NN}$ be a generalised directed sequence of $C^*$-algebras.
	
	\begin{enumerate}
	\item \label{itm:univ_maps}	The universal maps $\mu_k \colon \wt{A}_k \to \unilima(A_i,\varphi_i)$ belong to $\Morp(\wt{A}_k,\unilima(A_i,\varphi_i))$ and are injective.

	\item \label{itm:mult_lim_map}	 The maps $\alpha_{A_i} \in \Mor(\wt{A}_i,A_i)$, induced by the ideal $A_i \trianglelefteq \wt{A}_i$, induce a $*$-homomorphism $\unilima(A_i,\varphi_i) \to \mlima(A_i,\varphi_i)$.

	\item \label{itm:property_limit} If $\Pp$ is a property of $C^*$-algebras that is closed under extensions and countable direct limits (e.g. separability, nuclearity), and $A_i$ satisfies $\Pp$ for all $i \in \NN$, then $\unilima(A_i,\varphi_i)$ also satisfies $\Pp$. 
	\end{enumerate}
\end{prop}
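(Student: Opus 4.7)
My plan is to handle the three parts in turn; they are largely independent and each rests on machinery already established.

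For \labelcref{itm:univ_maps}, properness of each $\mu_k$ is immediate from \cref{lem:limit_properly_nd}, since the remarks preceding the proposition already observe, via \cref{prop:unified_alg_perf}, that every $\wt{\varphi}_i$ belongs to $\Morp(\wt{A}_i,\wt{A}_{i+1})$. For injectivity, I would invoke the standard model of the direct limit $\varinjlim(\wt{A}_i,\wt{\varphi}_i)$ as a quotient of $\prod_n \wt{A}_n / \bigoplus_n \wt{A}_n$: because each $\wt{\varphi}_i$ is an injective $*$-homomorphism between $C^*$-algebras it is isometric, so $\|\wt{\varphi}_{n-1} \circ \cdots \circ \wt{\varphi}_k(a)\| = \|a\|$ for all $n > k$ and $a \in \wt{A}_k$, which forces $\mu_k(a) = 0$ to imply $a = 0$.

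For \labelcref{itm:mult_lim_map}, the strategy is to apply the universal property of $\unilima(A_i,\varphi_i)$ to the family $(\alpha_{A_i})_{i\in\NN}$, viewed as morphisms into the inductive system defining $\mlima(A_i,\varphi_i)$. The required compatibility $\alpha_{A_{i+1}} \circ \wt{\varphi}_i = \ol{\varphi}_i \circ \alpha_{A_i}$ follows at once from \cref{eq:alpha_commute}: in the internal description of $\wt{A}_{i+1}$, the morphism $\alpha_{A_{i+1}}$ is projection onto the first coordinate, while $\wt{\varphi}_i(a) = (\ol{\varphi}_i \circ \alpha_{A_i}(a), a)$ by construction.

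For \labelcref{itm:property_limit}, I would induct on $i$. By definition $\wt{A}_i = \unis{A_i}{\ol{\varphi}_{i-1} \circ \alpha_{A_{i-1}}}{\wt{A}_{i-1}}$, and the unified algebra construction of \cref{sec:unified_alg} presents this as a split extension $0 \to A_i \to \wt{A}_i \to \wt{A}_{i-1} \to 0$. With base case $\wt{A}_0 = A_0$, which satisfies $\Pp$ by hypothesis, closure of $\Pp$ under extensions propagates $\Pp$ through every $\wt{A}_i$, and closure under countable direct limits then delivers $\Pp$ for $\unilima(A_i,\varphi_i) = \varinjlim(\wt{A}_i,\wt{\varphi}_i)$. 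The step I expect to require the most care is the injectivity argument in \labelcref{itm:univ_maps}: one must observe that each composition $\wt{\varphi}_{n-1} \circ \cdots \circ \wt{\varphi}_k$ is a genuine $*$-homomorphism between $C^*$-algebras, rather than a morphism into a multiplier algebra, so that isometry actually applies. This is guaranteed by properness of the individual $\wt{\varphi}_i$.
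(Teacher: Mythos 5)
Your proposal is correct and follows essentially the same route as the paper: properness of the $\mu_k$ via \cref{lem:limit_properly_nd}, injectivity from injectivity of the $\wt{\varphi}_i$ (you spell out the standard isometry argument that the paper leaves implicit), the intertwining relation \labelcref{eq:alpha_commute} plus the universal property for part (ii), and induction on the split-extension structure of the $\wt{A}_i$ for part (iii). No gaps.
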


\begin{proof}
	\labelcref{itm:univ_maps} That the $\mu_k$ are injective follows from the $\wt{\varphi}_i$ being injective for all $i \in \NN$. \cref{lem:limit_properly_nd} implies that each $\mu_k \in \Morp(\wt{A}_k,\unilima(A_i,\varphi_i))$.
	
	\labelcref{itm:mult_lim_map} 
	Using \labelcref{eq:alpha_commute} we have  $\alpha_{A_{i+1}}  \circ \wt{\varphi}_i = \ol{\varphi}_i \circ \alpha_{A_i} $ for all $i \ge 0$. The existence of a $*$-homomorphism  $\unilima(A_i,\varphi_i) \to \mlima(A_i,\varphi_i)$ now follows from the universal property of direct limits (cf. \cref{lem:injectivity_limits}).
	
	\labelcref{itm:property_limit} This follows inductively from the fact that, as a unified algebra, each $\wt{A}_{i+1}$ is an extension of $\wt{A}_i$ by $A_{i+1}$.
\end{proof}

By considering quotient fibrewise compactification of the unified algebra, we can also construct smaller limits of a generalised directed sequence.  

\begin{dfn}\label{dfn:regulated_limit_alg} 	A \emph{regulating sequence} for a generalised directed sequence $(A_i,\varphi_i)_{i \in \NN}$ of $C^*$-algebras is a collection $\Jj = (J_i)_{i \in \NN}$ of ideals $J_i \trianglelefteq A_i$ such that each $J_i$ is $\varphi_i$-proper. If each $J_i$ is $\varphi_i$-perfect, then we call $\Jj$ a \emph{perfect} regulating sequence.

	Let $A^{\Jj}_0 \coloneqq A_0$. 
	For $i \ge 1$, inductively define
	\[
	A^\Jj_i \coloneqq \pers{A_{i}}{\decor{\varphi_{i-1}}}{q_{i-1}(J_{i-1} \oplus 0)}{A^{\Jj}_{i-1}},
	\]
	where $\decor{\varphi_{i-1}} \in \Mor(A^{\Jj}_{i-1},A_{i})$ is the map defined in \cref{lem:compositions_associative_quotient}, and 	
	$q_i \colon \unis{A_{i}}{\decor{\varphi_{i-1}}}{A_{i-1}^{\Jj}} \to A^{\Jj}_i$ is the quotient map. For $i \ge 0$, let $\varphi^{\Jj}_i \colon A_{i}^{\Jj} \to A_{i+1}^\Jj$ be given by $\varphi^{\Jj}_i \coloneqq \decor{\varphi}_{q_{i}(J_{i} \oplus 0)}$.	
	We call the $C^*$-algebra $\varinjlim^{\Jj}(A_i,\varphi_i) \coloneqq \varinjlim(A^{\Jj}_{i},\varphi^{\Jj}_i)$ the \emph{$\Jj$-regulated limit} of $(A_i,\varphi_i)_{i \in \NN}$.
\end{dfn}

	By \cref{prop:quotient_perfections_unified}, the $\varphi^{\Jj}_i$ belong to $\Morp(\wt{A}_i,\wt{A}_{i+1})$, and are injective if $\Jj$ is perfect. 
 Extending the notation established in \cref{lem:compositions_associative_quotient}, we  write
 \[
A^{\Jj}_i \eqqcolon \pers{A_{i}}{\varphi_{i-1}}{J_{i-1}}{
 	\pers{A_{i-1}}{\varphi_{i-2}}{J_{i-2}}{\pers{\cdots}{\varphi_0}{J_{0}}{A_0}}}.
 \]
By \cref{lem:compositions_associative_quotient}, we may identify $A_i^\Jj$ with 
\begin{align*}
	&\wt{A}_i/ (0 \oplus J_{i-1} \oplus \cdots \oplus J_{0})\\
	&=\Big\{ (a_i,a_{i-1}+J_{i-1},\ldots,a_1 + J_1,a_0 + J_0 ) \in \Mm(A_i) \oplus \frac{\Mm(A_{i-1})}{J_{i-1}} \oplus \cdots \oplus \frac{\Mm(A_{1})}{J_1} \oplus \frac{A_0}{J_0}  \Bigm| \\
	&\qquad  \qquad \varphi_0(a_0) - a_1 \in A_1 \text{ and } \ol{\varphi}_k(a_k) - a_{k+1} \in A_{k+1} \text{ for all } 1 \le k < i  \Big\}.
\end{align*}
Under this identification, the maps $\varphi^\Jj_i$ satisfy
\begin{equation}\label{eq:phi_J_formula}
	\varphi^\Jj_i(a_i,a_{i-1}+J_{i-1},\ldots,a_0 + J_0 ) = (\ol{\varphi}_i(a_i), a_i + J_i,\ldots ,a_0 + J_0 ).
\end{equation}

We record the following properties of the $\Jj$-regulated limit.

\begin{cor}\label{cor:regulated_limit_alg}
	Let $(A_i,\varphi_i)_{i \in \NN}$ be a generalised directed sequence of $C^*$-algebras with a regulating sequence $\Jj = (J_i)_{i \in \NN}$. 
	\begin{enumerate}
		
		\item \label{itm:reg_a}
		The universal maps $\mu_k \colon A^\Jj_k \to \varinjlim^{\Jj}(A_i,\varphi_i)$ belong to $ \Morp(A^\Jj_k,\varinjlim^{\Jj}(A_i,\varphi_i))$, and are injective if $\Jj$ is perfect. 
		
		\item \label{itm:reg_b} The maps $\alpha_{A_i} \in \Mor(A^{\Jj}_i,\Mm(A_i))$, induced by the ideal $A_i \trianglelefteq A^{\Jj}_i $, induce a $*$-homomorphism $\varinjlim^{\Jj}(A_i,\varphi_i) \to \mlima(A_i,\varphi_i)$.

		\item \label{itm:reg_c} If $\Pp$ is a property of $C^*$-algebras that is closed under taking quotients, ideals, extensions, and countable direct limits (e.g. separability, nuclearity); and $A_i$ satisfies $\Pp$ for all $i \in \NN$, then $\varprojlim^{\Jj}(A_i,\varphi_i)$ satisfies $\Pp$. 
		
		\item \label{itm:reg_d}	Let $\Ii = (I_i)_{i \in \NN}$ be another regulating sequence for $(A_i,\varphi_i)_{i \in \NN}$. If $J_i \trianglelefteq I_i$ for all $i \in \NN$, then there is a surjection $\varinjlim^{\Jj}(A_i,\varphi_i) \twoheadrightarrow \varinjlim^{\Ii}(A_i,\varphi_i)$.
	\end{enumerate}
\end{cor}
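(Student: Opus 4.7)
The plan is to dispatch each item using \cref{prop:quotient_perfections_unified}, \cref{thm:unified_quotient}, the explicit formula \labelcref{eq:phi_J_formula}, and standard facts about direct limits. For \labelcref{itm:reg_a}, \cref{prop:quotient_perfections_unified} ensures each $\varphi^{\Jj}_i$ is proper (and injective when $\Jj$ is perfect), whence \cref{lem:limit_properly_nd} gives that each $\mu_k$ is proper. Injectivity of the $\mu_k$ in the perfect case follows from the standard fact that a direct limit of isometric $*$-homomorphisms has isometric universal maps.

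For \labelcref{itm:reg_b}, inspection of \labelcref{eq:phi_J_formula} gives $\alpha_{A_{i+1}} \circ \varphi^{\Jj}_i = \ol{\varphi}_i \circ \alpha_{A_i}$. Post-composing each $\alpha_{A_i}$ with the canonical $*$-homomorphism $\Mm(A_i) \to \mlima(A_i,\varphi_i)$ yields a compatible family $A^{\Jj}_i \to \mlima(A_i,\varphi_i)$ that factors through $\varinjlim^{\Jj}(A_i,\varphi_i)$ by the universal property.

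For \labelcref{itm:reg_c}, applying \cref{thm:unified_quotient} to the recursive definition of $A_i^\Jj$ yields, for each $i \ge 1$, the short exact sequence
\[
0 \to A_i \to A_i^\Jj \to A_{i-1}^\Jj/q_{i-1}(J_{i-1} \oplus 0) \to 0,
\]
whose right-hand term is a quotient of $A_{i-1}^\Jj$. An induction on $i$, using closure of $\Pp$ under ideals, quotients, and extensions, shows each $A_i^\Jj$ satisfies $\Pp$; closure under countable direct limits then handles $\varinjlim^{\Jj}(A_i,\varphi_i)$.

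For \labelcref{itm:reg_d}, I would use the identification $A_i^\Jj \cong \wt{A}_i/(0 \oplus J_{i-1} \oplus \cdots \oplus J_0)$ recorded after \cref{dfn:regulated_limit_alg}. Since $J_k \trianglelefteq I_k$ for every $k$, we have $0 \oplus J_{i-1} \oplus \cdots \oplus J_0 \subseteq 0 \oplus I_{i-1} \oplus \cdots \oplus I_0$, producing surjective $*$-homomorphisms $q_i \colon A_i^\Jj \twoheadrightarrow A_i^\Ii$. A direct check against \labelcref{eq:phi_J_formula} gives $q_{i+1} \circ \varphi_i^\Jj = \varphi_i^\Ii \circ q_i$, so the $q_i$ descend to a $*$-homomorphism between the direct limits that is surjective because surjectivity is preserved under countable direct limits. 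The main subtle point across the proof is the bookkeeping around the nested quotient identifications underpinning \labelcref{itm:reg_c} and \labelcref{itm:reg_d}; once these are clarified, each assertion reduces to a routine verification.
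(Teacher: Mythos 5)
Your proposal is correct and follows essentially the same route as the paper: parts (a)--(c) are handled by the same arguments the paper invokes by analogy with \cref{prop:unified_limit_algebra} (properness via \cref{lem:limit_properly_nd}, the intertwining relation $\alpha_{A_{i+1}} \circ \varphi^{\Jj}_i = \ol{\varphi}_i \circ \alpha_{A_i}$ from \labelcref{eq:phi_J_formula}, and the inductive extension argument), and part (d) uses exactly the paper's quotient maps $\wt{A}_i/(0 \oplus J_{i-1} \oplus \cdots \oplus J_0) \twoheadrightarrow \wt{A}_i/(0 \oplus I_{i-1} \oplus \cdots \oplus I_0)$ together with the universal property of the direct limit.
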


\begin{proof}
	Parts \labelcref{itm:reg_a}, \labelcref{itm:reg_b}, and \labelcref{itm:reg_c} follow from arguments analogous to those found in the proof of \cref{prop:unified_limit_algebra}.
	
	For \labelcref{itm:reg_d}, observe that there are quotient maps 
	\[
	\pi_i \colon A_{i}^{\Jj} \cong \frac{\wt{A}_i}{0 \oplus J_{i-1} \oplus \cdots \oplus J_{0}} \to \frac{\wt{A}_i}{0 \oplus I_{i-1} \oplus \cdots \oplus I_{0}} \cong A_i^{\Ii}.  
	\]
	By \labelcref{eq:phi_J_formula}, we have $\pi_{i+1} \circ \varphi^{\Jj}_i = \pi_i \circ \varphi^{\Jj}_{i+1}$. If $\mu_k^\Jj \colon A_k^{\Jj} \to \varinjlim^{\Jj}(A_i,\varphi_i)$ and $\mu_k^\Ii \colon A_k^{\Jj} \to \varinjlim^{\Jj}(A_i,\varphi_i)$ are the universal maps, then the universal property of direct limits gives a $*$-homomorphism $\pi \colon \varinjlim^{\Jj}(A_i,\varphi_i) \to \varinjlim^{\Ii}(A_i,\varphi_i)$ such that $\pi \circ \mu_k^{\Jj} = \mu_k^{\Ii} \circ \pi_k$ for all $k \ge 0$. Since the union of the images of the $\mu_k^{\Ii}$ is dense in $\varinjlim^{\Ii}(A_i,\varphi_i)$, and the $\pi_k$ are surjective, $\pi$ is surjective. 
\end{proof}

We have the following consequence of \cref{cor:commutative_quotients_same}.

\begin{cor}
	Let $(X_i,f_i)_{i \in \NN}$ be an inverse sequence of locally compact Hausdorff spaces and continuous maps with a regulating sequence $\Uu = (U_i)_{i \in \NN}$. Then $\Uu^* \coloneqq (C_0(U_i))_{i \in \NN}$ is a regulating sequence for the generalised directed sequence $(C_0(X_i),f_i^*)_{i \in \NN}$ of $C^*$-algebras. Moreover, $C_0(\varprojlim^{\Uu}(X_i,f_i)) \cong \varinjlim^{\Uu^*}(C_0(X_i),f_i^*)$.
\end{cor}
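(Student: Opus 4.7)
The plan is to verify the regulating-sequence claim, then inductively match the finite-stage algebras $C_0(X_i^{\Uu})$ with $A_i^{\Uu^*}$, and finally pass to the limit via Gelfand duality for proper maps. For the regulating-sequence part, each $U_i$ is $f_i$-proper by hypothesis, so \cref{lem:from_sets_to_ideals_and_back} immediately gives that $C_0(U_i)$ is $f_i^*$-proper, making $\Uu^* = (C_0(U_i))_{i \in \NN}$ a regulating sequence for $(C_0(X_i),f_i^*)_{i \in \NN}$.

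Next I would build, by induction on $i$, a $*$-isomorphism $\Phi_i \colon A_i^{\Uu^*} \to C_0(X_i^{\Uu})$ intertwining $\varphi_i^{\Uu^*}$ with $(f_i^{\Uu})^*$. The base case $\Phi_0 = \id$ is trivial. For the inductive step, unpacking \cref{dfn:regulated_limit} and \cref{dfn:regulated_limit_alg} yields
\[
X_{i+1}^{\Uu} = \pers{X_{i+1}}{\iota_{X_i} \circ f_i}{U_i}{X_i^{\Uu}} \quad\text{and}\quad A_{i+1}^{\Uu^*} = \pers{C_0(X_{i+1})}{\decor{f_i^*}}{q_i(C_0(U_i) \oplus 0)}{A_i^{\Uu^*}}.
\]
Using $\Phi_i$ together with \cref{cor:commutative_quotients_same} (applied to the map $\iota_{X_i} \circ f_i \colon X_{i+1} \to X_i^{\Uu}$), and appealing to the associativity identifications of \cref{lem:compositions_associative_sub} and \cref{lem:compositions_associative_quotient}~\labelcref{itm:composition_quotient_algebras}, both $A_{i+1}^{\Uu^*}$ and $C_0(X_{i+1}^{\Uu})$ present as the same quotient of the unified algebra $\unis{C_0(X_{i+1})}{\decor{f_i^*}}{A_i^{\Uu^*}} \cong C_0(\unis{X_{i+1}}{\iota_{X_i} \circ f_i}{X_i^{\Uu}})$ by the ideal corresponding to the open set $\iota_{X_i}(U_i)$. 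The induced isomorphism is $\Phi_{i+1}$, and the intertwining follows from the explicit formulas for $\varphi_i^{\Uu^*}$ and $(f_i^{\Uu})^*$.

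To pass to the limit, observe that each $f_i^{\Uu}$ is perfect (in particular continuous and proper) and each $X_i^{\Uu}$ is locally compact Hausdorff by \cref{cor:regulated_limit}. Gelfand duality restricts to a contravariant equivalence between the category of locally compact Hausdorff spaces with proper continuous maps and the category of commutative $C^*$-algebras with proper morphisms, and this equivalence sends countable inverse limits to countable direct limits. Combining this with the isomorphisms $\Phi_i$ gives
\[
C_0(\varprojlim\nolimits^{\Uu}(X_i,f_i)) = C_0(\varprojlim(X_i^{\Uu}, f_i^{\Uu})) \cong \varinjlim(C_0(X_i^{\Uu}), (f_i^{\Uu})^*) \cong \varinjlim\nolimits^{\Uu^*}(C_0(X_i),f_i^*),
\]
as required.

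The main obstacle will be the inductive step, specifically matching the algebraic ideal $q_i(C_0(U_i) \oplus 0)$ appearing in the definition of $A_{i+1}^{\Uu^*}$ with the topological ideal of continuous functions vanishing outside $X_{i+1} \sqcup (X_i^{\Uu} \setminus \iota_{X_i}(U_i))$ inside $C_0(\unis{X_{i+1}}{\iota_{X_i} \circ f_i}{X_i^{\Uu}})$. The algebraic ideal is cut out of a unified algebra whose structure is described by \cref{lem:compositions_associative_quotient}, whereas the topological side is described by \cref{lem:compositions_associative_sub}; these must be compared step-by-step via the single-step correspondence of \cref{cor:commutative_quotients_same}, and the resulting isomorphism must be checked to intertwine the structure maps of the two sequences.
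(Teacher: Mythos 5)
Your proposal is correct and follows exactly the route the paper intends: the paper gives no written proof, presenting the statement as a consequence of \cref{cor:commutative_quotients_same}, and your argument fills in the expected details --- the regulating-sequence claim via \cref{lem:from_sets_to_ideals_and_back}, an induction matching $A_i^{\Uu^*}$ with $C_0(X_i^{\Uu})$ using the single-step correspondence together with the associativity identifications, and the passage to the limit via the proper Gelfand equivalence. Your closing remark correctly identifies the only point requiring care (matching $q_i(C_0(U_i)\oplus 0)$ with the ideal $C_0(\iota_{X_i}(U_i))$ and checking that $\decor{(f_i^*)}$ corresponds to $(\iota_{X_i}\circ f_i)^*$ under $\Phi_i$), and these checks go through as you describe.
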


We highlight two special cases of regulated limits. 

\begin{dfn}
	Let $(A_i,\varphi_i)_{i \in \NN}$ be a generalised directed sequence of $C^*$-algebras. 
	\begin{enumerate}
		\item If $\Jj = (\pim(\varphi_i))_{i \in \NN}$, then we call $\minlima(A_i,\varphi_i) \coloneqq \varinjlim^{\Jj}(A_i,\varphi_i)$ the \emph{minimal regulated limit} of $(A_i,\varphi_i)_{i \in \NN}$.
		\item If $\Jj = (\kat(\varphi_i))_{i \in \NN}$, then we call $\perlima(A_i,\varphi_i) \coloneqq \varinjlim^{\Jj}(A_i,\varphi_i)$ the \emph{minimal perfect regulated limit} of $(A_i,\varphi_i)_{i \in \NN}$.
	\end{enumerate}
\end{dfn}

By \cref{cor:regulated_limit_alg}, the above regulated limits fit into a commuting diagram
% https://q.uiver.app/#q=WzAsNCxbMCwxLCIgXFx1bmlsaW1hIChBX2ksXFx2YXJwaGlfaSkiXSxbMSwxLCJcXHBlcmxpbWEoQV9pLFxcdmFycGhpX2kpIl0sWzIsMSwiXFxtaW5saW1hKEFfaSxcXHZhcnBoaV9pKSJdLFsxLDAsIlxcbWxpbWEoQV9pLFxcdmFycGhpX2kpIl0sWzAsM10sWzEsM10sWzIsM10sWzAsMSwiIiwyLHsic3R5bGUiOnsiaGVhZCI6eyJuYW1lIjoiZXBpIn19fV0sWzEsMiwiIiwyLHsic3R5bGUiOnsiaGVhZCI6eyJuYW1lIjoiZXBpIn19fV1d
\[
\begin{tikzcd}[ampersand replacement=\&,cramped]
	\& {\mlima(A_i,\varphi_i)} \\
	{ \unilima (A_i,\varphi_i)} \& {\perlima(A_i,\varphi_i)} \& {\minlima(A_i,\varphi_i)}
	\arrow[from=2-1, to=1-2]
	\arrow[two heads, from=2-1, to=2-2]
	\arrow[from=2-2, to=1-2]
	\arrow[two heads, from=2-2, to=2-3]
	\arrow[from=2-3, to=1-2]
\end{tikzcd}
\]
where the horizontal maps are surjections, and the maps into $\mlima(A_i,\varphi_i)$ are induced by the inclusions of ideals.

\subsection{Cores of Cuntz--Pimsner algebras as regulated limits}\label{sec:cores}

Let $(\varphi,X_A)$ be an $A$--$A$-correspondence as in \cref{ex:correspondences}.
In this section, we explicitly compute regulated limits of the generalised directed sequence $(\Kk_A(X_A^{\ox i}),\varphi_i)_{i \in \NN}$ of
\cref{ex:correspondence_generalised_sequence} in terms of relative Cuntz--Pimsner algebras. 

Fix a $\varphi$-proper ideal $J \trianglelefteq A$. Let
$\tau^J = (\tau_A^J,\tau_X^J)$ denote a universal $J$-covariant representation of $(\varphi,X_A)$ in the $J$-relative Cuntz--Pimsner algebra $\Oo_{X,J}$. By the universal property of $\Oo_{X,J}$, there is a strongly continuous action $\gamma \colon \TT \to \Aut(\Oo_{X,J})$, called the \emph{gauge action}, satisfying
\[
\gamma_z(\tau_A^J(a)) = \tau_A^J(a) \quad \text{and}
\quad \gamma_z(\tau_X^J(x)) = z \tau_X^J(x)
\]
for all $a \in A$ and $x \in X_A$. The fixed-point algebra $\Oo_{X,J}^{\gamma}$ of $\gamma$ is called the \emph{core} of $\Oo_{X,J}$. In \cite[Proposition~5.7]{Kat04cor} it is shown that $\Oo_{X,J}^\gamma$ can be expressed as a direct limit of $C^*$-algebras. In this section, we show that $\Oo_{X,J}^{\gamma}$ is in fact a regulated limit of $(\Kk_A(X_A^{\ox i}),\varphi_i)_{i \in \NN}$.

We first characterise $\pim(\varphi_i)$ and $\kat(\varphi_i)$. To do this we establish some additional terminology and results about $C^*$-correspondences. 
Let $X_A$ be a right Hilbert $A$-module. By \cite[Corollary~1.4]{Kat07cor}, if $I \trianglelefteq A$, then $XI \coloneqq X \cdot I = \{x \cdot a \mid x \in X, a \in I\}$ is an $A$--$A$-correspondence with the operations induced from $X_A$. 
By \cite[Lemma~2.3]{FMR03}, there is an inclusion $\iota_I \colon \Kk_A(XI) \to \Kk_A(X)$ as an ideal such that $\iota_I(\Theta_{x,y}^{XI}) = \Theta_{x,y}^X$ for all $x,y \in XI$. Moreover, for $T \in \Kk_A(XI)$ the operator $\iota_I(T)$ is the unique extension of $T$ to $\Ll_A(X)$ whose range is contained in $XI$. As such we identify $\Kk_A(XI)$ as an ideal in $\Kk_A(X)$. 

The following characterisation of $\Kk_A(XI)$ is known to experts. 
\begin{lem}\label{lem:compact_ideal_characterisation}
Let $X$ be a right Hilbert $A$-module and let $I \trianglelefteq A$.
Then
\begin{align}\label{eq:K(XI)_characterisation}
	\begin{split}
		\Kk_A(XI) &= \ol{\spaan}\{ \Theta_{\xi,\eta \cdot a} \mid \xi,\eta \in X, a \in I\}\\
		&= \{T \in \Kk_A(X) \mid \langle x | T y \rangle_A \in I \text{ for all } x,y \in X\}.
	\end{split}
\end{align}
\end{lem}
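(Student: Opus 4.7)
The plan is to denote
\[
J_0 \coloneqq \ol{\spaan}\{\Theta_{\xi, \eta \cdot a} \mid \xi,\eta \in X,\, a \in I\}, \qquad J_1 \coloneqq \{T \in \Kk_A(X) \mid \langle x | Ty\rangle_A \in I \text{ for all } x,y \in X\},
\]
and to establish the equalities $\Kk_A(XI) = J_0 = J_1$. The inclusion $\Kk_A(XI) \subseteq J_0$ will follow from Cohen--Hewitt factorization applied to the bounded approximate identity of $I$: this gives $XI = \{\eta \cdot a \mid \eta \in X,\, a \in I\}$, so every rank-one generator of $\Kk_A(XI)$ is already of the form $\Theta_{\xi, \eta \cdot a}$. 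For $J_0 \subseteq J_1$, a direct computation on generators yields $\langle x | \Theta_{\xi, \eta \cdot a} y\rangle = \langle x | \xi\rangle a^* \langle \eta | y\rangle$, which lies in $I$ because $a^* \in I$ and $I$ is two-sided; the inclusion extends to the closure by continuity of the inner product.

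For $J_0 \subseteq \Kk_A(XI)$, the key algebraic identity is $\Theta_{\xi, \eta \cdot a} = \Theta_{\xi \cdot a^*, \eta}$, which places the first argument in $XI$ (only $\eta$ remains outside). Fixing a self-adjoint bounded approximate identity $(u_\lambda)$ for $I$ and using the identity $\Theta_{x \cdot b, y} = \Theta_{x, y \cdot b^*}$, one obtains
\[
\Theta_{\xi \cdot a^*, \eta} = \lim_{\lambda} \Theta_{\xi \cdot a^* u_\lambda, \eta} = \lim_{\lambda} \Theta_{\xi \cdot a^*, \eta \cdot u_\lambda}.
\]
The operators on the right lie in $\Kk_A(XI)$ because both arguments belong to $XI$, and the norm-closedness of the ideal $\Kk_A(XI) \trianglelefteq \Kk_A(X)$ closes this inclusion.

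The main obstacle is the inclusion $J_1 \subseteq J_0$. Given $T \in J_1$, the strategy is to show that $T \cdot \Kk_A(X) \subseteq J_0$ and then conclude via a bounded-approximate-identity argument: taking a bounded approximate identity $(k_\mu)$ of $\Kk_A(X)$, the fact that $T \in \Kk_A(X)$ gives $T k_\mu \to T$ in norm, and closedness of $J_0$ forces $T \in J_0$. For a rank-one element, $T \Theta_{x,y} = \Theta_{Tx, y}$, so it suffices to show $\Theta_{Tx, y} \in J_0$. Applying the hypothesis to the pair $(Tx, x)$ gives $\langle Tx | Tx\rangle \in I$, and the standard Hilbert-module estimate $\|Tx - Tx \cdot u_\lambda\|^2 = \|(1 - u_\lambda) \langle Tx | Tx\rangle (1 - u_\lambda)\| \to 0$ then shows $Tx \in XI$. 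A final application of Cohen--Hewitt writes $Tx = \xi \cdot b$ with $\xi \in X$ and $b \in I$, so $\Theta_{Tx, y} = \Theta_{\xi \cdot b, y} = \Theta_{\xi, y \cdot b^*}$ is exactly a generator of $J_0$, which completes the proof.
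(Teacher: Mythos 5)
Your proof is correct and follows essentially the same route as the paper's: the same three-set chain of inclusions, the same computation $\langle x \mid \Theta_{\xi,\eta\cdot a}\, y\rangle_A = \langle x\mid\xi\rangle_A\, a^*\langle\eta\mid y\rangle_A$ for one direction of the second equality, and the same closing approximate-identity argument in $\Kk_A(X)$ using $T\Theta_{x,y}=\Theta_{Tx,y}$. The only differences are at the level of micro-tactics: where you push $a$ across via $\Theta_{\xi,\eta\cdot a}=\Theta_{\xi\cdot a^{*},\eta}=\lim_{\lambda}\Theta_{\xi\cdot a^{*},\eta\cdot u_{\lambda}}$, the paper uses the positive square-root identity $\Theta_{\xi,\eta\cdot a}=\Theta_{\xi\cdot a^{1/2},\eta\cdot a^{1/2}}$; and where you deduce $Tx\in XI$ from the estimate $\|Tx-Tx\cdot u_{\lambda}\|^{2}=\|(1-u_{\lambda})\langle Tx\mid Tx\rangle_A(1-u_{\lambda})\|\to 0$, the paper instead cites Katsura's Proposition~1.3 --- so your version is marginally more self-contained, but the argument is the same.
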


\begin{proof}
For the first equality of \labelcref{eq:K(XI)_characterisation}, observe that $\Theta_{\xi \cdot a, \eta \cdot b} = \Theta_{\xi,\eta  \cdot a^* b}$ for all $x,y \in X$ and $a,b \in I$. So $\Kk_A(XI) \subseteq 	\ol{\spaan}\{ \Theta_{\xi,\eta \cdot a} \mid \xi,\eta\in X, a \in I\}$. For the reverse inclusion fix $\xi,\eta \in X$ and $a \in A$. Since positive elements span $A$, we can suppose that $a$ is positive. Then $\Theta_{\xi,\eta \cdot a} = \Theta_{\xi \cdot a^{1/2}, \eta \cdot a^{1/2}} \in \Kk_A(XI)$.
	
For the second equality of \labelcref{eq:K(XI)_characterisation}, fix $\xi,\eta \in X$ and $a \in I$ and consider $\Theta_{\xi, \eta \cdot a} \in \Kk_A(XI)$. For all $x,y \in X$, we have 
$
\langle x | \Theta_{\xi \cdot, \eta \cdot a} y \rangle_A = \langle x | \xi \cdot \langle \eta \cdot a | y \rangle_A \rangle_A = \langle x | \xi \rangle_A a^* \langle \eta | y \rangle_A \in I.
$
Since the $\Theta_{\xi, \eta \cdot a}$ densely span $\Kk_A(XI)$, if $T \in \Kk_A(XI)$, then $\langle x | T y \rangle_A \in I$ for all $x,y \in X$.

On the other hand, suppose that $T \in \Kk_A(X)$ and  $\langle x | T y \rangle_A \in I$ for all $x,y \in X$. 
By \cite[Proposition~1.3]{Kat07cor}, we have $Ty \in X I$ for all $y \in X$.
Choose an increasing approximate unit $(\sum_{(x,y) \in F_{\lambda}} \Theta_{x,y})_{\lambda \in \Lambda}$ for $\Kk_A(X)$, where for each $\lambda \in \Lambda$, $F_{\lambda}$ is a finite subset of $X \times X$. In particular, we have 
$T = \lim_{\lambda} \sum_{(x,y) \in F_{\lambda}} \Theta_{Tx,y}$ in the norm on $\Kk_A(X)$. Since each $Tx \in XI$, by the first equality of \labelcref{eq:K(XI)_characterisation}, we have $\Theta_{Tx,y} \in \Kk_A(XI)$. The ideal $\Kk_A(XI)$ is closed in $\Kk_A(X)$, so $T \in \Kk_A(XI)$.
\end{proof}

Let $X_A$ be a right Hilbert $A$-module. We write $\langle X | X \rangle \coloneqq \ol{\spaan}\{\langle x | y \rangle_A \colon x,y \in X\}$. We say $X_A$ is \emph{full} if $\langle X \mid X \rangle = A$. If $X_A$ is not full, then $\langle X | X \rangle$ is a proper ideal of $A$. The module $X_A$ provides a Morita equivalence between $\Kk_A(X)$ and $\langle X | X \rangle$ (cf. \cite[Proposition~3.8]{RW98}). As such, every ideal of $\Kk_A(X)$ is isomorphic to $\Kk_A(XI)$ for some unique ideal $I \trianglelefteq \langle X | X \rangle$. Moreover, the bijection $I \leftrightarrow \Kk_A(XI)$ is an isomorphism between the ideal lattices of $\langle X | X \rangle$ and $\Kk_A(X)$ \cite[Proposition~3.24]{RW98}.

For a non-full Hilbert module $X_A$ we can have $\Kk_A(XI) = \Kk_A(XJ)$ for distinct ideals $I,J \trianglelefteq A$. 

\begin{lem}\label{lem:compact_annihilators}
	Let $X_A$ be a right Hilbert $A$-module. Then for every ideal $I \trianglelefteq A$ we have $\Kk_A(X I) = \Kk_A(X \cdot (I \cap \langle X | X \rangle))$ and $\Kk_A(X  I)^{\perp} = \Kk_A( X \cdot (I \cap \langle X | X \rangle)^{\perp})$. 
\end{lem}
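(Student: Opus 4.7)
The first equality will be essentially immediate from the second characterization in \cref{lem:compact_ideal_characterisation}: since $\langle x \mid Ty\rangle_A$ automatically lies in $\langle X \mid X\rangle$ for all $T \in \Kk_A(X)$ and $x,y \in X$, the condition ``$\langle x \mid Ty\rangle_A \in I$'' is the same as ``$\langle x \mid Ty\rangle_A \in I \cap \langle X \mid X\rangle$'', and both ideals in the claimed equality are cut out by this condition.

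For the second equality, I would set $J \coloneqq I \cap \langle X \mid X\rangle$ and first observe, using the first equality applied to the ideal $J^{\perp} \trianglelefteq A$, that $\Kk_A(X \cdot J^{\perp}) = \Kk_A(X \cdot (J^{\perp} \cap \langle X \mid X\rangle))$, so the notation $(I \cap \langle X \mid X\rangle)^{\perp}$ in the statement is unambiguous (the annihilator may be taken either in $A$ or in $\langle X \mid X\rangle$). The plan is then to argue both inclusions directly using the first characterization of $\Kk_A(XK)$ from \cref{lem:compact_ideal_characterisation} together with the rank-one multiplication identity
\[
\Theta_{\xi,\eta \cdot k}\Theta_{\xi',\eta' \cdot j}(z) = \xi \cdot k^{*} \langle \eta \mid \xi'\rangle j^{*} \langle \eta' \mid z\rangle.
\]
For $\supseteq$, if $k \in J^{\perp}$ and $j \in J$, then $\langle \eta \mid \xi'\rangle j^{*} \in \langle X \mid X\rangle \cdot J \subseteq J$ is killed by $k^{*} \in J^{\perp}$, so the product of rank-ones vanishes; density together with a symmetric argument for $TS$ will handle the general case.

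For $\subseteq$, I would take $S \in \Kk_A(X J)^{\perp}$ and apply $\Theta_{x', y' \cdot j}S = 0$ with $j \in J$, evaluating at $z \in X$ to obtain $x' \cdot j^{*}\langle y' \mid Sz\rangle = 0$ for every $x' \in X$; pairing with an arbitrary $x \in X$ will force $j^{*}\langle y' \mid Sz\rangle$ into $\langle X \mid X\rangle^{\perp}$, and since it already lies in $\langle X \mid X\rangle$, it must vanish. This gives $\langle y' \mid Sz\rangle \in J^{\perp}$ for all $y',z \in X$, and \cref{lem:compact_ideal_characterisation} then identifies $S$ as an element of $\Kk_A(X J^{\perp})$. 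The main obstacle will be purely one of bookkeeping—tracking in which ambient algebra each annihilator is being computed—which the first equality resolves. Alternatively, the whole argument can be packaged by invoking the Rieffel correspondence for the Morita equivalence between $\Kk_A(X)$ and $\langle X \mid X\rangle$ implemented by $X_A$ (see \cite[Propositions~3.8 and 3.24]{RW98}), which sends $J' \trianglelefteq \langle X \mid X\rangle$ to $\Kk_A(X J')$ and, being a lattice isomorphism of ideals, preserves annihilators.
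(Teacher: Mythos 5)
Your proof is correct, and its skeleton---\cref{lem:compact_ideal_characterisation} plus rank-one computations plus density---is the same as the paper's, but the points where you deviate are worth recording. For the first equality the paper verifies the nontrivial inclusion by computing $\langle x \mid \Theta_{\xi,\eta\cdot a}y\rangle_A$ for $a \in I$ and checking it lands in $I\cap\langle X\mid X\rangle$; your observation that both ideals are cut out by the single condition $\langle x\mid Ty\rangle_A\in I$ is a genuine streamlining of the same idea. For the inclusion $\Kk_A(XI)^{\perp}\subseteq \Kk_A(X\cdot(I\cap\langle X\mid X\rangle)^{\perp})$ the paper fixes $a\in I\cap\langle X\mid X\rangle$, writes it \emph{exactly} as $\langle\xi\mid\eta\rangle_A$ with $\xi,\eta\in XI$, and deduces $\langle x\mid Ty\rangle_A\,a=a\,\langle x\mid Ty\rangle_A=0$; your route via evaluating $\Theta_{x',y'\cdot j}S=0$ at a point and using $\langle X\mid X\rangle\cap\langle X\mid X\rangle^{\perp}=\{0\}$ avoids that exact factorisation, which is the one step in the paper's argument that needs a separate (unstated) justification. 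One small caveat: you only ever use one-sided annihilation, so you should say explicitly that the annihilator of a closed two-sided ideal in a $C^*$-algebra is itself a two-sided, self-adjoint ideal and that left and right annihilators coincide---a fact the paper also uses silently. Finally, your closing alternative---pushing annihilators through the Rieffel correspondence of \cite[Proposition~3.24]{RW98}, using that a lattice isomorphism preserves the largest ideal meeting a given one trivially, and then invoking the first equality to pass between annihilators computed in $A$ and in $\langle X\mid X\rangle$---is the most conceptual packaging; the paper sets up exactly this lattice isomorphism in the preceding paragraph but does not exploit it here.
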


\begin{proof}
	Since $I \cap \langle X | X \rangle \trianglelefteq I$, we have $\Kk_A(X \cdot (I \cap \langle X | X \rangle)) \subseteq \Kk_A(X I)$. For the reverse inclusion, fix $\xi,\eta \in X$ and $a \in I$. By \cref{lem:compact_ideal_characterisation}, $\Theta_{\xi,\eta \cdot a} \in \Kk_A(XI)$. For each $y \in X$, we have $\Theta_{\xi,\eta \cdot a}y = \xi \cdot a^* \langle \eta | y \rangle_A$. Since $a^* \langle \eta | y \rangle_A \in I \cap \langle X \mid X \rangle$, we have $\langle x | \Theta_{\xi,\eta \cdot a} y \rangle_A \in I \cap \langle X | X \rangle$ for all $x,y \in X$, so by \cref{lem:compact_ideal_characterisation}, we have $\Theta_{\xi,\eta \cdot a} \in \Kk_A(X \cdot (I \cap \langle X | X \rangle))$. Since the $\Theta_{\xi,\eta \cdot a}$ densely span $\Kk_A(X \cdot (I \cap \langle X | X \rangle))$, we have $\Kk_A(X I) = \Kk_A(X \cdot (I \cap \langle X | X \rangle))$.
	
	Now suppose that $\xi,\eta \in X$ and $a \in (I \cap \langle X | X \rangle)^{\perp}$. Fix $S \in \Kk_A(XI)$. Using \cref{lem:compact_ideal_characterisation}, we have
	$
	\Theta_{\xi,\eta \cdot a}Sx = \xi \cdot a^* \langle \eta | Sx \rangle_A  = 0
	$
	for all $x \in X_A$. Using \cref{lem:compact_ideal_characterisation} again, for all $x,y \in X_A$ we have
	$
	\langle x | S \Theta_{\xi,\eta \cdot a} y \rangle_A= \langle x | S \xi \rangle_A a^* \langle \eta | y \rangle_A = 0,
	$
	so $S \Theta_{\xi,\eta \cdot a} = 0$.  Hence, $\Theta_{\xi,\eta \cdot a} \in \Kk_A(XI)^{\perp}$. 	
	Since such $\Theta_{\xi,\eta \cdot a}$ densely span $\Kk_A( X \cdot (I \cap \langle X | X \rangle)^{\perp})$ we have $\Kk_A( X \cdot (I \cap \langle X | X \rangle)^{\perp}) \subseteq \Kk_A(X  I)^{\perp}$.
	
	Now suppose that $T \in \Kk(X I)^{\perp}$. Fix $a \in I \cap \langle X | X \rangle$. Then there exists $\xi,\eta \in XI$ such that $\langle \xi | \eta \rangle_A =a$. For each $x,y \in X_A$, we have
	\[
	\langle x | Ty \rangle_A a = \langle x | Ty \cdot \langle \xi | \eta \rangle_A \rangle_A = \langle x | T \Theta_{y,\xi} \eta \rangle_A = 0
	\]
	since $\Theta_{y,\xi} \in \Kk_A(XI)$, and
	\[
	a \langle x | Ty \rangle_A = \langle x \cdot \langle \eta | \xi \rangle_A \mid T y \rangle_A = \langle \Theta_{x,\eta} \xi | Ty \rangle_A = \langle \xi | \Theta^*_{x,\eta} T y\rangle_A = 0
	\]
	since $\Theta_{x,\eta} \in \Kk_A(XI)$. Hence, $\langle x | T y \rangle_A \in (I \cap \langle X | X \rangle)^{\perp}$ for all $x,y \in X_A$, so $T \in \Kk_A( X \cdot (I \cap \langle X | X \rangle)^{\perp}).$
\end{proof}

\cref{lem:compact_annihilators} indicates that if a Hilbert module $X_A$ is not full, then the map taking an ideal $I \trianglelefteq A$ to $\Kk_A(XI)$ does not behave nicely with respect to annihilators. This makes a description of the Katsura ideals of the maps $\varphi_i$ from \cref{ex:correspondence_generalised_sequence} difficult. As such, we restrict our attention to full modules. 

\begin{prop} \label{prop:tensor_ideals}
	Let $X_A$ be a full right Hilbert $A$-module and let $(\psi,Y_B)$ be a nondegenerate $A$--$B$-correspondence. Consider the morphism $\lambda \in \Mor(\Kk_A(X), \Kk_B(X \ox Y))$ satisfying $\lambda(T)(x \ox y) = (Tx) \ox y$ for all $T \in \Kk_A(X)$ and $x \ox y \in X \ox_A Y$ (see \cite[p.~190]{Pim97}). Then
	\begin{enumerate}
		\item \label{itm:pim_same}	$\pim(\lambda) = \Kk_A(X \cdot \pim(\psi))$;
		\item \label{itm:ker_same} $\ker(\lambda) = \Kk_A(X \cdot \ker(\psi))$;
		\item \label{itm:perp_same} $\ker(\lambda)^{\perp} = \Kk_A(X \cdot \ker(\psi)^{\perp})$; and
		\item \label{itm:kat_same} $\kat(\lambda) = \Kk_A(X \cdot \kat(\psi)).$
	\end{enumerate}
	Moreover, the Morita equivalence between $A$ and $\Kk_A(X)$, induced by $X_A$, restricts to an isomorphism between the sublattice of $\psi$-proper ($\psi$-perfect) ideals and the sublattice of $\lambda$-proper ($\lambda$-perfect) ideals.
\end{prop}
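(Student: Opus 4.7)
My plan is to leverage the Morita equivalence between $A$ and $\Kk_A(X)$ induced by $X_A$ (which makes sense because $X$ is full), together with the explicit characterisation of $\Kk_A(XI)$ from \cref{lem:compact_ideal_characterisation}. The workhorse computation is the following sandwich identity: for each $x_0 \in X$, the adjointable operator $L_{x_0} \colon Y \to X \ox_A Y$ given by $L_{x_0}(y) = x_0 \ox y$ has adjoint $L_{x_0}^*(\xi \ox y) = \psi(\langle x_0 | \xi \rangle) y$, and a short calculation using the definition of the interior tensor product gives
\[
L_{x_0}^* \lambda(T) L_{x_0} = \psi(\langle x_0 | T x_0 \rangle_A)
\]
for every $T \in \Kk_A(X)$. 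This lets me transport statements about $\lambda(T) \in \Kk_B(X \ox Y)$ into statements about $\psi(\langle x_0 | T x_0\rangle)$, and then polarise in $x_0$ to get corresponding statements about $\psi(\langle x | T y\rangle)$ for all $x,y \in X$.

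For \labelcref{itm:pim_same}, I would first show $\supseteq$ by the direct computation $\lambda(\Theta_{\xi, \eta \cdot a})(x \ox y) = \xi \ox \psi(a^* \langle \eta | x\rangle) y$; when $a \in \pim(\psi)$ the operator $\psi(a^*) \in \Kk_B(Y)$ can be approximated by finite sums of $\Theta_{y_i,y_i'}$, and these assemble $\lambda(\Theta_{\xi, \eta \cdot a})$ as a norm-limit of finite rank operators $\Theta_{\xi \ox y_i, \eta \ox y_i'}$. For $\subseteq$ I apply the sandwich identity: if $\lambda(T) \in \Kk_B(X \ox Y)$ then $\psi(\langle x_0 | T x_0\rangle) \in \Kk_B(Y)$ for all $x_0$, so $\langle x_0 | T x_0\rangle \in \pim(\psi)$; polarising yields $\langle x | T y\rangle_A \in \pim(\psi)$ for all $x,y \in X$, and \cref{lem:compact_ideal_characterisation} produces $T \in \Kk_A(X \cdot \pim(\psi))$. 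For \labelcref{itm:ker_same} the same strategy works verbatim after replacing $\pim(\psi)$ by $\ker(\psi)$: the containment $\lambda(\Kk_A(X \cdot \ker(\psi))) = 0$ is the same rank-one calculation, and conversely $\lambda(T) = 0$ forces $\psi(\langle x_0 | T x_0\rangle) = 0$ which one polarises and feeds into \cref{lem:compact_ideal_characterisation}.

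Part \labelcref{itm:perp_same} is not a separate computation but an application of \cref{lem:compact_annihilators}: since $X$ is full we have $\ker(\psi) \cap \langle X | X\rangle = \ker(\psi)$, so \cref{lem:compact_annihilators} combined with \labelcref{itm:ker_same} gives $\ker(\lambda)^{\perp} = \Kk_A(X \cdot \ker(\psi))^{\perp} = \Kk_A(X \cdot \ker(\psi)^{\perp})$. Then \labelcref{itm:kat_same} follows by intersecting \labelcref{itm:pim_same} and \labelcref{itm:perp_same} and using that $I \mapsto \Kk_A(XI)$ is a lattice isomorphism (and in particular intersection-preserving) on the ideals of $A = \langle X | X \rangle$. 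Finally, the ``moreover'' statement is immediate: by \cref{lem:phi_proper_characterisation}, $I \trianglelefteq A$ is $\psi$-proper (respectively $\psi$-perfect) iff $I \subseteq \pim(\psi)$ (respectively $I \subseteq \kat(\psi)$), and the Morita-equivalence bijection intertwines these containments with $\Kk_A(XI) \subseteq \pim(\lambda)$ (respectively $\Kk_A(XI) \subseteq \kat(\lambda)$) via \labelcref{itm:pim_same} and \labelcref{itm:kat_same}.

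The main obstacle is \labelcref{itm:pim_same}$\subseteq$: establishing that $\langle x_0 | T x_0\rangle \in \pim(\psi)$ for all $x_0$ really does force $\langle x | T y\rangle \in \pim(\psi)$ for all $x, y$. I expect the cleanest way to handle this is a polarisation identity on $x_0 \mapsto \langle x_0 | T x_0\rangle$ together with sesquilinearity of the inner product, but some care is needed because $\pim(\psi)$ is only a $*$-closed linear subspace (it is an ideal, so this is automatic, but worth a brief verification).
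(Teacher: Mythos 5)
Your proposal is correct, and for parts \labelcref{itm:ker_same}--\labelcref{itm:kat_same} and the ``moreover'' statement it is essentially the argument in the paper: the paper also characterises $\ker(\lambda)$ by computing $\langle x_1 \ox y_1 \mid \lambda(T)(x_2 \ox y_2)\rangle_B = \langle y_1 \mid \psi(\langle x_1 \mid Tx_2\rangle_A)y_2\rangle_B$ and invoking \cref{lem:compact_ideal_characterisation}, deduces \labelcref{itm:perp_same} from fullness and \cref{lem:compact_annihilators}, and gets \labelcref{itm:kat_same} and the lattice statement from the Morita-equivalence isomorphism of ideal lattices together with \cref{lem:phi_proper_characterisation}. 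The genuine divergence is part \labelcref{itm:pim_same}: the paper disposes of it in one line by citing \cite[Corollary~3.7]{Pim97}, whereas you prove it from scratch via the compression $L_{x_0}^*\lambda(T)L_{x_0} = \psi(\langle x_0 \mid Tx_0\rangle_A)$, polarisation, and the rank-one computation $\lambda(\Theta_{\xi,\eta\cdot a})(x\ox y) = \xi \ox \psi(a^*)\psi(\langle \eta\mid x\rangle)y$. Your route is longer but self-contained and makes the mechanism visible (it is, in effect, a proof of the cited corollary adapted to the notation here); the sandwich identity is correct, $L_{x_0}$ is adjointable with the adjoint you state, compacts are preserved under compression by adjointables between Hilbert modules, and the polarisation step is unproblematic because $\pim(\psi)$ is a closed linear subspace. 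The only point worth spelling out if you write this up is the norm estimate showing that approximating $\psi(a^*)$ by finite-rank operators really does approximate $\lambda(\Theta_{\xi,\eta\cdot a})$ in operator norm (the difference factors through the small operator, so this is routine), and the observation that $a^* \in \pim(\psi)$ because the Pimsner ideal is $*$-closed.
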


\begin{proof}
	\labelcref{itm:pim_same} This follows directly from \cite[Corollary~3.7]{Pim97}.
	
	\labelcref{itm:ker_same} Observe that $T \in \ker(\lambda)$ if and only if 
	\[
	0 = \langle x_1 \ox y_1 | Tx_2 \ox y_2\rangle_B = \langle y_1 | \psi(\langle x_1 | Tx_2 \rangle_A)y_2 \rangle_B
	\]
	for all $x_1,x_2 \in X_A$ and $y_1,y_2 \in Y_B$. This is true if and only if $\langle x_1 | T x_2 \rangle_A \in \ker(\psi)$ for all $x_1,x_2 \in X_A$. By \cref{lem:compact_ideal_characterisation}, this is equivalent to $T \in \Kk_A(X \cdot \ker(\psi))$.
	
	\labelcref{itm:perp_same} Since $X_A$ is full, this follows from \labelcref{itm:ker_same} and \cref{lem:compact_annihilators}.
	
	\labelcref{itm:kat_same} Since the Morita equivalence induced by $X_A$ is an isomorphism of the lattice of ideals, we have
	$
	\kat(\lambda) = \Kk_A(X \cdot \pim(\psi)) \cap \Kk_A(X \cdot \ker(\psi)^{\perp}) = \Kk_A(X \cdot (\pim(\psi) \cap \ker(\psi)^{\perp}))  = \Kk_A(X \cdot \kat(\psi)).
	$
	
	The final statement now follows from \labelcref{itm:pim_same}, \labelcref{itm:kat_same}, and \cref{lem:phi_proper_characterisation}.
\end{proof}

 Suppose that $\rho = (\rho_A,\rho_X)$ a representation of an $A$--$A$-correspondence  $(\varphi,X_A)$ in a $C^*$-algebra $B$. For each $i \ge 1$, $\rho$ induces a representation $\rho^i$ of $(\varphi,X_A^{\ox i})$ in $B$ satisfying $\rho^i_A = \rho_A$ and $\rho^i_X(x_1 \ox \cdots \ox x_i) = \rho_X(x_1) \cdots \rho_X(x_i)$ for all $x_1 \ox \cdots \ox x_i \in X^{\ox i}$. We write $\rho_A^{(i)} \coloneqq (\rho^i)_A^{(1)}$ for the induced $*$-homomorphism $\Kk_A(X^{\ox i}) \to B$ of \labelcref{eq:compact_formula}. 

Now suppose that $(\varphi,X_A)$ is full and
 consider the generalised sequence $(\Kk_A(X^{\ox i}), \varphi_i)_{i \in \NN}$ of \cref{ex:correspondence_generalised_sequence}. Since $X^{\ox {i+1}} \cong X^{\ox i} \ox_A X$, it follows inductively from \cref{prop:tensor_ideals} that for any $\varphi$-proper ($\varphi$-perfect) ideal $J \trianglelefteq A$ the ideal $\Kk_A(X^{\ox i} J)$ is $\varphi_i$-proper ($\varphi_i$-perfect). Moreover, $\pim(\varphi_i) = \Kk_A(X^{\ox i} \cdot \pim(\varphi))$ and $\kat(\varphi_i) = \Kk_A(X^{\ox i} \cdot \kat(\varphi))$. As such, for every $\varphi$-proper ($\varphi$-perfect) ideal $J$ there is an associated (perfect) regulating sequence $\Jj = (\Kk_A(X^{\ox i}  J))_{i \in \NN}$ for $(\Kk(X^{\ox i}), \varphi_i)_{i \in \NN}$. 

\begin{thm}\label{prop:cuntz-pimsner_characterisation2}
Let $(\varphi,X_A)$ be a $C^*$-correspondence over $A$ and let $J \trianglelefteq A$ be a $\varphi$-perfect ideal. As in \cref{ex:correspondence_generalised_sequence}, let $\varphi_0 \coloneqq \varphi$, and for each $i \ge 1$, let $\varphi_i \in \Mor(\Kk_A(X^{\ox i}), \Kk_A(X^{\ox {i+1}}))$ be the map satisfying $\varphi_i(T)(x \ox y) = (Tx) \ox y$ for all $T \in \Kk_A(X^{\ox i})$, $x \in X^{\ox i}$, and $y \in X$.  
Let $\Jj = (\Kk_A(X^{\ox i}J))_{i \in \NN}$ be the perfect regulating sequence associated to $J$, and let $\gamma$ be the gauge action on $\Oo_{X,J}$. 
Then $\varinjlim^{\Jj}(\Kk_A(X^{\ox i}),\varphi_i) \cong \Oo_{X,J}^{\gamma}$. In particular,
\[
\unilima(\Kk_A(X^{\ox i}),\varphi_i) \cong \Tt_X^{\gamma} \quad \text{ and } \quad \perlima(\Kk_A(X^{\ox i}),\varphi_i) \cong \Oo_X^{\gamma}.
\]
\end{thm}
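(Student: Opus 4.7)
The plan is to match the $\Jj$-regulated limit with the natural filtration of the core of $\Oo_{X,J}$ stage by stage, and then appeal to the direct-limit description of the core due to Katsura. By \cite[Proposition~5.7]{Kat04cor}, the fixed-point algebra $\Oo_{X,J}^{\gamma}$ equals $\overline{\bigcup_{i \in \NN} B_i}$, where $B_i \coloneqq \sum_{k=0}^{i} (\tau_A^J)^{(k)}(\Kk_A(X^{\ox k}))$ is a $C^*$-subalgebra of $\Oo_{X,J}$ and the inclusions $B_i \hookrightarrow B_{i+1}$ form a directed system whose direct limit is $\Oo_{X,J}^{\gamma}$. It therefore suffices to construct, compatibly, $*$-isomorphisms $\Phi_i \colon A_i^{\Jj} \to B_i$ that intertwine the structure map $\varphi_i^{\Jj}$ with the inclusion $B_i \hookrightarrow B_{i+1}$; the universal property of the direct limit then delivers the required isomorphism $\varinjlim^{\Jj}(\Kk_A(X^{\ox i}),\varphi_i) \cong \Oo_{X,J}^{\gamma}$.

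The isomorphisms $\Phi_i$ are built by induction on $i$. The base case $i=0$ is immediate: $A_0^{\Jj} = A$ and $B_0 = \tau_A^J(A)$, and $\tau_A^J$ is injective because $J$ is $\varphi$-perfect and hence contained in $\kat(\varphi)$. For the inductive step, \cref{lem:compositions_associative_quotient} identifies $A_{i+1}^{\Jj}$ with the quotient fibrewise compactification $\pers{\Kk_A(X^{\ox (i+1)})}{\psi_i}{\Kk_A(X^{\ox i}J)}{A_i^{\Jj}}$, where $\psi_i \in \Mor(A_i^{\Jj}, \Kk_A(X^{\ox (i+1)}))$ is the morphism induced by $\ol{\varphi_i} \circ \alpha_{A_i}$. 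Under the identification $A_i^{\Jj} \cong B_i$ provided by the induction hypothesis, $\psi_i$ corresponds to the morphism $B_i \to \Ll_A(X^{\ox (i+1)}) \cong \Mm(\Kk_A(X^{\ox (i+1)}))$ arising from the left action of $B_i \subseteq \Oo_{X,J}$ on one more tensor factor of $X$. Applying the universal property of \cref{thm:unified_quotient} to the commuting extension diagram \labelcref{eq:quotient_unified} with ideal $\Kk_A(X^{\ox (i+1)})$, total algebra $B_{i+1}$, and distinguished map the inclusion $B_i \hookrightarrow B_{i+1}$, we obtain $\Phi_{i+1}$.

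The principal obstacle is verifying the two inputs required to invoke \cref{thm:unified_quotient} at the inductive step: (a) that $(\tau_A^J)^{(i+1)}(\Kk_A(X^{\ox (i+1)}))$ is an ideal of $B_{i+1}$ onto which $\Kk_A(X^{\ox (i+1)})$ maps isomorphically, and (b) that the inclusion $B_i \hookrightarrow B_{i+1}$ restricted to $\Kk_A(X^{\ox i}J) \subseteq B_i$ coincides with $(\tau_A^J)^{(i+1)} \circ \varphi_i$. Item~(a) uses \cref{prop:tensor_ideals} together with the faithfulness of $\tau^J$, which propagates to faithfulness of $(\tau_A^J)^{(i+1)}$ on the generalised compacts; the ideal property follows from the standard spectral-subspace multiplication rules in a gauge-graded algebra. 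Item~(b) is precisely the $\Kk_A(X^{\ox i}J)$-covariance relation at level $i+1$, which is a consequence of tensoring the $J$-covariance of $\tau^J$ with $X^{\ox i}$ and applying \cite[Proposition~3.3]{Kat04cor}. The intertwining $\iota_i \circ \Phi_i = \Phi_{i+1} \circ \varphi_i^{\Jj}$ is then a direct unpacking of the formula \labelcref{eq:phi_J_formula}. Finally, the special cases follow by specialising $J$: taking $J = \{0\}$ gives $\Oo_{X,\{0\}} = \Tt_X$ and the $\unilima$ identification, while taking $J = \kat(\varphi)$ gives $\Oo_X$ and the $\perlima$ identification.
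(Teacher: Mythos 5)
Your proposal is correct and follows essentially the same route as the paper: Katsura's direct-limit description of the core \cite[Proposition~5.7]{Kat04cor}, followed by an inductive application of the universal property of \cref{thm:unified_quotient} to produce intertwining isomorphisms $\Phi_i \colon A_i^{\Jj} \to B_{[0,i]}$. The only cosmetic difference is that the paper obtains your verification items (a) and (b) in one step by citing the commuting extension diagram of \cite[Proposition~5.12]{Kat04cor}, whereas you sketch them directly from \cref{prop:tensor_ideals} and the covariance relations.
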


\begin{proof}
	Since $J$ is $\varphi$-perfect, the universal representation $\tau^J = (\tau^J_A,\tau^J_X)$ of $(\varphi,X_A)$ in $\Oo_{X,J}$ is faithful. Hence, the maps $(\tau_A^J)^{(i)} \colon \Kk_A(X^{\ox i}) \to \Oo_{X,J}$ are injective for all $i \in \NN$. Following \cite{Kat04cor}, for each $i \in \NN$ we define
	\[
	B_{[0,i]} = \tau_A^J(A) + (\tau_A^J)^{(1)}(\Kk_A(X))
	+ \cdots +
	(\tau_A^J)^{(i)}(\Kk_A(X^{\ox i})).
	\]
	
	Let $\iota_{i} \colon B_{[0,i]} \to B_{[0,i+1]}$ denote the obvious inclusion. 
	By \cite[Proposition~5.7]{Kat04cor}, we have \[\Oo_{X,J}^\gamma = \ol{\bigcup_{i \in \NN} B_{[0,i]}} = \varinjlim(B_{[0,i]}, \iota_i).\]
	Let $J_i \coloneqq \Kk_A(X^{\ox i}J)$, \[A_i^{\Jj} \coloneqq \pers{\Kk_A(X^{\ox i})}{\varphi_{i-1}}{J_{i-1}}{\pers{\Kk_A(X^{\ox i-1})}{\varphi_{i-2}}{J_{i-2}}{\pers{\cdots}{\varphi_0}{J}{A}}},\] and let $\varphi_i^{\Jj} \in \Morp (A_i^{\Jj},A_{i+1}^{\Jj})$ be as in \labelcref{eq:phi_J_formula}. Since $(\tau_A^J)^{(i)}$ is injective for all $i \in \NN$, \cite[Proposition~5.12]{Kat04cor} yields a commuting diagram	\begin{equation*}
		\begin{tikzcd}[ampersand replacement=\&]
			0 \arrow[r] 
			\& \Kk_A(X^{\ox i} J) \arrow[r,"(\tau_A^J)^{(i)}"] \arrow[d,"\varphi_i"] 
			\& B_{[0,i]} \arrow[r] \arrow[d, "\iota_i"]
			\&  B_{[0,i]} /(\tau_A^J)^{(i)}(\Kk_A(X^{\ox i} J)) \arrow[r] \arrow[d, "\cong"] 
			\& 0 
			\\
			0 \arrow[r] 
			\& \Kk_A(X^{\ox i+1}) \arrow[r, "(\tau_A^J)^{(i+1)}"] 
			\& B_{[0,i+1]} \arrow[r]  
			\& B_{[0,i+1]}/(\tau_A^J)^{(i+1)}(\Kk_A(X^{\ox i+1})) \arrow[r]                              
			\& 0
		\end{tikzcd}
	\end{equation*}
	with exact rows. Starting from $i = 0 $, and inductively applying universal property of \cref{thm:unified_quotient}, there are isomorphisms $\Phi_i \colon A_{i}^{\Jj} \to B_{[0,j]}$ such that $\iota_i \circ \Phi_i = \Phi_i \circ \varphi_i^{\Jj}$ for all $i \in \NN$. Consequently,
	\[
	\textstyle \varinjlim^{\Jj}(\Kk(X^{\ox i}), \varphi_i) = \varinjlim(A^{\Jj}_i,\varphi^{\Jj}_i) \cong \varinjlim(B_{[0,i]},\iota_i) = \Oo_{X,J}^{\gamma}. 
	\]
	By \cref{prop:tensor_ideals}, and the fact that $\Tt_X = \Oo_{X,\{0\}}$ and $\Oo_X = \Oo_{X,\kat(\varphi)}$, we have
	$\unilima(\Kk_A(X^{\ox i}),\varphi_i) \cong \Tt_X^{\gamma}$ and $\perlima(\Kk_A(X^{\ox i}),\varphi_i) \cong \Oo_X^{\gamma}$.
\end{proof}

\vspace{10pt}

\scriptsize{
\noindent \textsc{Alexander Mundey}\\
School of Mathematics and Applied Statistics,\\ University of Wollongong, NSW 2522, Australia. \\
Email address: \texttt{amundey@uow.edu.au}\\
}

%\printbibliography
%\printbibitembibliography

\end{document}